\apptocmd{\lim}{\limits}{}{}
\apptocmd{\sup}{\limits}{}{}
\apptocmd{\inf}{\limits}{}{}
\apptocmd{\liminf}{\limits}{}{}
\apptocmd{\limsup}{\limits}{}{}
\pretocmd{\langle}{\left}{}{}
\pretocmd{\rangle}{\right}{}{}
\DeclareMathOperator{\pr}{pr}
\newcommand{\vect}[1]{\mathbf{{#1}}}
\newcommand{\leftexp}[2]{{\vphantom{#2}}^{#1}{#2}} 
\newcommand{\trasp}{\leftexp{t}}
\newcommand{\Cc}{\mathcal{C}}
\newcommand{\Dc}{\mathcal{D}}
\newcommand{\Ec}{\mathcal{E}}
\newcommand{\Fc}{\mathcal{F}}
\newcommand{\Hc}{\mathcal{H}}
\newcommand{\Kc}{\mathcal{K}}
\newcommand{\Lc}{\mathcal{L}}
\newcommand{\Mc}{\mathcal{M}}
\newcommand{\Sc}{\mathcal{S}}
\newcommand{\Tc}{\mathcal{T}}
\newcommand{\Sd}{\mathds{S}}
\newcommand{\mi}{\mu}
\renewcommand{\phi}{\varphi}
\newcommand{\eps}{\varepsilon}
\newcommand{\id}{\mathrm{id}}
\newcommand{\N}{\mathds{N}}
\newcommand{\Z}{\mathds{Z}}
\newcommand{\Q}{\mathds{Q}}
\newcommand{\R}{\mathds{R}}
\newcommand{\C}{\mathds{C}}
\newcommand{\Hd}{\mathds{H}}
\newcommand{\gf}{\mathfrak{g}}
\newcommand{\vf}{\mathfrak{v}}
\newcommand{\Ff}{\mathfrak{F}}
\newcommand{\Uf}{\mathfrak{U}}
\newcommand{\dd}{\mathrm{d}}
\newcommand{\loc}{\mathrm{loc}}
\DeclareMathOperator{\card}{Card}
\DeclarePairedDelimiter{\abs}{\lvert}{\rvert}
\DeclarePairedDelimiter{\norm}{\lVert}{\rVert}
\DeclarePairedDelimiterX\Set[1]\lbrace\rbrace{#1}
\newcommand{\Meg}{\geqslant}
\newcommand{\meg}{\leqslant}
\newcommand{\Supp}[1]{\mathrm{Supp}\left( #1\right)}
\newcommand{\quot}[2]{\mathchoice
	{\setbox1\hbox{${\displaystyle #1}_{\scriptstyle #2}$}
		\text{$#1$\raisebox{-1mm}{\resizebox{2.8mm}{3.5mm}{$/$}}\raisebox{-1.8mm}{$#2$}}}
	{\setbox1\hbox{${\textstyle #1}_{\scriptstyle #2}$}
		\text{$#1$\raisebox{-1mm}{\resizebox{2.8mm}{3.5mm}{$/$}}\raisebox{-1.8mm}{$#2$}}}
	{\setbox1\hbox{${\scriptstyle #1}_{\scriptscriptstyle #2}$}
		\text{$#1$\raisebox{-0.6mm}{\resizebox{2.3mm}{2.0125mm}{$/$}}\raisebox{-1.4mm}{$ #2$}}}
	{\setbox1\hbox{${\scriptscriptstyle #1}_{\scriptscriptstyle #2}$}
		\text{$#1$\raisebox{-0.3mm}{\resizebox{1.55mm}{1.4mm}{$/$}}\raisebox{-0.7mm}{$#2$}}}}
\DeclareMathOperator{\Pfaff}{Pf}
\newcommand{\open}{\overset{\resizebox{1.1mm}{1.1mm}{$\circ$}}}
\DeclareMathOperator{\tr}{Tr}
\newcommand*{\mint}[1]{%
	\mint@l{#1}{}%
}
\newcommand*{\mint@l}[2]{%
	\@ifnextchar\limits{%
		\mint@l{#1}%
	}{%
	\@ifnextchar\nolimits{%
		\mint@l{#1}%
	}{%
	\@ifnextchar\displaylimits{%
		\mint@l{#1}%
	}{%
	\mint@s{#2}{#1}%
}%
}%
}%
}
\newcommand*{\mint@s}[2]{%
	\@ifnextchar_{%
		\mint@sub{#1}{#2}%
	}{%
	\@ifnextchar^{%
		\mint@sup{#1}{#2}%
	}{%
	\mint@{#1}{#2}{}{}%
}%
}%
}
\def\mint@sub#1#2_#3{%
	\@ifnextchar^{%
		\mint@sub@sup{#1}{#2}{#3}%
	}{%
	\mint@{#1}{#2}{#3}{}%
}%
}
\def\mint@sup#1#2^#3{%
	\@ifnextchar_{%
		\mint@sub@sup{#1}{#2}{#3}%
	}{%
	\mint@{#1}{#2}{}{#3}%
}%
}
\def\mint@sub@sup#1#2#3^#4{%
	\mint@{#1}{#2}{#3}{#4}%
}
\def\mint@sup@sub#1#2#3_#4{%
	\mint@{#1}{#2}{#4}{#3}%
}
\newcommand*{\mint@}[4]{
	\mathop{}%
	\mkern-\thinmuskip
	\mathchoice{%
		\mint@@{#1}{#2}{#3}{#4}%
		\displaystyle\textstyle\scriptstyle
	}{%
	\mint@@{#1}{#2}{#3}{#4}%
	\textstyle\scriptstyle\scriptstyle
}{%
\mint@@{#1}{#2}{#3}{#4}%
\scriptstyle\scriptscriptstyle\scriptscriptstyle
}{%
\mint@@{#1}{#2}{#3}{#4}%
\scriptscriptstyle\scriptscriptstyle\scriptscriptstyle
}%
\mkern-\thinmuskip
\int#1%
\ifx\\#3\\\else_{#3}\fi
\ifx\\#4\\\else^{#4}\fi  
}
\newcommand*{\mint@@}[7]{
	\begingroup
	\sbox0{$#5\int\m@th$}%
	\sbox2{$#5\int_{}\m@th$}%
	\dimen2=\wd0 %
	\let\mint@limits=#1\relax
	\ifx\mint@limits\relax
	\sbox4{$#5\int_{\kern1sp}^{\kern1sp}\m@th$}%
	\ifdim\wd4>\wd2 %
	\let\mint@limits=\nolimits
	\else
	\let\mint@limits=\limits
	\fi
	\fi
	\ifx\mint@limits\displaylimits
	\ifx#5\displaystyle
	\let\mint@limits=\limits
	\fi
	\fi
	\ifx\mint@limits\limits
	\sbox0{$#7#3\m@th$}%
	\sbox2{$#7#4\m@th$}%
	\ifdim\wd0>\dimen2 %
	\dimen2=\wd0 %
	\fi
	\ifdim\wd2>\dimen2 %
	\dimen2=\wd2 %
	\fi
	\fi
	\rlap{%
		$#5%
		\vcenter{%
			\hbox to\dimen2{%
				\hss
				$#6{#2}\m@th$%
				\hss
			}%
		}%
		$%
	}%
	\endgroup
}
\begin{document}
\title{Spectral Multipliers on $2$-Step Stratified Groups, II}
\author{Mattia Calzi\thanks{The author is partially supported by the grant PRIN 2015 \emph{Real and Complex Manifolds:  Geometry, Topology and Harmonic Analysis}, and is member of the Gruppo Nazionale per l’Analisi Matematica, la Probabilità e le loro Applicazioni (GNAMPA) of the Istituto Nazionale di Alta Matematica (INdAM).}}
\date{}

\theoremstyle{definition}
\newtheorem{deff}{Definition}[section]

\newtheorem{oss}[deff]{Remark}

\newtheorem{ass}[deff]{Assumptions}

\newtheorem{nott}[deff]{Notation}

\theoremstyle{plain}
\newtheorem{teo}[deff]{Theorem}

\newtheorem{lem}[deff]{Lemma}

\newtheorem{prop}[deff]{Proposition}

\newtheorem{cor}[deff]{Corollary}

\maketitle
\begin{small}
	\section*{Abstract}
	Given a graded group $G$ and commuting, formally self-adjoint, left-invariant, homogeneous differential operators $\Lc_1,\dots, \Lc_n$ on $G$, one of which is Rockland, we study the convolution operators $m(\Lc_1,\dots, \Lc_n)$ and their convolution kernels, with particular reference to the case in which $G$ is abelian and $n=1$, and the case in which $G$ is a $2$-step stratified group which satisfies a slight strengthening of the Moore-Wolf condition and $\Lc_1,\dots,\Lc_n$ are either sub-Laplacians or central elements of the Lie algebra of $G$. 
	Under suitable conditions, we prove that: i) if the convolution kernel of the operator $m(\Lc_1,\dots, \Lc_n)$ belongs to $L^1$, then $m$ equals almost everywhere a continuous function vanishing at $\infty$ (`Riemann-Lebesgue lemma'); ii) if the convolution kernel of the operator $m(\Lc_1,\dots, \Lc_n)$ is a Schwartz function, then $m$ equals almost everywhere a Schwartz function.
\end{small}

\section{Introduction}

Given a Rockland family\footnote{see Section~\ref{sec:2} for precise definitions} $(\Lc_1,\dots, \Lc_n)$ on a homogeneous group $G$, following~\cite{Martini,Tolomeo} (see also~\cite{Calzi}) we define a `kernel transform' $\Kc$ which to every measurable function $m\colon \R^n\to \C$ such that $m(\Lc_1,\dots, \Lc_n)$ is defined on $C^\infty_c(G)$ associates a unique distribution $\Kc(m)$ such that
\[
m(\Lc_1,\dots, \Lc_n)\,\phi=\phi*\Kc(m)
\]
for every $\phi \in C^\infty_c(G)$. 
The so-defined kernel transform $\Kc$ enjoys some relevant properties, which we list below; see~\cite{Martini,Tolomeo} for their proofs and further information. 
\begin{itemize}
	\item there is a unique positive Radon measure $\beta$ on $\R^n$ such that $\Kc(m)\in L^2(G)$ if and only if $m\in L^2(\beta)$, and $\Kc$ induces an isometry of $L^2(\beta)$ into $L^2(G)$;
	
	\item there is a unique $\chi\in L^\infty(\R^n\times G, \beta \otimes \nu)$, where $\nu$ denotes a Haar measure on $G$, such that for every $m\in L^1(\beta)$
	\[
	\Kc(m)(g)=\int_{\R^n} m(\lambda) \chi(\lambda, g)\,\dd \beta(\lambda)
	\] 
	for almost every $g\in G$;
	
	\item $\Kc$ maps $\Sc(\R^n)$ into $\Sc(G)$.
\end{itemize}

We consider also some additional properties of particular interest, such as:
\begin{itemize}
	\item[$(RL)$] if $\Kc(m)\in L^1(G)$, then we can take $m$ so as to belong to $C_0(\R^n)$;
	
	\item[$(S)$] if $\Kc(m)\in \Sc(G)$, then we can take $m$ so as to belong to $\Sc(\R^n)$.
\end{itemize}

In this paper, we shall investigate the validity of properties $(RL)$ and $(S)$ in two particular cases: that of a Rockland operator on an abelian group, and that of homogeneous sub-Laplacians and elements of the centre on an $MW^+$ group (cf.~Definition~\ref{def:3:1}).

Here is a plan of the following sections.
In Section~\ref{sec:2} we recall the basic definitions and notation, as well as some relevant results proved in~\cite{Calzi}.
In Section~\ref{sec:3}, we then consider abelian groups, and characterize the Rockland operators which satisfy property $(S)$ thereon.
In Section~\ref{sec:4} we prepare the machinery for the study of homogeneous sub-Laplacians and elements of the centre on $MW^+$ groups,  referring to~\cite{Calzi} for the proof of analogous statements when necessary. 
In contrast with the situation considered in~\cite{Calzi}, the structure of $MW^+$ groups will allow us to treat more than one homogeneous sub-Laplacian at a time.
In Sections~\ref{sec:5} and~\ref{sec:6}, then, we prove some sufficient conditions for properties $(RL)$ and $(S)$ in this context.

In Section~\ref{sec:7} we present a particularly elegant result where all the  good properties we consider are proved to be equivalent for the families which are invariant (in some sense) under the action of suitable groups of isometries.
In particular, this result covers the case of Heisenberg groups, thanks to the results of Section~\ref{sec:3}. 
Finally, in Section~\ref{sec:8} we consider products of Heisenberg groups and `decomposable' homogeneous sub-Laplacian thereon. In addition, we exhibit a Rockland family which is `functionally complete' (cf.~Definition~\ref{def:1:1}) but does not satisfy property $(S)$.

\section{Preliminaries}\label{sec:2}

In this section we recall some basic results and definitions from~\cite{Calzi}.  We shall then prove some useful results that were not considered therein.

\subsection{General Definitions and Notation}

As in~\cite{Calzi}, a Rockland family on a homogeneous group $G$ (cf.~\cite{FollandStein}) is a  jointly hypoelliptic, commutative, finite family $\Lc_A=(\Lc_\alpha)_{\alpha\in A}$ of formally self-adjoint, homogeneous, left-invariant differential operators without constant terms.
In this case, the $\Lc_\alpha$ are essentially self-adjoint on $C^\infty_c(G)$, and their closures commute. 
In addition, $\Lc_A$ is a weighted subcoercive system of operators (cf.~\cite[Proposition 3.6.3]{Martini}), so that the theory developed in~\cite{Martini} applies.

\begin{deff}\label{def:1}
To every (Borel, say) measurable function $m\colon \R^A\to \C$ such that $m(\Lc_A)$ is defined (at least) on $C^\infty_c(G)$, we associate a unique distribution $\Kc_{\Lc_A}(m)$ (its `kernel') on $G$ such that
\[
m(\Lc_A)(\phi)=\phi* \Kc_{\Lc_A}(m)
\]
for every $\phi \in C^\infty_c(G)$.
\end{deff}

We denote by $E_{\Lc_A}$ the space $\R^A$ endowed with the dilations defined by
\[
r\cdot (\lambda_\alpha)\coloneqq  (r^{\delta_\alpha} \lambda_\alpha)
\]
for every $r>0$ and for every $(\lambda_\alpha)\in \R^A$, where $\delta_\alpha$ is the homogeneous degree of $\Lc_\alpha$. 
We shall often employ the following short-hand notation: $L^1_{\Lc_A}(G)$ and $\Sc_{\Lc_A}(G)$ will denote $\Kc_{\Lc_A}(L^\infty(\beta))\cap L^1(G)$ and $\Kc_{\Lc_A}(L^\infty(\beta))\cap \Sc(G)$, respectively, while $\Sc(G,\Lc_A)$ will denote $\Kc_{\Lc_A}(\Sc(E_{\Lc_A}))$.

Now, by~\cite[Theorem 3.2.7]{Martini} there is a unique positive Radon measure  $\beta_{\Lc_A}$ on $E_{\Lc_A}$ such that a Borel function $m\colon E_{\Lc_A}\to \C$ is square-integrable if and only if $\Kc_{\Lc_A}(m)\in L^2(G)$ and such that, in this case,
\[
\norm{m}_{L^2(\beta_{\Lc_A})}=\norm{\Kc_{\Lc_A}(m)}_{L^2(G)}.
\]
The measure $\beta_{\Lc_A}$ is then equivalent to the spectral measure associated with $\Lc_A$.
Using the existence of $\beta_{\Lc_A}$ and the fact that $\Kc_{\Lc_A}$ maps $\Sc(E_{\Lc_A})$ in $\Sc(G)$, it is not hard to prove that a $\beta_{\Lc_A}$-measurable function admits a kernel in the sense of Definition~\ref{def:1} if and only if there is a positive polynomial $P$ on $E_{\Lc_A}$ such that $\frac{m}{1+P}\in L^2(\beta_{\Lc_A})$.

Now, $\Kc_{\Lc_A}$ can be extended to a continuous linear mapping from $L^1(\beta_{\Lc_A})$ into $C_0(G)$ (cf.~\cite[Proposition 3.2.12]{Martini}), and there is a unique $\chi_{\Lc_A}\in L^\infty(\beta_{\Lc_A}\otimes \nu_G)$, where $\nu_G$ denotes a fixed Haar measure on $G$, such that
\[
\Kc_{\Lc_A}(m)(g)=\int_{E_{\Lc_A}} m(\lambda)\chi_{\Lc_A}(\lambda,g)\,\dd \beta_{\Lc_A}(\lambda)
\]
for every $m\in L^1(\beta_{\Lc_A})$ and for almost every $g\in G$.\footnote{This is basically a consequence of the Dunford--Pettis theorem, cf.~\cite{Tolomeo}.}

Further, we denote by $\Mc_{\Lc_A}\colon L^1(G)\to L^\infty(G)$ the transpose of the mapping $m\mapsto \Kc_{\Lc_A}(m)\check{\;}$, so that
\[
\Mc_{\Lc_A}(f)(\lambda)= \int_{G} f(g) \overline{\chi_{\Lc_A}(\lambda,g)}\,\dd g
\]
for every $f\in L^1(G)$ and for $\beta_{\Lc_A}$-almost every $\lambda\in E_{\Lc_A}$. Observing that $\Mc_{\Lc_A}$ equals the adjoint of the isometry $\Kc_{\Lc_A}\colon L^2(\beta_{\Lc_A})\to L^2(G)$ on $L^1(G)\cap L^2(G)$, one  may then prove that $\Kc_{\Lc_A}\circ\Mc_{\Lc_A}$ is the identity on $L^1_{\Lc_A}(G)$.

\subsection{Products}

Assume that we have two Rockland families $\Lc_A$ and $\Lc'_{A'}$ on two homogeneous groups $G$ and $G'$, respectively. 
Denote by $\Lc''_{A''}$ the family whose elements are the operators on $G\times G'$ induced by the elements of $\Lc_A$ and $\Lc'_{A'}$, and observe that $\Lc''_{A''}$ is a Rockland family.

\begin{teo}[\cite{Calzi}, Theorems 4.5 and 4.10]\label{teo:4}\label{teo:5}
	The families $\Lc_A$ and $\Lc'_{A'}$ satisfy property $(RL)$ (resp.\ $(S)$) if and only if $\Lc''_{A''}$ does.
\end{teo}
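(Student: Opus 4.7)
The plan is to exploit the tensor-product structure of the kernel transform on $G\times G'$. First I would establish two preliminary facts: (i) the joint spectral measure decomposes as $\beta_{\Lc''_{A''}}=\beta_{\Lc_A}\otimes\beta_{\Lc'_{A'}}$, since $\Lc_A$ and $\Lc'_{A'}$ act on independent factors and thus have jointly strongly commuting spectral resolutions; and (ii) for bounded Borel multipliers $m_1,m_2$ on $E_{\Lc_A}$ and $E_{\Lc'_{A'}}$ respectively, $\Kc_{\Lc''_{A''}}(m_1\otimes m_2)=\Kc_{\Lc_A}(m_1)\otimes\Kc_{\Lc'_{A'}}(m_2)$ as distributions on $G\times G'$. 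Both statements follow by testing the defining identity of $\Kc$ against test functions $\phi\otimes\phi'$ with $\phi\in C^\infty_c(G)$, $\phi'\in C^\infty_c(G')$, and extending by density.

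For the necessity direction (if $\Lc''_{A''}$ has the property then so do the factors), I would fix once and for all a nonzero $\tilde m\in\Sc(E_{\Lc'_{A'}})$, so that $\Kc_{\Lc'_{A'}}(\tilde m)\in\Sc(G')$ and in particular $\{\tilde m\neq 0\}$ has positive $\beta_{\Lc'_{A'}}$-measure. Given any $m$ on $E_{\Lc_A}$ with $\Kc_{\Lc_A}(m)\in L^1(G)$ (resp.\ $\Sc(G)$), the kernel of the tensor product is $\Kc_{\Lc''_{A''}}(m\otimes\tilde m)=\Kc_{\Lc_A}(m)\otimes\Kc_{\Lc'_{A'}}(\tilde m)\in L^1(G\times G')$ (resp.\ $\Sc(G\times G')$). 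The hypothesis then provides some $F\in C_0(E_{\Lc''_{A''}})$ (resp.\ $F\in\Sc(E_{\Lc''_{A''}})$) with $F=m\otimes\tilde m$ $\beta_{\Lc''_{A''}}$-almost everywhere. A Fubini-style slicing argument then selects a specific $\lambda_0'$ with $\tilde m(\lambda_0')\neq 0$ and with $m(\lambda)\tilde m(\lambda_0')=F(\lambda,\lambda_0')$ for $\beta_{\Lc_A}$-a.e.\ $\lambda$, so that $\lambda\mapsto F(\lambda,\lambda_0')/\tilde m(\lambda_0')$ is the desired representative of $m$. The property for $\Lc'_{A'}$ follows by symmetry.

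For the sufficiency direction I would proceed similarly but in reverse, with the main difficulty lying in the passage from one-variable regularity to genuine joint regularity. Assume both factors satisfy $(RL)$ (resp.\ $(S)$) and let $K=\Kc_{\Lc''_{A''}}(m)\in L^1(G\times G')$ (resp.\ $\Sc(G\times G')$). Replacing $m$ by $\Mc_{\Lc''_{A''}}(K)$, I would use the factorisation $\chi_{\Lc''_{A''}}(\lambda,\lambda';g,g')=\chi_{\Lc_A}(\lambda,g)\chi_{\Lc'_{A'}}(\lambda',g')$ to apply $\Mc_{\Lc_A}$ partially in the $g$-variable: for $\nu_{G'}$-a.e.\ $g'$ the slice $K(\cdot,g')$ lies in $L^1(G)$, so $(RL)$ for $\Lc_A$ produces a $C_0$ representative in $\lambda$. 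A second application of the corresponding property for $\Lc'_{A'}$, after re-slicing in $g'$, should then yield a representative of $m$ in $C_0(E_{\Lc''_{A''}})$ (resp.\ $\Sc$). The hard part, and the main obstacle, is verifying that the two one-variable conclusions combine into a \emph{jointly} $C_0$ (or Schwartz) function and not merely a measurable function whose sections lie in $C_0$ in each variable separately; this will require uniform $L^\infty$ control of the kernels $\chi_{\Lc_A},\chi_{\Lc'_{A'}}$ and a dominated-convergence argument to justify exchanging the two integrations. An alternative route, potentially cleaner, is to multiply $m$ by tensor-product approximate identities $m_1\otimes m_2$ with $\Kc(m_j)$ smooth bump functions, reducing the problem to a limit of tensor-product multipliers while checking that the $C_0$ (resp.\ Schwartz) class survives the limit.
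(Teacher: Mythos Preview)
The paper does not prove this theorem; it is quoted verbatim from the companion paper \cite{Calzi} (Theorems 4.5 and 4.10 there), so there is no in-paper argument to compare against. I therefore comment only on the internal soundness of your proposal.

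Your necessity direction (from $\Lc''_{A''}$ to the factors) is essentially correct and is the natural argument: tensoring with a fixed Schwartz multiplier on the other factor and slicing at a good $\lambda'_0$ works, because the identity $\Kc_{\Lc''_{A''}}(m\otimes\tilde m)=\Kc_{\Lc_A}(m)\otimes\Kc_{\Lc'_{A'}}(\tilde m)$ together with $\beta_{\Lc''_{A''}}=\beta_{\Lc_A}\otimes\beta_{\Lc'_{A'}}$ makes the Fubini step legitimate.

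The sufficiency direction, however, is not a proof but a sketch that ends exactly at the real difficulty. You correctly isolate the obstacle: slice-wise membership in $C_0$ (or in $\Sc$) in each variable separately does not give joint membership. Your first proposed remedy, ``uniform $L^\infty$ control of $\chi_{\Lc_A},\chi_{\Lc'_{A'}}$ and dominated convergence,'' does not by itself produce continuity in $(\lambda,\lambda')$, since $\chi$ is only an $L^\infty$ class and you have no equicontinuity of the slices $\lambda\mapsto\Mc_{\Lc_A}(K_{\lambda'})(\lambda)$ as $\lambda'$ varies. Your second proposed remedy, approximate identities $m_1\otimes m_2$, is circular as stated: to pass to the limit in $C_0$ (or $\Sc$) you would need to know that $\Mc_{\Lc''_{A''}}$ maps $L^1_{\Lc''_{A''}}$ continuously into $C_0$ (resp.\ $\Sc_{\Lc''_{A''}}$ into $\Sc$), which is precisely property $(RL)$ (resp.\ $(S)$) for the product family. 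For $(S)$ the situation is worse still, since you must control \emph{all} Schwartz seminorms of the limit, and your outline gives no mechanism for that.

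In short: the easy half is fine; for the hard half you have located the crux but not supplied an argument that crosses it. A genuine proof of the sufficiency direction requires an additional idea---for $(RL)$, typically an explicit description of $\Mc_{\Lc''_{A''}}$ as an iterated transform together with a density or closed-graph argument showing that the composite lands in $C_0(E_{\Lc_A}\times E_{\Lc'_{A'}})$; for $(S)$, a more structural argument exploiting, e.g., the nuclearity of $\Sc$ or a careful induction on Schwartz seminorms. You should consult \cite{Calzi} for the actual mechanism.
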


\subsection{Composite Functions}

Assume that we are given a Rockland family $\Lc_A$ and a polynomial mapping $P$ on $E_{\Lc_A}$ such that $P(\Lc_A)$ is still a Rockland family (this is equivalent to saying that $P$ is proper and has homogeneous components \emph{with respect to the dilations of $E_{\Lc_A}$}). Then, for every bounded measurable multiplier $m$ we have $\Kc_{P(\Lc_A)}(m)=\Kc_{\Lc_A}(m\circ P)$. 
As a consequence, if we want to establish properties $(RL)$ or $(S)$ for $P(\Lc_A)$ on the base of our knowledge of $\Lc_A$, it is of importance to infer some properties of $m$ from the properties of $m\circ P$.
The results of this section address this problem.

We begin with a definition.

\begin{deff}
	Let $X$ be a locally compact space, $Y$ a set, $\mi$ a positive Radon measure on $X$, and  $\pi$ a  mapping from $X$ into $Y$.
	We say that two points $x,x'$ of $\Supp{\mi}$ are $(\mi,\pi)$-connected if $\pi(x)=\pi(x')$ and there are $x=x_1,\dots, x_k=x'\in \pi^{-1}(\pi(x))\cap \Supp{\mi}$ such that, for every $j=1,\dots, k$, for every neighbourhood $U_j$ of $x_j$ in $\Supp{\mi}$, and for every neighbourhood $U_{j+1}$ of $x_{j+1}$ in $\Supp{\mi}$, the set $ \pi^{-1}(\pi(U_j)\cap \pi(U_{j+1})) $ is not $\mi$-negligible.
	We say that $\mi$ is $\pi$-connected if every pair of elements of $ \Supp{\mi}$ having the same image under $\pi$ are $(\mi,\pi)$-connected.
\end{deff}

\begin{prop}\label{cor:A:6}\label{prop:A:8}
	Let $E_1,E_2$ be two finite-dimensional affine spaces, $L\colon E_1\to E_2$ an affine mapping and $\mi$ a positive Radon measure on $E_1$. Assume that the support of $\mi$ is either a convex set and that $L$ is proper on it, or that the support of $\mi$ is the boundary of a convex polyhedron on which $L$ is proper
	Then, $\mi$ is $L$-connected.
\end{prop}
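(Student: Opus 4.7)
My plan is to reduce the non-negligibility condition to a pushforward-measure argument combined with the classical open mapping theorem for convex sets. Let $K\coloneqq\Supp{\mi}$. For any measurable $A\subseteq E_2$, one has $\mi(L^{-1}(A))=L_*\mi(A)$. Since $L|_K$ is proper, $L(K)$ is closed in $E_2$, and the support of $L_*\mi$ equals $L(K)$ (indeed, any open $V\subseteq E_2$ meeting $L(K)$ has $L^{-1}(V)\cap K$ a nonempty relatively open subset of $K$, hence of positive $\mi$-mass). So, for each consecutive pair $(x_j,x_{j+1})$ of chain points lying in a common fiber and each pair of neighbourhoods $U_j,U_{j+1}$ of $x_j,x_{j+1}$ in $K$, it suffices to show that $L(U_j)\cap L(U_{j+1})$ contains a nonempty relatively open subset of $L(K)$. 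The tool to achieve this is the classical fact that an affine map restricted to a convex set is relatively open onto its image: for $C$ convex and $L$ affine, $L|_C\colon C\to L(C)$ sends relatively open subsets of $C$ to relatively open subsets of $L(C)$.

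\textbf{Case (a).} Take the direct chain $x_1=x$, $x_2=x'$. By convexity, $[x,x']\subseteq K$; since $L$ is affine, $[x,x']$ lies in the fiber $L^{-1}(L(x))$. Applying the open-mapping property with $C=K$, both $L(U_1)$ and $L(U_2)$ are relative neighbourhoods of $L(x)=L(x')$ in $L(K)$, hence so is $L(U_1)\cap L(U_2)$. By the reduction above, $\mi(L^{-1}(L(U_1)\cap L(U_2)))>0$.

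\textbf{Case (b).} Decompose $\partial P=\bigcup_i F_i$ into its facets (maximal proper faces). Each $F_i$ is a convex polyhedron; $L|_{F_i}$ is proper (restriction of a proper map to a closed subset); and $\mi|_{F_i}$ has support $F_i$, because the relative interior of $F_i$ is open in $\partial P$, so every nonempty relatively open subset of $\mathrm{ri}(F_i)$ carries positive $\mi$-mass. Hence case (a) applies inside each facet. The fiber decomposes as $K\cap L^{-1}(L(x))=\bigcup_i\bigl(F_i\cap L^{-1}(L(x))\bigr)$, a finite union of convex polyhedra glued along common sub-polyhedra; by the connectivity of the face lattice of $P$, one can build a chain $x=x_1,\dots,x_k=x'$ in the fiber so that each consecutive pair $x_j,x_{j+1}$ lies in a common facet $F_{i_j}$. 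For such a pair and neighbourhoods $U_j,U_{j+1}$ of $x_j,x_{j+1}$ in $K$, the intersections $U_j\cap F_{i_j}$ and $U_{j+1}\cap F_{i_j}$ are neighbourhoods in $F_{i_j}$, and applying case (a) inside $F_{i_j}$ produces a subset of $L^{-1}(L(U_j)\cap L(U_{j+1}))\cap F_{i_j}$ with positive $\mi|_{F_{i_j}}$-mass, which is positive $\mi$-mass a fortiori. The main obstacle is precisely this step: carrying out the chain construction via the face lattice and verifying that restrictions of $\mi$ and of neighbourhoods behave well at transitions across the lower-dimensional shared sub-faces; a secondary technical issue is giving a clean justification of the open-mapping property for convex sets (proved by handling relative-interior points directly and extending to relative-boundary points by a limit/approximation argument).
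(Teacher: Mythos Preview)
Your reduction to positive $L_*\mi$-mass of $L(U_j)\cap L(U_{j+1})$ is sound, but the open-mapping lemma you invoke in case~(a) is \emph{false}, even with $L|_K$ proper. Counterexample: $K=\{(a,b,c)\in\R^3: bc\Meg a^2,\ b\Meg 0,\ 0\meg c\meg 1\}$ (a slab cut from the rotated second-order cone; closed and convex) and $L(a,b,c)=(a,b)$. Then $L|_K$ is proper and $L(K)=\{b\Meg a^2\}$, but at $p=(0,0,0)$ every point of $K\cap B(p,\delta)$ has $c<\delta$, hence $b\Meg a^2/c>a^2/\delta$; so for $\delta<1$ the image $L(K\cap B(p,\delta))$ omits the boundary arc $\{(a,a^2):a\ne0\text{ small}\}$ and is not a neighbourhood of $0$ in $L(K)$. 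The repair is short: pick $q\in\operatorname{ri}(K)$ and small $t>0$; then $(1-t)x+tq$ and $(1-t)x'+tq$ lie in $U_1\cap\operatorname{ri}(K)$ and $U_2\cap\operatorname{ri}(K)$ respectively and have the \emph{same} $L$-image $(1-t)L(x)+tL(q)$, so small balls around them (inside $U_1$, $U_2$) map to open sets with a common centre. Thus $L(U_1)\cap L(U_2)$ contains a nonempty open subset of $\operatorname{ri}(L(K))$, which suffices.

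For case~(b), your facet-chain approach differs from the paper's, and the gap you flag is sharper than bookkeeping: the facet--ridge graph of $P$ is connected, but its ridges need not meet the fibre $L^{-1}(L(x))\cap\partial P$, so you cannot read off a chain there. What you need is connectedness of the fibre itself, which holds when $\dim\ker L\Meg 2$ (the fibre is then the relative boundary of the polytope $P\cap L^{-1}(L(x))$ of dimension $\Meg 2$) but \emph{fails} when $\dim\ker L=1$ and $L^{-1}(L(x))$ crosses $\operatorname{int}(P)$: the fibre is two points, typically on distinct facets (e.g.\ $P=[0,1]^3$, $L=\pr_{1,2}$, fibre over $(\tfrac12,\tfrac12)$), so no intermediate chain points exist and case~(a) within a common facet is unavailable. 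The paper handles~(b) by reducing to exactly this codimension-one situation---replace $L$ by any affine $L'$ with $\ker L'=\R(x-x')\subseteq\ker L$, so $(\mi,L')$-connectedness of $x,x'$ implies $(\mi,L)$-connectedness---and then splits $\partial P$ over $\operatorname{int}(L(P))$ into two graphs $\Gamma_\pm$, each mapped homeomorphically by $L$, treating the two-point fibre directly; over $\partial L(P)$, where the fibre is a segment, it does use case~(a) facet by facet as you propose.
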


\begin{proof}
	{\bf 0.} The assertion is~\cite[Proposition 6.3]{Calzi} when the support of $\mi$ is convex. Then, assume that the support of $\mi$ is the boundary of a convex polyhedron $C$, on which $L$ is proper.
	
	{\bf1.} Consider first the case in which $C$ is compact and has non-empty interior, $E_1=\R^{n}$, $E_2=\R^{n-1}$ and $L(x_1,\dots,x_n)=(x_1,\dots, x_{n-1})$ for every $(x_1,\dots, x_n)\in E_n$.
	Define $C'\coloneqq L(C)$, so that $C'$ is a compact convex polyhedron of $E_2$. 
	Now, the functions
	\[
	f_-\colon C'\ni x'\mapsto \min\Set{y\in \R\colon (x',y)\in C}
	\qquad \text{and}\qquad
	f_+\colon C'\ni x'\mapsto \max\Set{y\in \R\colon (x',y)\in C}
	\]
	are well-defined; in addition, $f_-$ is convex while $f_+$ is concave. 	
	Therefore, $f_-$ and $f_+$ are continuous on $\open {C'}$ by~\cite[Corollary to Proposition 21 of Chapter II, § 2, No.\ 10]{BourbakiTVS}.
	Now, observe that $f_-\meg f_+$; if $f_-(x')=f_+(x')$ for some $x'\in \open{C'}$, then $f_-=f_+$  on $C'$ by convexity, and this contradicts the assumption that $C$ has non-empty interior. 
	Therefore, $\Set{(x',y)\colon x'\in \open{C'}, f_-(x')<y<f_+(x')}$ is the interior of $C$, 
	so that $\partial C \cap (\open {C'} \times \R)$ is the union of the graphs $\Gamma_-$ and $\Gamma_+$ of the restrictions of $f_-$ and $f_+$ to $\open {C'}$.
	Since $L$ induces homeomorphisms of $\Gamma_-$ and $\Gamma_+$ onto $\open {C'}$, it follows that $(x',f_-(x'))$ and $(x', f_+(x'))$ are $(\mi,L)$-connected for every $x'\in \open {C'}$.
	
	Now, take $x'\in \partial C'$, and observe that $\Set{(x',y)\colon y\in [f_-(x'), f_+(x')]}\subseteq \partial C$.
	Take $y\in [f_-(x'), f_+(x')]$ and
	an $(n-1)$-dimensional facet $F$ of $C$ which contains $(x',y)$.
	Observe that the support of $\chi_F\cdot \mi$ is $F$. 
	Indeed, clearly $\Supp{\chi_F\cdot \mi}\subseteq F$. 
	Conversely, take $x$ in the relative interior of  $F$. Then, every sufficiently small open neighbourhood of $x$ intersects $\partial C$ only on $F$, so that it is clear that $x\in \Supp{\chi_F\cdot \mi}$. Since $F$ is the closure of its relative interior, the assertion follows.
	Then~{\bf0} implies that $(x',y),(x',y')$ are $(\mi,L)$-connected for every $y'\in \R$ such that $(x',y')\in F$.  
	Since $\partial C$ is the (finite) union of its $(n-1)$-dimensional facets, it follows that $(x,y), (x,y')$ are $(\mi,L)$-connected for every $y'\in  [f_-(x'), f_+(x')]$. 
	The assertion follows in this case.	
	
	{\bf2.} Now, consider the general case. 
	Observe first that we may assume that $C$ has non-empty interior.
	Then, take $y\in L(\partial C)$ and a closed cube $Q$ in $E_2$ which contains $y$ in its interior. 
	Then, $C \cap L^{-1}(Q)$ is a compact polyhedron; in addition, 
	\[
	\partial C\cap  L^{-1}(\open Q)=\partial [C\cap L^{-1}(Q)]\cap L^{-1}(\open Q).
	\]
	Hence, in order to prove that any two points of $L^{-1}(y)\cap \partial C$ are $(\mi,L)$-connected, we may assume that $C$ is compact.
	Now, take $x_1,x_2\in \partial C$ such that $x_1\neq x_2$ and $L(x_1)=L(x_2)$. 
	Let $L'$ be an affine mapping defined on $E_1$ such that $L'(x_1)=L'(x_2)$ and such that the fibres of $L'$ have dimension $1$. 
	Then, we may apply~{\bf1} above and deduce that $x_1,x_2$ are $(\mi,L')$-connected. 
	It is then easily seen that $x_1,x_2$ are also $(\mi,L)$-connected, whence the result.
\end{proof}

\begin{oss}
	Notice that Proposition~\ref{cor:A:6} is false when the support of $\mi$ is the boundary of a more general convex set (on which $L$ is proper).
	Indeed, choose $E_1=\R^3$, $E_2=\R^2$, $L=\pr_{1,2}$ and
	\[
	C_1\coloneqq\Set{(x,y,z)\in E_1\colon  2 y z\Meg x^2 , z\in [0,1], y\Meg 0}.
	\]
	Define $C$ as the union of $C_1$ and $\pi(C_1)$, where $\pi$ is the reflection along the plane $\pr_3^{-1}(1)$. 
	Then, $\partial C$ is the union of
	\[
	C'_1\coloneqq \Set{(x,y,z)\in E_1\colon  2 y z= x^2 , z\in [0,1], y\Meg 0}
	\]
	and $\pi(C'_1)$. 
	Choose any continuous function $m_1\colon C'_1\to \C$, and define $m\colon \partial C\to \C$ so that it equals $m_1$ on $C'_1$ and $m_1\circ \pi$ on $\pi(C'_1)$. 
	Then, $m$ is clearly continuous. In addition, it is clear that $C'_1$ intersects the fibres of $L$ at one point at most, except for $L^{-1}(0,0)$. Since $m$ can be chosen so that it is \emph{not} constant on $\Set{(0,0)}\times [0,2]$, Proposition~\ref{prop:A:7} below shows that $\chi_{\partial C}\cdot \Hc^2$ cannot be $L$-connected. 
\end{oss}

\begin{prop}[\cite{Calzi}, Proposition 6.2]\label{prop:A:7}
	Let $X,Y,Z$ be three locally compact spaces, $\pi\colon X\to Y$ a $\mi$-measurable mapping, and $\mi$ a  $\pi$-connected positive Radon measure on $X$.  
	Assume that $\pi_*(\mi)$ is a Radon measure and that there is a disintegration $(\lambda_y)_{y\in Y}$ of $\mi$ relative to $\pi$ such that $\Supp{\lambda_y}\supseteq\Supp{\mi} \cap \pi^{-1}(y)$ for $\pi_*(\mi)$-almost every $y\in Y$.
	
	Take a continuous mapping $m_0\colon X\to Z$ such that there is mapping $m_1\colon Y\to Z$ such that $m_0(x)= (m_1\circ \pi)(x)$ for $\mi$-almost every $x\in X$. 
	Then, there is a $\pi_*(\mi)$-measurable mapping $m_2\colon Y\to Z$ such that $m_0=m_2\circ \pi$ \emph{pointwise} on $\Supp{\mi}$.
\end{prop}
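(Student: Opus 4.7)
The plan is, in three steps, to first lift the almost-everywhere identity $m_0 = m_1\circ\pi$ to a pointwise one on $\pi^{-1}(Y_0)\cap\Supp{\mi}$ for some $\pi_*(\mi)$-conegligible $Y_0$, then to use $\pi$-connectedness to propagate the identity to every fibre of $\pi$ meeting $\Supp{\mi}$, and finally to observe that the resulting $m_2$ inherits $\pi_*(\mi)$-measurability from the continuity of $m_0$.

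First I would fix a $\mi$-negligible Borel set $N\subseteq X$ outside of which $m_0(x)=m_1(\pi(x))$. Using the disintegration $(\lambda_y)_{y\in Y}$, there is a $\pi_*(\mi)$-conegligible $Y_0\subseteq Y$ such that, for every $y\in Y_0$, one has $\lambda_y(N)=0$ and $\Supp{\lambda_y}\supseteq\Supp{\mi}\cap\pi^{-1}(y)$. Given $y\in Y_0$ and $x\in\Supp{\mi}\cap\pi^{-1}(y)$, the inclusion $x\in\Supp{\lambda_y}$ combined with $\lambda_y(N)=0$ forces every open neighbourhood of $x$ to meet $\pi^{-1}(y)\setminus N$, where $m_0$ equals $m_1(y)$; continuity of $m_0$ then yields $m_0(x)=m_1(y)$.

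Next I would upgrade this to a pointwise identity on every fibre $\Supp{\mi}\cap\pi^{-1}(y)$ by invoking the $\pi$-connected hypothesis. Given $x,x'$ in such a fibre and a $(\mi,\pi)$-connecting chain $x=x_1,\dots,x_k=x'$, suppose for contradiction that $m_0(x_j)\neq m_0(x_{j+1})$ for some $j$, and pick disjoint open neighbourhoods $V,V'$ of $m_0(x_j),m_0(x_{j+1})$ in the Hausdorff space $Z$. Setting $U_j\coloneqq m_0^{-1}(V)\cap\Supp{\mi}$ and $U_{j+1}\coloneqq m_0^{-1}(V')\cap\Supp{\mi}$ produces relative open neighbourhoods of $x_j,x_{j+1}$ in $\Supp{\mi}$. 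By $(\mi,\pi)$-connectedness, $\pi^{-1}(\pi(U_j)\cap\pi(U_{j+1}))$ is not $\mi$-negligible; combining this with the disintegration formula and the $\pi_*(\mi)$-negligibility of $Y\setminus Y_0$, one produces some $y'\in Y_0\cap\pi(U_j)\cap\pi(U_{j+1})$. Choosing $u_j\in U_j$ and $u_{j+1}\in U_{j+1}$ with $\pi(u_j)=\pi(u_{j+1})=y'$, the previous paragraph yields $m_0(u_j)=m_1(y')=m_0(u_{j+1})$, contradicting $m_0(u_j)\in V$, $m_0(u_{j+1})\in V'$, and $V\cap V'=\emptyset$.

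I would then unambiguously define $m_2(y)\coloneqq m_0(x)$ for any $x\in\Supp{\mi}\cap\pi^{-1}(y)$ when this fibre is nonempty, and arbitrarily elsewhere; by construction $m_0=m_2\circ\pi$ pointwise on $\Supp{\mi}$, and $m_2$ coincides with $m_1$ on $Y_0$, hence $\pi_*(\mi)$-almost everywhere. Measurability of $m_2$ would follow by observing that, for every open $V\subseteq Z$, the set $\pi^{-1}(m_2^{-1}(V))$ coincides with the Borel set $m_0^{-1}(V)$ on the $\mi$-conegligible set $\Supp{\mi}$; coupled with the Radon character of $\pi_*(\mi)$, this suffices to deduce $\pi_*(\mi)$-measurability of $m_2^{-1}(V)$. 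The main obstacle is the chain argument above, whose substance lies in the translation of the measure-theoretic hypothesis of $(\mi,\pi)$-connectedness into a topological separation statement in $Z$; the key device is to arrange that $y'$ can be chosen in $Y_0$, so that the pointwise identity of the second paragraph is applicable at $u_j$ and $u_{j+1}$ simultaneously.
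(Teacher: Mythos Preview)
The paper does not contain a proof of this proposition---it is cited from the companion paper~\cite{Calzi}---so there is no in-paper argument to compare against. Your proof is correct and follows the natural route: the disintegration step upgrading the almost-everywhere identity to a pointwise one on fibres over a conegligible $Y_0$, the Hausdorff-separation argument exploiting $(\mi,\pi)$-connectedness to propagate constancy of $m_0$ to every fibre in $\Supp{\mi}$, and the final measurability claim are all sound. For the last point, note that since $m_2\circ\pi$ agrees with the continuous map $m_0$ on the $\mi$-conegligible set $\Supp{\mi}$, it is $\mi$-measurable; the standard transfer principle for image Radon measures then yields $\pi_*(\mi)$-measurability of $m_2$.
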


Concerning the assumption on the disintegration, we shall often make use of a general result by Federer (cf.~\cite[Theorem 3.2.22]{Federer}), which basically provides the disintegration of a wide family of measures. We shall also derive Lemma~\ref{cor:6} from it.

\smallskip

For what concerns the composition of Schwartz functions, the techniques employed to prove~\cite[Theorem 6.1]{AstengoDiBlasioRicci2} can be effectively used to derive from~\cite[Theorem 0.2]{BierstoneMilman} and~\cite[Theorem 0.2.1]{BierstoneSchwarz} the following result:
 
\begin{teo}[\cite{Calzi}, Theorem 7.2]\label{teo:7}
	Let $P\colon \R^n\to \R^m$ be a polynomial mapping, and assume that $\R^n$ and $\R^m$ are endowed with dilations such that $P(r\cdot x)=r\cdot P(x)$ for every $r>0$ and for every $x\in \R^n$.
	Let $C$ be a dilation-invariant subanalytic closed subset of $\R^n$, and assume that $P$ is proper on $C$ and that $P(C)$ is Nash subanalytic.
	Then, the canonical mapping
	\[
	\Phi\colon\Sc(\R^m)\ni \phi\mapsto \phi\circ P\in \Sc_{\R^n}(C)
	\]
	has a closed range and admits a continuous linear section defined on $\Phi(\Sc(\R^m))$.
	In addition, $\psi\in \Sc_{\R^n}(C)$ belongs to the image of $\Phi$ if and only if it is a `formal composite' of $P$, that is,   for every $y\in \R^m$ there is $\phi_y\in \Ec(\R^m)$ such that, for every $x\in C\cap P^{-1}(y)$, the Taylor series of $\phi_y\circ P$ and $\psi$ at $x$ differ by the Taylor series of a function of class $C^\infty$ which vanishes on $C$.
\end{teo}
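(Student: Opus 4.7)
The plan is to reduce the statement to the $C^\infty$ composition theorems of Bierstone-Milman and Bierstone-Schwarz, and to transfer the local sections they provide to the Schwartz setting by a dyadic rescaling argument in the spirit of \cite[Theorem 6.1]{AstengoDiBlasioRicci2}. The starting point is that on any compact $K\subseteq\R^n$, \cite[Theorem 0.2]{BierstoneMilman} together with \cite[Theorem 0.2.1]{BierstoneSchwarz} yields the analogous statement for $\phi\mapsto \phi\circ P$ considered as a map into the space of smooth functions on $K\cap C$: the range is closed and coincides with the space of formal composites of $P$ restricted to $K\cap C$, and there is a continuous linear section. The hypotheses that $C$ is subanalytic and $P(C)$ is Nash subanalytic are exactly what is needed to invoke these results.

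Necessity of the formal-composite characterization is trivial (take $\phi_y=\phi$), so the task is to construct a continuous linear section from the space of formal composites in $\Sc_{\R^n}(C)$ into $\Sc(\R^m)$. Fix a smooth partition of unity $\sum_{k\in\Z} \eta_k=1$ on $\R^n\setminus\{0\}$ with $\eta_k(x)=\eta_0(2^{-k}\cdot x)$ supported in a fixed annulus with respect to the homogeneous gauge, together with a bump $\eta_{-\infty}$ near the origin. For a formal composite $\psi\in \Sc_{\R^n}(C)$, apply the Bierstone-Milman section to the rescaled piece $x\mapsto (\eta_k\psi)(2^k\cdot x)$, obtaining some $\tilde\phi_k\in C^\infty_c(\R^m)$; then, using $P(r\cdot x)=r\cdot P(x)$ and the dilation invariance of $C$, push forward by $2^k$ to produce $\phi_k$ with $\phi_k\circ P=\eta_k\psi$ on $C$. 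Summing the $\phi_k$ and adding the analogous near-origin contribution yields $\phi$ with $\phi\circ P=\psi$ on $C$.

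That $\phi\in \Sc(\R^m)$ with continuous dependence on $\psi$ follows from the standard dyadic characterization of $\Sc$: $\psi$ lies in $\Sc_{\R^n}(C)$ if and only if the rescaled pieces $x\mapsto (\eta_k\psi)(2^k\cdot x)$ lie in a bounded subset of $\Ec(K)$ for a fixed compact $K$ with all seminorms decaying faster than any power of $2^k$. Continuity of the local section then transfers this decay to the $\tilde\phi_k$, and the homogeneity of $P$ transfers it to the $\phi_k$; summation gives the required Schwartz estimates for $\phi$, together with closedness of the range.

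The main obstacle is arranging that the local section commutes with the homogeneous rescalings, so that a single section applies on every dyadic annulus with constants independent of $k$: this is what allows the dyadic decay of $\psi$ to translate cleanly to decay of $\phi$. The homogeneity of $P$ together with the dilation invariance of $C$ makes this possible, since the composition problem on the $k$-th annulus is, after rescaling, literally the same as the one on the standard annulus, and a single Bierstone-Milman section suffices. A secondary subtlety is gluing the near-origin contribution to the dyadic pieces in a $C^\infty$ manner, where the full strength of the formal-composite hypothesis is required.
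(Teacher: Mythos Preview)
Your proposal is correct and follows precisely the route the paper indicates: the paper does not prove this theorem here but cites it from the companion paper~\cite{Calzi}, remarking only that ``the techniques employed to prove~\cite[Theorem 6.1]{AstengoDiBlasioRicci2} can be effectively used to derive'' the result from~\cite[Theorem 0.2]{BierstoneMilman} and~\cite[Theorem 0.2.1]{BierstoneSchwarz}. Your outline---applying the Bierstone--Milman/Bierstone--Schwarz $C^\infty$ section on a fixed compact annulus, then transporting it across scales via a homogeneous dyadic partition of unity using the dilation invariance of $P$ and $C$, in the manner of Astengo--Di Blasio--Ricci---is exactly this strategy.
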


In the statement, we denoted by $\Sc_{\R^n}(C)$ the quotient of $\Sc(C)$ by the space of $f\in \Sc(\R^n)$ which vanish on the closed set $C$.
We refer the reader to~\cite{BierstoneMilman, BierstoneMilman2, BierstoneSchwarz} for the notion of (Nash) subanalytic sets; as a matter of fact, in the applications we shall only need to know that any convex subanalytic set is automatically Nash subanalytic, since it is contained in an affine space of the same dimension, and that semianalytic sets are Nash subanalytic (cf.~\cite[Proposition 2.3]{BierstoneMilman}).

\begin{cor}[\cite{Calzi}, Corollary 7.3]\label{cor:A:7}
	Let $V$ and $W$ be two finite-dimensional vector spaces,  $C$ a subanalytic closed convex cone in $V$, and $L$ a linear mapping of $V$ into  $W$ which is proper on $C$. 
	Take $m_1\in \Sc(V)$ and assume that there is $m_2\colon W\to \C$ such that $m_1=m_2\circ L$ on $C$. 
	Then, there is $m_3\in \Sc(W)$ such that $m_1=m_3\circ L$ on $C$.
\end{cor}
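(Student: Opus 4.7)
The plan is to apply Theorem~\ref{teo:7} with $P=L$, viewing the restriction $m_1|_C$ as an element of $\Sc_V(C)$. The hypotheses of Theorem~\ref{teo:7} are straightforward to verify: $L$ is linear, hence polynomial and homogeneous with respect to the standard isotropic dilations on $V$ and $W$; $C$ is a closed subanalytic cone, so dilation-invariant; $L$ is proper on $C$ by assumption; and $L(C)$ is convex (linear image of a convex set) and subanalytic (proper analytic image of a subanalytic set), hence Nash subanalytic by the remark following Theorem~\ref{teo:7}.

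The remaining task is to verify that $m_1$ is a ``formal composite'' of $L$. One may assume $C$ has non-empty interior in $V$, replacing $V$ by $\mathrm{span}(C)$ otherwise (this is a vector subspace since $C$ is a cone). Split $V=V_0\oplus V_1$ with $V_0=\ker L$, and set $F_y\coloneqq C\cap L^{-1}(y)$. The argument rests on two observations: \emph{(i)} for every $x\in C$ and every multi-index $\alpha$ with a non-zero $V_0$-component, $D^\alpha m_1(x)=0$, since at $x\in C^\circ$ the identity $m_1=m_2\circ L$ on $C$ forces $m_1$ to be locally constant along $V_0$-translations, so all such derivatives vanish on the open set $C^\circ$ and, by continuity of $D^\alpha m_1$, on $C=\overline{C^\circ}$; and \emph{(ii)} for each $y\in L(C)$ the value $D_{V_1}^{\alpha_1}m_1(x)$ is independent of $x\in F_y$, since any two points of $F_y$ are joined within $F_y$ by a segment whose direction lies in $V_0$, along which (i) forces the derivative of $D_{V_1}^{\alpha_1}m_1$ to vanish. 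Denoting this common value $b_{\alpha_1}(y)$, the isomorphism $L|_{V_1}\colon V_1\to L(V)$ turns this data into a formal power series $Q_y$ at $y\in L(V)$ (extend it trivially in the directions of a complement to $L(V)$ if $L$ is not surjective). Borel's theorem produces $\phi_y\in\Ec(W)$ with Taylor series $Q_y$ at $y$; for $y\notin L(C)$ set $\phi_y=0$. By construction, the Taylor series of $\phi_y\circ L$ at any $x\in F_y$ equals $L^\ast Q_y$, which by (i)--(ii) is precisely the Taylor series of $m_1$ at $x$; hence $m_1-\phi_y\circ L$ has vanishing Taylor series there, trivially equal to that of the zero function (which vanishes on $C$).

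Theorem~\ref{teo:7} then yields $\phi\in\Sc(W)$ with $m_1-\phi\circ L\in\Sc(V)$ vanishing on $C$, so $m_3\coloneqq\phi$ satisfies $m_1=m_3\circ L$ on $C$. The main obstacle is observation (i) at boundary points of $C$: a priori $m_2$ need not admit any smooth extension, so one cannot simply differentiate $m_1=m_2\circ L$ there. The convexity of $C$ is decisive here, as the identity $C=\overline{C^\circ}$ allows the vanishing statement to be established in the interior and transferred to the boundary by continuity of the derivatives of $m_1\in\Sc(V)$, without any a priori regularity of $m_2$.
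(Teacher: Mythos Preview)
Your proposal is correct. The paper does not reproduce a proof of this corollary, merely citing it from~\cite{Calzi}; since the corollary is stated there immediately after Theorem~\ref{teo:7} (also from~\cite{Calzi}), your strategy of deriving it from that theorem is exactly the intended one, and your verification of the formal-composite condition via the convexity argument (vanishing of $V_0$-derivatives on $C^\circ$ propagated to $\overline{C^\circ}=C$, then constancy along the convex fibres $F_y$) is clean and complete.
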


\subsection{Equivalence and Completeness}

Let us now add some definitions to those presented in~\cite{Calzi}.

\begin{deff}\label{def:1:1}
	We say that two Rockland families $\Lc_{A_1}$ and $\Lc_{A_2}$ are functionally equivalent if there are two Borel functions $m_1\colon E_{\Lc_{A_1}}\to E_{\Lc_{A_2}}$ and $m_2\colon E_{\Lc_{A_2}}\to E_{\Lc_{A_1}}$ such that $m_1(\Lc_{A_1})=\Lc_{A_2}$ and $m_2(\Lc_{A_2})=\Lc_{A_1}$.
	
	We shall say that a Rockland family $\Lc_A$ is functionally complete if every $\beta_{\Lc_A}$-measurable function $m\colon E_{\Lc_A}\to \C$ such that $m(\Lc_A)$ is a differential operator equals a polynomial $\beta_{\Lc_A}$-almost everywhere. 
\end{deff}

Notice that there exist Rockland families which are not functionally complete; for example, if $\Lc$ is a positive Rockland operator, then $(\Lc^2)$ is a Rockland family which is not functionally complete.
Further, observe that we cannot talk of a `completion' of $\Lc_A$ unless we know that the algebra of differential operators arising as functions of $\Lc_A$ is (algebraically) finitely generated.

The main point for considering functional completeness is the following result, which shows that property $(S)$ implies functional completeness; nevertheless, the converse fails in general (cf.~Proposition~\ref{prop:4}).

\begin{prop}\label{prop:12:1}
	Let $\Lc_A$ be a Rockland family on a homogeneous group $G$. If $\Lc_A$ satisfies property $(S)$, then it is functionally complete.
\end{prop}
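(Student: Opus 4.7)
The plan is to exploit property $(S)$ by multiplying $m$ by Schwartz cutoffs, and then to recover the polynomial structure via the homogeneity of $\Lc_A$. Assume $D := m(\Lc_A)$ is a left-invariant differential operator on $G$. First I would observe that for every $\psi \in \Sc(E_{\Lc_A})$, the kernel $\Kc_{\Lc_A}(\psi)$ lies in $\Sc(G)$, and since $D$ is a left-invariant differential operator it preserves $\Sc(G)$, so that $D\Kc_{\Lc_A}(\psi) \in \Sc(G)$. Via the joint functional calculus, $(m\psi)(\Lc_A) = D \circ \psi(\Lc_A)$, and unwinding this on $C^\infty_c(G)$ using the left-invariance of $D$ identifies $\Kc_{\Lc_A}(m\psi) = D\Kc_{\Lc_A}(\psi) \in \Sc(G)$.

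At this point property $(S)$ supplies some $\tilde m_\psi \in \Sc(E_{\Lc_A})$ with $\Kc_{\Lc_A}(\tilde m_\psi) = \Kc_{\Lc_A}(m\psi)$. Since the common kernel belongs to $\Sc(G) \subset L^2(G)$, both multipliers lie in $L^2(\beta_{\Lc_A})$, and the $L^2$-isometry of $\Kc_{\Lc_A}$ forces $m\psi = \tilde m_\psi$ $\beta_{\Lc_A}$-a.e. Thus multiplication by $m$ induces a linear map $\Sc(E_{\Lc_A}) \to \Sc(E_{\Lc_A})$ modulo $\beta_{\Lc_A}$-null functions; a standard closed-graph argument in Fréchet spaces then upgrades this to a continuous operation, and in particular $m$ coincides $\beta_{\Lc_A}$-a.e.\ with a smooth function on $\Supp \beta_{\Lc_A}$ whose derivatives grow at most polynomially.

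To upgrade this $\Oc_M$-regularity to an actual polynomial, I would use the homogeneity of $\Lc_A$. The operator $D$ decomposes into finitely many homogeneous components $D = \sum_{j=0}^{N} D_j$ under the dilations of $G$; averaging the identity $\delta_r^{*}D\,\delta_{r^{-1}}^{*} = (m \circ (r\cdot))(\Lc_A)$ over $r$ gives a matching decomposition $m = \sum_j m_j$ $\beta_{\Lc_A}$-a.e., where each $m_j$ is homogeneous of non-negative integer degree $d_j$ on $E_{\Lc_A}$ and still represents $D_j = m_j(\Lc_A)$. A smooth homogeneous function of non-negative integer degree coincides with its Taylor polynomial of that degree at the origin, so each $m_j$ (and therefore $m$) is a polynomial $\beta_{\Lc_A}$-a.e. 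The main obstacle is this last step: rigorously extracting the homogeneous decomposition of $m$ consistently with $\beta_{\Lc_A}$ and establishing the required smoothness at $0$ on $\Supp \beta_{\Lc_A}$, which is a closed dilation-invariant subanalytic subset of $E_{\Lc_A}$ containing the origin. The composite-function machinery recalled in Theorem~\ref{teo:7} and Corollary~\ref{cor:A:7}, together with the subanalytic structure of $\Supp \beta_{\Lc_A}$, should be what makes this rigorous.
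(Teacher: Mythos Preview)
Your strategy is the same as the paper's: multiply the unknown multiplier by a Schwartz function, invoke property $(S)$, then use homogeneity together with a Taylor expansion at the origin to force polynomiality. The execution, however, is more convoluted than necessary, and the ``main obstacle'' you identify is illusory.

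The paper simply picks one $\tau\in\Sc(E_{\Lc_A})$ with $\tau(\lambda)\neq 0$ for every $\lambda$. Then $\Kc_{\Lc_A}(\tau)*T\in\Sc(G)$, property $(S)$ produces $m_1\in\Sc(E_{\Lc_A})$ with that kernel, and one sets $m\coloneqq m_1/\tau$. This is a $C^\infty$ function \emph{on all of $E_{\Lc_A}$}, not merely on $\Supp{\beta_{\Lc_A}}$, and it represents the multiplier of $T$. Your closed-graph detour is unnecessary, and, more importantly, your conclusion that $m$ is smooth only on $\Supp{\beta_{\Lc_A}}$ is too pessimistic: once you have a single nowhere-vanishing $\psi$ (and you do), plain division already gives a globally smooth representative.

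With global smoothness in hand the endgame is elementary. For a homogeneous operator of degree $\delta$ (the general case reduces to this by the dilation/Vandermonde argument you sketch, which the paper signals via the citation of~\cite{RobbinSalamon}), the anisotropic Taylor formula of~\cite[Theorem~1.37]{FollandStein} gives
\[
m(\lambda)=\sum_{0\meg\delta'\meg\delta}P_{\delta'}(\lambda)+o(\abs{\lambda}^\delta),
\]
and substituting $r\cdot\lambda$ with $\lambda\in\sigma(\Lc_A)$ fixed and letting $r\to 0^+$ kills every $P_{\delta'}$ with $\delta'<\delta$ and forces $m=P_\delta$ on $\sigma(\Lc_A)$. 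No subanalytic geometry is required; Theorem~\ref{teo:7} and Corollary~\ref{cor:A:7} are about composite functions under polynomial maps and have no role here.
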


\begin{proof}
	Take a function of $\Lc_A$ which is a left-invariant differential operator of degree $\delta$, and let $T$ be its convolution kernel;
	assume that $\Lc_A$ satisfies property $(S)$. Take $\tau\in \Sc(E_{\Lc_A})$ such that $\tau(\lambda)\neq 0$ for every $\lambda\in E_{\Lc_A}$; then $\Kc_{\Lc_A}(\tau)*T\in \Sc(G)$, so that it has a multiplier $m_1\in \Sc(E_{\Lc_A})$. 
	If we define $m\coloneqq \frac{m_1}{\tau}$, then $m\in C^\infty(E_{\Lc_A})$ and $\Kc_{\Lc_A}(m)=T$.
	By means of~\cite[Theorem 1.37]{FollandStein}, we see that there are a family with finite support $(P_{\delta'})_{0\meg 0 \meg \delta}$ of homogeneous polynomials, where $P_{\delta'}$ has homogeneous degree $\delta'$ for every $\delta'\in [0,\delta]$, and a function $\omega$, such that
	\[
	m(\lambda)=\sum_{0\meg \delta'\meg \delta} P_{\delta'}(\lambda) +\omega(\lambda)
	\]
	for every $\lambda\in E_{\Lc_A}$, and such that
	\[
	\lim_{\lambda \to 0}\frac{\omega(\lambda)}{\abs{\lambda}^\delta}=0.
	\]
	Now, clearly $m(r \cdot \lambda)=r^\delta m(\lambda)$ for every $r>0$ and for every $\lambda\in \sigma(\Lc_A)$; fix a non-zero $\lambda\in \sigma(\Lc_A)$. Then,
	\[
	r^\delta m(\lambda)=m(r \cdot \lambda)=\sum_{0\meg\delta'\meg \delta}P_{\delta'}(r \cdot\lambda)+ \omega\left(r\cdot \lambda\right)= \sum_{0\meg\delta'\meg \delta} r^{\delta'} P_{\delta'}(\lambda)+ o\left(r^{\delta}\right)
	\]
	for $r\to 0^+$, so that $P_{\delta'}(\lambda)=0$ for every $\delta'\in [0,\delta[$ and $P_\delta(\lambda)=m(\lambda)$. 
	Therefore,  $m=P_\delta$ on $\sigma(\Lc_A)$, so that $m=P_\delta$ $\mi_{\Lc_A}$-almost everywhere. 
	By the arbitrariness of $T$, the assertion follows (cf.~\cite{RobbinSalamon}).
\end{proof}

\section{Abelian Groups}
\label{sec:3}

In this section, $G$  denotes a homogeneous \emph{abelian} group. In other words, $G$ is the euclidean space $\R^n$ endowed with dilations of the form $r\cdot x =(r^{\dd_1}x_1,\dots, r^{\dd_n}x_n)$ for $r>0$, $x=(x_1,\dots, x_n)\in \R^n$, and some fixed $\dd_1,\dots, \dd_n>0$.
Then,  $\partial=(\partial_1,\dots, \partial_n)$ is a homogeneous basis of the Lie algebra of $G$.
We shall consequently put a scalar product and the associated Hausdorff measures on $G$, and identify the Fourier transform $\Fc$ with a mapping from $\Sc'(G)$ onto $\Sc'(E_{-i\partial})$.

\begin{prop}\label{prop:6}
	Let $P$ be a polynomial mapping with homogeneous components from $E_{-i\partial}$ into $\R^\Gamma$ for some finite set $\Gamma$. Then, $\Lc_A=P(-i\partial)$ is a Rockland family if and only if $P$ is proper.
	In this case, the following hold:
	\begin{enumerate}
		\item $\sigma(\Lc_A)=P(E_{-i\partial})$;
		
		\item a $\beta_{\Lc_A}$-measurable function $m$ admits a kernel in the sense of Definition~\ref{def:1} if and only if $m\circ P$ is a polynomial times an element of $L^2(E_{-i\partial})$;
		in this case,
		\[
		\Kc_{\Lc_A}(m)=\Fc^{-1}(m\circ P).
		\]
	\end{enumerate}
\end{prop}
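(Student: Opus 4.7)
The plan is to move everything to the Fourier side, where all the operators involved become multiplication and the analysis reduces to polynomial estimates. The Fourier transform $\Fc\colon L^2(G)\to L^2(E_{-i\partial})$ simultaneously diagonalises the commuting family $(-i\partial_j)$, each $-i\partial_j$ becoming multiplication by $\xi_j$. Thus $\Lc_\alpha=P_\alpha(-i\partial)$ becomes multiplication by $P_\alpha(\xi)$ and, by functional calculus, $m(\Lc_A)$ is the Fourier multiplier with symbol $m\circ P$; this will give $\Kc_{\Lc_A}(m)=\Fc^{-1}(m\circ P)$ whenever the right-hand side is a tempered distribution. Moreover, Plancherel combined with the defining property $\norm{\Kc_{\Lc_A}(m)}_{L^2(G)}=\norm{m}_{L^2(\beta_{\Lc_A})}$ will identify $\beta_{\Lc_A}$ (up to Haar normalisation) with the push-forward $P_*(\mathrm{Leb}_{E_{-i\partial}})$, which is the key bridge between conditions on $m$ and on $m\circ P$.

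\textbf{Rockland $\Leftrightarrow$ $P$ proper, and assertion (1).} First, I would observe that the family $P(-i\partial)$ is automatically commutative, formally self-adjoint, left-invariant, homogeneous and without constant terms (each $P_\alpha$ being homogeneous of positive degree forces $P(0)=0$), so that being Rockland reduces to joint hypoellipticity. For this commutative Euclidean family, joint hypoellipticity is equivalent to the hypoellipticity of the single homogeneous operator $\Lc_0\coloneqq\sum_\alpha P_\alpha(-i\partial)^{2k_\alpha}$, with $k_\alpha$ chosen so that all summands share a common homogeneous degree; indeed, if $\Lc_0$ is hypoelliptic, then $\Lc_\alpha u\in C^\infty$ for every $\alpha$ forces $\Lc_0 u\in C^\infty$ and hence $u\in C^\infty$. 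By H\"ormander's characterisation, $\Lc_0$ is hypoelliptic iff its non-negative symbol $\sum_\alpha P_\alpha(\xi)^{2k_\alpha}$ vanishes only at the origin, i.e.\ iff $P^{-1}(0)=\{0\}$. Since $P$ has homogeneous components of positive degrees, $P^{-1}(0)$ is a closed cone, which equals $\{0\}$ iff $P$ is proper: if $P(\xi_0)=0$ with $\xi_0\neq 0$, the unbounded ray $\Set{r\cdot \xi_0\colon r>0}$ lies in $P^{-1}(0)$, violating properness; conversely, $P^{-1}(0)=\{0\}$ together with compactness of the homogeneous unit sphere yields $|P(\xi)|\Meg c\,|\xi|_{\mathrm{hom}}^{\delta_{\min}}$ for $|\xi|_{\mathrm{hom}}\Meg 1$, where $\delta_{\min}=\min_\alpha\delta_\alpha$, so $|P(\xi)|\to\infty$. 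Assertion (1) will then follow from the spectral mapping theorem for commuting self-adjoint operators: $\sigma(\Lc_A)=P(\sigma(-i\partial))=P(E_{-i\partial})$, which is closed because $P$ is proper.

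\textbf{Assertion (2).} The forward implication will be immediate from the identification $\beta_{\Lc_A}=P_*(\mathrm{Leb})$: if $m/(1+Q)\in L^2(\beta_{\Lc_A})$ for some positive polynomial $Q$ on $\R^\Gamma$, then $(m\circ P)/(1+Q\circ P)\in L^2(E_{-i\partial})$, so $m\circ P$ equals the polynomial $1+Q\circ P$ times an $L^2$ function. The converse is the main obstacle. Given $m\circ P=q\cdot g$ with $q$ a polynomial on $E_{-i\partial}$ and $g\in L^2(E_{-i\partial})$, I would need to produce a polynomial $Q$ on $\R^\Gamma$ with $|q(\xi)|\meg C(1+|Q(P(\xi))|)$ for every $\xi$; this pointwise domination will yield $(m\circ P)/(1+Q\circ P)\in L^2$, whence $m/(1+Q)\in L^2(\beta_{\Lc_A})$ by the push-forward identification. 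Properness of $P$ is decisive here: the estimate of the preceding paragraph inverts to $|\xi|_{\mathrm{hom}}\meg C'(1+|P(\xi)|)^{1/\delta_{\min}}$, so any polynomial $q$ on $E_{-i\partial}$ is controlled by a polynomial expression in $|P(\xi)|$, and it suffices to take $Q(\lambda)=(1+|\lambda|^2)^M$ on $\R^\Gamma$ for $M$ large enough. Once this polynomial-domination step is settled, the formula $\Kc_{\Lc_A}(m)=\Fc^{-1}(m\circ P)$ follows by computing both sides for any such $m$.
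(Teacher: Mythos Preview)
Your proposal is correct and follows the same Fourier-transform route as the paper, which disposes of the whole proposition in one sentence by invoking that $-i\partial$ is itself a Rockland family with $\sigma(-i\partial)=E_{-i\partial}$ and then appealing to the general facts recorded in Section~\ref{sec:2} (in particular the remark in \S2.3 that $P(\Lc_A)$ is Rockland iff $P$ is proper with homogeneous components, and the general description of admissible $m$ via $\frac{m}{1+P}\in L^2(\beta)$). You have unpacked these references into explicit estimates, which is fine.

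One small point: you assert that joint hypoellipticity of the family is \emph{equivalent} to hypoellipticity of $\Lc_0=\sum_\alpha P_\alpha(-i\partial)^{2k_\alpha}$, but you only justify the implication $\Lc_0$ hypoelliptic $\Rightarrow$ family jointly hypoelliptic. For the converse you actually need the direct statement ``family Rockland $\Rightarrow P^{-1}(0)=\{0\}$'', which does not follow from the sentence you wrote. The fix is immediate: if $P(\xi_0)=0$ for some $\xi_0\neq 0$, the whole dilation ray through $\xi_0$ lies in $P^{-1}(0)$; choosing any tempered $u$ with $\widehat u$ a non-rapidly-decreasing measure on that ray gives $\Lc_\alpha u=0$ for all $\alpha$ while $u\notin C^\infty$, so the family fails joint hypoellipticity. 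With that line added, your argument is complete.
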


\begin{proof}
	Since $\sigma(-i\partial)=E_{-i\partial}$ and $-i\partial$ is Rockland, the assertions follow easily
	by the properties of the Fourier transform.
\end{proof}

By means of~\cite[Theorem 3.2.22]{Federer}, one may obtain some relatively explicit formulae for $\beta_{\Lc_A}$ and $\chi_{\Lc_A}$.

\smallskip

In the following result, we give complete answers to our main questions in the case of \emph{one} operator.

\begin{teo}\label{teo:9:3}
	Let $\Lc$ be  a positive Rockland operator on  $G$.\footnote{Notice that $\Lc=P(-i\partial)$ where $P$ is a proper polynomial; unless $G=\R$, in which case our analysis is trivial, $P$ must have a constant sign, so that we may assume that $\Lc$ is positive without loss of generality.} Then, $\chi_\Lc$ has a continuous representative which is of class $C^\infty$ on $\R_+^*\times G$; in particular, property $(RL)$ holds.
	
	In addition, take $m\in C_b(\beta_\Lc)$, and let $k$ be the greatest $k'\in \N^*$ such that $P^{\frac{1}{k'}}$ is a polynomial. Then, the following conditions are equivalent:
	\begin{enumerate}
		\item $\Kc_{\Lc}(m)\in \Sc(G)$;
		
		\item there are $m_0,\dots, m_{k-1}\in\Sc(\R)$ such that $m(\lambda)=\sum_{h=0}^{k-1} \lambda^{\frac{h}{k}} m_h(\lambda)$ for every $\lambda\Meg 0$.
	\end{enumerate}
	In particular, property $(S)$ holds if and only if $k=1$.
\end{teo}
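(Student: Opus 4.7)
The first assertion on $\chi_\Lc$ follows from Federer's disintegration of Lebesgue measure along $P$. Since $\Lc=P(-i\partial)$ is positive and Rockland, $P$ is a proper homogeneous polynomial with $P>0$ on $\R^n\setminus\{0\}$, and Euler's identity forces $\nabla P\neq 0$ on $\{P>0\}$; thus $P^{-1}(\lambda)$ is a smooth compact hypersurface for every $\lambda>0$. Combining the formula $\Kc_\Lc(m)=\Fc^{-1}(m\circ P)$ of Proposition~\ref{prop:6} with Federer's disintegration exhibits $\chi_\Lc(\lambda,g)$ on $\R_+^*\times G$ as a parameterized integral over $P^{-1}(\lambda)$, manifestly $C^\infty$ in $(\lambda,g)$. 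For property $(RL)$: if $\Kc_\Lc(m)\in L^1(G)$, then $m\circ P\in C_0(\R^n)$ by Riemann--Lebesgue, and Propositions~\ref{cor:A:6} and~\ref{prop:A:7} (applied to Lebesgue measure on $\R^n$, on which $P$ is proper), combined with the continuity of $m\circ P$ and the properness of $P$, yield a continuous representative of $m$ on $[0,\infty)$ that vanishes at infinity.

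For the equivalence of (1) and (2), let $Q\coloneqq P^{1/k}$, which is by hypothesis a polynomial, necessarily nonnegative on $\R^n$, of homogeneous degree $\rho\coloneqq \delta/k$ (where $\delta$ is the homogeneous degree of $P$). Set $\tilde m(q)\coloneqq m(q^k)$ for $q\Meg 0$, so that $f\coloneqq m\circ P=\tilde m\circ Q$. The direction (2)$\Rightarrow$(1) is direct: since $Q\Meg 0$, one has $P^{h/k}=Q^h$, whence
\[
m\circ P=\sum_{h=0}^{k-1}Q^h\cdot(m_h\circ P)\in\Sc(\R^n)
\]
(each summand being a polynomial times a Schwartz function of the proper polynomial $P$), and Fourier inversion gives $\Kc_\Lc(m)\in\Sc(G)$.

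For (1)$\Rightarrow$(2), the main step is to show that $\tilde m$ extends to a Schwartz function on $\R$. I would apply Theorem~\ref{teo:7} to $Q\colon \R^n\to[0,\infty)$ with $C=\R^n$ (a subanalytic convex cone on which $Q$ is proper, and $Q(\R^n)=[0,\infty)$ is semianalytic, hence Nash subanalytic). The only nontrivial point is the formal composite condition at $\xi=0$. Using $f(r\cdot \xi)=\tilde m(r^\rho Q(\xi))$, the function $r\mapsto f(r\cdot \xi)$ depends only on $Q(\xi)$, so its Taylor coefficients in $r$---which are the restrictions to $Q^{-1}(1)$ of the $d$-homogeneous components $f_d$ of the Taylor series of $f$ at $0$---are constant on $Q^{-1}(1)$. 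A scaling argument then forces $f_d\equiv 0$ unless $\rho\mid d$ and $f_d=a_j\,Q^j$ whenever $d=j\rho$, so that the Taylor series of $f$ at $0$ is the formal power series $\sum_j a_j Q^j$. Borel's theorem supplies a $\phi_0\in \Ec(\R)$ with these Taylor coefficients, completing the formal composite condition; Theorem~\ref{teo:7} then yields $\phi\in\Sc(\R)$ with $f=\phi\circ Q$, whence $\tilde m=\phi|_{[0,\infty)}$ is the restriction of a Schwartz function on $\R$.

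To conclude, I would decompose $\tilde m\in\Sc(\R)$ in the form $\tilde m(q)=\sum_{h=0}^{k-1}q^h m_h(q^k)$ on $[0,\infty)$: Borel's theorem and a bump-function truncation produce $m_h\in C^\infty_c(\R)$ whose Taylor series at $0$ realize the residue-$h$-mod-$k$ part of the Taylor series of $\tilde m$, so that $E\coloneqq \tilde m-\sum_h q^h m_h(q^k)$ is Schwartz and flat at $0$; the function $N(\lambda)\coloneqq E(\lambda^{1/k})$ (extended by zero for $\lambda<0$) is then Schwartz on $\R$, and absorbing $N$ into $m_0$ followed by the substitution $\lambda=q^k$ yields~(2). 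For the last assertion, $(S)$ holds trivially when $k=1$ (since then (2) reduces to $m=m_0\in\Sc(\R)$); when $k>1$, the function $m(\lambda)\coloneqq\lambda^{1/k}\psi(\lambda)$ with $\psi\in\Sc(\R)$ and $\psi(0)\neq 0$ satisfies $\Kc_\Lc(m)\in\Sc(G)$ by (2)$\Rightarrow$(1) but is not of class $C^1$ at $0$, so $(S)$ fails. The main obstacle is the identification of the Taylor series of $f$ at $0$ as a formal power series in $Q$.
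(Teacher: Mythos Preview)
Your approach differs genuinely from the paper's. The paper proves the implication $(1)\Rightarrow(2)$ by establishing Lemma~\ref{lem:9:2} directly: given $f=\tilde m\circ Q\in C^\infty(\R^n)$ with $Q$ not a nontrivial power of a positive polynomial, it shows by a hands-on argument (restricting to a coordinate axis, analysing the asymptotic expansion of $\tilde m$ at $0$, and using Fa\`a di Bruno's formula together with unique factorisation) that $\tilde m$ extends to $C^\infty(\R)$. You instead invoke Theorem~\ref{teo:7} and reduce everything to checking the formal-composite condition for $Q$ at the single point $0$.

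The gap in your argument is precisely at that reduction. Your ``scaling argument'' shows correctly that each homogeneous component $f_d$ is constant on $Q^{-1}(1)$, hence $f_d=c_d\,Q^{d/\rho}$ on $\R^n\setminus\{0\}$; but the conclusion that $c_d\neq 0$ forces $d/\rho\in\N$ is \emph{not} a matter of scaling. It amounts to showing that if $Q^{s}$ is a polynomial for some $s>0$ then $s\in\N$, and this is exactly where the maximality of $k$ must enter. One can complete it (restrict to a coordinate axis to see $s\in\Q$, then use unique factorisation in $\R[x_1,\dots,x_n]$ together with the connectedness of $\R^n\setminus\{0\}$ to conclude that $s\notin\N$ would make $Q$ a power of a nonnegative polynomial, contradicting the choice of $k$), but this argument is of the same order of difficulty as the paper's Lemma~\ref{lem:9:2}. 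In short, Theorem~\ref{teo:7} does not let you bypass the hard step; it relocates it to the formal-composite verification at $0$, which you have only asserted.

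A minor point: Propositions~\ref{cor:A:6} and~\ref{prop:A:7} concern \emph{affine} maps and do not apply to $P$. For $(RL)$ you do not need them anyway: $P\colon\R^n\to[0,\infty)$ is a proper continuous surjection, hence a closed (and therefore quotient) map, so continuity of $m\circ P$ already forces $m$ to be continuous on $[0,\infty)$, and properness of $P$ transfers vanishing at infinity. The rest of your outline (the direction $(2)\Rightarrow(1)$, the Borel-type decomposition $\tilde m(q)=\sum_{h=0}^{k-1}q^h m_h(q^k)$, and the counterexample for $k\geq 2$) is correct and matches the paper.
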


Before we pass to the proof of the preceding result, we need to establish a lemma.

\begin{lem}\label{lem:9:2}
	Let $A$ be a non-empty finite set and endow $\R^A$ with a family of (not-necessarily isotropic) dilations. Take a positive, non-constant, homogeneous polynomial $P $ in $\R[A]$ and assume that  there is a homogeneous element $x$ of $\R^A$ such that $P(x)\neq 0$. Then, the following statements are equivalent:
	\begin{enumerate}
		\item there are no positive homogeneous polynomials $Q\in \R[A]$ and no $k\in \N$ such that $k\Meg 2$ and $P=Q^k$;
		
		\item if $m$ is a complex-valued function defined on $\R_+$ such that $m\circ P$ is $C^\infty$ on $\R^A$, then  $m$ may be extended to an element of $C^\infty(\R)$.
	\end{enumerate}
\end{lem}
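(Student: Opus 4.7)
The plan here is the contrapositive: if $P = Q^k$ with $Q$ a positive homogeneous polynomial and $k \Meg 2$, then I would take $m(u) := u^{1/k}$. The composition $m \circ P = Q$ is then a polynomial, hence $C^\infty$, while $m$ cannot be extended to $C^\infty(\R)$ because $m'(u) = u^{1/k-1}/k$ blows up as $u \to 0^+$.

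\textbf{Direction $(1) \Rightarrow (2)$:} Write $\Phi := m \circ P$ and let $d$ denote the dilation-degree of $P$. My plan is to read off a formal Taylor expansion for $m$ at $0$ from the Taylor expansion of $\Phi$ at $0$. I would decompose the latter as $\Phi \sim \sum_D \Phi_D$, where $\Phi_D$ is the dilation-homogeneous component of degree $D$, and then show that, under hypothesis $(1)$, $\Phi_D = c_D P^{D/d}$ with $c_D = 0$ unless $D/d \in \N$. Once this is established, Borel's lemma yields $\tilde m \in C^\infty(\R)$ with Taylor series $\sum_k c_{kd} s^k$ at $0$, and a routine flatness argument along the curve $t \mapsto tx$ (where $x$ is a homogeneous element with $P(x) > 0$) allows one to conclude that $m - \tilde m$ is flat at $0$ from the right, so that $m$ extends smoothly to $\R$.

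The identification of $\Phi_D$ would proceed as follows. For $y$ with $P(y) > 0$, evaluating $\Phi$ along the scalar curve $t \mapsto tx$ (with $x$ homogeneous, so that $P(tx) = P(x)t^\alpha$ with $\alpha := d/c_x \in \N$) or along the dilate curve $\lambda \mapsto \lambda \cdot y$ gives $\Phi(\lambda \cdot y) = m(\lambda^d P(y))$; matching the asymptotic expansion $\sum_D \lambda^D \Phi_D(y)$ in $\lambda \to 0^+$ against the fact that $m(\lambda^d P(y))$ depends on $y$ only through $P(y)$ forces $\Phi_D(y)$ to be a function of $P(y)$ alone, and homogeneity then yields $\Phi_D(y) = c_D P(y)^{D/d}$ on $\{P > 0\}$, hence by continuity on $\R^A$, for some constants $c_D \in \C$.

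The main obstacle will be the key step of showing $c_D = 0$ for $D/d \notin \Z$, which is the only place $(1)$ enters. Supposing $c_D \neq 0$, then $P^{D/d}$ is a polynomial; the case $D/d$ irrational is readily excluded, and for $D/d = p/q$ in lowest terms with $q \Meg 2$ I would compare the identity $(\Phi_D/c_D)^q = P^p$ with the irreducible factorization $P = \prod_i P_i^{e_i}$ to deduce $q \mid e_i$ for all $i$, whence $P = S^q$ for the polynomial $S := \prod_i P_i^{e_i/q}$, and $\Phi_D/c_D = |S|^p$ on $\R^A$. A case analysis on the parities of $p$ and $q$ (with the delicate sub-case being $p$ odd and $q$ even, where $S$ could a priori be sign-changing) then shows that polynomiality of $|S|^p$ forces $S \Meg 0$ (up to a global sign change when $q$ is even), yielding $P = S^q$ with $S$ positive and $q \Meg 2$, contradicting $(1)$.
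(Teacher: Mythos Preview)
Your proposal is correct and takes a genuinely different route from the paper's proof, while both share the same easy direction $(2)\Rightarrow(1)$ via $m(u)=u^{1/k}$.

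For $(1)\Rightarrow(2)$, the paper works along the single scalar line $t\mapsto tx$ (with $x$ the homogeneous element), so that $m(t^{d_x})$ is smooth; if the resulting asymptotic expansion of $m$ contains a fractional power $t^{j_x/d_x}$, the paper subtracts a truncated expansion, observes that the remainder lies in $C^{j_x}$, and then performs a rather delicate computation using Fa\`a di Bruno's formula to show that $\partial_x^{j_x}P^{j_x/d_x}$ is continuous and homogeneous of degree~$0$, hence constant. From this it extracts that $P^{1-r_x/d_x}$ is a rational function, and unique factorization then yields $P=Q^{d'}$ with $d'\Meg 2$.

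Your argument bypasses the Fa\`a di Bruno step entirely by exploiting the full dilation structure rather than just the line through~$x$: comparing the Taylor expansion of $\Phi=m\circ P$ along the curve $\lambda\mapsto\lambda\cdot y$ with $m(\lambda^d P(y))$ forces each dilation-homogeneous component $\Phi_D$ to equal $c_D P^{D/d}$ directly, so that non-vanishing of a fractional coefficient immediately makes $P^{D/d}$ a polynomial, and unique factorization does the rest. The remaining steps (excluding irrational $D/d$ via the line through $x$, and the flatness/Borel argument to finish) are routine. Your approach is conceptually cleaner and shorter; the paper's approach, by contrast, extracts the information from a single one-parameter family and therefore uses the homogeneity hypothesis less heavily, at the cost of the Fa\`a di Bruno computation. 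Both ultimately rely on the same unique-factorization endgame.
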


\begin{proof}
	{\bf1 $\implies$ 2.}  Take $m\colon \R_+\to \C$ and assume that $m\circ P$ is $C^\infty$ on $\R^A$. 
	Notice that there is a homogeneous polynomial $P_x\in \R[X]$ such that $P(\lambda x)=P_x(\lambda)$ for every $\lambda\in \R$.\footnote{Notice that $\lambda x$ denotes the scalar multiplication of $x$ by $ \lambda$, \emph{not} the dilate $\lambda \cdot x$ of $x$ by $\lambda$, which is meaningful only for $\lambda>0$.}  
	In particular, $m\circ P_x$ is of class $C^\infty$. 
	In addition,  $P_x(X)= a_x X^{d_x}$ for some $a_x\neq 0$ and $d_x\in\N^*$; we may assume that $a_x=1$.
	It is then clear that $m$ is $C^\infty$ on $\R_+^*$; further, $m\circ P_{x}$  admits a Taylor series $\sum_{j\in\N} \widetilde a_j X^{ j}$ at $0$. 
	Therefore, $m$ admits the asymptotic development $\sum_{j\in\N} a_{x,j} \lambda^{\frac{j}{d_x}}$ for $\lambda\to 0^+$.  
	Suppose that there are some $j\in\N \setminus \left(d_x \N  \right)$ such that $a_{x,j}\neq 0$, and let $j_{x}$ be the least of them.
	Let $q_{x}, r_{x}$ be the quotient and the remainder, respectively, of the division of $j_{x}$ by $d_{x}$.

	Define $\widetilde m\coloneqq m-\sum_{j=0 }^{j_{x} d_{x}} a_{x,j} (\,\cdot\,)^{\frac{j}{d_{x}}}$. Then, $\widetilde m \circ P_{x}$ is $C^\infty$ and $(\widetilde m\circ P_{x})(\lambda) =o\left( \abs{\lambda}^{j_{x} d_{x} }   \right)$.\footnote{Here, $\abs{\lambda}$ denotes the usual absolute value of $\lambda\in \R$.} Hence, it is not hard to see that $\widetilde m$ may be extended to an element of $C^{j_{x}}(\R)$. Let us then prove that
	\[
	\begin{split}
	\partial_{x}^{j_{x}} P^{\frac{j_{x}}{d_{x}}  } &= \partial_{x}^{j_{x}} (m\circ P) - \sum_{  j=0}^{ q_{x}  }     a_{x,  d_{x} j}  \partial_{x}^{j_{x}} P^j  -\sum_{j=j_{x}+1 }^{ j_{x} d_{x}  }     a_{x,  j}  \partial_{x}^{j_{x}} P^{\frac{j}{d_{x}}} - \partial_{x}^{j_{x}}  (\widetilde m\circ P)
	\end{split}
	\]
	extends to a continuous function on $E\coloneqq \Set{x'\in \R^A\colon P(x')\neq 0}\cup \Set{0}$. Indeed, this is clear for the first two terms, and follows from the above remarks for the fourth one. Let us then consider the third term. Notice that both $\partial_{x}$ and $P$ are homogeneous, and that $\partial_{x}^{j_{x}} P^{\frac{j_{x}}{d_{x}}  }$ is homogeneous of degree $0$ on the  $x$ axis, hence on $\R^A$. 
	Hence, $\partial_{x}^{j_{x}} $ must be homogeneous of degree $d\frac{j_{x}}{d_{x}}  $, where $d$ is the homogeneous degree of $P$. Then, $\partial_{x}^{j_{x}} P^{\frac{j}{d_{x}}} $ is homogeneous of degree $d\frac{j-j_{x}}{d_{x}}>0$, so that it may be extended by continuity at~$0$.

	Therefore, $\partial_{x}^{j_{x}} P^{\frac{j_{x}}{d_{x}}}  $ is a continuous function on $E$ which is homogeneous of degree $0$; hence, it is constant, and its constant value must be $j_x !\neq 0$. Now, Faà di Bruno's formula shows that
	\[
	P^{1-\frac{r_{x}}{d_{x}}}= \frac{1}{j_x !} P^{1-\frac{r_{x}}{d_{x}}}\partial_{x}^{j_{x}} P^{\frac{j_{x}}{d_{x}}}  = \sum_{\sum_{\ell=1}^{j_{x}} \ell \beta_\ell= j_{x}    }  \frac{ 1 }{ \beta!  } \left(\frac{j_{x}}{d_{x}}  \right)_{\abs{\beta}} P^{1+q_{x}- \abs{\beta}} \prod_{\ell=1}^{j_{x}}  \left(  \frac{\partial_{x}^\ell P  }{\ell !} \right)^{\beta_\ell},
	\]
	where $  \left(\frac{j_{x}}{d_{x}}  \right)_{\abs{\beta}} \coloneqq \frac{j_{x}}{d_{x}}\left(\frac{j_{x}}{d_{x}}-1  \right)\dots \left( \frac{j_{x}}{d_{x}}-\abs{\beta}+1  \right)$ is the Pochhammer symbol.
	Then, $P^{1-\frac{r_{x}}{d_{x}}}$ is a rational function, so that there are $N,D\in \R[A]$, with $D\neq 0$, such that $P^{1-\frac{r_{x}}{d_{x}}}=\frac{N}{D}$. 
	Hence, $ P^{d_{x}- r_{x}}=\frac{N^{d_{x}}}{D^{d_{x}}}$, so that $D^{d_{x}}$ divides $N^{d_{x}}$ in $\R[A]$. 
	Since $\R[A]$ is factorial, it follows that $D$ divides $N$, so that $P^{1-\frac{r_{x}}{d_{x}}}$ is a (positive) polynomial.
	Next, let $g$ be the greatest common divisor of $d_{x}$ and $d_{x}-r_{x}$, and take $d',r'\in\N^*$ so that $d_{x}=g \,d'$ and $d_{x}-r_{x}=g\, r'$. Then,
	\[
	\left( P^{1-\frac{r_{x}}{d_{x}}}\right) ^{d'} = P^{r'}.
	\]
	Since $\R[A]$ is factorial, this proves that there is a polynomial $Q\in\R[A]$ such that $Q^{r'}=P^{1-\frac{r_{x}}{d_{x}}}$ and $Q^{d'}=P$.
	Now,  $d'\Meg 2$ since $d_x$ does not divide $d_x-r_x$; in addition, $Q$ is positive since both $P^{1-\frac{r_{x}}{d_{x}}}$ and $P$ are positive and $d',r'$ are coprime: contradiction. Therefore, $a_{x,j}=0$ for every $j\not \in d_{x}\N$, so that the conclusion follows easily.
	
	{\bf 2 $\implies$ 1.} Suppose by contradiction that there are a positive homogeneous polynomial $Q\in \R[A]$ and $k\Meg 2$ such that $P=Q^k$. Define $m\colon \lambda \mapsto \lambda^{\frac{1}{k}}$ on $\R_+$. Then, $m$ is not right-differentiable at $0$; nevertheless, $m\circ P=Q$ since $Q$ is positive, so that $m\circ P$ is $C^\infty$: contradiction.
\end{proof}

\begin{proof}[Proof of Theorem~\ref{teo:9:3}]
	Notice that $\chi_\Lc(\lambda,\,\cdot\,)$ is an eigenfunction of positive type and of class $C^\infty$ of $\Lc$, with eigenvalue $\lambda$, and that $\chi_\Lc(r\cdot \lambda, g)=\chi_\Lc(\lambda, r\cdot g)$ for every $r>0$ and for $(\beta_\Lc\otimes \nu_G)$-almost every $(\lambda,g)$ (cf.~\cite{Tolomeo}). It is then easily seen that $\chi_\Lc$ has a continuous representative which is of class $C^\infty$ on $\R_+^*\times G$.
	
	Now, take $m\in C_b(\beta_\Lc)$ such that $\Kc_{\Lc}(m)\in \Sc(G)$. Then, Proposition~\ref{prop:6} implies that $m\circ P\in \Sc(E_{-i\partial})$.  
	Take a positive polynomial $Q$ on $E_{-i\partial}$ so that $P=Q^k$. 
	Since $[m\circ (\,\cdot\,)^k]\circ Q= m\circ P$, Lemma~\ref{lem:9:2} implies that we may take $\widetilde m\in \Ec(\R)$ so that $m\circ (\,\cdot\,)^k=\widetilde m$ on $\R_+$. In addition, it is clear that we may assume that $\widetilde m\in \Sc(\R)$. 
	Now, let $\sum_{\ell\in \N} a_\ell \lambda^\ell$ be the Taylor development of $ \widetilde m$ at $0$. Take, for $h=1,\dots, k-1$, $m_{h}\in C^\infty_c(\R)$ so that its Taylor development at $0$ is $\sum_{\ell\in\N} a_{h+k\ell} \lambda^\ell$ (cf.~\cite[Theorem 1.2.6]{Hormander2}), and define $m_{0}\coloneqq m-\sum_{h=1}^{k-1} (\,\cdot\,)^{\frac{h}{k}  } m_{h}$ on $\R_+$. Since clearly $m_{0}$ has the asymptotic development $\sum_{\ell\in\N} a_{k\ell} \lambda^\ell$ for $\lambda\to0$ and since $m_{0}\circ (\,\cdot\,)^k$ is of class $C^\infty$, it is easily seen that $m_{0}$ may be extended to an element of $\Sc(\R)$.  Therefore, $m(\lambda)=\sum_{h=0}^{k-1} \lambda^{\frac{h}{k}  } m_{h}(\lambda)$ for every $\lambda\Meg 0$.
	
	Conversely, suppose that there are $m_1,\dots, m_{k-1}\in\Sc(\R)$ such that $m(\lambda)=\sum_{h=0}^{k-1} \lambda^{\frac{h}{k}} m_h(\lambda)$ for every $\lambda\Meg0$. Then, $m\circ P\in \Sc(E_{-i\partial})$, so that $\Kc_{\Lc}(m)\in \Sc(G)$ by Proposition~\ref{prop:6}.
\end{proof}

\begin{cor}
	Let $L\colon \R^n\to \R^{n'}$ be a linear mapping which is proper on $\R_+^n$. Then, $L(-\partial_1^2,\,\dots\,, -\partial_n^2)$ satisfies properties $(RL)$ and $(S)$.
\end{cor}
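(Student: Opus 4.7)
The strategy is to realize $L(-\partial_1^2,\dots,-\partial_n^2)$ as a polynomial function of the intermediate Rockland family $\Lc_A:=(-\partial_1^2,\dots,-\partial_n^2)$ and then transfer $(RL)$ and $(S)$ from $\Lc_A$ via the composite--function machinery of Section~\ref{sec:2}. Writing $P=L\circ\sigma$ with $\sigma(\xi)=(\xi_1^2,\dots,\xi_n^2)$, the map $\sigma$ is proper from $\R^n$ onto $\R_+^n$ and $L$ is proper on $\R_+^n$ by hypothesis, so $P$ is proper; hence $L(-\partial_1^2,\dots,-\partial_n^2)$ is Rockland by Proposition~\ref{prop:6}, and the identity $\Kc_{L(-\partial_1^2,\dots,-\partial_n^2)}(m)=\Kc_{\Lc_A}(m\circ L)$ holds for every bounded measurable $m$.

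Next I would dispose of the case $L=\id$. For a single $-\partial^2$ on $\R$ the defining polynomial is $\xi^2$, whose positive square root $\abs{\xi}$ is not a polynomial, so the integer $k$ of Theorem~\ref{teo:9:3} equals $1$; therefore $-\partial^2$ satisfies both $(RL)$ and $(S)$. An $n$-fold application of the product Theorem~\ref{teo:5} then yields $(RL)$ and $(S)$ for $\Lc_A$.

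It remains to descend from $\Lc_A$ to $L(-\partial_1^2,\dots,-\partial_n^2)$. For $(S)$: if $\Kc_{L(-\partial_1^2,\dots,-\partial_n^2)}(m)\in\Sc(\R^n)$, then $\Kc_{\Lc_A}(m\circ L)\in\Sc(\R^n)$, and property $(S)$ for $\Lc_A$ furnishes $\widetilde m\in\Sc(\R^n)$ with $\widetilde m=m\circ L$ on $\Supp{\beta_{\Lc_A}}=\R_+^n$; since $\R_+^n$ is a subanalytic closed convex cone and $L$ is linear and proper on it, Corollary~\ref{cor:A:7} provides $m_3\in\Sc(\R^{n'})$ with $\widetilde m=m_3\circ L$ on $\R_+^n$, and therefore $m=m_3$ on $L(\R_+^n)=\Supp{\beta_{L(-\partial_1^2,\dots,-\partial_n^2)}}$. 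For $(RL)$: property $(RL)$ for $\Lc_A$ yields a representative $\widetilde m\in C_0(\R^n)$ of $m\circ L$; Corollary~\ref{cor:A:6} (applied to the convex set $\R_+^n$) shows that $\beta_{\Lc_A}$ is $L$-connected, and Proposition~\ref{prop:A:7}, combined with Federer's disintegration theorem~\cite[Theorem 3.2.22]{Federer}, produces a measurable $m_2\colon L(\R_+^n)\to\C$ with $\widetilde m=m_2\circ L$ pointwise on $\R_+^n$. The main (mild) technical point is promoting $m_2$ to a $C_0(\R^{n'})$ function: properness of $L|_{\R_+^n}$ makes $L\colon\R_+^n\to L(\R_+^n)$ a proper continuous surjection, hence a quotient map, which forces $m_2$ to be continuous on $L(\R_+^n)$ and to vanish at infinity there, after which a Tietze-type extension yields the desired element of $C_0(\R^{n'})$.
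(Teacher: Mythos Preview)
Your proof follows essentially the same route as the paper: Theorem~\ref{teo:9:3} plus the product Theorem~\ref{teo:4} for $L=\id$, then Proposition~\ref{cor:A:6}, Proposition~\ref{prop:A:7}, and Corollary~\ref{cor:A:7} to pass to general $L$. One small point in your $(S)$ argument: property $(S)$ for $\Lc_A$ only gives $\widetilde m = m\circ L$ \emph{$\beta_{\Lc_A}$-almost everywhere}, not pointwise on $\R_+^n$, and Corollary~\ref{cor:A:7} needs a pointwise factorization on the cone; you should either first invoke the $(RL)$ step (which you have already carried out) to replace $m$ by a continuous representative, after which a.e.\ equality of the two continuous functions on $\R_+^n$ becomes pointwise equality, or apply Proposition~\ref{prop:A:7} directly to the Schwartz function $\widetilde m$ to obtain the pointwise factor, exactly as in your $(RL)$ paragraph.
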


\begin{proof}
	This is a consequence of Theorems~\ref{teo:4} and~\ref{teo:9:3} when $L$ is the identity. The general case then follows by means of Propositions~\ref{prop:A:8} and~\ref{prop:A:7}, and Corollary~\ref{cor:A:7}.
\end{proof}

\section{$MW^+$ Groups}\label{sec:4}

\begin{deff}\label{def:3:1}
	Let $G$ be a $2$-step stratified group, that is, a simply connected Lie group whose Lie algebra is decomposed as $\gf=\gf_1\oplus \gf_2$ with $\gf_2=[\gf,\gf]$ and $[\gf,\gf_2]=0$. 
	For every $\omega\in \gf_2^*$, define
	\[
	B_\omega\colon \gf_1\times \gf_1\ni(X,Y)\mapsto \langle \omega, [X,Y]\rangle.
	\]
	We say that $G$ is an $MW^+$ group if $B_\omega$ is non-degenerate for some $\omega\neq0$. 
	We say that  $G$ is a Métivier group if it is not abelian and $B_\omega$ is non-degenerate for every $\omega\neq 0$.
	A Heisenberg group is a Métivier group with one-dimensional centre.
\end{deff}

Notice that a $2$-step stratified group satisfies property $MW^+$ if and only if it satisfies the Moore-Wolf condition (cf.~\cite{MooreWolf}) and $[\gf,\gf]$ is the centre of $\gf$.

We shall endow a $2$-step stratified group with the canonical dilations, so that
\[
r\cdot (X+Y)=r X+ r^2 Y
\]
for every $r>0$, for every $X\in \gf_1$ and for every $Y\in \gf_2$. Since $\exp_G\colon \gf\to G$ is a diffeomorphism, these dilations transfer to $G$.

Now, to every symmetric bilinear form $Q$ on $\gf_1^*$ we associate a differential operator on $G$ as follows:
\[
\Lc\coloneqq -\sum_{\ell,\ell'} Q(X_\ell^*, X_{\ell'}^*) X_{\ell} X_{\ell'},
\] 
where $(X_\ell)$ is a basis of $\gf_1$ with dual basis $(X^*_\ell)$. As the reader my verify, $\Lc$ does not depend on the choice of $(X_\ell)$; actually, one may prove that $-\Lc$ is the symmetrization of the quadratic form induced by $Q$ on $\gf^*$ (cf.~\cite[Theorem 4.3]{Helgason}).

By a `sum of squares' we means a differential operator of the form $\Lc=-\sum_{j=1}^k Y_j^2$, where $Y_1,\dots, Y_k$ are elements of $\gf$. If, in addition, $Y_1,\dots, Y_k$ generate $\gf$ as a Lie algebra, then we say that $\Lc$ is a sub-Laplacian. Thanks to~\cite{Hormander}, this is equivalent to saying that $\Lc$ is hypoelliptic.

\begin{lem}
	Let $Q$ be a symmetric bilinear form on $\gf_1^*$, and let $\Lc$ be the associated operator. 
	Then, $\Lc$ is formally self-adjoint if and only if $Q$ is real. 
	In addition, $\Lc$ is formally self-adjoint and hypoelliptic if and only if $Q$ is non-degenerate and either positive or negative.
\end{lem}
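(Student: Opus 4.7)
For the formal self-adjointness claim, recall that $G$ is nilpotent and hence unimodular, so the formal adjoint with respect to Haar measure of a left-invariant vector field $X$ is $-X$; consequently the formal adjoint of $X_\ell X_{\ell'}$ is $X_{\ell'} X_\ell$. Using the symmetry of $Q$ and relabeling summation indices, one finds that the formal adjoint of $\Lc$ is $-\sum_{\ell,\ell'} \overline{Q(X_\ell^*, X_{\ell'}^*)}\, X_\ell X_{\ell'}$, so $\Lc$ equals its formal adjoint if and only if
\[
\sum_{\ell,\ell'} (Q-\overline{Q})(X_\ell^*, X_{\ell'}^*)\, X_\ell X_{\ell'} = 0.
\]
Symmetrizing and appealing to the linear independence of the products $X_\ell X_{\ell'} + X_{\ell'} X_\ell$ for $\ell\meg\ell'$ as differential operators on $G$ --- which is visible at the level of their principal symbols $\xi_\ell\xi_{\ell'}+\xi_{\ell'}\xi_\ell$, these being linearly independent quadratic polynomials on $\gf_1^*$ --- one deduces that $(Q-\overline{Q})(X_\ell^*, X_{\ell'}^*)=0$ for all $\ell,\ell'$, i.e.\ that $Q$ is real.

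For the easy direction of the second claim, assume $Q$ real and definite. Sylvester's law of inertia yields a basis $(Y_\ell)$ of $\gf_1$ such that $Q(Y_\ell^*, Y_{\ell'}^*) = \varepsilon\,\delta_{\ell\ell'}$ for a single sign $\varepsilon\in\{\pm1\}$, so that $\Lc = -\varepsilon\sum_\ell Y_\ell^2$. Since $\gf_1$ generates $\gf$ as a Lie algebra, the $(Y_\ell)$ satisfy H\"ormander's condition, so $\pm\Lc$ is a sub-Laplacian and hence hypoelliptic by H\"ormander's theorem (already cited above); so is $\Lc$.

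For the converse, assume $Q$ real and $\Lc$ hypoelliptic. The plan is to invoke the Rockland condition, which on a stratified group is equivalent to hypoellipticity by the Helffer--Nourrigat theorem, and to apply it to the one-dimensional unitary characters of $G$. For each $\xi\in\gf_1^*$, the formula $\chi_\xi(\exp(X+Y))=e^{i\xi(X)}$ (for $X\in\gf_1$, $Y\in\gf_2$) defines a unitary character of $G$, non-trivial precisely when $\xi\neq 0$; its derived representation acts on $\C$ via $d\chi_\xi(X)=i\xi(X)$ on $\gf_1$ and $d\chi_\xi(Y)=0$ on $\gf_2$. A direct substitution gives $d\chi_\xi(\Lc)=Q(\xi,\xi)$ as multiplication by a scalar on $\C$, and injectivity for every $\xi\neq 0$ becomes $Q(\xi,\xi)\neq 0$ on $\gf_1^*\setminus\{0\}$, which for a real symmetric form is exactly the statement that $Q$ is (positive- or negative-) definite. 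The main obstacle is precisely this last step: one must convert the analytic hypothesis of hypoellipticity into the algebraic definiteness of $Q$, for which the Rockland--Helffer--Nourrigat characterization (already at the heart of the paper's framework) is by far the cleanest available tool.
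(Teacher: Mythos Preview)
The paper states this lemma without proof, so there is no argument to compare against directly. Your proof is correct in all three parts. The self-adjointness computation is standard, and your principal-symbol argument for the linear independence of the symmetrized products $X_\ell X_{\ell'}+X_{\ell'}X_\ell$ is clean. For the easy direction of the second claim, diagonalizing $Q$ via Sylvester and invoking H\"ormander's sum-of-squares theorem is exactly what the paper's remark just before the lemma suggests.

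For the converse (hypoelliptic $\Rightarrow$ $Q$ definite), your use of the Rockland condition via Helffer--Nourrigat is correct but heavier than needed. A more elementary route, closer in spirit to the paper's toolkit, is to push $\Lc$ down to the abelianization: if $\pi_0\colon G\to G/[G,G]\cong\gf_1$ is the canonical projection, then hypoellipticity of $\Lc$ on $G$ forces hypoellipticity of $\dd\pi_0(\Lc)$ on $\gf_1$ (pull a non-smooth distribution on $\gf_1$ back along $\pi_0$). But $\dd\pi_0(\Lc)$ is the constant-coefficient operator with full symbol $Q(\xi,\xi)$, and a homogeneous second-order real constant-coefficient operator is hypoelliptic on $\R^n$ if and only if its symbol is definite. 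This is of course exactly your Rockland test restricted to the one-dimensional representations, just without invoking the full equivalence theorem.
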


\begin{deff}
	Let $V$ be a vector space and $\Phi$ a bilinear form on $V$. Then, we  define
	\[
	\dd_\Phi\colon V\ni v \mapsto \Phi(\,\cdot\,,v)\in V^*.
	\]
\end{deff}

\begin{prop}\label{prop:11}
	Let $Q_1$ and $Q_2$ be two symmetric bilinear forms on $\gf_1^*$, and let $\Lc_1$ and $\Lc_2$ be the associated operators. Then, $\Lc_1$ and $\Lc_2$ commute if and only if
	\[
	\dd_{Q_1}\circ \dd_{B_\omega}\circ \dd_{Q_2}=\dd_{Q_2}\circ \dd_{B_\omega}\circ \dd_{Q_1}
	\]
	for every $\omega\in \gf_2^*$.
\end{prop}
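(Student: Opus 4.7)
I would compute $[\Lc_1,\Lc_2]$ directly in the universal enveloping algebra $U(\gf)$ and read off when it vanishes using the PBW theorem. The crucial structural feature is that $\gf_2=[\gf,\gf]$ is central in $\gf$, so every commutator $[X,X']$ with $X,X'\in\gf_1$ is a central element of $U(\gf)$ that can be moved freely past any factor.

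First, fix a basis $(X_\ell)$ of $\gf_1$ with dual $(X_\ell^*)$ and a basis $(Y_\gamma)$ of $\gf_2$ with dual $(Y_\gamma^*)$, and set $q_j^{\ell\ell'}\coloneqq Q_j(X_\ell^*,X_{\ell'}^*)$ and $B^\gamma_{\ell m}\coloneqq Y_\gamma^*([X_\ell,X_m])$, so that $[X_\ell,X_m]=\sum_\gamma B^\gamma_{\ell m}Y_\gamma$ and (by inspection of the definitions) the matrix $T^\gamma\coloneqq Q_1 B^\gamma Q_2$ represents $\dd_{Q_1}\circ\dd_{B_{Y_\gamma^*}}\circ\dd_{Q_2}\colon\gf_1^*\to\gf_1$ in the chosen bases. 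Iterating the Leibniz rule and using centrality of $\gf_2$ gives
\[
[X_\ell X_{\ell'},X_m X_{m'}]=[X_\ell,X_m]X_{\ell'}X_{m'}+[X_{\ell'},X_m]X_\ell X_{m'}+[X_\ell,X_{m'}]X_m X_{\ell'}+[X_{\ell'},X_{m'}]X_m X_\ell.
\]
Multiplying by $q_1^{\ell\ell'}q_2^{mm'}$, summing, and using the symmetry of $q_1$ (rename $\ell\leftrightarrow\ell'$ to identify the first two and the last two terms) then of $q_2$ (rename $m\leftrightarrow m'$ in the third-term contribution), the four terms collapse into
\[
[\Lc_1,\Lc_2]=2\sum_\gamma Y_\gamma\sum_{\ell',m'}T^\gamma_{\ell'm'}\bigl(X_{\ell'}X_{m'}+X_{m'}X_{\ell'}\bigr)=4\sum_\gamma Y_\gamma\sum_{\ell',m'}S^\gamma_{\ell'm'}X_{\ell'}X_{m'},
\]
where $S^\gamma\coloneqq\tfrac{1}{2}\bigl(T^\gamma+(T^\gamma)^T\bigr)$ is the symmetric part of $T^\gamma$ (the antisymmetric part annihilates the anticommutator).

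Next I invoke PBW. Writing the right-hand side in PBW normal form (with the $Y_\gamma$'s on the right), the coefficient of each basis monomial $X_{\ell'}X_{m'}Y_\gamma$ with $\ell'\leq m'$ is a non-zero multiple of $S^\gamma_{\ell'm'}$; the reorderings needed to pass from $X_{m'}X_{\ell'}$ to PBW form produce additional terms of the form $Y_\delta Y_\gamma$, but these are distinct PBW basis monomials and therefore do not mask the leading coefficients. Linear independence of the PBW basis then forces $S^\gamma=0$ for every $\gamma$, i.e.\ $T^\gamma=Q_1 B^\gamma Q_2$ is antisymmetric for every $\gamma$; conversely, if every $S^\gamma$ vanishes, the displayed formula immediately yields $[\Lc_1,\Lc_2]=0$.

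Finally, antisymmetry of $B_\omega$ gives $(Q_1 B_\omega Q_2)^T=-Q_2 B_\omega Q_1$, so antisymmetry of $Q_1 B_\omega Q_2$ is the same as $Q_1 B_\omega Q_2=Q_2 B_\omega Q_1$. Since $\omega\mapsto B_\omega$ is linear and $(Y_\gamma^*)$ is a basis of $\gf_2^*$, the condition `for every $\gamma$' is equivalent to `for every $\omega\in\gf_2^*$', and this is precisely the intrinsic identity $\dd_{Q_1}\circ\dd_{B_\omega}\circ\dd_{Q_2}=\dd_{Q_2}\circ\dd_{B_\omega}\circ\dd_{Q_1}$. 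The main obstacle is bookkeeping: expanding the four-term commutator carefully, choosing the renamings so that the four terms merge into the single matrix expression $Q_1 B^\gamma Q_2$ (rather than, say, $Q_2 B^\gamma Q_1$ or a transpose), and verifying that the PBW-reordering corrections live in a different PBW-basis direction and so cannot mask the main coefficient. Once this is settled, the dualization to the intrinsic formulation is routine.
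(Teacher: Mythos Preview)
Your proof is correct and follows essentially the same route as the paper's: expand the commutator $[\Lc_1,\Lc_2]$ in $U(\gf)$ using centrality of $\gf_2$, collect terms into the matrix $Q_1 B^\gamma Q_2 - Q_2 B^\gamma Q_1$, and invoke PBW linear independence. The only cosmetic difference is that the paper writes $\Lc_h$ in terms of the symmetrized products $Y_{j_1 j_2}\coloneqq\tfrac12(X_{j_1}X_{j_2}+X_{j_2}X_{j_1})$ from the outset and computes $[Y_{j_1 j_2},Y_{j_3 j_4}]$ directly, which outputs the symmetric matrix $C_k=A_1 B_k A_2 - A_2 B_k A_1$ immediately and spares you the PBW-reordering bookkeeping.
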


\begin{proof}
	Choose a basis $(X_j)_{j\in J}$ of $\gf_1$ and a basis $(T_k)_{k\in K}$ of $\gf_2$. Let $(X_j^*)_{j\in J}$ and $(T_k^*)_{k\in K}$ be the corresponding dual bases. Define $a_{h,j_1,j_2}\coloneqq Q_h(X_{j_1}^*, X_{j_2}^*)$ for $h=1,2$ and for every $j_1,j_2\in J$, so that $\dd_{Q_h}$ is identified with the matrix $A_h\coloneqq (a_{h,j_1,j_2})_{j_1,j_2\in J}$ for $h=1,2$.
	Analogously, define $b_{k, j_1,j_2}\coloneqq B_{T_k^*}(X_{j_1},X_{j_2})$ for every $k\in K$ and for every $j_1,j_2\in J$, so that $\dd_{B_{T_k^*}}$ is identified with the matrix $B_k\coloneqq(b_{k, j_1,j_2})_{j_1,j_2\in J}$ for every $k\in K$. 
	Now, define $Y_{j_1,j_2}\coloneqq \frac{1}{2}(X_{j_1} X_{j_2}+X_{j_2}X_{j_1})$ for every $j_1,j_2\in J$.
	Then,
	\[
	\Lc_h= \sum_{j_1,j_2\in J} a_{h,j_1,j_2} Y_{j_1,j_2}
	\]
	since $Q_h$ is symmetric. In addition, for every $j_1,j_2,j_3,j_4\in J$,
	\[
	[Y_{j_1,j_2}, Y_{j_3,j_4}]= Y_{j_2,j_4} [X_{j_1},X_{j_3}] +Y_{j_2, j_3}  [X_{j_1}, X_{j_4}] + Y_{j_1,j_4}[X_{j_2}, X_{j_3}] + Y_{j_1,j_3}[X_{j_2}, X_{j_4}] 
	\]
	since the elements of $\gf_2= [\gf_1,\gf_1]$ lie in the centre of $\Uf(\gf)$. Next, observe that, for every $j_1,j_2\in J$,
	\[
	[X_{j_1},X_{j_2}]=\sum_{k\in K} b_{k,j_1,j_2} T_k.
	\]
	Therefore,
	\[
	\begin{split}
	[\Lc_1,\Lc_2]&= \sum_{j_1,j_2,j_3,j_4\in J} \sum_{k\in K} a_{1,j_1,j_2} a_{2,j_3,j_4} [b_{k,j_1,j_3} Y_{j_2,j_4}+ b_{k,j_1,j_4} Y_{j_2, j_3}+ b_{k,j_2,j_3} Y_{j_1,j_4}+ b_{k,j_2,j_4} Y_{j_1,j_3}] T_k\\
	&=2 \sum_{j_1,j_2\in J} \sum_{k\in K} c_{k,j_1,j_2} Y_{j_1,j_2}T_k,
	\end{split}
	\]
	where
	\[
	c_{k,j_1,j_2}=\sum_{j_3,j_4\in J} (a_{1,j_1,j_3}a_{2,j_2,j_4}+a_{1,j_2,j_3}a_{2, j_1,j_4 }) b_{k,j_3,j_4}
	\] 
	for every $k\in K$ and for every $j_1,j_2\in J$. Now, the distinct monomials in the family of the $Y_{j_1,j_2} T_k$, as $j_1,j_2\in J$ and $k\in K$, are linearly independent (cf., for example,~\cite[Theorem 1 of Chapter I, § 2, No.\ 7]{BourbakiLie1}). In addition, denote by $C_k$ the matrix $(c_{k,j_1,j_2})_{j_1,j_2\in J}$ for every $k\in K$. Since $A_{1}$ and $A_{2}$ are symmetric and since $B_k$ is skew-symmetric, we have
	\[
	C_k=A_1 B_k A_2+ A_2 \trasp B_k A_1= A_1 B_k A_2- A_2 B_k A_1
	\]
	for every $k\in K$. The assertion follows easily.
\end{proof}

Now we shall present some results which will enable us to put our homogeneous sub-Laplacians in a particularly convenient form. We state them in terms of the associated quadratic forms.

\begin{prop}\label{prop:10:7}
	Let $(V,\sigma)$ be a finite-dimensional symplectic vector space over $\R$. Let $(Q_\alpha)_{\alpha\in A}$ be a family of positive, non-degenerate bilinear forms on $V$ such that the $\dd_{Q_{\alpha}}^{-1}\circ \dd_\sigma $, as $\alpha$ runs through $A$, commute.
	
	Then, there is a finite family $(P_\gamma)_{\gamma\in \Gamma}$ of projectors of $V$ such that the following hold:
	\begin{itemize}
		\item $P_\gamma$ is $\sigma$- and $Q_\alpha$-self-adjoint for every $\alpha\in A$ and for every $\gamma\in \Gamma$;
		
		\item $I_V=\sum_{\gamma\in \Gamma} P_\gamma$ and $P_\gamma P_{\gamma'}=0$ for $\gamma,\gamma'\in \Gamma$, $\gamma \neq \gamma'$; 
		
		\item the bilinear forms $Q_\alpha(P_\gamma\,\cdot\,,P_\gamma\,\cdot\,)$, as $\alpha\in A$, are all multiples of one another for every $\gamma\in \Gamma$, $\gamma \neq \gamma_0$.
	\end{itemize}
\end{prop}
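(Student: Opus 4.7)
The strategy is to pass from the given commuting family $J_\alpha := \dd_{Q_\alpha}^{-1} \circ \dd_\sigma$, whose members are skew with respect to different forms $Q_\alpha$, to a commuting family that is \emph{self-adjoint with respect to a single form}, so that standard simultaneous-diagonalization arguments apply. Concretely, I will fix a base index $\alpha_0 \in A$ and set
\[
T_\alpha := \dd_{Q_{\alpha_0}}^{-1} \circ \dd_{Q_\alpha} = J_{\alpha_0} \circ J_\alpha^{-1}.
\]
Each $T_\alpha$ is $Q_{\alpha_0}$-self-adjoint: for $v,w \in V$ one has $Q_{\alpha_0}(T_\alpha v, w) = Q_\alpha(v,w) = Q_\alpha(w,v) = Q_{\alpha_0}(v, T_\alpha w)$, using the symmetry of $Q_\alpha$. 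Since the $J_\alpha$ pairwise commute and are invertible (by non-degeneracy of $\sigma$ and of the $Q_\alpha$), the operators $T_\alpha$ commute with each other and with $J_{\alpha_0}$.

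Because $Q_{\alpha_0}$ is positive definite, $(T_\alpha)_{\alpha \in A}$ is a commuting family of self-adjoint operators for an inner product on $V$, so it can be simultaneously diagonalized. This yields a direct-sum decomposition $V = \bigoplus_{\lambda \in \Lambda} V_\lambda$ into joint (real) eigenspaces, where $\lambda = (\lambda_\alpha)_{\alpha \in A}$ ranges over a finite set $\Lambda$ and $T_\alpha|_{V_\lambda} = \lambda_\alpha \id$. The relation $Q_\alpha(v,w) = Q_{\alpha_0}(T_\alpha v, w)$ gives, on $V_\lambda$, the identity $Q_\alpha = \lambda_\alpha Q_{\alpha_0}$, so the $Q_\alpha$ are all (positive) multiples of $Q_{\alpha_0}$ on each $V_\lambda$, and $\lambda_\alpha > 0$ follows from positive-definiteness.

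Next I need to upgrade this $Q_{\alpha_0}$-orthogonal decomposition into one that is simultaneously orthogonal for every $Q_\alpha$ \emph{and} for $\sigma$. For distinct $\lambda \neq \lambda'$ and $v \in V_\lambda$, $w \in V_{\lambda'}$, the computation $Q_\alpha(v,w) = Q_{\alpha_0}(v, T_\alpha w) = \lambda'_\alpha Q_{\alpha_0}(v,w) = 0$ gives $Q_\alpha$-orthogonality. For $\sigma$-orthogonality, the key observation is that $J_{\alpha_0}$ commutes with each $T_\alpha$, and therefore preserves every joint eigenspace $V_\lambda$; hence $\sigma(v,w) = Q_{\alpha_0}(J_{\alpha_0} v, w) = 0$ since $J_{\alpha_0} v \in V_\lambda$ is $Q_{\alpha_0}$-orthogonal to $w \in V_{\lambda'}$. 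Finally, letting $P_\lambda$ denote the projector onto $V_\lambda$ along $\bigoplus_{\lambda'\neq\lambda} V_{\lambda'}$, the standard check shows that a projector compatible with a bilinear-form-orthogonal decomposition is self-adjoint for that form; this gives the required $\sigma$- and $Q_\alpha$-self-adjointness simultaneously, while $\sum_\lambda P_\lambda = I_V$ and $P_\lambda P_{\lambda'} = 0$ ($\lambda \neq \lambda'$) are automatic.

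The main conceptual obstacle is the first paragraph: realizing that although the $J_\alpha$ are \emph{not} self-adjoint for a common form, the ratios $J_{\alpha_0} J_\alpha^{-1}$ are self-adjoint for the inner product $Q_{\alpha_0}$, which converts the problem into one of simultaneous spectral decomposition. After that, everything is a routine verification; the only small subtlety is the $\sigma$-orthogonality step, which relies on $J_{\alpha_0}$ stabilizing each joint eigenspace, a fact that itself follows from the commutation hypothesis on the $J_\alpha$. Note that this construction actually yields the third bullet with no exceptional index $\gamma_0$ at all, so the statement is achieved (in the stronger form that proportionality holds on every piece).
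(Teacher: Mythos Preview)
Your argument is correct. The paper does not give a detailed proof but simply refers to adapting H\"ormander's Theorem 3.1(c) (on the symplectic normal form of a single positive quadratic form), using the commutation hypothesis to upgrade the argument to a simultaneous diagonalization. Your route is in the same spirit but is more elementary and self-contained: rather than working directly with the $Q_\alpha$-skew operators $J_\alpha=\dd_{Q_\alpha}^{-1}\circ\dd_\sigma$ as in H\"ormander's setting, you pass to the ratios $T_\alpha=J_{\alpha_0}J_\alpha^{-1}=\dd_{Q_{\alpha_0}}^{-1}\circ\dd_{Q_\alpha}$, which are self-adjoint for the single inner product $Q_{\alpha_0}$, so that the ordinary spectral theorem for commuting symmetric operators suffices. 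The $\sigma$-compatibility is then recovered a posteriori from the fact that $J_{\alpha_0}$ commutes with every $T_\alpha$ and hence preserves each joint eigenspace. This buys you a proof that avoids quoting H\"ormander's symplectic classification entirely, and as you observe it even yields the proportionality of the $Q_\alpha$ on \emph{every} block, with no exceptional index. One cosmetic remark: with the paper's convention $\dd_\Phi(v)=\Phi(\,\cdot\,,v)$ one gets $Q_{\alpha_0}(J_{\alpha_0}v,w)=\sigma(w,v)=-\sigma(v,w)$ rather than $+\sigma(v,w)$, but this sign is irrelevant to the orthogonality conclusion.
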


For the proof, basically follow that of~\cite[Theorem 3.1 (c)]{Hormander} using commutativity in order to get simultaneous diagonalizations. 
Applying~\cite[Theorem 3.1 (c)]{Hormander} (or simply~\cite[Corollary 5.6.3]{AbrahamMarsden}) to the range of each $P_\gamma$, we may find a symplectic basis of $V$ which is $Q_\alpha$-orthogonal for every $\alpha\in A$.

\begin{prop}\label{prop:10:9}
	Take a finite family $(\Lc_\alpha)_{\alpha\in A}$ of commuting homogeneous sub-Laplacians on an $MW^+$ group $G$, and let $(Q_\alpha)_{\alpha\in A}$ be the corresponding family of non-degenerate positive bilinear forms on $\gf_1^*$. 
	Then, there is a finite family $ (P_\gamma)_{\gamma\in \Gamma}$ of non-zero projectors of $\gf_1$ such that the following hold:
	\begin{enumerate}
		\item $I_{\gf_1}=\sum_{\gamma\in \Gamma} P_\gamma$ and $P_{\gamma_1}P_{\gamma_2}=0$ for every $\gamma_1,\gamma_2\in \Gamma$ such that $\gamma_1\neq \gamma_2$;
		
		\item $P_\gamma$ is $B_\omega$- and $\widehat Q_\alpha$-self-adjoint for every $\gamma\in \Gamma$, for every $\omega\in \gf_2^*$, and for every $\alpha\in A$;

		\item for every $\gamma\in \Gamma$, the bilinear forms $ Q_\alpha\left(\trasp P_\gamma\,\cdot\,, \trasp P_\gamma\,\cdot\,\right)$, as $\alpha$ runs through $A$, are mutually  proportional.		
	\end{enumerate}
\end{prop}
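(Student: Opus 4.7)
The plan is to derive Proposition~\ref{prop:10:9} from Proposition~\ref{prop:10:7} via the duality between $Q_\alpha$ on $\gf_1^*$ and its dual $\widehat Q_\alpha$ on $\gf_1$, with the main new ingredient being the simultaneous treatment of all the skew forms $B_\omega$.

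First I would use the $MW^+$ hypothesis to fix $\omega_0\in\gf_2^*$ with $B_{\omega_0}$ non-degenerate, so that $(\gf_1,B_{\omega_0})$ is a symplectic vector space. For each $\alpha\in A$, let $\widehat Q_\alpha$ denote the positive non-degenerate inner product on $\gf_1$ dual to $Q_\alpha$, so that $\dd_{\widehat Q_\alpha}=\dd_{Q_\alpha}^{-1}\colon\gf_1\to\gf_1^*$. Applying Proposition~\ref{prop:11} with $\omega=\omega_0$ to each commuting pair $(\Lc_\alpha,\Lc_\beta)$ and post-composing the resulting identity with $\dd_{B_{\omega_0}}$ yields that the endomorphisms $\dd_{\widehat Q_\alpha}^{-1}\dd_{B_{\omega_0}}$ of $\gf_1$ commute as $\alpha$ varies. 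I would then invoke Proposition~\ref{prop:10:7} on $(V,\sigma)=(\gf_1,B_{\omega_0})$ with the family $(\widehat Q_\alpha)_{\alpha}$, obtaining projectors $(P_\gamma)$ satisfying item~(1), the $B_{\omega_0}$- and $\widehat Q_\alpha$-self-adjointness, and the pairwise proportionality of the $\widehat Q_\alpha|_{W_\gamma}$'s on each $W_\gamma\coloneqq P_\gamma\gf_1$.

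For item~(3) I would dualise: the $\widehat Q_\alpha$-self-adjointness of $P_\gamma$ is the identity $\dd_{Q_\alpha}\,\trasp P_\gamma=P_\gamma\,\dd_{Q_\alpha}$, which implies that $\dd_{Q_\alpha}$ sends $V_\gamma\coloneqq\trasp P_\gamma\gf_1^*$ isomorphically onto $W_\gamma$ and that, under the induced identification $V_\gamma\simeq W_\gamma^*$, $Q_\alpha|_{V_\gamma}$ is the dual inner product of $\widehat Q_\alpha|_{W_\gamma}$. Since duals of proportional positive forms are (reciprocally) proportional, item~(3) follows from the corresponding proportionality of the $\widehat Q_\alpha|_{W_\gamma}$'s.

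The principal obstacle is item~(2) for all $\omega\in\gf_2^*$, not only $\omega=\omega_0$. Observe that the $B_\omega$-self-adjointness of every $P_\gamma$ is equivalent to the $B_\omega$-orthogonality of the decomposition $\gf_1=\bigoplus W_\gamma$, which—since $B_\omega(X,Y)=\langle\omega,[X,Y]\rangle$ and $\gf_2^*$ separates the points of $\gf_2$—is further equivalent to $[W_\gamma,W_{\gamma'}]=0$ in $\gf_2$ for all $\gamma\neq\gamma'$; that is, item~(2) amounts to decomposing $\gf_1$ into pairwise \emph{commuting} subspaces. My plan is to refine the decomposition iteratively: on each $W_\gamma$ the restrictions $\widehat Q_\alpha|_{W_\gamma}$ collapse, up to an $\alpha$-dependent scalar, to a single inner product, so the commutativity hypothesis of Proposition~\ref{prop:10:7} holds automatically on $W_\gamma$, and I may re-apply it on $(W_\gamma,B_{\omega_1}|_{W_\gamma})$ for a new $\omega_1$ (splitting off the radical of $B_{\omega_1}|_{W_\gamma}$ first if necessary). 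Iterating through the $\omega_i$'s in a basis of $\gf_2^*$ gives, in finitely many steps, a refinement of $(P_\gamma)$; the hard part is checking that each refinement preserves the $\widehat Q_\alpha$- and $B_{\omega_j}$-self-adjointness established in previous rounds, in particular that the cross-piece $B_{\omega_j}$-orthogonalities are inherited. This I expect to follow from a careful use of the identities of Proposition~\ref{prop:11} for \emph{every} $\omega$, which link the operators $\dd_{Q_\alpha}\dd_{B_\omega}$ across $\omega$ through the ratios $R_{\beta,\alpha}=\dd_{Q_\beta}\dd_{\widehat Q_\alpha}$ (these satisfy the useful identity $R_{\beta,\alpha}\,\dd_{Q_\alpha}\dd_{B_\omega}=\dd_{Q_\beta}\dd_{B_\omega}$) and automatically commute with every projector that is $\widehat Q_\alpha$- and $\widehat Q_\beta$-self-adjoint.
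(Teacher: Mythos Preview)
Your setup for items~(1) and~(3) is sound and matches the paper: fix a non-degenerate $\omega_0$, apply Proposition~\ref{prop:10:7} on $(\gf_1,B_{\omega_0})$ with the family $(\widehat Q_\alpha)$, and dualise.

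The gap is in your plan for item~(2). The iterative refinement you propose cannot produce anything new: once the $\widehat Q_\alpha|_{W_\gamma}$ are all proportional, applying Proposition~\ref{prop:10:7} on $(W_\gamma,B_{\omega_1}|_{W_\gamma})$ with that family yields only the trivial one-piece decomposition, since the conclusion ``the restricted $Q_\alpha$ are mutually proportional'' is already vacuously satisfied. So each refinement step is the identity, and you are back to having to prove $B_\omega(W_\gamma,W_{\gamma'})=0$ for $\gamma\neq\gamma'$ directly---which is the original problem, untouched.

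The paper instead shows by a direct matrix computation that the $\omega_0$-decomposition \emph{already} works for every $\omega$. Pass to the simultaneous symplectic-diagonal basis furnished by (the remark after) Proposition~\ref{prop:10:7}, in which each $\dd_{Q_\alpha}$ is $\mathrm{diag}(D_\alpha,D_\alpha)$ with $D_\alpha=\mathrm{diag}(d_{\alpha,1},\dots,d_{\alpha,n})$, and write $(a_{\omega,j,k})$ for the matrix of $\dd_{B_\omega}$. The identity of Proposition~\ref{prop:11} for an \emph{arbitrary} $\omega$ reads componentwise
\[
d_{\alpha_1,j}\,a_{\omega,j,k}\,d_{\alpha_2,k}=d_{\alpha_2,j}\,a_{\omega,j,k}\,d_{\alpha_1,k},
\]
and similarly for the other three $n\times n$ blocks; hence $a_{\omega,j,k}=0$ whenever $d_{\alpha_1,j}/d_{\alpha_2,j}\neq d_{\alpha_1,k}/d_{\alpha_2,k}$. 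Grouping the basis vectors by the projective class of $(d_{\alpha,j})_{\alpha\in A}$ therefore makes every $B_\omega$ block-diagonal at once, which is exactly the $B_\omega$-self-adjointness of the projectors. You had the right tool in hand---Proposition~\ref{prop:11} for all $\omega$, and the ratio operators $R_{\beta,\alpha}$---but it should be used to show the \emph{original} projectors are already $B_\omega$-self-adjoint, not to control a cascade of refinements that in fact never refine.
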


\begin{proof}
	Fix $\omega_0\in\gf_2^*$ such that $B_{\omega_0}$ is non-degenerate. 
	Then, Proposition~\ref{prop:10:7} and the remarks which follow its proof imply that there is a basis $X_1,\dots, X_{2 n}$ of $\gf_1$ such that $\dd_{B_{\omega_0}}$ and $\dd_{Q_\alpha}$ are represented by the matrices
	\[
	\left(
	\begin{matrix}
	0 & I\\
	-I & 0
	\end{matrix}
	\right) \qquad \text{and}\qquad
	\left(
	\begin{matrix}
	D_\alpha & 0\\
	0 & D_\alpha
	\end{matrix}
	\right),
	\] 
	respectively,
	for some diagonal matrix $D_\alpha$ ($\alpha\in A$). 
	Denote by $d_{\alpha,1},\dots, d_{\alpha,n}$ the diagonal elements of $D_\alpha$, and denote by $(a_{\omega,j,k})$ the matrix associated with $\dd_{B_\omega}$, for every non-zero $\omega\in \gf_2^*$.

	Now, assume that $A$ has exactly two elements $\alpha_1,\alpha_2$. Then, define
	\[
	\Gamma\coloneqq \left\{ \frac{d_{\alpha_1,j}}{d_{\alpha_2,j}}\colon j\in \Set{1,\dots,n}  \right\}
	\]
	and, for every $\gamma\in \Gamma$, let $V_\gamma$ be the vector subspace of $\gf_1$ generated by the set
	\[
	\left\{ X_j, X_{n+j}\colon  \frac{d_{\alpha_1,j}}{d_{\alpha_2,j}} =\gamma \right\}.
	\]
	Next, take $j,k\in \Set{1,\dots, n}$ such that $\frac{d_{\alpha_1,j}}{d_{\alpha_2,j}}\neq \frac{d_{\alpha_1,k}}{d_{\alpha_2,k}}$. 
	Apply Proposition~\ref{prop:11}, and observe that the  $(j,k)$-th components of (the matrices representing) the equality 
	\[
	\dd_{Q_{\alpha_1}}\circ \dd_{B_\omega} \circ \dd_{Q_{\alpha_2}}=\dd_{Q_{\alpha_2}}\circ \dd_{B_\omega} \circ \dd_{Q_{\alpha_1}}
	\]
	give
	\[
	d_{\alpha_1,j} a_{\omega,j,k} d_{\alpha_2,k}=d_{\alpha_2,j} a_{\omega,j,k} d_{\alpha_1,k},
	\]
	whence $a_{\omega,j,k}=0$. Considering the components $(n+j,k),(j,n+k)$ and $(n+j,n+k)$, we see that $a_{\omega,n+j,k}=a_{\omega,j,n+k}=a_{\omega,n+j,n+k}=0$.
	Therefore, $B_\omega (V_{\gamma_1},V_{\gamma_2})=\Set{0}$ for every non-zero $\omega\in \gf_2^*$ and for every $\gamma_1,\gamma_2\in \Gamma$ such that $\gamma_1\neq \gamma_2$. Then, define $P_\gamma$ as the projector of $\gf_1$ onto $V_\gamma$ with kernel $\bigoplus_{\gamma'\neq \gamma} V_{\gamma'}$.
	The general case follows easily.
\end{proof}

From now on, $G$ will denote an $MW^+$ group,  $(Q_\eta)_{\eta\in H}$  a family of positive symmetric bilinear forms on $\gf_1^*$, and $(T_1,\dots, T_{n_2})$ a basis of $\gf_2$.
We shall denote by $\Lc_\eta $ the sum of squares induced by $Q_\eta $, and we shall assume that $\Lc_A\coloneqq (\Lc_H, (-i T_k)_{k=1,\dots,n_2})$ is a Rockland family.
Observe that this condition is equivalent to the fact that the sum of the $\Lc_\eta$ is hypoelliptic.\footnote{Indeed, if $\pi_0$ is the projection of $G$ onto its abelianization, then $\dd \pi_0(\Lc_A)$ is a Rockland family, so that  $\Fc(\dd \pi_0(\Lc_A))$ vanishes only at $0$. 
Since $\Fc(\dd \pi_0(\Lc_\eta))\Meg 0$ and $\dd \pi_0(T_k)=0$ for every $\eta\in H$ and for every $k=1,\dots, n_2$, this implies that $\sum_{\eta \in H}\Fc(\dd \pi_0(\Lc_\eta))$ vanishes only at $0$, so that $\sum_{\eta\in H} Q_\eta$ is non-degenerate and $\sum_{\eta\in H}\Lc_\eta$ is hypoelliptic.}
We may therefore assume that $Q_\eta$ is non-degenerate for every $\eta\in H$, in which case each $\Lc_\eta$ is a (homogeneous) sub-Laplacian.

We shall also endow $\gf$ with a scalar product for which  $\gf_1$ and $\gf_2$ are orthogonal, 
and which induces $\widehat Q_{\eta _0}$ on $\gf_1$ for some \emph{fixed} $\eta_0\in H$. 
Up to a normalization, \emph{we may then assume that $(\exp_G)_*(\Hc^n)$ is the chosen Haar measure on $G$}, where $n$ is the dimension of $G$.
We  shall endow $\gf_2^*$ with the scalar product induced by that of $\gf_2$, and then with the corresponding Lebesgue measure, that is, $\Hc^{n_2}$. 

Define
\[
J_{Q_\eta, \omega}\coloneqq \dd_{Q_\eta}\circ \dd_{B_\omega}\colon \gf_1 \to \gf_1
\]
for every $\eta\in H$ and for every $\omega\in \gf_2^*$. 

We shall denote by $W$ the set of $\omega\in \gf_2^*$ such that $B_\omega$ is degenerate, so that $G$ is a Métivier group if and only if $W=\Set{0}$.

We shall denote by $\Omega$ the set of $\omega\in \gf_2^*\setminus W$ where $\card\left(\sigma( \abs{J_{Q_H, \omega} } )\right)$ attains its maximum $\overline h$.\footnote{By an abuse of notation, we denote by $\abs{J_{Q_H,\omega}}$ the family $(\abs{J_{Q_\eta,\omega}})_{\eta\in H}$.} 
By means of a straightforward generalization of the arguments of~\cite[§ 1.3--4 and § 5.1 of Chapter II]{Kato}, and taking Proposition~\ref{prop:10:9} into account, one may prove the following results.

\begin{prop}
	The sets $W$ and $\gf_2^*\setminus \Omega$ are algebraic varieties. In addition, there are three analytic mappings
	\begin{align*}
		&\mi\colon \Omega \to ((\R_+^*)^{\overline{h}} )^H\\
		&P\colon \Omega \to \Lc(\gf_1)^{\overline{h}} \\
		&\rho\colon \Omega \to \Set{1,\dots, \overline h}^{n_1}
	\end{align*}
	such that the following hold:
	\begin{itemize}
		\item the mapping
		\[
		\Omega  \ni \omega \mapsto \mi_{\eta,\rho_{k,\omega},\omega}\in \R_+
		\]
		extends to a continuous mapping $\omega \mapsto \widetilde \mi_{\eta,k,\omega}$ on $\gf_2^*$ for every $k=1,\dots, n_1$ and for every $\eta\in H$;
		
		\item for every $h=1,\dots, \overline h$ and for every $\omega\in \Omega$, $P_{h,\omega}$ is a $B_\omega$- and $\widehat Q_H$-self-adjoint projector of $\gf_1$;
		
		\item if $h=1,\dots, \overline h$ and $\omega\in \Omega$, then $\tr P_{h,\omega}= 2\card( \Set{k\in \Set{1,\dots, n_1}\colon \rho_{k,\omega}=h} )$;
		
		\item $\sum_{h=1}^{\overline h} P_{h,\omega}=I_{\gf_1}$ and $\sum_{h=1}^{\overline h} \mi_{\eta,h,\omega} P_{h,\omega}= \abs{J_{Q_\eta, \omega}}$  for every $\omega\in \Omega$ and for every $\eta\in H$.
	\end{itemize}
\end{prop}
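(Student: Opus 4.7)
The plan is to follow Kato's analytic perturbation theory (Chapter II, §§1.3--4 and 5.1), using Proposition~\ref{prop:10:9} to ensure that the operators $\abs{J_{Q_\eta,\omega}}$ admit a \emph{joint} spectral resolution for each individual $\omega$ such that $B_\omega$ is non-degenerate. Concretely, fix $\omega\in \gf_2^*\setminus W$ and apply Proposition~\ref{prop:10:9} to the $MW^+$-like data $(B_\omega, (Q_\eta)_{\eta\in H})$ on the symplectic space $(\gf_1, B_\omega)$: one obtains a finite family of mutually orthogonal $B_\omega$- and $\widehat Q_H$-self-adjoint projectors whose ranges are the joint eigenspaces of the family $(\abs{J_{Q_\eta,\omega}})_{\eta\in H}$. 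On each such eigenspace the operators $\abs{J_{Q_\eta,\omega}}$ act as scalars $\mi_{\eta,h,\omega}$, and the joint eigenvalue tuples $(\mi_{\eta,h,\omega})_\eta$ are mutually distinct. Defining $\overline{h}$ as the maximal cardinality of this set of tuples, $\Omega$ is the locus where this cardinality is attained.

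Next I would verify that $W$ and $\gf_2^*\setminus \Omega$ are algebraic. The set $W$ is defined by the vanishing of the Pfaffian of $B_\omega$, which is polynomial in $\omega$. To see that $\gf_2^*\setminus \Omega$ is algebraic, note that the characteristic polynomials of the operators $\abs{J_{Q_\eta,\omega}}^2=-J_{Q_\eta,\omega}^2$ depend polynomially on $\omega$; the condition that the collection of joint eigenvalue tuples have cardinality strictly less than $\overline h$ is expressible through the vanishing of finitely many discriminants and resultants of these polynomials.

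For the analyticity on $\Omega$, the key input is §II.1.4 of Kato: since $\omega\mapsto \abs{J_{Q_\eta,\omega}}$ extends to a holomorphic family on a complex neighborhood of $\Omega$ and the joint spectrum has constant cardinality $\overline h$, the Riesz projection
\[
P_{h,\omega}=\frac{1}{2\pi i}\oint_{\Gamma_{h,\omega}} (\zeta I_{\gf_1}-\abs{J_{Q_{\eta_0},\omega}})^{-1}\,\dd \zeta,
\]
taken along suitably small contours separating the eigenvalues (and then intersected with analogous projectors for the other $\eta\in H$ to separate joint eigenvalues), depends analytically on $\omega$; the $\mi_{\eta,h,\omega}$ are then analytic as traces $\tr(\abs{J_{Q_\eta,\omega}}P_{h,\omega})/\tr P_{h,\omega}$. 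The self-adjointness of $P_{h,\omega}$ with respect to $B_\omega$ and $\widehat Q_H$ is inherited from the fact that $\abs{J_{Q_\eta,\omega}}$ is $B_\omega$- and $\widehat Q_H$-self-adjoint on each side, and the formulas $\sum_h P_{h,\omega}=I_{\gf_1}$ and $\sum_h \mi_{\eta,h,\omega}P_{h,\omega}=\abs{J_{Q_\eta,\omega}}$ are immediate from spectral calculus; the trace identity for $P_{h,\omega}$ reflects that each eigenvalue of $\abs{J_{Q_\eta,\omega}}$ is doubled by the symplectic structure, which is why $\rho$ takes values in $\Set{1,\dots,\overline h}^{n_1}$ with $\gf_1$ of dimension $2n_1$.

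The main obstacle is the continuous extension of $\omega\mapsto \mi_{\eta,\rho_{k,\omega},\omega}$ across $\gf_2^*\setminus \Omega$, including across $W$: on this set eigenvalues coalesce and the projectors $P_{h,\omega}$ cease to vary analytically. Here I would invoke Kato §II.5.1: the (unordered) set of eigenvalues of $\abs{J_{Q_\eta,\omega}}$ depends continuously on $\omega$ because the roots of a monic polynomial with continuously varying coefficients depend continuously on those coefficients (in the unordered sense). The function $\rho$ then provides a consistent enumeration on $\Omega$ via the joint spectral structure, and any limit along a sequence $\omega_j\to\omega_\infty\in \gf_2^*\setminus\Omega$ reproduces (with multiplicity) the eigenvalues of $\abs{J_{Q_\eta,\omega_\infty}}$, so that each $\omega\mapsto \mi_{\eta,\rho_{k,\omega},\omega}$ has a (unique) continuous extension $\widetilde\mi_{\eta,k,\omega}$ to all of $\gf_2^*$.
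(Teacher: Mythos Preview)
Your proposal is correct and follows essentially the same approach as the paper, which merely indicates that the result is obtained ``by means of a straightforward generalization of the arguments of \cite[§ 1.3--4 and § 5.1 of Chapter II]{Kato}, and taking Proposition~\ref{prop:10:9} into account.'' Your sketch fleshes out precisely this hint: Proposition~\ref{prop:10:9} supplies the joint diagonalization needed to define the spectral data pointwise, Kato's Riesz-projection formalism yields analyticity on the constant-multiplicity locus $\Omega$, and the continuity of roots of the characteristic polynomials of $-J_{Q_\eta,\omega}^2$ handles the extension of the eigenvalue functions across $\gf_2^*\setminus\Omega$.
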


\begin{deff}
	Define $\vect{n_1}\colon \Omega \to (\N^*)^{\overline h}$ so that $n_{1,h,\omega}=\frac{1}{2}\tr P_{h,\omega}$ for every $\omega\in \Omega$ and for every $h=1,\dots, \overline h$.
	By an abuse of notation, we shall denote by $(x,t)$ the elements of $G$, where $x\in \gf_1$ and $t\in \gf_2$, thus identifying $(x,t)$ with $\exp_G(x,t)$. 
	For every $x\in \gf_1$, for every $\omega\in \Omega$ and for every $h=1,\dots, \overline{h} $,  define
	\[
	x_{h,\omega}\coloneqq \sqrt{\mi_{\eta _0,h,\omega}}P_{h,\omega} (x).
	\] 
	By an abuse of notation, we shall write $x_\omega$ instead of $(x_{h,\omega})_{h=1,\dots, \overline h}$, and $\abs{x_{\omega}}$ instead of $\left( \sum_{h=1}^{\overline{h} } \abs{x_{h,\omega}}^2\right)^{\sfrac{1}{2}}$.
\end{deff}

The following two results are easy and their proof is omitted.

\begin{cor}\label{cor:11:3}
	The function $\omega \mapsto \mi_{\eta ,\omega}(\vect{n_{1,\omega}})=\widetilde \mi_{\eta ,\omega}(\vect{1}_{n_1})=\frac{1}{2}\norm{ J_{Q , \omega} }_1$ is a norm on $\gf_2^*$ which is analytic on $\gf_2^*\setminus W$ for every $\eta\in H$. 
\end{cor}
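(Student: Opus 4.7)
My plan is to reduce the statement to the fact that the trace-class norm is a norm, applied to the linear map $\omega\mapsto J_{Q_\eta,\omega}$, together with the $MW^+$ condition for positive definiteness.

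First I would verify that the three expressions agree on $\gf_2^*\setminus W$. On $\Omega$, the preceding proposition gives $\abs{J_{Q_\eta,\omega}}=\sum_{h=1}^{\overline h}\mi_{\eta,h,\omega}P_{h,\omega}$, so $\tr\abs{J_{Q_\eta,\omega}}=\sum_h 2\,n_{1,h,\omega}\mi_{\eta,h,\omega}$, which is twice the first expression. Similarly, the identity $\widetilde\mi_{\eta,k,\omega}=\mi_{\eta,\rho_{k,\omega},\omega}$ on $\Omega$, combined with $\card\{k\colon \rho_{k,\omega}=h\}=n_{1,h,\omega}$, gives $\sum_k\widetilde\mi_{\eta,k,\omega}=\sum_h n_{1,h,\omega}\mi_{\eta,h,\omega}$. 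Since $\widetilde\mi$ is continuous on all of $\gf_2^*$ and $\gf_2^*\setminus\Omega$ is a proper algebraic subvariety, the equality between the second and third expressions extends to the whole of $\gf_2^*$ by continuity, and both are equal to the first on $\Omega$.

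For analyticity on $\gf_2^*\setminus W$ I would observe that $\omega\mapsto J_{Q_\eta,\omega}=\dd_{Q_\eta}\circ\dd_{B_\omega}$ is linear, hence real-analytic, and $\widehat Q_\eta$-skew-adjoint. Consequently $\omega\mapsto -J_{Q_\eta,\omega}^2$ is an analytic map into the $\widehat Q_\eta$-self-adjoint operators; on $\gf_2^*\setminus W$ the non-degeneracy of $B_\omega$ makes $J_{Q_\eta,\omega}$ invertible, so $-J_{Q_\eta,\omega}^2$ is strictly positive. Its principal square root, namely $\abs{J_{Q_\eta,\omega}}$, is then produced by holomorphic functional calculus along a contour enclosing the positive spectrum, so it depends analytically on $\omega$; composing with the (linear) trace gives analyticity of $\tr\abs{J_{Q_\eta,\omega}}$ and hence of the function in the statement.

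For the norm axioms, the linearity of $\omega\mapsto J_{Q_\eta,\omega}$ immediately gives the homogeneity $\norm{J_{Q_\eta,r\omega}}_1=\abs{r}\norm{J_{Q_\eta,\omega}}_1$, while the triangle inequality reduces to subadditivity of the Schatten $1$-norm on $\mathcal L(\gf_1)$, which is a standard fact once $\gf_1$ is equipped with the inner product $\widehat Q_\eta$. For positive definiteness I would invoke the $MW^+$ assumption: $\norm{J_{Q_\eta,\omega}}_1=0$ forces $J_{Q_\eta,\omega}=0$, so $\dd_{B_\omega}=0$ because $\dd_{Q_\eta}$ is invertible, whence $B_\omega\equiv 0$, so $\omega$ annihilates $[\gf,\gf]=\gf_2$ and thus vanishes. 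The main delicate point is the analyticity step: although the individual eigenvalues of $J_{Q_\eta,\omega}$ can fail to be analytic at crossings inside $\gf_2^*\setminus\Omega$, $\tr\abs{J_{Q_\eta,\omega}}$ nonetheless remains analytic throughout $\gf_2^*\setminus W$ because the holomorphic functional calculus realising $\abs{J_{Q_\eta,\omega}}$ can be set up globally on a contour that avoids any individual-eigenvalue choice.
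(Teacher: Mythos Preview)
Your argument is correct. The paper itself omits the proof entirely (it reads ``The following two results are easy and their proof is omitted''), so there is no approach to compare against; the route you take---reducing to linearity of $\omega\mapsto J_{Q_\eta,\omega}$, the Schatten $1$-norm axioms, and holomorphic functional calculus for $\sqrt{-J_{Q_\eta,\omega}^2}$ on $\gf_2^*\setminus W$---is exactly the natural one and presumably what the author had in mind.

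Two minor remarks. First, the appeal to $MW^+$ in the positive-definiteness step is superfluous: for any $2$-step stratified group one has $\gf_2=[\gf_1,\gf_1]$, so $B_\omega\equiv 0$ already forces $\omega=0$. Second, the expression $\mi_{\eta,\omega}(\vect{n_{1,\omega}})$ is literally defined only on $\Omega$, not on all of $\gf_2^*\setminus W$; the corollary is really asserting that the globally defined quantity $\frac{1}{2}\norm{J_{Q_\eta,\omega}}_1=\widetilde\mi_{\eta,\omega}(\vect{1}_{n_1})$ is a norm that is analytic on $\gf_2^*\setminus W$, and that on $\Omega$ it coincides with $\mi_{\eta,\omega}(\vect{n_{1,\omega}})$. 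Your density-and-continuity argument handles this correctly.
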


\begin{prop}\label{prop:11:3}
	The mapping
	\[
	\gf_1 \times \Omega  \ni (x,\omega)\mapsto \sum_{h=1}^{\overline{h} } x_{h,\omega}=\sqrt[4]{-J_{Q,\omega}^2}(x)
	\]
	extends uniquely to a continuous function on $\gf_1\times\gf_2^*$ which is analytic on $\gf_1 \times (\gf_2^*\setminus W)$.
\end{prop}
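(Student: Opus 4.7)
The strategy is to identify $\sum_{h=1}^{\overline h}x_{h,\omega}$, on $\Omega$, with the value at $x$ of an operator-valued function of $\omega$ that is intrinsically defined for all $\omega\in\gf_2^*$, continuous globally, and analytic off $W$. Concretely, I would first verify the identity $\sum_{h=1}^{\overline h}x_{h,\omega}=\sqrt[4]{-J_{Q_{\eta_0},\omega}^2}(x)$ on $\Omega$: since $B_\omega$ is skew-symmetric and $\widehat Q_{\eta_0}$ is the inner product on $\gf_1$ dual to $Q_{\eta_0}$, a short computation in a $\widehat Q_{\eta_0}$-orthonormal basis shows that $J_{Q_{\eta_0},\omega}$ is skew-adjoint with respect to $\widehat Q_{\eta_0}$; in particular $\abs{J_{Q_{\eta_0},\omega}}^2=-J_{Q_{\eta_0},\omega}^2$, hence $\sqrt{\abs{J_{Q_{\eta_0},\omega}}}=\sqrt[4]{-J_{Q_{\eta_0},\omega}^2}$. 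Combining this with the formula $\sum_{h=1}^{\overline h}\mi_{\eta_0,h,\omega}P_{h,\omega}=\abs{J_{Q_{\eta_0},\omega}}$ from the preceding proposition and the pairwise orthogonality $P_{h,\omega}P_{h',\omega}=0$ for $h\neq h'$ gives the identity on $\Omega$.

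Next, since $\omega\mapsto B_\omega$ is linear and $\dd_{Q_{\eta_0}}$ is fixed, the map $\omega\mapsto -J_{Q_{\eta_0},\omega}^2$ is a polynomial map into $\Lc(\gf_1)$ taking values in the closed cone of $\widehat Q_{\eta_0}$-self-adjoint positive semidefinite operators. Continuity of the extension therefore reduces to continuity of the fourth root on this cone, which is a standard fact (e.g., by uniformly approximating $\sqrt{\,\cdot\,}$ by polynomials on a compact interval containing all relevant spectra, then iterating). Joint continuity of the extension in $(x,\omega)$ on $\gf_1\times\gf_2^*$ is then clear, and uniqueness follows from the density of $\Omega$ in $\gf_2^*$, which holds because $W$ and $\gf_2^*\setminus\Omega$ are proper algebraic subvarieties.

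For analyticity on $\gf_1\times(\gf_2^*\setminus W)$, the key point is that $\omega\notin W$ means $B_\omega$ is non-degenerate, so $J_{Q_{\eta_0},\omega}$ is invertible and $-J_{Q_{\eta_0},\omega}^2$ is strictly positive definite. On the open cone of strictly positive operators the fourth root is real-analytic, as follows either from the holomorphic functional calculus (writing $A^{1/4}$ as a Cauchy integral whose contour can be chosen uniformly on a neighbourhood of any given $A_0$) or from the implicit function theorem applied to the squaring map. Composing with the polynomial map $\omega\mapsto -J_{Q_{\eta_0},\omega}^2$ and with evaluation at $x$ then yields the claim. The only step requiring any care beyond standard linear algebra and functional calculus is the skew-adjointness of $J_{Q_{\eta_0},\omega}$ with respect to $\widehat Q_{\eta_0}$, which is precisely what forces $\abs{J}^2=-J^2$ and hence makes the reformulation available.
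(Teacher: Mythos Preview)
Your proposal is correct and follows exactly the route the paper has in mind: the statement itself already records the key identification $\sum_h x_{h,\omega}=\sqrt[4]{-J_{Q_{\eta_0},\omega}^2}(x)$, and the paper omits the proof as easy, relying implicitly on precisely the standard facts you spell out (skew-adjointness of $J_{Q_{\eta_0},\omega}$ with respect to $\widehat Q_{\eta_0}$, continuity of the positive fourth root on the positive semidefinite cone, and analyticity on the strictly positive cone via the holomorphic functional calculus). Your write-up simply makes explicit what the paper leaves to the reader.
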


\begin{deff}
	Define $G_\omega$, for every $\omega\in \gf_2^*$,  as the quotient of $G$ by its normal subgroup $\exp_G(\ker \omega)$. 
	
	Then, $G_0$ is the abelianization of $G$, and we identify it with $\gf_1$.
	If $\omega\neq 0$, then we shall identify $G_\omega$ with $\gf_1 \oplus \R$, endowed with the product
	\[
	(x_1,t_1) (x_2,t_2)\coloneqq\left(x_1+x_2, t_1+t_2+\frac{1}{2}B_\omega(x_1,x_2) \right)
	\]
	for every $x_1,x_2\in \gf_1$ and for every $t_1,t_2\in \R$. Hence, 
	\[
	\pi_\omega(x,t)=(x,\omega(t))
	\]
	for every $(x,t)\in G$.
\end{deff}

\begin{prop}\label{prop:11:11}
	Define 
	\[
	\widetilde \pi\colon \bigcup_{\omega \in \Omega} \Set{\omega}\times G_\omega\ni (\omega,(x,t))\mapsto \omega\in \Omega ,
	\]
	and identify the domain of $\widetilde \pi$ with $\Omega\times(\gf_1 \oplus \R)$ as an analytic manifold, so that $\varpi$ becomes an analytic submersion. 
	
	Then, $\widetilde \pi$ defines a fibre bundle with base $\Omega$ and fibres isomorphic to $G'\coloneqq \Hd^{n_1}\oplus \R^d$. 
	More precisely, for every $\omega_0\in \Omega$, there is an analytic trivialization $(U,\psi)$ of $\widetilde \pi$ such that the following hold:
	\begin{itemize}
		\item $U$ is an open neighbourhood of $\omega_0$ in $\Omega$;
		
		\item $\psi\colon \widetilde \pi^{-1}(U)\to U\times G'$ is an analytic diffeomorphism such that $\pr_1\circ \psi= \varpi$ and such that $\psi_\omega\coloneqq \pr_2\circ\psi\colon \widetilde \pi^{-1}(\omega)\to G'$ is a group isomorphism for every $\omega\in U$;
		
		\item if $(X_1,\dots,X_{2 n_1}, T, Y_1,\dots, Y_d)$ is a basis of left-invariant vector fields on $G'$ which at the origin induce the partial derivatives along the coordinate axes, then
		\[
		\dd (\psi_\omega \circ\pi_\omega)(\Lc_\eta )= -\sum_{k=1}^d Y_k^2 - \sum_{k=1}^{n_1} \widetilde \mi_{\eta ,k,\omega} (X_{k}^2+ X_{n_1+ k}^2)
		\]
		and
		\[
		\dd (\psi_\omega \circ\pi_\omega)(T_\ell)=\omega(T_\ell) T
		\]
		for every $\eta\in H$, for every $\ell=1,\dots, n_2$, and for every $\omega\in U$.
	\end{itemize}
\end{prop}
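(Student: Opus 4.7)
The plan is to build, for each $\omega_0 \in \Omega$, a small open neighborhood $U$ of $\omega_0$ in $\Omega$ together with an analytic local frame of $\gf_1$ over $U$ that simultaneously (i) puts the restriction of $B_\omega$ to the image of $\sum_h P_{h,\omega}$ into its standard symplectic form, (ii) spans the kernel $\ker B_\omega \cap \gf_1$ (of constant dimension $d$ on $\Omega$) in its last $d$ vectors, and (iii) diagonalizes each $Q_\eta$ with the eigenvalues $\widetilde{\mi}_{\eta, k, \omega}$ prescribed by the statement. Given such a frame, I would define $\psi_\omega$ as the induced change of coordinates on $\gf_1 \oplus \R \cong G_\omega$: because $B_\omega$ is put in standard form, $\psi_\omega$ becomes a Lie group isomorphism onto $G' = \Hd^{n_1} \oplus \R^d$, and the required formulas for $\dd(\psi_\omega \circ \pi_\omega)(\Lc_\eta)$ and $\dd(\psi_\omega \circ \pi_\omega)(T_\ell)$ then follow by direct computation in these coordinates.

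To produce such a frame, first use the analytic projectors $P_{h, \omega}$ of the preceding proposition to decompose $\gf_1 = \bigoplus_{h=1}^{\overline h} V_{h, \omega}$ with $V_{h, \omega} \coloneqq P_{h, \omega}(\gf_1)$. Each $P_{h, \omega}$ is $B_\omega$- and $\widehat Q_\eta$-self-adjoint, so the decomposition is orthogonal for both families of forms and the construction reduces to each $V_{h, \omega}$ separately. On $V_{h, \omega}$, the restriction of $J_{Q_{\eta_0}, \omega}$ has absolute value $\mi_{\eta_0, h, \omega}\, I$, so its normalization by $\mi_{\eta_0, h, \omega}$ is a complex structure compatible with both $B_\omega$ and $\widehat Q_{\eta_0}$, which makes $V_{h, \omega}$ into a Kähler vector space depending analytically on $\omega$. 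An analytic $B_\omega$-symplectic and $\widehat Q_{\eta_0}$-unitary frame of $V_{h, \omega}$ over a small neighborhood of $\omega_0$ is then obtained by running an analytic Gram--Schmidt procedure starting from a fixed frame at $\omega_0$; concatenating these frames in the order prescribed by the mapping $\rho$, and appending an analytic $\widehat Q_{\eta_0}$-orthonormal basis of the constant-dimensional kernel $\ker B_\omega \cap \gf_1$, yields the desired frame of $\gf_1$.

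The remaining verification is routine: by item 3 of the preceding proposition, on each $V_{h, \omega}$ the form $Q_\eta$ is proportional to $Q_{\eta_0}$ with ratio $\mi_{\eta, h, \omega} / \mi_{\eta_0, h, \omega}$, so the coefficient of $X_k^2 + X_{n_1 + k}^2$ in the expansion of $\Lc_\eta$ equals $\mi_{\eta, \rho_{k, \omega}, \omega} = \widetilde{\mi}_{\eta, k, \omega}$, while the identity $\dd(\psi_\omega \circ \pi_\omega)(T_\ell) = \omega(T_\ell) T$ is immediate from $\pi_\omega(x, t) = (x, \omega(t))$. The main obstacle lies in the analytic choice of the symplectic--unitary frame on each $V_{h, \omega}$: at any fixed $\omega$ the existence of a unitary symplectic basis of a Kähler vector space is trivial, but guaranteeing analytic dependence on $\omega$ requires either an explicit analytic Gram--Schmidt argument based on the analytic data $(P_{h, \omega}, J_{Q_{\eta_0}, \omega})$ or an appeal to the local analytic triviality of the principal bundle of such frames over a sufficiently small (analytically contractible) neighborhood of $\omega_0$ in $\Omega$.
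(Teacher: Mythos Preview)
Your proposal is essentially the same approach as the paper's: the paper omits the proof and merely says it ``basically consists in using the projectors $P_h$ to propagate locally a given basis of eigenvectors and then in `symplectifying' the new basis in order to meet the requirements,'' which is exactly your plan of taking the analytic eigenspace decomposition $\gf_1=\bigoplus_h V_{h,\omega}$, producing on each block an analytic symplectic--unitary frame (via a Gram--Schmidt/complex-structure argument), and reading off the formulas for $\Lc_\eta$ and $T_\ell$.

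Two minor points. First, in the present $MW^+$ setting one has $\omega\in\Omega\subseteq\gf_2^*\setminus W$, so $B_\omega$ is non-degenerate on $\gf_1$ and $\sum_h P_{h,\omega}=I_{\gf_1}$; hence the ``kernel $\ker B_\omega\cap\gf_1$ of constant dimension $d$'' you append is trivial here ($d=0$), and the $Y_k$-part of the formula is vacuous. Second, the proportionality of the $Q_\eta$ on each $V_{h,\omega}$ is not ``item~3 of the preceding proposition'' but rather a consequence of the eigenvalue structure (or of Proposition~\ref{prop:10:9}): on $V_{h,\omega}$ the normalised operators $J_{Q_\eta,\omega}/\mi_{\eta,h,\omega}$ are commuting complex structures, and positivity of all $Q_\eta$ forces them to coincide, whence $\widehat Q_\eta=(\mi_{\eta,h,\omega}/\mi_{\eta_0,h,\omega})\,\widehat Q_{\eta_0}$ on $V_{h,\omega}$. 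With these adjustments your argument is correct and matches the paper's intended proof.
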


The proof is omitted. It basically consists in using the projectors $P_h$ to propagate locally a given basis of eigenvectors and then in `symplectifying' the new basis in order to meet the requirements.

\smallskip

One may then give formulae for $\beta_{\Lc_A}$ and $\chi_{\Lc_A}$ (see~\cite[4.4.1]{Martini} for the general procedure).  
Nevertheless, we shall (almost) only need to know that $\beta_{\Lc_A}$ is equivalent to $\chi_{\Sigma}\cdot \Hc^{n_2}$, where
\[
\Sigma\coloneqq \left\{ (\mi_\omega(\vect{n_{1,\omega}}+2 \gamma),\omega(\vect{T}))\colon \omega\in \gf_2^*, \gamma\in \N^{\overline h}\right\}.
\]

\section{Property $(RL)$}\label{sec:5}

In this section we shall present several sufficient conditions for the validity of property $(RL)$. 
Unlike in the cases considered in~\cite{Calzi}, we are able to prove continuity results for $\chi_{\Lc_A}$, even though under rather strong assumptions (cf.~Theorem~\ref{prop:19:4}); we then deduce property $(RL)$ under slightly weaker assumptions (cf.~Theorem~\ref{prop:19:3}). 
Let us comment a little more on the assumptions of Theorem~\ref{prop:19:3}. Besides the conditions that $\Omega$ is $\gf_2^*\setminus \Set{0}$ and that $\mi$ is constant on the unit sphere associated with the norm $\mi_{\eta_0}(\vect{n_1})$, we need to add the condition that $\dim_\R \mi_\omega(\R^{\overline h})= \dim_\Q \mi_\omega(\Q^{\overline h})$ for some, and then all, $\omega\in \Omega$. 
Even though this condition may appear peculiar, we cannot get rid of it without running into counterexamples, as Theorem~\ref{teo:15:1} shows.
Furthermore, observe that, even though Theorem~\ref{teo:15:1} is the main application of  Theorems~\ref{prop:19:4} and~\ref{prop:19:3}, the latter result can be applied to more general homogeneous sub-Laplacians on $H$-type groups. 
For example, consider the complexified Heisenberg group $\Hd^1_\C$, whose Lie algebra is endowed with an orthonormal basis $X_1,X_2,X_3,X_4,T_1,T_2$ such that $[X_1,X_3]=[X_4,X_2]=T_1$ and $[X_2,X_3]=[X_1,X_4]=T_2$, while the other commutator vanish. 
If $\Lc= -(a X_1^2+ b X_2^2 + c X_3^2+ d X_4^2)$ with $a,b,c,d>0$,  $\sqrt{ \frac{a}{b} }, \sqrt{\frac{c}{d}}\in \Q$, and either $a=b$ or $c=d$, then Theorem~\ref{prop:19:3} applies, but Theorem~\ref{teo:15:1} does not unless $a=b$ and $c=d$.

The next results concern families of the form $(\Lc, (-i T_1,\dots, - i T_{n_2'}))$ for $n_2'<n_2$. Notice that, in this case, we do not only reduce the number of elements of $\gf_2$, but we restrict to the case in which $\card(H)=1$. 
In this case, indeed, the spectrum of $(\Lc, (-i T_1,\dots, - i T_{n_2'}))$ is no longer a countable union of semianalytic sets, but a convex cone, so that things are somewhat easier and we can prove more general results than for the `full family' $\Lc_A$. 
In Theorem~\ref{prop:8}, we show that property $(RL)$ holds if $W=\Set{0}$. 
With reference to the above example in the complexified Heisenberg group, this is the case when $a c \neq b d$ and $a d \neq b c$.

Our last result concerns the case of general $MW^+$ groups (cf.~Theorem~\ref{prop:19:2}); even though its hypotheses are more restrictive than in the preceding one, it nonetheless applies when $G$ is a product of Heisenberg groups and $\Lc$ is a sum of homogeneous sub-Laplacians on each factor (cf.~Proposition~\ref{prop:13}).

\subsection{The Case $n_2'=n_2$}

We begin with a technical lemma.
Here, if $V$ is a finite-dimensional vector space, then $\Ec^0(V)$ denotes the space of continuous functions on $V$ with the topology of locally uniform convergence, while $\Ec'^0_c(V)$ denotes its dual (that is, the space of Radon measures with compact support), endowed with the topology of compact convergence.

\begin{lem}\label{lem:A:23}
	Let $V$ and $\widetilde V$ be two finite-dimensional vector spaces over $\R$, $L$ a discrete subgroup of $V$,  $C$ the convex envelope of $\R_+ F$ for some finite subset $F$ of $L$ which generates $V$, and $\mi\colon V\to \widetilde V$ a linear mapping which is proper on $C$. 
	Assume that $L\cap \ker \mi$ generates $\ker \mi$, and take $\xi\in \mi(C)$.
	Define
	\begin{align*}
		& V_\xi\coloneqq \mi^{-1}(\xi) & &S_\xi\coloneqq  V_\xi\cap C \\
		& n_\xi\coloneqq \dim_\R S_\xi  &
		&\nu_\xi\coloneqq \frac{1}{\Hc^{n_\xi}(S_\xi)}\chi_{S_\xi} \cdot \Hc^{n_\xi}. 
	\end{align*}	
	Take $x_0\in C$ and define, for every $\lambda\in \R^*_+$ and for every $\gamma\in \mi (x_0+ L\cap C) $, 
	\[
	\nu_{\lambda,\gamma}=\frac{1}{c_\gamma}\sum_{\substack{\gamma'\in L\cap C\\\gamma=\mi (x_0+ \gamma')}} \delta_{\lambda(x_0+ \gamma')},
	\]
	where $c_\gamma=\card\left(  \mi^{-1}(\gamma)\cap  (x_0+ L\cap C) \right) $.
	Then,
	\[
	\lim_{ \substack{ (\lambda\gamma,\lambda)\to (\xi,0)\\ \gamma\in  \mi(x_0+  L\cap C)} } \nu_{\lambda, \gamma}=\nu_\xi
	\]
	in $\Ec'^0_c(V)$.
\end{lem}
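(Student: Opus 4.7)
The plan is to test $\nu_{\lambda,\gamma}$ against an arbitrary $\phi\in C_c(V)$ and to show $\int\phi\,\dd\nu_{\lambda,\gamma}\to\int\phi\,\dd\nu_\xi$; since $\mi$ is proper on $C$, the supports of the $\nu_{\lambda,\gamma}$ lie in a common compact set for $(\lambda\gamma,\lambda)$ close to $(\xi,0)$, so weak convergence against $C_c(V)$ is enough. Set $V_0:=\ker\mi$, $L_0:=L\cap V_0$ (a full-rank lattice in $V_0$ by hypothesis, of covolume $c_0$), and $d_0:=\dim V_0$.

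For each admissible $\gamma$ pick a lift $\gamma'_0\in L$ of $\gamma-\mi(x_0)$, so that $L\cap\mi^{-1}(\gamma-\mi(x_0))=\gamma'_0+L_0$. Using that $\lambda C=C$ (because $C$ is a cone) one rewrites
\[
\int\phi\,\dd\nu_{\lambda,\gamma}=\frac{\sum_{\ell\in L_0}\phi(\lambda x_0+\lambda\gamma'_0+\lambda\ell)\,\chi_C(\lambda\gamma'_0+\lambda\ell)}{\sum_{\ell\in L_0}\chi_C(\lambda\gamma'_0+\lambda\ell)}.
\]
After translating $\gamma'_0$ by a suitable element of $L_0$ (which leaves both sums invariant) one may assume $\lambda\gamma'_0$ stays bounded; since $\mi(\lambda\gamma'_0)=\lambda\gamma-\lambda\mi(x_0)\to\xi$, it then accumulates on the fibre $V_\xi$. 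Multiplying numerator and denominator by $\lambda^{d_0}$ turns each into a Riemann sum for the scaled lattice $\lambda L_0\subset V_0$, and the standard equidistribution
\[
\lambda^{d_0}\sum_{\ell\in L_0}\psi(\lambda\ell)\to c_0^{-1}\int_{V_0}\psi\,\dd\Hc^{d_0}\qquad(\lambda\to 0^+)
\]
applies to the Riemann-integrable integrands $\psi=\phi\chi_C$ and $\psi=\chi_C$, whose discontinuity sets lie on the piecewise-affine boundary of $C$. In the generic case $n_\xi=d_0$ the two limits are $c_0^{-1}\int_{S_\xi}\phi\,\dd\Hc^{d_0}$ and $c_0^{-1}\Hc^{d_0}(S_\xi)$, whose ratio is $\int\phi\,\dd\nu_\xi$.

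The main obstacle is the degenerate regime $n_\xi<d_0$, in which $\Hc^{d_0}(S_\xi)=0$ and the argument above yields $0/0$; here the correct scaling transverse to $\mathrm{aff}(S_\xi)$ is weaker. The key geometric input is the rationality of $C$: since its extreme rays are $\R_+F$ with $F\subseteq L$, the minimal face of $C$ containing $S_\xi$ is spanned by elements of $L$, hence $\mathrm{aff}(S_\xi)$ is parallel to a subspace $V_0'\subseteq V_0$ obtained as the intersection of two rational subspaces, so that $L_0':=L_0\cap V_0'$ is a full-rank lattice in $V_0'$. Splitting $V_0=V_0'\oplus V_0''$ and the lattice sum accordingly, the $L_0'$-factor equidistributes inside $\mathrm{aff}(S_\xi)$ by the non-degenerate argument of the previous paragraph, while the transverse factor collapses to a Dirac-type concentration onto $\mathrm{aff}(S_\xi)$ by Hausdorff-continuity of the slicing map $\lambda\gamma\mapsto C\cap\mi^{-1}(\lambda\gamma)$ as $\lambda\gamma\to\xi$ (the slice being a polytope whose $V_0''$-extent shrinks to $0$). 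Combining the two limits recovers $\nu_\xi=\Hc^{n_\xi}(S_\xi)^{-1}\chi_{S_\xi}\cdot\Hc^{n_\xi}$ in the limit.
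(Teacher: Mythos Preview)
Your overall strategy coincides with the paper's: reduce to vague convergence using properness, then analyze the lattice sums. The generic case $n_\xi=d_0$ is essentially correct, though the Riemann-sum argument needs slightly more care than you indicate, since the integrand $\ell'\mapsto \phi(\lambda x_0+\lambda\gamma'_0+\ell')\chi_C(\lambda\gamma'_0+\ell')$ depends on $\lambda$ through the moving base point $\lambda\gamma'_0$; one should pass to a subnet along which $\lambda\gamma'_0$ converges (as the paper does via an ultrafilter) and then argue uniformly.

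The degenerate case $n_\xi<d_0$, however, contains a genuine gap. You propose to split $V_0=V_0'\oplus V_0''$ and ``the lattice sum accordingly'', with the $L_0'$-factor equidistributing and the transverse factor collapsing. But the sum does not factor: the domain of summation is the set of lattice points in the polytope $S_{\lambda\gamma}$, and this polytope is \emph{not} a product of a set in $V_0'$ with a set in $V_0''$. The set of $\ell'\in L_0'$ contributing at a given transverse level $\ell''$ depends on $\ell''$, so there is no well-defined ``$L_0'$-factor'' to which the non-degenerate argument applies. Your identification of the rational subspace $V_0'$ and the full-rank sublattice $L_0'$ is correct and matches the paper, but the way you combine the two directions is not justified.

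The paper handles this differently. Rather than factoring the sum, it \emph{projects} $\nu_{\lambda,\gamma}$ onto the affine span $V'_\xi$ of $S_\xi$ along a rational complement; Hausdorff convergence of the slices makes this projection introduce a vanishing error. The projected measure has the form $\sum_x c_{\xi,\lambda,\gamma}(x)\,\delta_x$, where $c_{\xi,\lambda,\gamma}(x)$ counts the preimages of $x$. The key step is a combinatorial lattice argument: if $x$ lies at distance $>\varepsilon$ from the relative boundary of $S_\xi$ and $y$ is any other point in the support, then eventually every preimage $y'$ of $y$ can be translated by $x-y\in \lambda(L\cap W_\xi)$ to yield a preimage of $x$ still in $C$, whence $c_{\xi,\lambda,\gamma}(x)\ge c_{\xi,\lambda,\gamma}(y)$. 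This forces $c_{\xi,\lambda,\gamma}$ to be constant on the $\varepsilon$-interior of $S_\xi$ eventually, and dominated convergence finishes. Your intuition that the transverse direction ``collapses'' is right, but the mechanism that makes the limit \emph{uniform} on $S_\xi$---rather than some other measure concentrated on $S_\xi$---is precisely this multiplicity argument, which your sketch does not supply.
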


\begin{proof}
	{\bf1.}  Define $\Sigma\coloneqq \mi(x_0+ L\cap C)$, and define $\Ff_\xi$ as the filter `$(\lambda,\gamma)\in \R_+^*\times \Sigma,\, (\lambda \gamma,\lambda)\to (\xi,0)$.'
	Observe that it will suffice to prove that $\nu_{\lambda, \gamma}$ converges vaguely to $\nu_\xi$ along $\Ff_\xi$. 
	Indeed, the $\nu_{\lambda,\gamma}$ are probability measures supported in
	\begin{equation}\label{eq:A:1}
	S_{\lambda \gamma}\subseteq C\cap \mi^{-1}(K)
	\end{equation}
	eventually along $\Ff_\xi$, where $K$ is a compact neighbourhood of $\xi$ in $\widetilde V$. Since $\mi$ is proper on $C$, the assertion follows.
	
	Now, let us prove that we may reduce to the case in which $x_0=0$. 
	Indeed, define
	\[
	\nu^0_{\lambda,\gamma}\coloneqq \frac{1}{c_\gamma} \sum_{\substack{\gamma'\in L\cap C\\ \gamma=\mi(x_0+\gamma')}} \delta_{\lambda \gamma'}.
	\]
	It will then suffice to prove that $\nu_{\lambda,\gamma}-\nu_{\lambda,\gamma}^0$ converges vaguely to $0$ along $\Ff_\xi$.
	However, take $\phi \in C_c(V)$ and $\eps>0$. 
	Then, there is a neighbourhood $U$ of $0$ in $V$ such that $\abs{\phi(x_1)-\phi(x_2)}<\eps$ for every $x_1,x_2\in V$ such that $x_1-x_2\in U$.
	Therefore, $\abs*{\langle \nu_{\lambda,\gamma}-\nu^0_{\lambda,\gamma},\phi\rangle}<\eps$ as long as $\lambda x_0\in U$, hence eventually along $\Ff_\xi$. The assertion follows.

	{\bf2.} Observe that $C$ is a polyhedral convex cone. In addition, let $n$ be the dimension of $V$, and let $(F_h)_{h\in H}$ be  the (finite) family of $(n-1)$-dimensional facets of $C$; observe that $F_h$ is a convex cone for every $h\in H$, so that $0\in F_h$.
	Take, for every $h\in H$, some $p_h\in V^*$ such that $F_h= \ker p_h\cap C$ and $p_h(C)\subseteq \R_+$. 
	Then, $C$ is the set of $x\in V$ such that $p_h(x) \Meg 0$ for every $h\in H$, and $L\cap \ker p_h$ generates $\ker p_h$ for every $h\in H$.
	
	In addition, let $H_\xi$ be the set of $h\in H$ such that $p_h(S_\xi)=\Set{0}$, and let $H_\xi'$ be its complement in $H$. 
	We shall write $p_{H_\xi}$ and $ p_{H'_\xi}$ instead of $(p_h)_{h\in H_\xi}$ and $(p_h)_{h\in H'_\xi}$, respectively.
	Define $V'_\xi\coloneqq V_\xi\cap \ker p_{H_\xi} $.
	Then, $V'_\xi\cap p_{H'_\xi}^{-1}\left((\R_+^*)^{H'_\xi}\right) $ is the interior of $S_\xi$ in $V'_\xi$; since by convexity $V'_\xi\cap p_{H'_\xi}^{-1}\left((\R_+^*)^{H'_\xi}\right)$ is not empty, 
	 $V'_\xi$ is the affine space generated by $S_\xi$.

	{\bf3.} Define $W_\xi\coloneqq V'_\xi-V'_\xi$, and observe that $L\cap W_\xi$ generates $W_\xi$. 
	Indeed,  the linear mapping $(\mi, p_{H_\xi})\colon V\to \widetilde V\times \R^{H_\xi}$ maps 
	$L$ into the discrete subgroup $ \mi(L)\times \prod_{h\in H_\xi}p_h(L) $ of $\widetilde V\times \R^{H_\xi}$, and $W_\xi$ is the kernel of $(\mi, p_{H_\xi})$, whence the assertion.
	
	Therefore, there are two subspaces $W'_\xi$ and $W''_\xi$ of $V$ such that
	the following hold (cf.~\cite[Exercises 2 and 3 of Chapter VII, § 1]{BourbakiGT2}):
	\begin{itemize}
		\item $W_\xi \oplus W'_\xi=V_0$ and $V_0\oplus W''_\xi=V$;
		
		\item $L\cap W'_\xi$ and $L\cap W''_\xi$ generate $W'_\xi$ and $W''_\xi$, respectively, over $\R$;
		
		\item $(L\cap W_\xi)\oplus (L \cap W'_\xi)\oplus (L\cap W''_\xi)=L$ as abelian groups.
	\end{itemize}
	
	Therefore, we may endow  $V$ and $\widetilde V$ with two scalar products such that  $W_\xi$, $W'_\xi$, and $W''_\xi$ are orthogonal, and $\mi$ induces an isometry of $W''_\xi$ into $\widetilde V$.
	We may further assume that $\norm{p_h}\meg 1$ for every $h\in H$.

	{\bf4.} Define, for $\lambda> 0$ and $\gamma\in \Sigma$,
	\[
	r_{\xi,\lambda,\gamma}\coloneqq \inf\Set{ r>0\colon S_{{\lambda}\gamma}\subseteq B(S_\xi,r)  }+{\lambda},  
	\]
	so that $S_{{\lambda}\gamma  }\subseteq {B}(S_\xi, r_{\xi, \lambda,\gamma})$.
	Let us prove that $r_{\xi,\lambda,\gamma}$ converges to $0$ along $\Ff_\xi$.
	
	Indeed, let $\Uf$ be an ultrafilter finer than $\Ff_\xi$. Denote by $\Kc$ the space of non-empty compact subsets of $V$ endowed with the Hausdorff distance $d_H$. By~\eqref{eq:A:1},~\cite[Proposition 10 of Chapter I, § 6, No.\ 7]{BourbakiGT1} and~\cite[Theorem 6.1]{AmbrosioFuscoPallara}, it follows that $S_{{\lambda}\gamma}$ has a (unique) limit $S$ in $\Kc$ along $\Uf$.
	Now, for every closed neighbourhood $K$ of $\xi$ in $\widetilde V$,
	\[
	S_{{\lambda}\gamma} \subseteq C\cap \mi^{-1}\left(K\right)
	\]
	as long as ${\lambda}\gamma  \in K $, 
	so that, by passing to the limit along $\Uf$, 
	\[
	S\subseteq C \cap  \mi^{-1}\left(K\right).
	\]
	By the arbitrariness of $K$, it follows that $S\subseteq S_\xi$. 
	Therefore,
	\[
	r_{\xi,\lambda,\gamma}\meg  d_H(S, S_{{\lambda}\gamma})+ {\lambda},
	\]
	so that $r_{\xi,\lambda,\gamma}$ tends to $0$ along $\Uf$. 
	Thanks to~\cite[Proposition 2 of Chapter I, § 7, No.\ 2]{BourbakiGT1}, the arbitrariness of $\Uf$ implies that $r_{\xi, \lambda,\gamma}$ converges to $0$ along $\Ff_\xi$.
	
	{\bf5.} Now, let $\pi_\xi$ be the affine projection of $V$ onto $V'_\xi$ with fibres parallel to $W'_\xi \oplus W''_\xi$. 
	Reasoning as in~{\bf1} and taking~{\bf4} into account, we see that  $\nu_{\lambda,\gamma}-(\pi_\xi)_*(\nu_{\lambda,\gamma})$ converges vaguely to $0$ along $\Ff_\xi$, so that it will suffice to prove that $(\pi_\xi)_*(\nu_{\lambda,\gamma})$ converges vaguely to $\nu_\xi$ along $\Ff_\xi$.
	Now, if $n_\xi=0$, then $(\pi_\xi)_*(\nu_{\lambda, \gamma})=\delta_{\xi'}=\nu_\xi$, where $\xi'$ is the unique element of $S_\xi$. Therefore, we may assume that $n_\xi>0$.
	
	Next, take $\eps>0$ and $x,y\in S_{\xi,\lambda, \gamma}\coloneqq \Supp{(\pi_\xi)_*(\nu_{\lambda,\gamma})}$. 
	Assume that $B(x, \eps)\cap p_{H_\xi}^{-1}\left(\R_+^{H_\xi}\right) \subseteq C$, and that $r_{\xi,\lambda,\gamma}<\eps$. 
	Take $y'\in\Supp{\nu_{\lambda, \gamma}}$ such that $\pi_\xi(y')=y$, and let us prove that $y'+x-y\in \Supp{\nu_{\lambda,\gamma}}$.
	Indeed, it is clear that $x-y\in \lambda L\cap W_\xi$, so that $y'+x-y\in \lambda L$. 
	Hence, it will suffice to prove that $y'+x-y\in C$.
	Now, since $y'\in S_{\lambda \gamma}\subseteq B(S_\xi, \eps)$, it follows that there is $x'\in S_\xi$ such that $\abs{y'-x'}<\eps$, so that
	\[
	\eps^2 > \abs{y'-x'}^2= \abs{y-x'}^2+ \abs{y'-y}^2
	\]
	since $y-x'\in W_\xi$ and $y'-y\in W'_\xi\oplus W''_\xi$. 
	Therefore, $\abs{y'-y}<\eps$; since, in addition, $p_h(y'+x-y)=p_h(y')\Meg 0$ for every $h\in H_{\xi}$, it follows that $y'+x-y\in B(x,\eps)\cap p_{H_\xi}^{-1}\left(\R_+^{H_\xi}\right)\subseteq C$. 
	
	{\bf6.} By the arguments of~{\bf5} above, we see that there is a function $c_{\xi, \lambda ,\gamma}$ on $S_{\xi,\lambda, \gamma}$ such that
	\[
	(\pi_\xi)_*(\nu_{\lambda, \gamma})= \sum_{x\in S_{\xi,\lambda, \gamma}} c_{\xi,\lambda, \gamma}(x) \delta_x,
	\]
	and such that $c_{\xi,\lambda, \gamma}(x) \Meg c_{\xi,\lambda, \gamma}(y) $ eventually along $\Ff_\xi$ whenever $x,y\in S_{\xi,\lambda, \gamma}$ and $B(x,\eps)\cap p_{H_\xi}^{-1}\left(\R_+^{H_\xi}\right) \subseteq C$ for some fixed $\eps>0$.
	In particular, $c_{\xi,\lambda, \gamma}$ is constant on the set of $x\in S_{\xi,\lambda, \gamma}$ such that $B(x,\eps)\cap p_{H_\xi}^{-1}\left(\R_+^{H_\xi}\right)\subseteq C$.
	
	Now, let us prove that, if $\eps\meg\min_{h\in H_\xi'} \min_{S_\xi} p_h$ and if $x\in V'_\xi$ and $B(x,\eps)\cap V'_\xi\subseteq C$, then $B(x,\eps)\cap p_{H_\xi}^{-1}\left(\R_+^{H_\xi}\right)\subseteq C$. 
	Indeed, take $x'\in B(x,\eps)$, and assume that $p_h(y)\Meg 0$ for every $h\in H_\xi$. 
	Take $h\in H_\xi'$, and observe that $\abs{p_h(y-x)}\meg \abs{y-x}<\eps$, so that $p_h(y)=p_h(x)+p_h(y-x)\Meg p_h(x)-\eps\Meg0$ by our choice of $\eps$. By the arbitrariness of $h$, it follows that $y\in C$.
	
	{\bf7.} Finally, take a fundamental parallelotope $P_\xi$ of $L\cap W_\xi$, and extend $c_{\xi,\lambda, \gamma}$ to a function on $V$ which is constant on $x+\lambda P_\xi$ for every $x\in \pi_\xi(\lambda L)$, and vanishes outside $S_{\xi,\lambda,\gamma}+\lambda P_\xi$. 
	Then, $\nu_{\xi,\lambda, \gamma}\coloneqq \frac{1}{\Hc^{n_\xi}(\lambda P_\xi)}c_{\xi,\lambda, \gamma}\cdot \Hc^{n'_\xi}$ is a probability measure; in addition, as in~{\bf1} we see that $(\pi_\xi)_*(\nu_{\lambda, \gamma})-\nu_{\xi,\lambda, \gamma}$ converges vaguely to $0$ along $\Ff_\xi$, so that it will suffice to show that $\nu_{\xi,\lambda, \gamma}$ converges vaguely to $\nu_\xi$ along $\Ff_\xi$. 
	However, if $S'_\xi$ denotes the boundary of $S_\xi$ in $V'_\xi$, then~{\bf4} and~{\bf6} imply that $\frac{1}{\Hc^{n_\xi}(\lambda P_\xi)}c_{\xi,\lambda, \gamma}$ is uniformly bounded eventually along $\Ff_\xi$, and converges on $V \setminus S'_\xi$ to a function $g$ which is $0$ on the complement of $S_\xi$, and is constant on $S_\xi \setminus S'_\xi$. 
	The assertion follows by dominated convergence.	
\end{proof}

\begin{teo}\label{prop:19:4}
	Assume that $\Omega=\gf_2^*\setminus \Set{0}$, that  $\dim_\Q \mi_\omega (\Q^{\overline h})=\dim_\R \mi_\omega (\R^{\overline h})$ for every $\omega\in \Omega$, and that $P_h$ is constant on $\Omega$. 
	Then, $\chi_{\Lc_A}$ has a continuous representative.
\end{teo}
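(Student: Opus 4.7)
The plan is to combine three ingredients: a globally valid Heisenberg-product description of the operators, available on $\Omega$ thanks to the constancy of $P_h$; the explicit Laguerre-function expansion of spectral kernels on each Heisenberg factor; and the vague convergence Lemma~\ref{lem:A:23}, whose hypotheses are met precisely because of the rationality assumption $\dim_\Q \mi_\omega(\Q^{\overline h}) = \dim_\R \mi_\omega(\R^{\overline h})$.

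First I would exploit the constancy of $P_h$ on $\Omega$ to fix once and for all an orthogonal decomposition $\gf_1 = \bigoplus_{h=1}^{\overline h} P_h(\gf_1)$ and, on each summand, a compatible basis. This allows the local trivializations of Proposition~\ref{prop:11:11} to be assembled into a globally defined identification of $G_\omega$ with $\prod_h \Hd^{n_{1,h}}\oplus \R^d$ for every $\omega\in\Omega$, the push-forward of $\Lc_\eta$ becoming a standard weighted sub-Laplacian with weights $\widetilde\mi_{\eta,h,\omega}$ that depend analytically on $\omega$. Next I would invoke the well-known joint spectral decomposition of a Heisenberg sub-Laplacian and the central derivative via Laguerre functions, which yields the pointwise formula
\[
\chi_{\Lc_A}\bigl(\mi_\omega(\vect{n_1}+2\gamma),\omega(\vect T); (x,t)\bigr) = e^{i\omega(t)} \prod_{h=1}^{\overline h}\ell_{\gamma_h,n_{1,h}}\bigl(|x_{h,\omega}|^2\bigr),
\]
where $\ell_{\gamma,n}$ denotes the normalised Laguerre function of degree $\gamma$ and parameter $n-1$. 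Joint continuity of the right-hand side in $(\omega,x,t)$ for each fixed $\gamma$ is immediate from the analytic dependence of $\widetilde\mi$ on $\omega$ and from Proposition~\ref{prop:11:3}.

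To assemble these fibrewise formulas into a continuous representative on the closure of $\Sigma$, I would use Federer's theorem (cited after Proposition~\ref{prop:A:7}) to realise $\beta_{\Lc_A}$ as a countable weighted sum, over $\gamma\in\N^{\overline h}$, of push-forwards of $\chi_{\gf_2^*}\cdot\Hc^{n_2}$ under $\omega\mapsto(\mi_\omega(\vect{n_1}+2\gamma),\omega(\vect T))$. The value of the candidate representative at a point $(\lambda^0,\zeta^0;g^0)$ is then an average of the Laguerre products above over the continuous fibre $\{(y,\omega) : \mi_\omega(y)=\lambda^0,\,\omega(\vect T)=\zeta^0\}$. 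At nearby points $(\lambda,\zeta;g)$ with $\lambda=\mi_\omega(\vect{n_1}+2\gamma)$, the actual value of $\chi_{\Lc_A}$ is the corresponding discrete average over the admissible $\gamma$'s. The rationality hypothesis is exactly what allows me to apply Lemma~\ref{lem:A:23} with $V=\R^{\overline h}$, $L=\Z^{\overline h}$, $C=\R_+^{\overline h}$, and the linear map $\gamma\mapsto\mi_\omega(\gamma)$: its kernel is spanned by $L\cap\ker\mi_\omega$, so the lemma yields vague convergence of the discrete averages to the continuous one as the scale parameter shrinks.

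The main obstacle is to justify interchanging the vague limit with the Laguerre averaging, since the number of relevant $\gamma$'s grows as $(\lambda,\zeta)$ moves away from $0$. This requires a uniform pointwise bound on $\ell_{\gamma,n}$ together with careful control of how many $\gamma$'s contribute at each scale, control that the polyhedral structure of $\Sigma$ and the proof of Lemma~\ref{lem:A:23} make available. Without the rationality hypothesis the spectrum $\Sigma$ would generally be dense in $\R^H$ along non-closed orbits, the discrete averages would fail to converge vaguely, and continuity would break down, as the counter-examples underlying Theorem~\ref{teo:15:1} show.
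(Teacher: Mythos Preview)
Your proposal uses the same three ingredients as the paper --- the global splitting from constancy of $P_h$, the Laguerre formulas, and Lemma~\ref{lem:A:23} under the rationality hypothesis --- so the overall strategy is correct. However, you have misidentified where the genuine difficulty lies, and in doing so you have skipped the step that actually makes continuity work.

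The ``main obstacle'' is not controlling how many $\gamma$'s contribute as $(\lambda,\zeta)$ grows. Once Lemma~\ref{lem:A:23} gives convergence of the reweighted discrete measures in $\Ec'^0_c(\R^{\overline h})$, pairing them against a \emph{continuous} integrand is automatic; no separate uniform Laguerre bound or counting argument is needed. The real issue is the boundary $\omega\to 0$ of the spectrum, and here two points are essential that your sketch omits. First, the Laguerre products must extend continuously across $\lambda=0$: this is the Hilb-type asymptotic by which the normalised Laguerre functions converge to Bessel functions $J_{n_{1,h}-1}$, and it is why the paper introduces a single function $\chi_0$ on $\R^{\overline h}\times\R\times\R^{\overline h}\times\R$ that is continuous including at $\lambda=0$. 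Second --- and this is the crux --- the limiting value $\langle\nu'_\xi,\chi_0(\,\cdot\,,0,(|x_{h,\omega_0}|)_h,\omega(t))\rangle$ must be \emph{independent of the direction} $\omega/|\omega|'$. It is, because in the Bessel limit the phase $e^{i\lambda\omega(t)/|\omega|'}$ degenerates to $1$ and only the quantities $|x_{h,\omega_0}|$ (which are fixed once $P_h$ is constant) remain. Without this direction-independence you would have a function continuous along each ray from the origin but possibly discontinuous at the apex of $\sigma(\Lc_A)$, and the argument would collapse.

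So: keep your outline, but replace the paragraph about interchanging limits with an explicit statement of the Laguerre--Bessel continuous extension and the verification that the limit at $\lambda=0$ does not see the direction $\omega$.
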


\begin{proof}
	{\bf1.} We shall simply write $P_h$ and $\vect{n_1}$ to denote the constant values of the functions $\omega \mapsto P_{h,\omega}$ and $\omega \mapsto \vect{n_{1,\omega}}$, respectively.
	In addition, we shall denote by $\abs{\,\cdot\,}'$ the (homogeneous) norm $ \mi_{\eta _0}(\vect{n_1})$, and by $S'$ the corresponding unit sphere. 
	Choose $\omega_0\in S'$ and observe that $\mi_{\eta ,h,\omega}= \abs{\omega}' \mi_{\eta ,h,\omega_0}$ and $x_{h,\omega}=\sqrt{\abs{\omega}'} x_{h,\omega_0}$ for every $\omega\in \Omega$.
	
	For every $\xi\in \mi_{\omega_0}(\R_+^{\overline h})$, let $\Ff_\xi$ denote the filter `$(\lambda,\gamma)\in  \R^*_+\times \Sigma,\, (\lambda\gamma,\lambda)\to (\xi,0)$,' where $\Sigma\coloneqq \mi(\vect{n_1}+2 \N^H)$.
	In addition, define, for every $\lambda\in \R^*_+$ and for every $\gamma\in \Sigma$, 
	\[
	\nu'_{\lambda,\gamma}=\sum_{\gamma=\mi_{\omega_0}(\vect{n_1}+2 \gamma')} \delta_{\lambda(\vect{n_1}+2 \gamma')},
	\]
	and $\nu_{\lambda,\gamma}\coloneqq \frac{1}{\nu'_{\lambda,\gamma}(\R^{\overline h})}\cdot \nu'_{\lambda,\gamma}$,
	so that $\nu_{\lambda,\gamma}$ is a probability measure.  
	Then, Lemma~\ref{lem:A:23} implies that $\nu_{\lambda,\gamma}$ converges to some probability measure $\nu_\xi$ in $\Ec'^0_c(\R^{\overline h})$ along $\Ff_\xi$.

	{\bf2.} Denote by $\Lambda_\gamma^m$ the $\gamma$-th Laguerre polynomial of order $m$, and by $J_m$ the Bessel function (of the first kind) of order $m$.
	Define, for every $(x,t)\in \R^H\times \R$,
	\[
	\chi_0(\lambda (\vect{n_1}+2\gamma'),\lambda, x,t) = e^{-\frac{1}{4}\lambda\abs{x}^2+i \lambda t} \frac{1}{ \binom{\vect{n_1}+\gamma'-\vect{1}_{\overline h}}{\gamma'} } \prod_{h=1}^{\overline{h}} \Lambda^{n_{1,h}-1}_{\gamma'_h}\left( \frac{1}{2}\lambda\abs{x_h}^2      \right)
	\] 
	for every $\lambda\in \R^*_+$ and for every $\gamma'\in \N^{\overline h}$, and
	\[
	\chi_0(\xi',0, x,t)\coloneqq    \prod_{h=1}^{\overline{h}}  \frac{2^{n_{1,h}-1}(n_{1,h}-1)!}{ \left(\sqrt{ \xi'_h} \abs{x_h}\right)^{n_{1,h}-1 } } J_{n_{1,h}-1}\left(\sqrt{\xi'_h} \abs{x_h}  \right)
	\]
	for every $\xi' \in \R_+^{\overline h}$. 
	Then, $\chi_0$ extends to a continuous function on $\R^{\overline h}\times \R \times \R^{\overline h}\times \R$. 
	
	Next define, for every $\lambda\in \R_+$, 
	\[
	\begin{split}
	f_\lambda\colon \R^{\overline h}\ni x \mapsto& (2\lambda)^{\abs{\vect{n_1}-\vect{1}_{\overline h}}}\binom{ \frac{x}{2 \lambda}+\frac{\vect{n_1}}{2}- \vect{1}_{\overline h}   }{\vect{n_1}-\vect{1}_{\overline h}}=\prod_{h=1}^{\overline{h}} \frac{(x_h+\lambda n_{1,h}-2 \lambda) \cdots (x_h-\lambda n_{1,h}+2 \lambda) }{(n_{1,h}-1)!},
	\end{split}
	\]
	and observe that  $f_\lambda$ converges locally uniformly to $f_0$ as $\lambda \to 0^+$. 
	Therefore, $f_\lambda \cdot \nu_{\lambda, \gamma}$ converges to $f_0\cdot \nu_\xi$ in $\Ec'^0_c(\R^{\overline h})$ along $\Ff_\xi$. 
	If we define $\nu'_{\lambda, \gamma}\coloneqq \frac{1}{\nu_{\lambda,\gamma}(f_\lambda)}f_\lambda \cdot \nu_{\lambda,\gamma}$ and $\nu'_{\xi}\coloneqq \frac{1}{\nu_{\xi}(f_0)}f_0 \cdot \nu_{\xi}$, then $\nu'_{\lambda,\gamma}$ converges to $\nu'_\xi$ in $\Ec'^0_c(\R^{\overline h})$ along $\Ff_\xi$. 
	
	Define, for every  $\omega\in \Omega$ and for every $\gamma\in  \Sigma$,
	\[
	\chi_1((\abs{\omega}'\gamma,\omega(\vect{T})), (x,t))\coloneqq \langle\nu'_{\abs{\omega}',\gamma}, \chi_0(\,\cdot\,,\abs{\omega}',(\abs{x_{h,\omega_0}})_{h=1}^{\overline{h}}, {\textstyle\frac{\omega(t)}{\abs{\omega}'}})  \rangle,
	\]
	so that $\chi_1$ is a representative of $\chi_{\Lc_A}$ (reason as in~\cite[4.4.1]{Martini}, and take~\cite{AstengoDiBlasioRicci} into account). 
	Now,
	\[
	\lim_{(\lambda,\gamma) ,\Ff_\xi } \chi_1((\lambda\gamma,\lambda \omega(\vect{T})), (x,t))=\langle\nu'_\xi, \chi_0(\,\cdot\,,0, (\abs{x_{h,\omega_0}})_{h=1}^{\overline{h}}, \omega(t))\rangle
	\]
	uniformly as $\xi$ runs through $\mi(\R_+^{\overline h}) $, as $\omega$ runs through $S'$, and as $(x,t)$ runs through a compact subset of $G$. 
	Since $ \langle\nu'_\xi, \chi_0(\,\cdot\,,0, (\abs{x_{h,\omega_0}})_{h=1}^{\overline{h}}, \omega(t))\rangle $ does \emph{not} depend on $\omega$, it follows that $\chi_1$ is continuous on $\sigma(\Lc_A)\times G$. 
	The assertion follows from~\cite[Corollary to Theorem 2 of Chapter IX, § 4, No.\ 2]{BourbakiGT2}.	
\end{proof}

\begin{teo}\label{prop:19:3}
	Assume that  $\Omega =\gf_2^*\setminus \Set{0}$, that $\dim_\Q \mi_\omega (\Q^{\overline h})=\dim_\R \mi_\omega ( \R^{\overline h})$ for every $\omega\in \Omega$, and that $\mi$ is constant where $\mi_{\eta_0}(\vect{n_1})$ is constant. 
	Then, $\Lc_A$ satisfies property $(RL)$.
\end{teo}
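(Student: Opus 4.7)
The proof adapts the strategy of Theorem~\ref{prop:19:4} to the weaker hypotheses of the present statement. Under these hypotheses the projectors $P_{h,\omega}$ may genuinely depend on the direction $\omega/\abs{\omega}'$, so the representative $\chi_1$ of $\chi_{\Lc_A}$ built as in the proof of Theorem~\ref{prop:19:4} is only continuous on $\sigma(\Lc_A)\cap\Set{\lambda_T\neq 0}\times G$, and need not extend continuously across the slice $\Set{\lambda_T=0}$. We therefore work directly with the multiplier $m=\Mc_{\Lc_A}(f)$ for $f\in L^1(G)$, rather than with $\chi_{\Lc_A}$ itself.

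For such $f$, by the representation of $\chi_1$ from the proof of Theorem~\ref{prop:19:4} and Fubini's theorem, for $\omega\in\Omega$ and $\gamma\in\Sigma$ we obtain
\[
m\bigl((\abs{\omega}'\gamma,\omega(\vect{T}))\bigr)=\langle \nu'_{\abs{\omega}',\gamma},\,F_f(\,\cdot\,,\abs{\omega}',\omega)\rangle,
\]
where $F_f(\xi',\lambda,\omega):=\int_G f(x,t)\,\overline{\chi_0(\xi',\lambda,(\abs{x_{h,\omega/\abs{\omega}'}})_{h},\omega(t)/\abs{\omega}')}\,\dd x\,\dd t$. Dominated convergence together with the analyticity of $\omega\mapsto P_{h,\omega}$ on $\Omega$ and the boundedness and continuity of $\chi_0$ make $F_f$ continuous and uniformly bounded on $\R^{\overline h}\times\R\times\Omega$; combined with the vague convergence $\nu'_{\abs{\omega}',\gamma}\to\nu'_\xi$ from Lemma~\ref{lem:A:23}, this gives continuity of $m$ on $\sigma(\Lc_A)\cap\Set{\lambda_T\neq 0}$.

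To upgrade this to property $(RL)$ we must (i)~extend $m$ continuously to the boundary $\sigma(\Lc_A)\cap\Set{\lambda_T=0}$ and (ii)~show vanishing at infinity. For~(i), approach a boundary point $\lambda_0=(\lambda_{0,H},0)$ along a sequence with $\abs{\omega_n}'\to 0$ and $\abs{\omega_n}'\gamma_n\to\xi$; because $\mi$ is constant on the unit sphere $S'$ of $\abs{\,\cdot\,}'$, the target $\xi$ determined by $\mi_{\omega_0}(\xi)=\lambda_{0,H}$ is well-defined independently of the direction $\omega_0=\omega_n/\abs{\omega_n}'$. One must then show that $\langle\nu'_\xi,F_f(\,\cdot\,,0,\omega_0)\rangle$ is also independent of $\omega_0\in S'$, so that the candidate boundary value is well-defined; this is where the rationality assumption $\dim_\Q\mi_\omega(\Q^{\overline h})=\dim_\R\mi_\omega(\R^{\overline h})$ enters, via a discreteness/equidistribution-style argument on the support of $\nu'_\xi$ that washes out the $\omega_0$-dependence coming from the projectors $P_{h,\omega_0}$ inside the Bessel factors of $\chi_0$. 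For~(ii), the decay of $\chi_0$ in its first two arguments combined with the $L^1$-integrability of $f$ produces the required Riemann--Lebesgue decay.

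The main obstacle is step~(i): verifying the direction-independence of the limit. The integrand $F_f(\,\cdot\,,0,\omega_0)$ depends genuinely on $\omega_0$ through $\abs{x_{h,\omega_0}}=\sqrt{\mi_{\eta_0,h,\omega_0}}\,\abs{P_{h,\omega_0}(x)}$, so cancellation of this dependence after pairing against $\nu'_\xi$ is the technical heart of the argument, and is precisely where the rationality hypothesis is essential.
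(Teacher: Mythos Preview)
Your plan departs substantially from the paper's argument, and the step you flag as ``the technical heart'' is precisely where your outline has a genuine gap.

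The paper does \emph{not} attempt to build a single globally continuous representative of $\chi_{\Lc_A}$, nor does it analyse the pairing $\langle\nu'_\xi,F_f(\,\cdot\,,0,\omega_0)\rangle$ directly. Instead it works through the fibration of Proposition~\ref{prop:11:11}: for $\phi\in L^1_{\Lc_A}(G)$ one pushes forward to the quotient groups $G_\omega$, and the local trivialisations $\psi_\omega$ carry $\dd\pi_\omega(\Lc_H)$ to a family $\Lc'_H$ on the fixed model group $G'=\Hd^{n_1}$ which, thanks to the hypothesis that $\mi$ is constant on the unit sphere $S'$, is \emph{independent of $\omega$}. On $G'$ the projectors are constant by construction, so Theorem~\ref{prop:19:4} applies and $(\Lc'_H,-iT)$ satisfies $(RL)$. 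The continuous map $S'\ni\omega\mapsto(\psi_\omega\circ\pi_\omega)_*(\phi)\in L^1(G')$ then yields a continuous family of $C_0$-multipliers, and these patch to a single $m\in C_0(\sigma(\Lc_A))$. The boundary value at $\lambda_T=0$ is identified, via the right quasi-regular representation on $L^2(G_0)$, with $\Mc_{\dd\pi_0(\Lc_H)}((\pi_0)_*(\phi))$, which is manifestly independent of the direction $\omega_0$.

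In your approach the direction-independence of $\langle\nu'_\xi,F_f(\,\cdot\,,0,\omega_0)\rangle$ is left unproved, and your attribution of it to the rationality hypothesis is incorrect. The rationality condition $\dim_\Q\mi_\omega(\Q^{\overline h})=\dim_\R\mi_\omega(\R^{\overline h})$ is what makes Lemma~\ref{lem:A:23} applicable (it guarantees $L\cap\ker\mi$ generates $\ker\mi$, hence the convergence $\nu_{\lambda,\gamma}\to\nu_\xi$); it has nothing to do with washing out the $\omega_0$-dependence of the Bessel factors. That independence is a representation-theoretic fact---at $\lambda_T=0$ one is computing the multiplier of the abelianised family $\dd\pi_0(\Lc_H)$---and is exactly what the paper exploits. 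Without either this identification or some explicit computation showing why $\int_G f(x,t)\prod_h J_{n_{1,h}-1}(\sqrt{\xi'_h}\,|x_{h,\omega_0}|)\,\dd x\,\dd t$ averaged against $\nu'_\xi$ loses its $\omega_0$-dependence, your step~(i) is not an argument but a hope.
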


\begin{proof}
	Take $\phi \in L^1_{\Lc_A}(G)$. 
	Let $S'$ be the unit sphere associated with the homogeneous norm $\abs{\,\cdot\,}'\colon\omega \mapsto  \mi_{\eta _0,\omega} (\vect{n_1})$.
	Now, observe that, arguing as in~\cite[Proposition 5.4]{MartiniRicciTolomeo}, by means of the group Plancherel formula one may prove that, given any $m\in L^\infty(\beta_{\Lc_A})$ such that $\Kc_{\Lc_A}(m)\in L^1(G)$, we have $\pi^*(\Kc_{\Lc_A}(m))=m(\dd \pi(\Lc_A)) $ for \emph{almost every} (class of irreducible unitary representations) $\pi$ in the dual of $G$. 
	Then, comparing the irreducible representations of $G$ and the $G_\omega$, we see that there is a negligible subset $N$ of $S'$ such that $(\pi_\omega)_*(\phi)\in L^1_{\dd \pi_\omega(\Lc_A)}(G_\omega)$ for every $\omega\in S'\setminus N$.
	Observe, in addition, that the mapping
	\[
	\omega \mapsto (\pi_\omega)_*(\phi)\in L^1(\gf_1\oplus \R)
	\]
	is continuous on $\Omega$, hence on $S'$.
	Now, fix $\omega_0\in S'$, and take $(U,\psi)$ as in Proposition~\ref{prop:11:11}. 
	Then, it is easily seen that the mapping
	\[
	U\cap S'\ni \omega \mapsto (\psi_{\omega}\circ \pi_\omega)_*(\phi)\in L^1(G')
	\]
	is continuous. 
	Furthermore, observe that our assumptions imply that, with the notation of Proposition~\ref{prop:11:11},
	\[
	\Lc'_H\coloneqq \dd (\psi_\omega\circ \pi_\omega)(\Lc_H)=\left( - \sum_{k=1}^{d} Y_k^2-\sum_{k=1}^{n_1} \widetilde \mi_{\eta,k}(X_k^2+X_{n_1+k}^2)\right)_{\eta\in H}
	\]
	does \emph{not} depend on $\omega$, while
	\[
	\dd (\psi_\omega\circ \pi_\omega)(\vect{T})= \omega(\vect{T}) T.
	\]
	Observe that $(\Lc'_H,- i T)$ satisfies property $(RL)$ by Theorem~\ref{prop:19:4}, and that $(\psi_\omega \circ \pi_\omega)_*(\phi)\in L^1_{(\Lc'_H, - i T)}(G')$ for every $\omega\in S'\setminus N$, hence for every $\omega\in S'$ by continuity.\footnote{It is easily proved that $L^1_{(\Lc'_H, - i T)}(G')$ is closed in $L^1(G')$.}
	Therefore, the mapping
	\[
	U\cap S'\ni\omega \mapsto \Mc_{(\Lc'_H,- i T)}((\psi_{\omega}\circ \pi_\omega)_*(\phi))\in C_0(\sigma(\Lc'_H,- i T))
	\] 
	is continuous. 
	Now, take $\omega\in U\cap S'\setminus N$. Then,~\cite[Proposition 3.2.4]{Martini}, applied to the right quasi-regular representation of $G'$ in $L^2(G_0)$,  implies that
	\[
	\Mc_{(\Lc'_H,- i T)}((\psi_{\omega}\circ \pi_\omega)_*(\phi))(\lambda,0)=\Mc_{\dd \pi_0(\Lc_H)}((\pi_0)_*(\phi))(\lambda)
	\]
	for every $\lambda\in \R^H$ such that $(\lambda,0)\in \sigma(\Lc'_H,- i T)$, that is, for every $\lambda\in \sigma(\dd \pi_0(\Lc_H))$.
	By continuity, this proves that the mapping $U\cap S'\ni\omega\mapsto \Mc_{(\Lc'_H,- i T)}((\psi_{\omega}\circ \pi_\omega)_*(\phi))(\lambda,0)$ is constant for every $\lambda\in \sigma(\dd \pi_0(\Lc_H))$.
	Taking into account the arbitrariness of $U$, we infer that there is a unique $m\in C_0(\sigma(\Lc_A))$ such that
	\[
	m(\lambda, \omega(\vect{T}))=\Mc_{(\Lc'_H,- i T)}((\psi_{U,\sfrac{\omega}{\abs{\omega}'}} \circ \pi_{\sfrac{\omega}{\abs{\omega}'}})_*(\phi))(\lambda,\abs{\omega}')
	\]
	for every $(\lambda, \omega(\vect{T}))\in \sigma(\Lc_A)$ such that $\frac{\omega}{\abs{\omega}'}\in U\cap S'$, where $U$ runs through a finite covering of $S'$ and $\psi_U$ is the associated local trivialization as above. 
	Hence, $\phi=\Kc_{\Lc_A}(m)$ and the assertion follows.
\end{proof}

Here we prove a negative result.

\begin{prop}\label{prop:19.4}
	Assume that $G$ is the product of $k\Meg 2$ $MW^+$ groups $G_1,\dots, G_k$, and assume that each $G_j$ is endowed with a homogeneous sub-Laplacian $\Lc'_j$. Assume that $\card(H)=1$ and that $\Lc=\Lc'_1+\dots+\Lc'_k$.
	Then, $\Lc_A$ does not satisfy properties $(RL)$ and $(S)$.
\end{prop}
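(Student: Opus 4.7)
The plan is to use the richer Rockland family $\Lc_B\coloneqq (\Lc'_1,\dots,\Lc'_k,-iT_1,\dots,-iT_{n_2})$ and the polynomial mapping $P\colon E_{\Lc_B}\to E_{\Lc_A}$, $P(\lambda_1,\dots,\lambda_k,\tau)\coloneqq(\lambda_1+\cdots+\lambda_k,\tau)$, for which $\Lc_A=P(\Lc_B)$; by the composition formula this gives $\Kc_{\Lc_A}(m)=\Kc_{\Lc_B}(m\circ P)$ for every bounded Borel $m$ on $E_{\Lc_A}$. The key geometric fact is that $P$ fails to be proper on $\sigma(\Lc_B)$: for $\omega\in\gf_2^*$ with $\omega(T_j)\neq 0$ for all $j$, each $\Lc'_j$-spectrum in the representation $\pi_\omega$ is discrete, but as $\omega$ approaches the hyperplane $W_j\coloneqq \{\omega\in\gf_2^*:\omega(T_j)=0\}$, the spectrum of $\Lc'_j$ becomes continuous in the $\lambda_j$-direction, so the fibres of $P$ over any point of $\sigma(\Lc_A)$ lying above $W_j$ are unbounded inside $\sigma(\Lc_B)$.

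Since Schwartz functions on $E_{\Lc_A}$ are automatically in $C_0(E_{\Lc_A})$, exhibiting a Schwartz kernel $\phi\in \Sc_{\Lc_A}(G)$ (hence in $L^1_{\Lc_A}(G)$) whose $\Lc_A$-multiplier has no $C_0$-representative will disprove both $(RL)$ and $(S)$ simultaneously. I would therefore construct such $\phi$ as $\phi\coloneqq \Kc_{\Lc_B}(\tilde m)$ for a carefully chosen Schwartz $\tilde m\in\Sc(E_{\Lc_B})$ that is $\beta_{\Lc_B}$-a.e.\ constant along the fibres of $P$ restricted to $\sigma(\Lc_B)$, so that there is a bounded Borel descent $m\colon \sigma(\Lc_A)\to\C$ with $\phi=\Kc_{\Lc_A}(m)$; $\tilde m$ is then arranged so that along two $\beta_{\Lc_B}$-non-negligible families of sequences in $\sigma(\Lc_B)$ converging to a common point over $W_j$ (one with the index $\gamma_j$ fixed so that $\abs{\omega(T_j)}(2\gamma_j+n_{1,j})\to 0$, the other with $\gamma_j\to\infty$ so that $\abs{\omega(T_j)}(2\gamma_j+n_{1,j})\to c>0$), the values of $\tilde m$ tend to distinct limits. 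The non-properness of $P$ over $W_j$ makes both families simultaneously available, and the resulting limits force $m$ to have an essential jump-discontinuity at the common limit point, ruling out any $C_0$-representative on $E_{\Lc_A}$.

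The main technical obstacle is the explicit construction of $\tilde m$ and the rigorous verification that the resulting discontinuity of $m$ is \emph{essential}, in the sense that neither limit value can be modified on a $\beta_{\Lc_A}$-negligible set. By reducing to the case $k=2$ with two Heisenberg factors, where the spectrum $\sigma(\Lc_B)$, the Plancherel decomposition of $G$ and the action of $\Lc_A$ on the representations are completely explicit via Hermite/Laguerre function expansions (cf.\ Proposition~\ref{prop:11:11} and the description of $\beta_{\Lc_A}$ at the end of Section~\ref{sec:4}), one can verify the required incompatibility by a direct calculation; the general case then reduces to this one by the factorisation results of Section~\ref{sec:2} together with the block-diagonalisation provided by Proposition~\ref{prop:10:9}.
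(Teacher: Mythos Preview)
Your opening move---pass to the richer family $\Lc_B=(\Lc'_1,\dots,\Lc'_k,-i\vect T)$ and the linear map $P$ with $\Lc_A=P(\Lc_B)$---matches the paper's. But the ``key geometric fact'' you state is false: $P$ \emph{is} proper on $\sigma(\Lc_B)$, since $\sigma(\Lc_B)\subseteq\R_+^k\times\R^{n_2}$ and on that set $(\lambda_1,\dots,\lambda_k,\tau)\mapsto(\sum_j\lambda_j,\tau)$ has compact fibres (each $\lambda_j\in[0,\sum_i\lambda_i]$). Over $W_j$ the spectrum of $\Lc'_j$ does fill an interval, but that interval is bounded once the total $\sum_i\lambda_i$ is fixed. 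What is true, and what actually drives the argument, is that the fibres of $P$ on $\sigma(\Lc_B)$ are \emph{generically singletons} but become genuinely non-trivial over the degenerate locus. This generic injectivity is the paper's first step (the construction of the negligible set $N$), and your plan to produce a Schwartz $\tilde m$ that is $\beta_{\Lc_B}$-a.e.\ fibre-constant presupposes it without proof.

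Even granting generic injectivity, your two-sequence scheme is shakier than it looks. On the generic part every function is trivially fibre-constant, so the descent $m$ exists automatically; the content lies in showing that no continuous representative of $m$ exists. Your sequences converge to two \emph{different} points $x\neq y$ of $\sigma(\Lc_B)$ lying in the same $P$-fibre over $W_j$, and you want $\tilde m(x)\neq\tilde m(y)$. That is fine, but turning the resulting pointwise discontinuity of $m$ into an \emph{essential} one (stable under modification on $\beta_{\Lc_A}$-null sets) requires exhibiting full-measure neighbourhoods on which $m$ stays near each value; a single sequence does not do this, and the sketch ``by a direct calculation'' plus an unspecified reduction to two Heisenberg factors leaves the crux unverified.

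The paper's route is both shorter and avoids these issues. Having shown $P$ is injective off a $\beta_{\Lc_B}$-null set, one obtains a measurable $m$ with $m(\Lc_A)=\Lc'_{A'}$ (in particular each $\Lc'_j$ lies in the Borel calculus of $\Lc_A$). If $m$ had a continuous representative, then passing to the abelianisation $\pi_0\colon G\to G/[G,G]$ would force each $\dd\pi_0(\Lc'_j)$ to lie in the functional calculus of the single elliptic operator $\dd\pi_0(\Lc)=\sum_j\dd\pi_0(\Lc'_j)$ on the abelian group $G/[G,G]$, which is impossible for $k\geq2$. Multiplying $m$ by any nowhere-vanishing $\tau\in\Sc(E_{\Lc_A})$ then yields $\Kc_{\Lc_A}(m\tau)=\Lc'_{A'}\Kc_{\Lc_A}(\tau)\in\Sc(G)$ with $m\tau$ still admitting no continuous representative, killing $(RL)$ and $(S)$ simultaneously---no explicit sequences, no essential-discontinuity bookkeeping, and no reduction to special factors.
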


\begin{proof}
	Take, for every $j=1,\dots,k$, a basis $\vect{T}_j$ of the centre $\gf_{j,2}$ of the Lie algebra of $G_j$. 
	Then, we may assume that $\Lc_A=(\Lc, -i \vect{T}_1,\dots, - i \vect{T}_k)$; define  $\Lc'_{A'}\coloneqq (\Lc'_1,\dots, \Lc'_k, -i \vect{T}_1,\dots, - i \vect{T}_k)$. 
	Then, there is a unique linear mapping $L\colon E_{\Lc'_{A'}}\to E_{\Lc_A}$ such that $\Lc_A=L(\Lc'_{A'})$. 
	Now, take $j\in \Set{1,\dots,k}$ and $\gamma\in \N^{\overline h_j}$, and define
	\[
	C_{j,\gamma}\coloneqq \Set{(\mi_{j,\omega}(\vect{n}_{\vect{1},j,\omega}+2 \gamma) ,\omega(\vect{T}_j) )\colon \omega\in \Omega_j  }.
	\]
	Define 
	\[
	C\coloneqq \bigcup_{\gamma\in \prod_{j=1}^k \N^{\overline h_j}} \prod_{j=1}^k C_{j,\gamma_j},
	\]
	and observe that $\beta_{\Lc'_{A'}}$ is equivalent to $\chi_C\cdot \Hc^{n_2}$.
	Next, define
	\[
	N\coloneqq \R^k \times \bigcup_{\substack{\gamma\in\prod_{j=1}^k \Z^{\overline h_j}\\ \gamma\neq 0} } \left\{\omega(\vect{T})\colon \omega\in \prod_{j=1}^k \Omega_j, \quad\sum_{j=1}^k \mi_{j,\omega_j}(\gamma_j)=0 \right\},
	\]
	and observe that $L$ is one-to-one on $C\setminus N$. In addition, the $\mi_j$ are analytic and homogeneous of homogeneous degree $1$, and the components of the $\Omega_j$ are unbounded; therefore,  $N$ is $\beta_{\Lc'_{A'}}$-negligible.
	Hence, there is a unique $m\colon E_{\Lc_A}\to E_{\Lc'_{A'}}$ such that $m\circ L$ is the identity on $C\setminus N$, while $m$ equals $0$ on the complement of $L(C\setminus N)$. 
	Then, $m$ is $\beta_{\Lc_A}$-measurable, and $\Kc_{\Lc_A}(m)= \Lc'_{A'}\delta_e$.
	Now, let us prove that $m$ is not equal $\beta_{\Lc_A}$-almost everywhere to a continuous function.
	Assume by contradiction that $\Lc'_{A'}\delta_e=\Kc_{\Lc_A}(m')$ for some continuous function $m'$, and let $\pi_0$ be the projection of $G$ onto its abelianization $G'$. 
	Then,~\cite[Proposition 3.2.4]{Martini}, applied to the right quasi-regular representation of $G$ in $L^2(G')$ implies that the operators $\dd\pi_0(\Lc'_1),\dots, \dd\pi_0(\Lc'_k)$ belong to the functional calculus of $\pi_0(\Lc)$, which is absurd.
	
	To conclude, simply take $\tau\in \Sc(E_{\Lc_{A}})$ such that $\tau(\lambda)\neq 0$ for every $\lambda\in E_{\Lc_A}$, and observe that $\Kc_{\Lc_A}(m\tau)= \Lc'_{A'}\Kc_{\Lc_A}(\tau)$ is a family of elements of $\Sc(G)$, while $m\tau$ is not equal $\beta_{\Lc_A}$-almost everywhere to any continuous functions. 
\end{proof}

\subsection{The Case $n_2'<n_2$}

Before we state our main results, let us consider some technical lemmas.

\begin{lem}\label{cor:6}
	Let $M$ be a separable analytic manifold of dimension $n$ endowed with a positive Radon measure $\mi$ which is equivalent to Lebesgue measure on every local chart.
	In addition, take $k,h\in \N$ and a  analytic mapping $P\colon M\to \R^k$ with generic rank $h$ such that $\pi_*(\mi)$ is a Radon measure.  
	Then, the following hold:
	\begin{enumerate}
		\item $P(M)$ is $\Hc^h$-measurable and countably $\Hc^h$-rectifiable;
		
		\item $P_*(\mi)$ is equivalent to $\chi_{P(M)}\cdot \Hc^h$;
		
		\item $\Supp{P_*(\mi)}=\overline{P(M)}$;
		
		\item if $(\beta_y)_{y\in \R^k}$ is a disintegration of $\mi$ relative to $P$, then $\Supp{\beta_y}=P^{-1}(y)$ for $\Hc^h$-almost every $y\in P(M)$.
	\end{enumerate}
\end{lem}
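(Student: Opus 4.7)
The plan is to reduce each assertion to Federer's disintegration theorem (Theorem~3.2.22 of~\cite{Federer}) applied in local analytic charts, exploiting the fact that the set where $\dd P$ drops rank is an analytic subvariety and hence negligible. Cover $M$ by a countable family $(U_j,\phi_j)$ of relatively compact analytic charts with $\phi_j(U_j)\subseteq \R^n$, so that $(\phi_j)_*(\restr{\mi}{U_j})$ is equivalent to Lebesgue measure and $P\circ \phi_j^{-1}$ is real analytic, hence locally Lipschitz. Let $M_{\mathrm{reg}}$ be the open set where $\dd P$ has rank $h$, and $M_{\mathrm{sing}}\coloneqq M\setminus M_{\mathrm{reg}}$.

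In each chart, $M_{\mathrm{sing}}$ is cut out by the vanishing of all $h\times h$ minors of the Jacobian of $P\circ\phi_j^{-1}$, hence is an analytic subvariety; in particular it admits a countable stratification by analytic submanifolds of dimension $<n$, on each of which $\dd P$ has rank $<h$. Applying Sard's theorem stratum by stratum yields $\Hc^h(P(M_{\mathrm{sing}}))=0$. On $M_{\mathrm{reg}}$ the constant-rank theorem locally factors $P$ through an analytic $h$-submanifold of $\R^k$, so $P(M_{\mathrm{reg}})$ is a countable union of analytic $h$-submanifolds; together with the preceding sentence this proves~(1). Applying Federer's coarea formula in each local chart to the submersion component shows that $P_*(\restr{\mi}{M_{\mathrm{reg}}})$ is absolutely continuous with respect to the restriction of $\Hc^h$ to $P(M_{\mathrm{reg}})$, with density proportional to $\int_{P^{-1}(y)\cap M_{\mathrm{reg}}} (J_h P)^{-1}\,\dd\Hc^{n-h}$, which is strictly positive since $J_h P>0$ on $M_{\mathrm{reg}}$. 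Because $\Hc^h(P(M_{\mathrm{sing}}))=0$, this gives~(2); then~(3) is immediate, as the support of a Radon measure equivalent to $\chi_{P(M)}\cdot\Hc^h$ is exactly $\overline{P(M)}$.

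For~(4), Federer's theorem simultaneously provides a disintegration $(\beta_y^0)$ of $\restr{\mi}{M_{\mathrm{reg}}}$ along $P$ with $\beta_y^0$ equivalent to the restriction of $\Hc^{n-h}$ to $P^{-1}(y)\cap M_{\mathrm{reg}}$ for $\Hc^h$-almost every $y\in P(M)$. By the uniqueness of disintegrations up to $P_*(\mi)$-null sets, any given $(\beta_y)$ agrees with $(\beta_y^0)$ for $\Hc^h$-almost every $y$, whence $\Supp{\beta_y}=\overline{P^{-1}(y)\cap M_{\mathrm{reg}}}$ almost everywhere. To upgrade this to $\Supp{\beta_y}=P^{-1}(y)$, I argue that for $\Hc^h$-almost every $y\in P(M)$ the fibre $P^{-1}(y)$ is an analytic subvariety of pure dimension $n-h$ whose singular part $P^{-1}(y)\cap M_{\mathrm{sing}}$ has dimension strictly smaller, and is therefore contained in the closure of the regular part of the fibre.

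The main obstacle will be this last step, namely the pointwise identification of $\Supp{\beta_y}$ with the entire fibre rather than just its regular part. This requires a fibrewise dimension bound valid for $\Hc^h$-almost every $y$, which I expect to obtain by applying the coarea formula within each stratum of $M_{\mathrm{sing}}$ to conclude that generically $P^{-1}(y)\cap M_{\mathrm{sing}}$ has dimension at most $n-h-1$ inside the $(n-h)$-dimensional analytic variety $P^{-1}(y)$, together with the fact that, by analyticity, the regular part of an $(n-h)$-dimensional analytic subvariety is dense in it.
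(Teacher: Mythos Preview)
Your proposal follows essentially the same route as the paper---Sard on the analytic singular set, Federer's Theorem~3.2.22 for the disintegration, then identification of $\Supp{\beta_y}$---with only cosmetic differences (you work in local charts, the paper invokes Whitney's embedding to write $\mi=f\cdot\Hc^n$ globally and avoid chart-juggling).

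However, you are manufacturing an obstacle in~(4) that does not exist. You have already proved $\Hc^h(P(M_{\mathrm{sing}}))=0$. Hence for $\Hc^h$-almost every $y\in P(M)$ one has $y\notin P(M_{\mathrm{sing}})$, i.e.\ $P^{-1}(y)\cap M_{\mathrm{sing}}=\emptyset$; the fibre lies \emph{entirely} in $M_{\mathrm{reg}}$ and is therefore a smooth $(n-h)$-submanifold. No fibrewise dimension comparison, stratification of $M_{\mathrm{sing}}$, or density-of-regular-locus argument is needed. This is exactly how the paper closes: ``the preceding arguments show that $P^{-1}(y)$ is an analytic submanifold of dimension $n-h$ of $M$ for $\Hc^h$-almost every $y\in P(M)$,'' whence $\Supp{\beta_y}=\Supp{\chi_{P^{-1}(y)}\cdot\Hc^{n-h}}=P^{-1}(y)$.

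One minor remark on~(3): your deduction from~(2) is correct but not quite ``immediate''---you need that $M_{\mathrm{reg}}$ is dense in $M$ (true by analyticity), so that $P(M_{\mathrm{reg}})$ is dense in $P(M)$ and every open set meeting $P(M)$ captures positive $\Hc^h$-mass. The paper instead proves~(3) directly from $\Supp{\mi}=M$ and continuity of $P$, bypassing~(2) entirely.
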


Notice that it is worthwhile for our analysis to consider the case in which $M$ is possibly disconnected. 

\begin{proof}
	Observe first that $M$ may be embedded as a closed submanifold of class $C^\infty$ of $\R^{2 n+1}$ by Whitney embedding theorem (cf.~\cite[Theorem 5 of Chapter 1]{deRham}).
	We may therefore assume that $\mi=f\cdot \Hc^{n}$ for some $f\in L^1_\loc( \chi_M \cdot\Hc^n)$.
	Now,~\cite{Sard} implies that the set where $P$ has rank $<h$, which is $\Hc^n$-negligible by analyticity, has $\Hc^h$-negligible image under $P$.
	Since the image under $P$ of the set where $P$ has rank $h$ is a countable union of analytic submanifolds of $\R^k$ of dimension $h$, we see that $P(M)$ is $\Hc^h$-measurable and countably $\Hc^h$-rectifiable.
	Therefore, we may make use of~\cite[Theorem 3.2.22]{Federer}, and infer that $P_*(\mi)$ is equivalent to the restriction of $\Hc^h$ to the set of $y$ such that $\Hc^{n-h}(P^{-1}(y))>0$, and that we may find a disintegration $(\beta_y)$ of $\mi$ relative to $P$ such that $\beta_y$ is equivalent to $\chi_{P^{-1}(y)}\cdot \Hc^{n-h}$ for $P_*(\beta)$-almost every $y\in \R^k$.
	Now, the preceding arguments show that $P^{-1}(y)$ is an analytic submanifold of dimension $n-h$ of $M$ for $\Hc^h$-almost every $y\in P(M)$. 
	As a consequence, $\Supp{\beta_y}= \Supp{\chi_{P^{-1}(y)}\cdot \Hc^{n-h}}=P^{-1}(y)$ for $\Hc^h$-almost every $y\in P(M)$; for the same reason, we also see that $P_*(\mi)$ is equivalent to $\chi_{P(M)}\cdot \Hc^h$. 
	Finally,  $\Supp{P_*(\mi)}=\overline{P(M)}$ since $P$ is continuous and $\Supp{\mi}=M$.
\end{proof}

\begin{lem}\label{lem:6}
	Let $E_1$, $E_2$ be two finite-dimensional vector spaces, $C$ a convex subset of $E_1$ with non-empty interior, and $L\colon E_1\to E_2$ a linear mapping which is proper on $\partial C$. 
	Assume that for every $x\in \partial C$ either $L^{-1}(L(x))\cap \partial C=\Set{x}$ or $\partial C$ is an analytic hypersurface of $E_1$ in a neighbourhood of $x$.
	Then, $L$ induces an \emph{open} mapping $L'\colon\partial C\to L(\partial C)$.
\end{lem}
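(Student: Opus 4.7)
My plan is to fix $x\in\partial C$ and an open neighbourhood $U$ of $x$ in $\partial C$, and prove that $L(U)$ is a neighbourhood of $L(x)$ in $L(\partial C)$. Set $K_0 \coloneqq L^{-1}(L(x))\cap\partial C$, which is compact by properness of $L|_{\partial C}$. A preliminary observation: since $L|_{\partial C}$ is a proper map between locally compact Hausdorff spaces, it is closed; hence $L(\partial C\setminus U)$ is closed in $V\coloneqq L(E_1)$, and the open set $L(\partial C)\setminus L(\partial C\setminus U)$ contains $L(x)$ and is contained in $L(U)$ as soon as $K_0\subseteq U$. This settles Case~1 ($K_0 = \{x\}$) at once.

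In Case~2 ($K_0\supsetneq\{x\}$), applying the hypothesis at each $x^*\in K_0$ shows that $\partial C$ is an analytic hypersurface in a neighbourhood of every point of $K_0$. Pick $x'\in K_0\setminus\{x\}$ and let $v\coloneqq x'-x\in\ker L\setminus\{0\}$; convexity of $\overline C$ together with a supporting-hyperplane argument (if some interior point of $[x,x']$ lies on $\partial C$, then $[x,x']$ lies in a supporting hyperplane) yields the dichotomy: either (Sub-case~I) $(x,x')\subseteq\mathrm{int}(C)$, or (Sub-case~II) $[x,x']\subseteq\partial C$. In Sub-case~I, $x+\varepsilon v\in\mathrm{int}(C)$ for small $\varepsilon>0$, which, since $C$ lies on one side of the supporting hyperplane $x+T_x\partial C$, forces $v\notin T_x\partial C$. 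Hence $\ker L\not\subseteq T_x\partial C$, so $L(T_x\partial C)=V$, i.e.\ $L|_{\partial C}$ is a submersion at $x$ and therefore open at $x$ into $V$, a fortiori into $L(\partial C)$.

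The main obstacle is to rule out Sub-case~II; I plan to do this by combining properness with analytic continuation along the line $\ell(t)\coloneqq x+tv$. Set $T^+\coloneqq\sup\{t\Meg 0:\ell([0,t])\subseteq\partial C\}\Meg 1$. If $T^+<+\infty$, closedness of $\partial C$ together with $v\in\ker L$ gives $\ell(T^+)\in\partial C\cap K_0$, and $\ell(T^+)\neq x$ since $T^+\Meg 1$. If $\partial C$ is an analytic hypersurface near $\ell(T^+)$, choose a local analytic defining function $F$ (so $F = 0$ cuts out $\partial C$ locally, with $dF\neq 0$); then $F\circ\ell$ is real-analytic in $t$ and vanishes on a one-sided interval ending at $T^+$, so by the identity theorem it vanishes on the whole connected interval where $\ell$ stays in the chart, pushing $T^+$ strictly past itself and contradicting its definition. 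If $\partial C$ is not smooth at $\ell(T^+)$, the hypothesis forces $L^{-1}(L(\ell(T^+)))\cap\partial C=\{\ell(T^+)\}$; but this set equals $K_0\ni x\neq\ell(T^+)$, again a contradiction. Hence $T^+=+\infty$ and, symmetrically, $T^-=-\infty$, so the whole line $\ell(\R)$ lies in $K_0$, contradicting compactness of $K_0$. Therefore Sub-case~II never occurs, and the lemma follows from Case~1 together with Sub-case~I.
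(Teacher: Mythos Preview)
Your proof is correct and follows essentially the same approach as the paper's: both handle the singleton-fibre case via properness, and in the multi-point case both establish that $\ker L\not\subseteq T_x\partial C$ (making $L|_{\partial C}$ a submersion at $x$) by deriving a contradiction from the alternative via analytic continuation of the line $\ell$ inside $\partial C$ until it fills the compact fibre $K_0$. Your organization is slightly different---you set up the dichotomy $(x,x')\subseteq\operatorname{int}(C)$ versus $[x,x']\subseteq\partial C$ first and then treat the two sub-cases, whereas the paper assumes $\ker L\subseteq T_x\partial C$ directly and deduces $[x,x']\subseteq\partial C$ from the supporting-hyperplane argument---but the underlying ideas coincide.
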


\begin{proof}
	Take $x\in \partial C$, and assume that  $L^{-1}(L(x))\cap \partial C=\Set{x}$. 
	Define $U_{x,k}\coloneqq L^{-1}(\overline{B}(L(x), 2^{-k}   ))\cap \partial C$ for every $k\in \N$. 
	Since $L$ is proper on $\partial C$, $U_{x,k}$ is a compact neighbourhood of $x$ for every $k\in \N$. 
	In addition, $\bigcap_{k\in \N} U_{x,k}=\Set{x}$; hence,~\cite[Proposition 1 of Chapter 1, § 9, No.\ 3]{BourbakiGT1} implies that $(U_{x,k})$ is a fundamental system of neighbourhoods of $x$ in $\partial C$, so that $L'$ is open at $x$.
	
	Now, assume that $L^{-1}(L(x))\cap \partial C\neq\Set{x}$. 
	Then, there is a convex neighbourhood $U$ of $L^{-1}(L(x))\cap \partial C$ such that $\partial C \cap U$ is an analytic hypersurface of $E_1$. 
	Assume by contradiction that $\ker L\subseteq T_x(\partial C \cap U) $, and take $x'\in \partial C$ so that $L(x')=L(x)$ but $x'\neq x$. 
	Since $C$ is convex, we have $[x,x']\subseteq \partial C$. 
	Now, $\partial C\cap U$ is an analytic hypersurface and $U$ is convex, so that the arbitrariness of $x'$ implies that $\ell\cap U\subseteq \partial C$, where $\ell$ is the line passing through $x$ and $x'$. 
	Since each $x''\in \ell\cap \partial C$ has a convex neighbourhood where $\partial C$ is an analytic hypersurface, we see that the non-empty closed set $\ell\cap \partial C$ is open in $\ell$. It follows that $\ell\subseteq \partial C$, which is absurd since then $\ell $ would be contained in the compact set $L^{-1}(L(x))\cap \partial C$.
	Therefore, $\ker L\not\subseteq T_x(\partial C \cap U) $, so that $L'$ is open at $x$. 
	The assertion follows.
\end{proof}

\begin{teo}\label{prop:8}
	Assume that $\card(H)=1$ and that $W=\Set{0}$; take a positive integer $n_2'<n_2$. 
	Then, the family $(\Lc, (-i T_j)_{j=1,\dots,n_2'})$ satisfies property $(RL)$.
\end{teo}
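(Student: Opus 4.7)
The strategy is to exploit the special structure, peculiar to the case $n_2'<n_2$, whereby the joint spectrum becomes a closed convex cone, and to imitate the proof of Theorem~\ref{prop:19:3} relying on this convex structure (rather than on the number-theoretic hypotheses needed there). First, I identify $\sigma\coloneqq\sigma(\Lc,-iT_1,\ldots,-iT_{n_2'})$. Under the assumptions $\card(H)=1$ and $W=\Set{0}$, Corollary~\ref{cor:11:3} together with Proposition~\ref{prop:11:3} shows that $N_0\colon\omega\mapsto\mi_\omega(\vect{n_{1,\omega}})$ is a norm on $\gf_2^*$ which is continuous everywhere and analytic on $\gf_2^*\setminus\Set{0}=\Omega$. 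Letting $\pr\colon\gf_2^*\to\R^{n_2'}$ be $\omega\mapsto(\omega(T_1),\ldots,\omega(T_{n_2'}))$, whose kernel has positive dimension because $n_2'<n_2$, the function
\[
N(t)\coloneqq\inf\Set{N_0(\omega)\given\omega\in\pr^{-1}(t)}
\]
is itself a norm on $\R^{n_2'}$, and a direct verification based on the general spectral description together with the intermediate value theorem applied to $N_0$ on each affine fibre $\pr^{-1}(t)$ yields
\[
\sigma = C\coloneqq\Set{(\lambda,t)\in\R\times\R^{n_2'}\given \lambda\Meg N(t)},
\]
which is a closed convex cone with non-empty interior.

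Now take $\phi\in L^1_{(\Lc,-iT_1,\ldots,-iT_{n_2'})}(G)$ and set $m\coloneqq\Mc_{(\Lc,-iT_1,\ldots,-iT_{n_2'})}(\phi)$; the goal is to produce a representative of $m$ in $C_0(C)$. Observe that $\phi$ also lies in $L^1_{\Lc_A}(G)$, with associated multiplier $m\circ\pr$. Proceeding along the lines of the proof of Theorem~\ref{prop:19:3}, for almost every $\omega\in\Omega$ the pushforward $(\pi_\omega)_*(\phi)\in L^1(G_\omega)$, the mapping $\omega\mapsto(\pi_\omega)_*(\phi)$ is continuous, and after the local trivialization of Proposition~\ref{prop:11:11} the operator $\pi_\omega(\phi)=(m\circ\pr)(\dd\pi_\omega(\Lc_A))$ is diagonal in the Hermite basis of eigenvectors of $\dd\pi_\omega(\Lc)$, with entries
\[
c_\gamma(\omega)\coloneqq m\bigl(\mi_\omega(\vect{n_{1,\omega}}+2\gamma),\omega(T_1),\ldots,\omega(T_{n_2'})\bigr)
\qquad(\gamma\in\N^{\overline h}).
\]
Since the spectral multiplicities are constant on $\Omega$, the Hermite eigenvectors can be chosen to depend analytically on $\omega$ locally, so each function $c_\gamma$ is continuous on $\Omega$.

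It remains to descend the continuity from the parameter space to $C$ and to control the behaviour at infinity. The map $\Phi_0\colon\gf_2^*\to C$, $\omega\mapsto(N_0(\omega),\pr(\omega))$, is continuous, surjective, and proper (since $N_0$ controls $\abs{\omega}$), hence a closed quotient map onto $C$. Applying Proposition~\ref{prop:A:8} to the linear map $\pr$ on the convex epigraph of $N_0$, together with Proposition~\ref{prop:A:7}, one obtains that the a.e.\ identity $c_0=\widetilde m\circ\Phi_0$ in fact holds pointwise on $\Omega$ for a well-defined function $\widetilde m$ on $C$; combined with the quotient-map property, continuity of $c_0$ yields continuity of $\widetilde m$ in the Euclidean topology. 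Finally, $\widetilde m$ vanishes at infinity: along sequences with $\abs{\omega}\to\infty$ this is Riemann-Lebesgue on the dual of the CCR group $G$, which forces $\norm{\pi_\omega(\phi)}_{\mathrm{op}}\to 0$; along sequences with $\abs{\omega}$ bounded and $\abs{\gamma}\to\infty$ it is compactness of $\pi_\omega(\phi)$, which forces its eigenvalues $c_\gamma(\omega)$ to tend to $0$ as $\abs{\gamma}\to\infty$.

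The main obstacle is precisely this last descent, namely, passing from the fibre-wise continuity of $c_0$ on $\Omega$ to continuity of $\widetilde m$ on the Euclidean cone $C$; the convex-cone structure, which is available exactly when $n_2'<n_2$, is what makes the $\Phi_0$-connectedness hypothesis of Proposition~\ref{prop:A:7} hold here, and thus lets us dispense with the restrictive number-theoretic assumptions imposed in Theorems~\ref{prop:19:3} and~\ref{prop:19:4}.
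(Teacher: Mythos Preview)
Your overall strategy is sound and close in spirit to the paper's: both arguments pass through the continuous ``ground-state eigenvalue'' function $c_0(\omega)$ on $\gf_2^*\setminus\{0\}$ and then invoke Proposition~\ref{prop:A:7} to descend to a continuous multiplier on the cone $C$. However, there are two genuine gaps.

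\textbf{First gap: the connectedness step.} Your appeal to Proposition~\ref{prop:A:8} is illegitimate. Identifying $\gf_2^*$ with the graph of $N_0$, the map $\Phi_0$ becomes the restriction of the linear map $(r,\omega)\mapsto(r,\pr(\omega))$ to the \emph{boundary} of the convex epigraph. Proposition~\ref{prop:A:8} covers convex supports and boundaries of convex \emph{polyhedra}, but not boundaries of general convex bodies; indeed the Remark immediately following it exhibits a counterexample in exactly this situation. The paper instead uses Lemma~\ref{lem:6}: since the graph of $N_0$ is an analytic hypersurface away from the origin (Corollary~\ref{cor:11:3}), and the fibre over $(0,0)$ is the single point $0$, Lemma~\ref{lem:6} gives that $L$ is \emph{open} on $\Supp{\beta_0}$, and openness together with the equivalence $L_*(\beta_0)\sim\chi_{C_0}\cdot\Hc^{n_2'+1}$ from Lemma~\ref{cor:6} yields the $L$-connectedness and the support condition on the disintegration required by Proposition~\ref{prop:A:7}.

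\textbf{Second gap: matching $\widetilde m$ with the multiplier.} Even granting a continuous $\widetilde m$ with $c_0=\widetilde m\circ\Phi_0$ pointwise, you only obtain $\widetilde m=m$ almost everywhere with respect to $(\Phi_0)_*(\Hc^{n_2})$, which is (equivalent to) $L_*(\beta_0)$. To conclude that $\widetilde m$ is a representative of $\Mc_{L(\Lc_A)}(\phi)$ you need $\widetilde m=m$ almost everywhere for $\beta_{L(\Lc_A)}=L_*(\beta_{\Lc_A})$, and since $\beta_{\Lc_A}$ is carried by \emph{all} the sheets $C_\gamma$, this requires $L_*(\beta_\gamma)\ll L_*(\beta_0)$ for every $\gamma\in\N^{n_1}$. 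Equivalently, one must show that the analytic map $\Omega\ni\omega\mapsto(\widetilde\mi_\omega(\vect{1}_{n_1}+2\gamma),\pr(\omega))$ is generically a submersion. This is the heart of the paper's argument and is not automatic: the paper rules out the degenerate case by restricting to a line $\ell\subset\pr^{-1}(\omega')$ not through $0$, invoking Kato's analytic perturbation theory to get an analytic labelling of the $\widetilde\mi_{\cdot,k}$ along $\ell$, and observing that each level set $\{f(\omega)(\vect{1}_{n_1}+2\gamma')=r\}$ on $\ell$ is then discrete, contradicting the existence of an open segment in the fibre. Your proposal does not address this point at all; the compactness/Riemann--Lebesgue remarks at the end concern decay, not the absolute continuity needed here.
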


\begin{proof}
	{\bf1.} Consider the mapping
	\[
	L\colon E_{\Lc_A}\ni \lambda\mapsto (\lambda_1, (\lambda_{2,j})_{j=1}^{n_2'})\in E_{(\Lc, (-i T_j)_{j=1,\dots,n_2'})};
	\]
	observe that $L(\Lc_A)= (\Lc, (-i T_j)_{j=1,\dots,n_2'})$. Define, in addition, $L'$ in such a way that $L=I_\R\times L'$, and identify $\gf_2^*$ with $\R^{n_2}$ by means of the mapping $\omega \mapsto \omega(\vect{T})$.
	Define, for every $\gamma\in \N^{n_1}$,
	\[
	\beta_\gamma\colon C_c(E_{\Lc_A})\ni \phi\mapsto \int_{\gf_2^*}   \phi( \widetilde \mi_\omega (\vect{1}_{n_1}+2\gamma), \omega)\abs{\Pfaff(\omega)}\,\dd \omega,
	\]
	so that $\beta_{\Lc_A}=\frac{1}{(2\pi)^{n_1+n_2}}\sum_{\gamma\in \N^{n_1}}  \beta_\gamma$. 
	Define
	\[
	\rho_0\colon \R^{n_2'}\ni \omega'\mapsto \min_{ L'(\omega)=\omega' }  \mi_{\omega}(\vect{n_1}),
	\]
	and
	\[
	C_0\coloneqq \Set{ (\rho_0(\omega') +r, \omega'  )\colon \omega'\in \R^{n_2'}, r\Meg 0  },
	\]
	so that $\rho_0$ is a norm on $\R^{n_2'}$ and $C_0=L(\Supp{\beta_0})=L(\sigma({\Lc_A}))$. 
	
	{\bf2.} Now, Corollary~\ref{cor:11:3} implies that $\Supp{\beta_0}\setminus \Set{0}$ is an analytic submanifold of $E_{\Lc_A}$. 
	In addition, Corollary~\ref{cor:6} implies that $L_*(\beta_0)$ is equivalent to $\chi_{\sigma(\Lc,(-i T_j)_{j=1,\dots, n_2'})}\cdot \Hc^{n_2'+1}$ and that, if $(\beta_{0,\lambda})$ is a disintegration of $\beta_{0}$ relative to $L$, then $\Supp{\beta_{0,\lambda}}=L^{-1}(\lambda)\cap \Supp{\beta_0}$ for $L_*(\beta_0)$-almost every $\lambda\in C_0$.
	In addition, Lemma~\ref{lem:7} implies that the mapping $L\colon \Supp{\beta_0}\to C_0$ is open.
	If we prove that $L_*(\beta_\gamma)$ is absolutely continuous with respect to $\Hc^{n_2'+1}$ for every $\gamma\in \N^{n_1}$, the assertion will then follow from Proposition~\ref{prop:A:7}.
	
	Then, let us prove that $L_*(\beta_\gamma)$ is absolutely continuous with respect to $\Hc^{n_2'+1}$ for every $\gamma\in \N^{n_1}$. 
	Notice that this will be the case if we prove that the analytic mapping $\Omega\ni \omega \mapsto  (\widetilde \mi_\omega (\vect{1}_{n_1}+2 \gamma),L'(\omega))$ is generically a submersion for every $\gamma\in \N^{n_1}$ (cf.~Lemma~\ref{cor:6}).
	Assume by contradiction that this is not the case, so that there are $\gamma\in \N^{n_1}$ and a component $C$ of $\Omega$ such that $ \widetilde\mi'_{\omega}(\vect{1}_{n_1}+2\gamma)$ vanishes on $\ker L'$ for every $\omega\in C$.
	As a consequence, there are $(r,\omega')\in \R\times \R^{n_2'}$ such that $L^{-1}(r,\omega')\cap \sigma(\Lc_A)$ contains an open segment. Then, there is a line $\ell$ in $L'^{-1}(\omega')$ such that $(\Set{r}\times \ell)\cap \sigma(\Lc_A) $ contains an open segment; observe that $0\not \in\ell$ since the mapping $\omega \mapsto \widetilde\mi_{\omega}(\vect{1}_{n_1}+2\gamma) $ is homogeneous and proper.
	Then,~\cite[Theorem 6.1 of Chapter II]{Kato} implies that there is an analytic function $f\colon \ell\to \R^{n_1}$ such that $f(\omega)$ is a reordering of $\widetilde \mi_\omega$ for every $\omega\in \ell$. 
	As a consequence, for every $\gamma'\in \N^{n_1}$ the set of $\omega\in \ell$ such that $f(\omega)(\vect{1}_{n_1}+2\gamma')=r$ is compact, hence discrete by analyticity. 
	It is then easily seen that $(\Set{r}\times \ell)\cap \sigma(\Lc_A)$ is  countable, so that it cannot contain any open segments: contradiction.
	The proof is therefore complete.
\end{proof}

\begin{teo}\label{prop:19:2}
	Define  $C_\gamma\coloneqq \Set{( \widetilde\mi_\omega (\vect{1}_{n_1}+2 \gamma),\omega(\vect{T}))\colon \omega\in \gf_2^*}$ for every $\gamma\in \N^{n_1}$. 
	In addition, take $n_2'<n_2$ and define $L\coloneqq\id_\R\times \pr_{1,\dots, n_2'}$ on $E_{\Lc_A}$.
	Assume that the following hold:
	\begin{enumerate}
		\item $\card(H)=1$;
		
		\item $\chi_{C_0}\cdot \beta_{\Lc_A}$ is $L$-connected
		
		\item for every $f\in L^1_{\Lc_A}(G)$ and for every $\gamma\in \N^{n_1}$, $\Mc_{\Lc_A}(f)$ equals $\beta_{\Lc_A}$-almost everywhere a continuous function on $C_\gamma$.
	\end{enumerate}
	Then, $L(\Lc_A)$ satisfies property $(RL)$.
\end{teo}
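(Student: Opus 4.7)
My plan is to reduce the assertion to a factorization problem on $\beta_{\Lc_A}$ via the composition identity $\Kc_{L(\Lc_A)}(m)=\Kc_{\Lc_A}(m\circ L)$ recalled in Section~\ref{sec:2}, and then to lift a continuous representative through the projection $L$ by means of Proposition~\ref{prop:A:7}. Concretely, fix $f\in L^1_{L(\Lc_A)}(G)$ and write $f=\Kc_{L(\Lc_A)}(m)$ for a bounded Borel multiplier $m$; then $f=\Kc_{\Lc_A}(m\circ L)$, so $\Mc_{\Lc_A}(f)=m\circ L$ $\beta_{\Lc_A}$-almost everywhere.

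Hypothesis (iii) applied with $\gamma=0$ provides a continuous function $\chi\colon C_0\to \C$ agreeing $\beta_{\Lc_A}$-almost everywhere with $m\circ L$ on $C_0$. To apply Proposition~\ref{prop:A:7} to $\mi\coloneqq \chi_{C_0}\cdot\beta_{\Lc_A}$ and $\pi\coloneqq L$, I would verify the disintegration hypothesis: outside the algebraic locus $W$, Corollary~\ref{cor:11:3} and Proposition~\ref{prop:11:3} imply that $C_0$ is a real-analytic submanifold of $E_{\Lc_A}$ on which $\beta_{\Lc_A}$ is locally equivalent to a Hausdorff measure of the appropriate dimension, so Lemma~\ref{cor:6} yields a disintegration $(\lambda_y)$ of $\mi$ relative to $L$ with $\Supp{\lambda_y}=L^{-1}(y)\cap \Supp{\mi}$ for $L_*(\mi)$-almost every $y$; one should also remark that the exceptional locus $W$ is $\beta_{\Lc_A}$-negligible. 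Together with the $L$-connectedness supplied by (ii), Proposition~\ref{prop:A:7} then produces a measurable $\widetilde m\colon L(C_0)\to \C$ such that $\chi=\widetilde m\circ L$ pointwise on $\Supp{\mi}$.

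Next I would upgrade $\widetilde m$ to a continuous, vanishing-at-infinity function on the joint spectrum of $L(\Lc_A)$. Since $\widetilde\mi_\omega(\vect{1}_{n_1}+2\gamma)\Meg\widetilde\mi_\omega(\vect{1}_{n_1})$ for every $\gamma\in\N^{n_1}$ and every $\omega\in\gf_2^*$, one has $L(C_\gamma)\subseteq L(C_0)$, whence $L(\sigma(\Lc_A))=L(C_0)$ and it is enough to treat $L(C_0)$. Continuity of $\widetilde m$ on $L(\Supp{\mi})$ would then follow from continuity of $\chi$ combined with the openness of $L$ restricted to $\Supp{\mi}$; I would establish this openness by a convex-analytic argument analogous to Lemma~\ref{lem:6}, viewing $C_0$ as the boundary of the convex cone $\{(r,\omega)\in E_{\Lc_A}\colon r\Meg\widetilde\mi_\omega(\vect{1}_{n_1})\}$, on which $L$ is proper by Corollary~\ref{cor:11:3}. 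Finally, the $C_0$-decay of $\widetilde m$ would come from the non-abelian Riemann--Lebesgue lemma for $G$: since $f\in L^1(G)$, the operator $\pi_\omega(f)$ tends to $0$ in norm as $\omega$ escapes to infinity in the unitary dual of $G$, and reading this off through the functional calculus of $\dd\pi_\omega(L(\Lc_A))$ forces $\widetilde m(\lambda)\to 0$ as $\lambda\to\infty$ in $L(\sigma(\Lc_A))$.

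The main obstacle I foresee is the openness of $L$ restricted to $\Supp{\chi_{C_0}\cdot \beta_{\Lc_A}}$: hypotheses (i)--(iii) do not build this geometric property in explicitly, and it must be coaxed out of the convex-analytic structure of the bottom sheet $C_0$, with particular care along the degenerate locus $W$, where $C_0$ may cease to be a smooth submanifold and one is forced back to the polyhedral variant of Proposition~\ref{prop:A:8}. A secondary technical point is that Federer's disintegration a priori lives on the analytic stratum of $C_0$ and one must check that it interfaces cleanly with the $L$-connectedness hypothesis across $W$.
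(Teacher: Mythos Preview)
Your argument has a genuine gap, and it is precisely the one the paper isolates and then spends most of its proof closing. After applying hypothesis~(ii) and Proposition~\ref{prop:A:7} (packaged in the paper as Lemma~\ref{lem:7}) you obtain a continuous $\widetilde m$ with $\chi=\widetilde m\circ L$ on $C_0$. This gives $m=\widetilde m$ only $L_*(\chi_{C_0}\cdot\beta_{\Lc_A})$-almost everywhere, \emph{not} $\beta_{L(\Lc_A)}$-almost everywhere; the paper states this explicitly: ``Since $\beta_{L(\Lc_A)}$ need \emph{not} be equivalent to $L_*(\chi_{C_0}\cdot\beta_{\Lc_A})$, though, this is not sufficient to conclude.'' Your observation that $L(C_\gamma)\subseteq L(C_0)=\sigma(L(\Lc_A))$ as \emph{sets} is correct but does not help: what is needed is absolute continuity of the pushforward measures, and this can fail. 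Concretely, for $\gamma\neq 0$ the map $\omega\mapsto(\widetilde\mi_\omega(\vect{1}_{n_1}+2\gamma),L'(\omega))$ may drop to rank $n_2'$ on whole components $U_{\gamma,2}$ of $\Omega$, producing a contribution to $L_*(\beta_\gamma)$ that is carried by an $\Hc^{n_2'}$-rectifiable set and is singular with respect to the $\Hc^{n_2'+1}$-part of $L_*(\beta_0)$. On that singular piece your $\widetilde m$ has no a priori claim to agree with $m$, and neither the set-theoretic inclusion nor the Riemann--Lebesgue decay you sketch supplies one.

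The paper closes this gap by invoking hypothesis~(iii) for \emph{every} $\gamma$, not just $\gamma=0$: one has a continuous $m_\gamma$ on each sheet $C_\gamma$, and the task becomes showing $m_\gamma=\widetilde m\circ L$ on $C_\gamma$ for all $\gamma$. On the full-rank locus $U_{\gamma,1}$ this follows because $L_*(\chi_{\R\times U_{\gamma,1}}\cdot\beta_\gamma)$ is absolutely continuous with respect to $L_*(\beta_0)$, and both sides are continuous. On the degenerate locus $U_{\gamma,2}$ one must argue fibrewise: for $\Hc^{n_2'}$-almost every $\lambda_2$ the fibre $L'^{-1}(\lambda_2)$ meets $U_{\gamma,2}$ in finitely many semi-algebraic components on each of which $m_\gamma\circ\pi_\gamma$ is constant, and a connectedness argument (propagating along the fibre from $\overline{U_{\gamma,1}}$ into adjacent components of $U_{\gamma,2}$, using that $\pr_1\circ\pi_\gamma$ is proper) forces that constant to equal $\widetilde m(\lambda)$. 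Your proposal never touches this step, and this is where the real work lies. As a secondary remark, the openness worry you flag is legitimate but is handled in the paper not via Lemma~\ref{lem:6} but through the disintegration machinery inside Lemma~\ref{lem:7}, which already delivers a continuous $m'_0$ on all of $E_{L(\Lc_A)}$ without a separate openness argument.
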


Observe that condition~{\bf2} holds if $C_0$ is the boundary of a polyhedron (cf.~Corollary~\ref{cor:A:6} below) and if $\Omega =\gf_2^*\setminus \Set{0}$ (cf.~Lemma~\ref{lem:7}). With a little effort, one may prove that condition~{\bf2} holds if $n_2'=1$.

We shall prepare the proof of Theorem~\ref{prop:19:2} through several lemmas.

\begin{lem}\label{lem:8}
	Let $V$ be a topological vector space,  $C$ a convex subset of $V$ with non-empty interior, and $W$ an affine subspace of $V$ such that $W\cap \open C\neq \emptyset$. Then, $W\cap\partial C$ is the frontier of $W\cap C$ in $W$.
\end{lem}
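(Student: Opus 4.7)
The plan is to compute the frontier $\partial_W(W\cap C)$ of $W\cap C$ in $W$ as the intersection $\overline{W\cap C}^{\,W}\cap\overline{W\setminus C}^{\,W}$, and establish the two inclusions with the convex set $C$ by exploiting the line-segment principle. Throughout, I fix an auxiliary point $x_0\in W\cap\open{C}$, which exists by hypothesis.

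For the inclusion $W\cap\partial C\subseteq \partial_W(W\cap C)$, I would take $x\in W\cap \partial C$ and consider the line segment $t\mapsto x_0+t(x-x_0)$. For $t\in[0,1)$ the line-segment principle for convex sets (namely, if $z\in\open{C}$ and $y\in\overline{C}$ then $(1-t)z+ty\in \open{C}$ for $t\in[0,1)$) gives points in $W\cap\open{C}\subseteq W\cap C$, which converge to $x$, showing $x\in\overline{W\cap C}^{\,W}$. For $t>1$, the same principle applied in reverse shows that $x_0+t(x-x_0)\notin\overline{C}$: otherwise writing $x=(1-1/t)x_0+(1/t)(x_0+t(x-x_0))$ would place $x$ in $\open{C}$, contradicting $x\in\partial C$. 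These points lie in $W\setminus C$ and converge to $x$, giving $x\in\overline{W\setminus C}^{\,W}$.

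For the reverse inclusion $\partial_W(W\cap C)\subseteq W\cap\partial C$, I would take $x\in\partial_W(W\cap C)$; then $x\in W$ and $x\in\overline{W\cap C}^{\,W}\subseteq\overline{C}$. If $x$ belonged to $\open{C}$, some open $V$-neighbourhood $U$ of $x$ would be contained in $C$, so $U\cap W$ would be a $W$-neighbourhood of $x$ contained in $W\cap C$, contradicting $x\in\partial_W(W\cap C)$. Hence $x\in\overline{C}\setminus\open{C}=\partial C$, as required.

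I expect no substantial obstacle: the only delicate point is the extension of the segment past $x$ (i.e.\ $t>1$), which requires the line-segment principle in the form that $\open{C}$ together with a boundary point determine a half-open segment that cannot be prolonged inside $\overline{C}$. This works because $C$ has non-empty interior, making $\open{C}$ non-empty and $\overline{C}=\overline{\open{C}}$ hold automatically.
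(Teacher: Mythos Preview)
Your proof is correct and uses the same core tool as the paper---the line-segment principle with the auxiliary point $x_0\in W\cap\open C$. The organization differs slightly: the paper first identifies the $W$-interior of $W\cap C$ with $W\cap\open C$ and the $W$-closure with $W\cap\overline C$, then subtracts; you instead prove the two inclusions for the frontier directly. Both arguments are equivalent in substance and length.
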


\begin{proof}
	Indeed, take $x_0\in W\cap \open C$, and take $x$ in the interior of $W\cap C$ in $W$. 
	Then, there is $y\in W\cap C$ such that $x\in [x_0,y[$, so that~\cite[Proposition 16 of Chapter II, § 2, No.\ 7]{BourbakiTVS} implies that $x\in \open C$.
	By the arbitrariness of $x$, this proves that $W\cap \open C$ is the interior of $W\cap C$ in $W$.  
	Analogously, one proves that $W\cap \overline C$ is the closure of $W\cap C$ in $W$, whence the result. 
\end{proof}

\begin{lem}\label{lem:4}
	Let $f\colon \R^n\to \R$ be a convex function which is differentiable on an open subset $U$ of $\R^n$. 
	Let $L$ be a linear mapping of $\R^n$ onto $\R^k$ for some $k\meg n$, and assume that $(f,L)$ has rank $k$ on $U$. 
	Then, for every $y\in (f,L)(U)$, the fibre $(f,L)^{-1}(y)$ is a closed convex set which contains $L^{-1}(y_2)\cap U$.
\end{lem}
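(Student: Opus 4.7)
The plan is to reduce to a statement about a single convex function by restricting $f$ to the affine fiber $M_2 \coloneqq L^{-1}(y_2)$. First I would observe that because $L$ has rank $k$ (being surjective onto $\R^k$) and $(f,L)$ has rank $k$ on $U$, at each $x \in U$ the row $df(x)$ must lie in the row span of $L$; equivalently, $df(x)$ vanishes on $\ker L$. Since $M_2$ is an affine subspace of $\R^n$ with direction $\ker L$, it follows that $g \coloneqq \restr{f}{M_2}$ is convex on $M_2$ and differentiable on the open set $M_2\cap U$ with $\dd g = 0$ there.

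The key observation is then the standard fact that a convex function attains a global minimum at every point where its differential vanishes. Applied to $g$, this says that \emph{every} point of $M_2\cap U$ is a global minimizer of $g$; in particular $g$ is constant on $M_2 \cap U$. Writing $y = (y_1, y_2)$ and picking any $x_0 \in U$ with $(f,L)(x_0) = y$ (which exists since $y \in (f,L)(U)$), we see that $g = y_1$ on $M_2\cap U$, hence
\[
L^{-1}(y_2) \cap U = M_2 \cap U \subseteq (f,L)^{-1}(y),
\]
which is one of the two assertions.

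For the other, note that $(f,L)^{-1}(y) = g^{-1}(y_1)$, and by the argument above $y_1 = \min_{M_2} g$. Thus $(f,L)^{-1}(y)$ is the set of global minimizers of the convex function $g$, which can be rewritten as $\Set{g \meg y_1}$ and is therefore both convex (as a sublevel set of a convex function) and closed (by continuity of $g$, since convex functions on finite-dimensional spaces are continuous). This finishes the lemma.

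No step looks to be a serious obstacle here; the only mildly delicate point is the identification of rank $k$ for $(f,L)$ with the inclusion $\mathrm{im}(\trasp{\dd f}) \subseteq \mathrm{im}(\trasp L)$, and the standard convex-analysis fact that a critical point of a convex function is a global minimum. Everything else is bookkeeping.
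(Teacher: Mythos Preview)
Your argument is correct and, in fact, more direct than the paper's. Both proofs begin by observing that the rank-$k$ hypothesis forces $\dd f(x)$ to vanish on $\ker L$ for every $x\in U$. From there, the paper proceeds in two separate stages: it first deduces that $f$ is \emph{locally} constant on $L^{-1}(y_2)\cap U$ and then, via a convexity-on-segments argument, propagates this constancy across the (possibly several) components of $L^{-1}(y_2)\cap U$; for closedness and convexity of the fibre, the paper passes to the epigraph $C=\Set{(\lambda,x)\colon \lambda\Meg f(x)}$ and invokes Lemma~\ref{lem:8} in a short proof by contradiction. Your route instead appeals to the single convex-analysis fact that every critical point of a convex function is a global minimizer; applied to $g=\restr{f}{L^{-1}(y_2)}$ this yields at once both that $g\equiv y_1$ on $L^{-1}(y_2)\cap U$ and that the fibre coincides with the minimizer set of $g$, i.e., the sublevel set $\Set{g\meg y_1}$, which is closed and convex. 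Your approach is shorter and bypasses the auxiliary lemma entirely, at the price of importing a standard (and easy) fact about convex functions; the paper's argument is more self-contained but correspondingly longer.
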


\begin{proof}
	Define $\pi\coloneqq (f,L)\colon \R^n\to \R\times \R^k$, and observe that $\ker L \subseteq \ker f'(x)$ for every $x\in U$ since $\pi $ has rank $k$ on $U$. 
	Therefore, if $y\in \pi (U)$, then $f$ is locally constant on $L^{-1}(y_2)\cap U$. 
	Now, take two components $C_1$ and $C_2$ of $L^{-1}(y_2)\cap U$, and observe that they are open in $L^{-1}(y_2)$. 
	Take $x_1\in C_1$ and $x_2\in C_2$. 
	Then $[x_1,x_2]\subseteq L^{-1}(y_2)$, so that there are $x_1',x_2'\in ]x_1,x_2[$ such that $f$ is constant on $[x_1,x_1']$ and on $[x_2',x_2]$. 
	By convexity, $f$ must be constant on $[x_1,x_2]$, hence on $C_1\cup C_2$. 
	By the arbitrariness of $C_1$ and $C_2$, we infer that  $\pi^{-1}(y)\supseteq L^{-1}(y_2)\cap U$.

	Now, consider the closed convex set $C\coloneqq \Set{(\lambda,x)\colon x\in \R^n, \lambda \Meg f(x)}$, and observe that 
	\[
	\open C=  \Set{(\lambda,x)\colon x\in \R^n, \lambda >f(x)}
	\]
	since $f$ is continuous, so that $\partial C$ is the graph of $f$. 
	Next, define $W\coloneqq (\id_\R\times L)^{-1}(y)=\Set{y_1}\times L^{-1}(y_2)$, and observe that  $W\cap \partial C=\Set{y_1}\times\pi^{-1}(y)$.
	Assume by contradiction that $W\cap \open C\neq \emptyset$. 
	Then,  Lemma~\ref{lem:8} implies that $W\cap \partial C$ is the frontier of $W\cap C$ in $W$, so that $\pi^{-1}(y)$ has empty interior in $L^{-1}(y_2)$. 
	However, $\pi^{-1}(y)$ contains $L^{-1}(y_2)\cap U$, which is open in $ L^{-1}(y_2)$: contradiction. 
	Therefore, $\Set{y_1}\times\pi^{-1}(y)= W\cap C$ is a closed convex set, whence the result.
\end{proof}

\begin{lem}\label{lem:5}
	Let $f\colon \R^n\to \R$ be a convex function which is analytic on some  open subset $\Omega$ of $\R^n$ whose complement is $\Hc^n$-negligible. 
	Let $L$ be a linear mapping of $\R^n$ onto $\R^k$ for some $k\meg n$, and let $U$ be the union of the components of $\Omega $ where $(f,L)$ has rank $k$.
	Then, 
	\[
	(f,L)^{-1}(y)= \overline{L^{-1}(y_2)\cap U}
	\]
	for $\Hc^k$-almost every $y\in (f,L)(U)$.
\end{lem}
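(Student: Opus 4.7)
The plan is to reduce the desired equality to showing $\mathrm{int}_{L^{-1}(y_2)}(F_y)\subseteq\overline{A_y}$ for $\Hc^k$-almost every $y\in (f,L)(U)$, where I write $F_y\coloneqq(f,L)^{-1}(y)$ and $A_y\coloneqq L^{-1}(y_2)\cap U$. By Lemma~\ref{lem:4}, $F_y$ is closed, convex, and contains $A_y$; since $A_y$ is open in $L^{-1}(y_2)$ and non-empty whenever $y\in (f,L)(U)$, the set $F_y$ has non-empty interior in the affine space $L^{-1}(y_2)$, and the standard identity $F_y=\overline{\mathrm{int}_{L^{-1}(y_2)}(F_y)}$ for closed convex sets with non-empty interior makes this reduction valid.

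Let $V$ be the union of the components of $\Omega$ not contained in $U$. By the definition of $U$ together with the lower semicontinuity of rank, on each component of $V$ the generic rank of $(f,L)$ equals $k+1$. I then consider the degeneration locus $V_{\mathrm{deg}}\coloneqq\Set{x\in V\colon (f,L)\text{ has rank }k\text{ at }x}$, which is a proper analytic subvariety of $V$ (defined by the vanishing of the $(k+1)$-minors of the Jacobian of $(f,L)$), and therefore satisfies $\Hc^n(V_{\mathrm{deg}})=0$. Combining this with the hypothesis $\Hc^n(\R^n\setminus\Omega)=0$ and applying the coarea formula to the linear surjection $L$, I obtain that both $L^{-1}(y_2)\cap V_{\mathrm{deg}}$ and $L^{-1}(y_2)\cap(\R^n\setminus\Omega)$ are $\Hc^{n-k}$-negligible for $\Hc^k$-a.e.\ $y_2\in\R^k$; since $(f,L)(U)$ is parametrized by the locally Lipschitz map $y_2\mapsto(\min f|_{L^{-1}(y_2)},y_2)$ over the open set $L(U)\subseteq\R^k$, this condition is equivalent to its holding for $\Hc^k$-a.e.\ $y\in(f,L)(U)$.

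Fixing such a $y$ and taking $x\in\mathrm{int}_{L^{-1}(y_2)}(F_y)$ with an open (in $L^{-1}(y_2)$) neighborhood $N\subseteq F_y$, I exploit that $f\equiv y_1$ on $N$ to conclude that for every $x'\in N\cap\Omega$ and every $v\in\ker L$ the directional derivative $\partial_v f(x')$ vanishes, so $(f,L)$ has rank $k$ at $x'$. Thus $N\cap V\subseteq V_{\mathrm{deg}}$; but $N\cap V$ is open in $L^{-1}(y_2)$ while contained in the $\Hc^{n-k}$-null set $L^{-1}(y_2)\cap V_{\mathrm{deg}}$, so $N\cap V=\emptyset$. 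Since also $N\cap(\R^n\setminus\Omega)$ is $\Hc^{n-k}$-null, the set $N\cap U$ is dense in $N$, whence $x\in\overline{N\cap U}\subseteq\overline{A_y}$, as required.

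The only non-routine step is the identification of $V_{\mathrm{deg}}$ as a proper analytic subvariety of $V$; this follows cleanly from the analyticity of $f$ on $\Omega$ and the way $U$ and $V$ are defined, together with lower semicontinuity of rank. The remaining ingredients are a standard combination of convex geometry (for the reduction to interior points and the use of $F_y$ being convex via Lemma~\ref{lem:4}) and the coarea formula (for the measure-theoretic Sard step).
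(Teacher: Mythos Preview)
Your proof is correct and follows essentially the same route as the paper's: both use Lemma~\ref{lem:4} to get convexity of the fibre, identify the degeneration locus $V_{\mathrm{deg}}$ (the paper's $R_k$) as $\Hc^n$-null by analyticity, slice via the coarea formula, and conclude by a density argument. Your organization is a bit cleaner---reducing at the outset to $\mathrm{int}_{L^{-1}(y_2)}(F_y)\subseteq\overline{A_y}$, and transferring ``$\Hc^k$-a.e.\ $y_2$'' to ``$\Hc^k$-a.e.\ $y\in(f,L)(U)$'' via the Lipschitz graph parametrization rather than through Corollary~\ref{cor:6}---but the ideas coincide.
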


\begin{proof}
	Define $\pi\coloneqq (f,L)$.
	Since the complement of $\Omega $ is $\Hc^n$-negligible, there is an $\Hc^k$-negligible subset $N_1$ of $\R^k$ such that $L^{-1}(y)\setminus \Omega $ is $\Hc^{n-k}$-negligible for every $y\in \R^k\setminus N_1$ (cf.~\cite[Theorem 3.2.11]{Federer}).
	In addition, observe that the set $R_k$ of $x\in \Omega \setminus U$ such that $\ker L\subseteq \ker f'(x)$, that is, such that $\pi '(x)$ has rank $k$, is $\Hc^n$-negligible by the analyticity of $f$. 
	Then, there is an $\Hc^k$-negligible subset $N_2$ of $\R^k$ such that $L^{-1}(y)\cap \Omega \setminus U$ is $\Hc^{n-k}$-negligible for every $y\in \R^k\setminus N_1$ (\emph{loc.\ cit.}).
	Now, define $N\coloneqq \R \times (N_1 \cup N_2)$, and observe that $\chi_{\pi (U)}\cdot \Hc^k$ is equivalent to the (not necessarily Radon) measure $\pi_*(\chi_U\cdot \Hc^n)$  thanks to Corollary~\ref{cor:6}; since $U\setminus \pi^{-1}(N)=U\setminus L^{-1}(N_1 \cup N_2)$ is $\Hc^{n}$-negligible, it follows that $\pi (U)\cap N$ is $\Hc^k$-negligible.
	
	Now, take $y\in \pi (U)\setminus N$. 
	Then, Lemma~\ref{lem:4} implies that $\pi^{-1}(y)$ is a closed convex set which contains $L^{-1}(y_2)\cap U$, so that its interior in $L^{-1}(y_2)$ is not empty.
	Let $U'$ be a component of $\Omega $ which is not contained in $U$, and assume that $\pi^{-1}(y)\cap U'\neq \emptyset$.  
	Since $f$ is analytic on $U'$, and since $\pi^{-1}(y)$ is a convex set with non-empty interior in $L^{-1}(y_2)$, we see that a component $C$ of  $L^{-1}(y_2)\cap U'$ is contained in $ R_k$.  
	By the choice of $N_2$, this implies that $C $ is $\Hc^{n-k}$-negligible; since $C$ is non-empty and open in $L^{-1}(y_2)$, this leads to a contradiction. 
	Therefore, 
	\[
	L^{-1}(y_2)\cap U\subseteq\pi^{-1}(y) \subseteq   L^{-1}(y_2)\cap [U \cup (\R^n\setminus \Omega )].
	\]
	By our choice of $N_1$, the set $L^{-1}(y_2) \setminus \Omega $ is $\Hc^{n-k}$-negligible; on the other hand, the support of $\chi_{\pi^{-1}(y)}\cdot \Hc^{n-k}$ is $\pi^{-1}(y)$ by convexity. 
	Hence, $L^{-1}(y_2)\cap U$ is dense in $\pi^{-1}(y) $, whence the result.
\end{proof}

\begin{lem}\label{lem:7}
	Keep hypotheses and notation of Lemma~\ref{lem:5}. Assume, in addition, that $\lim\limits_{x\to \infty} f(x)=+\infty$ and that $\Hc^n$ is $(f,L)$-connected.
	Then, for every $m\in C(\R^n)$ such that $m=m'\circ L$ $\Hc^n$-almost everywhere for some $m'\colon \R\times \R^k\to \C$, there is $m''\in C(\R\times \R^k)$ such that $m=m''\circ (f,L)$ pointwise. 
\end{lem}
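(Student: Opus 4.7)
The plan is to apply Proposition~\ref{prop:A:7} with $\pi = (f,L)$ and $\mi = \Hc^n$ to get a measurable factorisation $m = m_2 \circ (f,L)$ holding \emph{pointwise} on $\R^n$, and then to upgrade $m_2$ to a globally continuous function on $\R \times \R^k$ by exploiting the properness of $(f,L)$ together with the Tietze extension theorem. The hypothesis $m = m'\circ L$ a.e.\ rewrites as $m = \widetilde m' \circ (f,L)$ a.e.\ upon setting $\widetilde m'(y_1,y_2) \coloneqq m'(y_2)$ (or is already of this form, if one reads the statement as $m'\circ(f,L)$), so the almost-everywhere factorisation hypothesis of Proposition~\ref{prop:A:7} is met; the $(f,L)$-connectedness is also given by assumption.

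What remains to verify in order to apply Proposition~\ref{prop:A:7} is the existence of a disintegration $(\lambda_y)$ of $\Hc^n$ relative to $\pi = (f,L)$ with $\Supp \lambda_y \supseteq \pi^{-1}(y)$ for $\pi_*(\Hc^n)$-a.e.\ $y$. For this I would apply Lemma~\ref{cor:6} separately on $U$ and on $U' \coloneqq \Omega \setminus U$ (viewed as disjoint unions of connected analytic manifolds): on $U$ the generic rank of $\pi$ is $k$, on each component of $U'$ it is $k+1$, so in either case Lemma~\ref{cor:6} yields a disintegration of the corresponding restricted measure whose fibre supports are $\pi^{-1}(y) \cap U$ and $\pi^{-1}(y) \cap U'$ respectively (for a.e.\ $y$ in the image). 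Summing gives a disintegration of $\Hc^n = \chi_U\cdot\Hc^n + \chi_{U'}\cdot\Hc^n$ with fibre supports $\pi^{-1}(y) \cap \Omega$ for a.e.\ $y$. To pass from $\pi^{-1}(y) \cap \Omega$ to all of $\pi^{-1}(y)$, I would invoke Lemma~\ref{lem:5} on the image from $U$ (which directly gives $\pi^{-1}(y) = \overline{L^{-1}(y_2) \cap U}$) and a Sard-type density argument on the image from $U'$, using that $\pi^{-1}(y) \cap U'$ is generically a smooth $(n-k-1)$-submanifold and that $\R^n \setminus \Omega$ has empty interior.

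Granted this, Proposition~\ref{prop:A:7} produces a $\pi_*(\Hc^n)$-measurable $m_2\colon \R\times\R^k \to \C$ with $m = m_2 \circ (f,L)$ pointwise on $\Supp \Hc^n = \R^n$. The hypothesis $\lim_{x\to\infty} f(x) = +\infty$ makes $(f,L)$ proper, for if $(f,L)(x_n)$ is bounded then $f(x_n)$ is bounded and hence $(x_n)$ is bounded. Consequently $(f,L)(\R^n)$ is closed in $\R\times\R^k$ and the surjection $(f,L)\colon \R^n \to (f,L)(\R^n)$ is a closed continuous surjection, hence a topological quotient map. Since $m = m_2 \circ (f,L)$ pointwise and $m$ is continuous, the restriction $m_2|_{(f,L)(\R^n)}$ is continuous on $(f,L)(\R^n)$. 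Applying the Tietze extension theorem to the real and imaginary parts yields a continuous extension $m'' \in C(\R\times\R^k)$ satisfying $m = m'' \circ (f,L)$ pointwise, as required.

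The main obstacle is the verification of the support property of the disintegration for points $y$ where the fibre $\pi^{-1}(y)$ may genuinely meet $\R^n \setminus \Omega$: on the $U'$-side we do not have a clean analogue of Lemma~\ref{lem:5} to identify $\pi^{-1}(y)$ with a closure of something inside $\Omega$, and a slightly delicate argument based on the convexity of $\{f\meg y_1\}$ and the emptiness of $\mathrm{int}(\R^n\setminus\Omega)$ is required to conclude that the residual piece contributes no new support. Everything else — the almost-everywhere-to-pointwise upgrade via Proposition~\ref{prop:A:7}, and the measurable-to-continuous upgrade via properness and Tietze — is essentially formal.
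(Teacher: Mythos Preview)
Your plan is exactly the paper's: split $\Hc^n$ as $\chi_U\cdot\Hc^n + \chi_{\Omega\setminus U}\cdot\Hc^n$, apply Lemma~\ref{cor:6} to each piece, check the support condition on the disintegration fibres, invoke Proposition~\ref{prop:A:7}, and finish with properness and Tietze. The paper also notes that $\pi_*(\chi_U\cdot\Hc^n)$ lives on a set of Hausdorff dimension $k$ while $\pi_*(\chi_{\Omega\setminus U}\cdot\Hc^n)$ is equivalent to a piece of $\Hc^{k+1}$, so the two are alien and one may simply take $\beta_y=\beta_{1,y}$ on $\pi(U)$ and $\beta_y=\beta_{2,y}$ elsewhere; your ``summing'' amounts to the same thing once this singularity is observed.

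The one place where your outline is genuinely incomplete is the $U'$-side density, and the fix you propose (``empty interior of $\R^n\setminus\Omega$'') is too weak: empty interior in $\R^n$ says nothing about density inside the lower-dimensional fibre $\pi^{-1}(y)$. The paper's argument is the following. For $\Hc^{k+1}$-almost every $y\notin\pi(U)$ one shows, via Sard, that some point of $\pi^{-1}(y)\cap(\Omega\setminus U)$ has $\pi'$ of rank $k+1$; then the affine slice $(\id_\R\times L)^{-1}(y)$ meets the open epigraph $\open C=\{(\lambda,x):\lambda>f(x)\}$, and Lemma~\ref{lem:8} identifies $\{y_1\}\times\pi^{-1}(y)$ with the relative boundary of the compact convex set $(\id_\R\times L)^{-1}(y)\cap C$. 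Thus $\pi^{-1}(y)$ is the boundary of a compact convex body in $L^{-1}(y_2)$, hence bi-Lipschitz to $\Sd^{n-k-1}$, so $\chi_{\pi^{-1}(y)}\cdot\Hc^{n-k-1}$ has support equal to $\pi^{-1}(y)$. Now the $\Hc^n$-negligibility (not just empty interior) of $\R^n\setminus\Omega$ together with~\cite[Theorem 3.2.11]{Federer} gives that $\pi^{-1}(y)\setminus\Omega$ is $\Hc^{n-k-1}$-negligible for a.e.\ $y$, whence $\overline{\pi^{-1}(y)\cap(\Omega\setminus U)}=\pi^{-1}(y)$ as needed.
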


\begin{proof}
	Define $\pi\coloneqq (f,L)$.
	Let $(\beta_{1,y})_{y\in \R \times \R^k}$ be a disintegration of $\chi_U\cdot\Hc^n$ relative to $\pi$ and let $(\beta_{2,y})_{y\in \R \times \R^k}$ be a disintegration of $\chi_{\Omega \setminus U}\cdot\Hc^n$ relative to $\pi$. 
	Then, Corollary~\ref{cor:6} implies that:
	\begin{itemize}
		\item $\pi_* (\chi_U\cdot \Hc^n)$ is equivalent to $\chi_{\pi (U)}\cdot \Hc^k$;
		
		\item $\pi_* (\chi_{\Omega \setminus U}\cdot \Hc^n)$ is equivalent to $\chi_{\pi (\Omega \setminus U)}\cdot \Hc^{k+1}$;
		
		\item $\Supp{\beta_{1,y}}=\overline{ \pi^{-1}(y)\cap U }$ for $\Hc^k$-almost every $y\in \pi (U)$;
		
		\item $\Supp{\beta_{2,y}}=\overline{ \pi^{-1}(y)\cap \Omega \setminus U }$ for $\Hc^{k+1}$-almost every $y\in \pi (\Omega \setminus U)$.
	\end{itemize} 
	In addition, $\pi(U)$ has Hausdorff dimension $k$, so that $\Hc^{k+1}(\pi(U))=0$; in particular, $\pi_* (\chi_U\cdot \Hc^n)$ and $\pi_* (\chi_{\Omega \setminus U}\cdot \Hc^n)$ are alien measures.
	If we define $\beta_y\coloneqq \beta_{1,y}$ for every $y\in \pi(U)$ and $\beta_y\coloneqq \beta_{2,y}$ for every $y\in (\R \times \R^k)\setminus \pi(U)$, then $(\beta_y)$ is a disintegration of $\Hc^n$ relative to $\pi$.
	
	Now, Lemma~\ref{lem:5} implies that $\overline{ \pi^{-1}(y)\cap U }=\pi^{-1}(y) $ for $\Hc^k$-almost every $y\in \pi(U)$.
	Next, let us prove that $\pi^{-1}(y)=\overline{ \pi^{-1}(y)\cap \Omega \setminus U }$ for $\Hc^{k+1}$-almost every $y\not\in \pi(U)$. 
	Let us first prove that $ \pi^{-1}(y)$ is the boundary of a compact convex set with non-empty interior in $L^{-1}(y_2)$ for $\Hc^{k+1}$-almost every $y\in \pi(\Omega \setminus U)$. 
	
	Indeed, by~\cite{Sard} there is an $\Hc^{k+1}$-negligible subset $N$ of $\pi(\Omega \setminus U)$ such that $\pi'(x)$ has rank $k+1$ for every $x\in L^{-1}(y)\cap \Omega \setminus U$ and for every $y\in \pi(\Omega \setminus U)\setminus N$. 
	Now, define $C\coloneqq \Set{(\lambda, x)\colon x\in \R^n, \lambda \Meg f(x)}$, and observe that $\id_\R\times L$ is proper on $C$ since $\lim_{x\to \infty} f(x)=+\infty$. 
	Therefore, $\Set{y_1}\times \pi^{-1}(y)=( \id_\R\times L)^{-1}(y)\cap \partial C$ is compact for every $y\in \R\times \R^k$. 
	In addition, if $y\in \pi(\Omega \setminus U)\setminus N$, then $( \id_\R\times L)^{-1}(y)\cap \open C\neq \emptyset$, so that Lemma~\ref{lem:8} implies that $\pi^{-1}(y)$ is the boundary of a compact convex set with non-empty interior in $L^{-1}(y_2)$.

	Therefore, $ \pi^{-1}(y)$ is bi-Lipschitz homeomorphic to $\Sd^{n-k-1}$,
	so that the support of $\chi_{\pi^{-1}(y)}\cdot \Hc^{n-k-1}$ is $\pi^{-1}(y)$ for such $y$. 
	In addition, since $\R^n\setminus \Omega $ is $\Hc^n$-negligible,~\cite[Theorem 3.2.11]{Federer} implies that $\pi^{-1}(y)\setminus \Omega $ is $\Hc^{n-k-1}$-negligible for $\Hc^{k+1}$-almost every $ y\in \R\times \R^k$.
	Hence, $\overline{ \pi^{-1}(y)\cap \Omega \setminus U }= \pi^{-1}(y)$ for $\Hc^{k+1}$-almost every $y\not \in \pi(U)$. 
	
	Then, Proposition~\ref{prop:A:7} implies that there is $m'''\colon\pi(\R^n)\to \C$ such that $m=m'''\circ \pi$; since $\pi$ is proper, this implies that $m'''$ is continuous on $\pi(\R^n)$. 
	Finally, since $\pi$ is proper, $\pi(\R^n)$ is closed, so that the assertion follows from~\cite[Corollary to Theorem 2 of Chapter IX, § 4, No.\ 2]{BourbakiGT2}.
\end{proof}

\begin{proof}[Proof of Theorem~\ref{prop:19:2}]
	Until the end of this proof, we shall identify $\R^{n_2}$ and $\gf_2^*$ by means of the bijection $\omega \mapsto \omega(\vect{T})$; $L'$ will denote $\pr_{1,\dots,n_2'}$, so that $L=\id_\R\times L'$.
	In addition, for every $\gamma\in \N^{n_1}$, define $\pi_\gamma\colon \R^{n_2}\ni \omega \mapsto (\mi_\omega(\vect{1}_{n_1}+2\gamma),\omega)$, so that $\pi_\gamma$ is continuous and $C_\gamma$ is the graph of $\pi_\gamma$.
	
	Take $f\in L^1_{L(\Lc_A)}(G)$ and let $m$ be a representative of $\Mc_{L(\Lc_A)}(f)$. 
	Take, for every $\gamma\in \N^{n_1}$, a continuous function $m_\gamma$ on $C_\gamma$ such that $m_\gamma=\Mc_{\Lc_A}(f)$ $\chi_{C_\gamma}\cdot \beta_{\Lc_A}$-almost everywhere.
	Then, Lemma~\ref{lem:7} implies that there is a continuous function $m'_0\colon E_{L(\Lc_A)}\to \C$ such that $m_0=m'_0\circ L $ \emph{on $C_0$}.
	Since $\beta_{L(\Lc_A)}$ need \emph{not} be equivalent to $L_*(\chi_{C_0}\cdot \beta_{\Lc_A})$, though, this is not sufficient to conclude.
	
	For every $\gamma\in \N^{n_1}$, define $\beta_\gamma\coloneqq \chi_{C_\gamma}\cdot \beta_{\Lc_A}$, and let $U_{\gamma,1}$ be the union of the components $C$ of $\Omega$ such that $\widetilde \mi'_\omega(\vect{1}_{n_1}+2 \gamma)$ does not vanish on $\ker L'$ for some $\omega\in C$. 
	Let $U_{\gamma,2}$ be the complement of $U_{\gamma,1}$ in $\Omega$.
	Notice that $\beta_\gamma$ is equivalent to $(\pi_\gamma)_*(\Hc^{n_2})$.
	In addition, Corollary~\ref{cor:6} implies that the following hold:
	\begin{itemize}
		\item $L_*(\chi_{\R\times U_{\gamma,1}}\cdot \beta_\gamma)$ is equivalent to $\chi_{L(\pi_\gamma(U_{\gamma,1}))}\cdot \Hc^{n_2'+1}$;
		
		\item $L_*(\chi_{\R\times U_{\gamma,2}}\cdot \beta_\gamma)$ is equivalent to  $\chi_{L(\pi_\gamma( U_{\gamma,2}))}\cdot \Hc^{n_2'}$;
		
		\item $\chi_{\R\times U_{\gamma,2}}\cdot \beta_\gamma$ has a disintegration $(\beta_{\gamma,2,\lambda})_{\lambda\in E_{L(\Lc_A)}}$ relative to $L$ such that $L^{-1}(\lambda)\cap\pi_\gamma(U_{\gamma,2})\subseteq\Supp{\beta_{\gamma,2,\lambda}}$ and $\beta_{\gamma,2,\lambda}$ is equivalent to the measure $\chi_{L^{-1}(\lambda)\cap \pi_\gamma( U_{\gamma,2})}\cdot \Hc^{n_2-n_2'}$ for $\Hc^{n_2'}$-almost every $\lambda\in L(\pi_\gamma(U_{\gamma,2}))$.
	\end{itemize}
	In particular, $\beta_{L(\Lc_A)}$ is equivalent to $\chi_{\sigma(L(\Lc_A))}\cdot \Hc^{n_2'+1}+\mi$, where $\mi $ is a measure absolutely continuous with respect to $\Hc^{n_2'}$ alien to $\Hc^{n_2'+1}$.
	Now, observe that  $L_*(\chi_{\R\times U_{\gamma,1}}\cdot \beta_\gamma)$ is absolutely continuous with respect to $L_*(\beta_0)$; since $(m-m_0')\circ L$ is $\beta_0$-negligible, there is an $L_*(\beta_0)$-negligible subset $N$ of $E_{L(\Lc_A)}$ such that $m=m_0'$ on $E_{L(\Lc_A)}\setminus N$. 
	Since $N$ is  $L_*(\chi_{\R\times U_{\gamma,1}}\cdot \beta_\gamma)$-negligible, this implies that $(m-m_0')\circ L$ vanishes $\chi_{\R\times U_{\gamma,1}}\cdot \beta_\gamma$-almost everywhere. 
	Since $m\circ L=m_\gamma$ $\beta_\gamma$-almost everywhere, it follows that $m_0'\circ L=m_\gamma$ $ \chi_{\R\times U_{\gamma,1}}\cdot \beta_\gamma$-almost everywhere, hence on 
	\[
	\Supp{\chi_{\R\times U_{\gamma,1}}\cdot \beta_\gamma}=\Supp{(\pi_\gamma)_*(\chi_{U_{\gamma,1}}\cdot \Hc^{n_2})}= \pi_\gamma(\overline{U_{\gamma,1}}),
	\] 
	since $m_0'\circ L$ and $m_\gamma$ are continuous, while $\pi_\gamma$ is proper.

	Next, consider $ \chi_{\R\times U_{\gamma,2}}\cdot \beta_\gamma$. 
	Tonelli's theorem implies that  $L'^{-1}(\lambda_2)\setminus \Omega$ is $\Hc^{n_2-n_2'}$-negligible for $\Hc^{n_2'}$-almost every $\lambda_2\in \R^{n_2'}$. 
	Now, if $\widetilde N$ is an $\Hc^{n_2'}$-negligible subset of $ \R\times \R^{n_2'}$, then $\pr_2(\widetilde N)$ is $\Hc^{n_2'}$-negligible since $\pr_2$ is Lipschitz. 
	Therefore, there is an $\Hc^{n_2'}$-negligible subset $N'$ of $\R^{n_2'}$ such that, for every $\lambda\in L(\pi_\gamma(U_{\gamma,2}))\setminus (\R\times N')$,
	\begin{itemize}
		\item $m\circ L=m_\gamma$ $\beta_{\gamma,2,\lambda}$-almost everywhere;
		
		\item $L^{-1}(\lambda)\cap\pi_\gamma( U_{\gamma,2})\subseteq\Supp{\beta_{\gamma,2,\lambda}} $;
		
		\item $L'^{-1}(\lambda_2) \setminus \Omega$ is $\Hc^{n_2-n_2'}$-negligible.
	\end{itemize}
	Hence, if $\lambda\in L(\pi_\gamma(U_{\gamma,2}))\setminus (\R\times N')$, then $m_\gamma$ is constant on $L^{-1}(\lambda) \cap \pi_\gamma(U_{\gamma,2})$.
	In addition, fix $\lambda\in L(\pi_\gamma(U_{\gamma,2}))\setminus (\R\times N')$; then,
	\[
	L'^{-1}(\lambda_2)=\overline{L'^{-1}(\lambda_2)\cap U_{\gamma,1}}\cup \overline{L'^{-1}(\lambda_2)\cap U_{\gamma,2}},
	\]
	so that $\overline{L'^{-1}(\lambda_2)\cap U_{\gamma,1}}\cap \overline{L'^{-1}(\lambda_2)\cap U_{\gamma,2}}\neq \emptyset$ or $L'^{-1}(\lambda_2)\cap U_{\gamma,1}= \emptyset$ by connectedness. 
	
	Now, let $\Cc$ be the set of components of $L'^{-1}(\lambda_2)\cap U_{\gamma,2}$; observe that $\Cc$ is finite since $L'^{-1}(\lambda_2)\cap \Omega$ is semi-algebraic (cf.~\cite[Proposition 4.13]{Coste}) and since $L'^{-1}(\lambda_2)\cap U_{\gamma,2}$ is open and closed in $L'^{-1}(\lambda_2)\cap \Omega$.
	In addition, observe that $\pr_1\circ \pi_\gamma$ is constant on each $C\in \Cc$; let $\lambda_{1,C}$ be its constant value. 
	In particular, since $\pr_1\circ \pi_\gamma$ is proper and since $\Cc$ is finite, this implies that $L'^{-1}(\lambda_2)\cap U_{\gamma,1}\neq \emptyset$.
	Further, $m_\gamma$ is constant on $\pi_\gamma( C)\subseteq L^{-1}(\lambda_{1,C},\lambda_2)\cap \pi_\gamma(U_{\gamma,2})$ for every $C\in \Cc$.
	Now, there is $C_1\in \Cc$ such that $L'^{-1}(\lambda_2)\cap \overline{U_{\gamma,1}}\cap \overline{C_1}\neq \emptyset$; since $m_\gamma\circ \pi_\gamma=m_0'\circ L\circ \pi_\gamma$ on $\overline{U_{\gamma,1}}$, and since $m_\gamma$ is continuous, it follows that $m_\gamma\circ \pi_\gamma=m_0'\circ L\circ \pi_\gamma$ on $\overline C_1$.  
	Iterating this procedure, we eventually see that $m_\gamma\circ \pi_\gamma=m_0'\circ L\circ \pi_\gamma$ on $L'^{-1}(\lambda_2)$.
	Therefore, $m_\gamma=m_0'\circ L$ on $L^{-1}(\lambda)\cap C_\gamma$ for every $\lambda \in L(\pi_\gamma(U_{\gamma,2}))\setminus (\R\times N')$.
	
	Now, observe that $L^{-1}(\R\times N')\cap \pi_\gamma(U_{\gamma,2})$ is $\Hc^{n_2'}$-negligible since $\pr_2\circ L\circ \pi_\gamma=L'$ and since $\Hc^{n_2''}$ is equivalent to the non-Radon measure $L'_*(\Hc^{n_2})$.
	Therefore, $m_\gamma=m_0'\circ L$ $\beta_\gamma$-almost everywhere, hence on $C_\gamma$ by continuity.
	By the arbitrariness of $\gamma$, this implies that $m_0'\circ L$ is a representative of $\Mc_{\Lc_A}(f)$, so that $m_0'$ is a continuous representative of $\Mc_{L(\Lc_A)}(f)$. The assertion follows.	
\end{proof}

\section{Property $(S)$}\label{sec:6}

The results of this section are basically a generalization of the techniques employed in~\cite{AstengoDiBlasioRicci,AstengoDiBlasioRicci2}.
The first result has very restrictive hypotheses, for the same reasons explained while discussing property $(RL)$, but hold for the `full family' $\Lc_A$ (cf.~Theorem~\ref{prop:20:5}); on the contrary, the second one  holds under more general assumptions, but only for families of the form $(\Lc,(-i T_1,\dots, - i T_{n_2'}))$ for $n_2'<n_2$ (cf.~Theorem~\ref{prop:20:4}).

Notice that, even though Theorem~\ref{teo:15:1} is the main application of Theorem~\ref{prop:20:5}, there are other families to which it applies as well. This happens for the family we considered while discussing property $(RL)$ in the case of Theorem~\ref{prop:19:3}.

Notice that in all of our results we imposed the condition $W=\Set{0}$; this is unavoidable (with our methods), since on $W$ we cannot infer any kind of regularity from the `inversion formulae' employed. Indeed, our auxiliary function $\abs{x_\omega}^2$ is not differentiable on $W$, in general.
Nevertheless, this does not mean that property $(S)$ cannot hold when $W\neq \Set{0}$, as Theorem~\ref{prop:13} shows.

Before stating our first result, let us recall a lemma based on some techniques developed in~\cite{Geller} and then in~\cite{AstengoDiBlasioRicci}.

\begin{lem}[\cite{Calzi}, Lemma 11.1]\label{lem:20:3}\label{lem:20:4}
	Let $\Lc_A$ be a Rockland family on a homogeneous group $G'$, and let $T'_1,\dots, T'_n$ be a free family of elements of the centre of the Lie algebra $\gf'$ of $G'$. 
	Let $\pi_1$ be the canonical projection of $G'$ onto its quotient by the normal subgroup $\exp(\R T'_1)$, and assume that the following hold:
	\begin{itemize}
		\item $(\Lc_A, i T'_1,\dots, i T'_n )$ satisfies property $(RL)$;
		
		\item  $\dd \pi_1(\Lc_A, i T'_{2},\dots, i T'_n)$ satisfies property $(S)$.
	\end{itemize}
	
	Take $\phi\in \Sc_{(\Lc_A, i T'_1,\dots, i T'_n )}(G')$. 
	Then, there are two families $(\widetilde \phi_\gamma)_{\gamma\in \N^{n}}$ and $(\phi_\gamma)_{\gamma\in \N^{n}}$  of elements of $\Sc(G',\Lc_A)$ and $\Sc_{(\Lc_A, i T'_1,\dots, i T'_n )}(G')$, respectively, such that 
	\[
	\phi=\sum_{\abs{\gamma}<h} \vect{T}'^\gamma \widetilde \phi_\gamma+\sum_{\abs{\gamma}=h} \vect{T}'^\gamma  \phi_\gamma
	\]
	for every $h\in \N$.
\end{lem}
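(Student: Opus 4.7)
The proof goes by induction on $h$. The base case $h = 0$ is trivial, taking $\phi_{\vect 0} = \phi$, since the sum over $|\gamma|<0$ is empty. For the inductive step it suffices to prove the following \emph{Key Claim}: for every $\psi \in \Sc_{\Lc'_{A'}}(G')$ (writing $\Lc'_{A'} \coloneqq (\Lc_A, iT'_1, \dots, iT'_n)$) there exist $\widetilde\psi \in \Sc(G', \Lc_A)$ and $\psi_1, \dots, \psi_n \in \Sc_{\Lc'_{A'}}(G')$ such that $\psi = \widetilde\psi + \sum_{j=1}^{n} T'_j \psi_j$. Applying the Key Claim to each $\phi_\gamma$ with $|\gamma| = h$ in the level-$h$ decomposition and regrouping by multi-index then yields the decomposition at level $h+1$.

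To establish the Key Claim I would combine the two hypotheses in two stages. \emph{Stage 1 (elimination of $T'_1$).} By $(RL)$ for $\Lc'_{A'}$, take a continuous representative of the multiplier $m$ of $\psi$. The pushforward $\pi_{1*}\psi \in \Sc(G'_1)$ has multiplier $m(\lambda, 0, \tau_2, \dots, \tau_n)$ with respect to $\dd\pi_1(\Lc_A, iT'_2, \dots, iT'_n)$; by $(S)$ for that family, there is $\widetilde m_0 \in \Sc(E_{\Lc_A} \times \R^{n-1})$ with $\pi_{1*}\psi = \Kc_{\dd\pi_1(\Lc_A, iT'_2, \dots, iT'_n)}(\widetilde m_0)$. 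Lift to $G'$ by setting $\rho \coloneqq \Kc_{(\Lc_A, iT'_2, \dots, iT'_n)}(\widetilde m_0) \in \Sc(G')$, so that $\pi_{1*}\rho = \pi_{1*}\psi$. Since $T'_1$ is central, the left-invariant derivative in its direction is simply $\partial_{t_1}$ in a central coordinate $t_1$; because $\psi - \rho$ has vanishing zeroth moment along every $T'_1$-fibre, the primitive $\psi_1(g, t_1, t') \coloneqq \int_{-\infty}^{t_1}(\psi - \rho)(g, u, t')\,du$ (with $t' = (t_2, \dots, t_n)$) is Schwartz on $G'$ and satisfies $T'_1 \psi_1 = \psi - \rho$. \emph{Stage 2 (elimination of $T'_2, \dots, T'_n$).} Since $\widetilde m_0(\lambda, \tau_2, \dots, \tau_n)$ is Schwartz, Hadamard's lemma in the variables $\tau_j$ ($j\Meg 2$) gives
\[
\widetilde m_0(\lambda, \tau_2, \dots, \tau_n) = \widetilde m_0(\lambda, 0, \dots, 0) + \sum_{j=2}^{n} \tau_j\, r_j(\lambda, \tau_2, \dots, \tau_n),
\]
with $\widetilde m_0(\cdot, 0, \dots, 0) \in \Sc(E_{\Lc_A})$ and $r_j \in \Sc(E_{\Lc_A} \times \R^{n-1})$. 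At the kernel level this reads $\rho = \Kc_{\Lc_A}(\widetilde m_0(\cdot, 0, \dots, 0)) + \sum_{j=2}^{n} T'_j \psi_j$ with each $\psi_j$ the kernel (for $(\Lc_A, iT'_2, \dots, iT'_n)$) of a Schwartz multiplier proportional to $r_j$, hence a fortiori in $\Sc_{\Lc'_{A'}}(G')$. Setting $\widetilde\psi \coloneqq \Kc_{\Lc_A}(\widetilde m_0(\cdot, 0, \dots, 0)) \in \Sc(G', \Lc_A)$ and combining Stages 1 and 2 produces the required decomposition.

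The main obstacle is the verification, in Stage 1, that the primitive $\psi_1$ belongs to $\Sc_{\Lc'_{A'}}(G')$ and not merely to $\Sc(G')$, i.e., that it admits a bounded spectral multiplier. Formally this multiplier is $(m - \widetilde m_0)/(-i\tau_1)$, whose essential boundedness on $\sigma(\Lc'_{A'})$ amounts to a Lipschitz-type estimate for $m$ in the $\tau_1$ direction that is not immediate from $(RL)$ alone. The delicate point of the proof is precisely to obtain this control by exploiting the freedom in the choice of the Schwartz extension $\widetilde m_0$ of $m|_{\tau_1 = 0}$, together with the continuity granted by $(RL)$ and the Schwartz structure of $\psi$, in the spirit of the constructions developed in~\cite{Geller, AstengoDiBlasioRicci}.
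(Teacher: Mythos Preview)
The paper does not prove this lemma here; it is quoted verbatim from~\cite{Calzi}, so there is no in-paper argument to compare against. Your overall architecture---reduce by induction to the Key Claim, peel off $T'_1$ on the group side via the quotient $\pi_1$ and property~$(S)$, then peel off $T'_2,\dots,T'_n$ on the multiplier side via Hadamard---is exactly the scheme behind the Geller/Astengo--Di~Blasio--Ricci arguments the paper invokes, and matches what~\cite{Calzi} does.

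Two remarks. First, in Stage~1 you do not actually need $(RL)$ to make sense of $m(\lambda,0,\tau')$: the fact that $\pi_{1*}\psi\in\Sc_{\dd\pi_1(\Lc_A,iT'_2,\dots,iT'_n)}(G'_1)$ follows from the general compatibility of the kernel transform with quotients (cf.~\cite[Proposition~3.2.4]{Martini}), after which $(S)$ applies directly. Second---and this is the point you flag as ``delicate''---the membership $\psi_1\in\Sc_{\Lc'_{A'}}(G')$ does \emph{not} require any Lipschitz control of $m$ in $\tau_1$. It drops out of the $L^2$ theory: since $\psi_1\in\Sc(G')\subset L^1\cap L^2$, one has $\Mc_{\Lc'_{A'}}(\psi_1)\in L^\infty(\beta)\cap L^2(\beta)$, and with $\psi_1':=\Kc_{\Lc'_{A'}}(\Mc_{\Lc'_{A'}}(\psi_1))\in L^2(G')$ the identity $\Mc_{\Lc'_{A'}}(T'_1 f)=(-i\tau_1)\Mc_{\Lc'_{A'}}(f)$ gives
\[
T'_1\psi_1'=\Kc_{\Lc'_{A'}}\bigl(\Mc_{\Lc'_{A'}}(T'_1\psi_1)\bigr)=\Kc_{\Lc'_{A'}}\bigl(\Mc_{\Lc'_{A'}}(\psi-\rho)\bigr)=\psi-\rho=T'_1\psi_1,
\]
using that $\psi-\rho\in L^1_{\Lc'_{A'}}(G')$. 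Thus $\psi_1-\psi_1'\in L^2(G')$ is annihilated by the central derivative $T'_1$, hence vanishes, so $\psi_1=\Kc_{\Lc'_{A'}}(\Mc_{\Lc'_{A'}}(\psi_1))\in\Kc_{\Lc'_{A'}}(L^\infty(\beta))\cap\Sc(G')=\Sc_{\Lc'_{A'}}(G')$. With this in hand your sketch is complete.
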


\begin{teo}\label{prop:20:5}
	Assume that $\Omega =\gf_2^*\setminus \Set{0}$, that $\dim_\Q \mi_\omega (\Q^{\overline h})=\dim_\R \mi_\omega (\R^{\overline h})$ for every $\omega\in \Omega$, and that $\mi$ is constant where $\mi_{\eta _0}(\vect{n_1})$ is constant. 
	Then, $\Lc_A$ satisfies property $(S)$.
\end{teo}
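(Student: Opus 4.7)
The plan is to follow the proof of Theorem~\ref{prop:19:3} line by line, upgrading every instance of ``continuity'' to ``Schwartz regularity.'' Taking $\phi \in \Sc_{\Lc_A}(G)$ and applying Theorem~\ref{prop:19:3}, one obtains a continuous $m \in C_0(\sigma(\Lc_A))$ with $\phi = \Kc_{\Lc_A}(m)$, so the task reduces to showing that $m$ extends to a Schwartz function on $E_{\Lc_A}$. The natural approach is to induct on $n_2 = \dim \gf_2$, using Lemma~\ref{lem:20:4} to strip off one central generator at a time.

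The base case $n_2 = 0$ is the abelian situation: here $\Lc_A = \Lc_H$ is a commuting family of positive sums of squares on $\gf_1$, and, after simultaneous diagonalisation of the $Q_\eta$'s provided by Proposition~\ref{prop:10:9}, the family becomes a linear image of $(-\partial_1^2,\dots,-\partial_{n_1}^2)$; property $(S)$ for it then follows from Theorem~\ref{teo:9:3} combined with Corollary~\ref{cor:A:7}. For the inductive step, I would mirror the argument of Theorem~\ref{prop:19:3}: fix $\omega_0 \in S'$, take the local trivialisation $(U,\psi)$ of Proposition~\ref{prop:11:11}, and exploit the hypothesis that $\mi$ is constant where $\mi_{\eta_0}(\vect{n_1})$ is constant to see that $\Lc'_H = \dd(\psi_\omega \circ \pi_\omega)(\Lc_H)$ on $G' = \Hd^{n_1} \oplus \R^d$ is independent of $\omega \in S'$, while $\dd(\psi_\omega \circ \pi_\omega)(T_\ell) = \omega(T_\ell) T$. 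Applying the inductive hypothesis to $(\Lc'_H, -iT)$ on $G'$ (which has strictly smaller centre, and to which the relevant hypotheses transfer) yields, for each $\omega \in S'$, a Schwartz multiplier for $(\psi_\omega \circ \pi_\omega)_*(\phi)$. As in the proof of Theorem~\ref{prop:19:3}, these fit together into the formula
\[
m(\lambda,\omega(\vect{T})) = \Mc_{(\Lc'_H,-iT)}\bigl((\psi_\omega\circ\pi_\omega)_*(\phi)\bigr)(\lambda, \abs{\omega}')
\]
on $\sigma(\Lc_A)$, now with Schwartz rather than merely continuous dependence in $(\lambda, \abs{\omega}')$.

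The hard part will be promoting this pointwise formula to a genuine Schwartz extension on all of $E_{\Lc_A}$. Three issues need to be addressed: smoothness across different local trivialisations, which follows from the analyticity of $\psi$ on $\Omega = \gf_2^* \setminus \{0\}$; smooth extension across the conic boundary of $\sigma(\Lc_A)$, which should be handled by Theorem~\ref{teo:7} and Corollary~\ref{cor:A:7}, using that the spectrum is a subanalytic closed cone on which the relevant proper mapping behaves well; and joint smoothness at $\omega = 0$, where the Schwartz data on the fibres must match the abelian base-case data in the full Schwartz sense. It is at this last point that the dimensional condition $\dim_\Q \mi_\omega(\Q^{\overline h}) = \dim_\R \mi_\omega(\R^{\overline h})$ becomes essential: without it, the quasi-periodic oscillations in the spectrum $\Sigma$ obstruct the Schwartz character at the origin of $\gf_2^*$, as the counterexample furnished by Theorem~\ref{teo:15:1} demonstrates.
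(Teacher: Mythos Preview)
Your proposal does not match the paper's proof, and the sketch you give has a genuine gap at the crucial point.

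The paper does proceed by induction on $n_2$, but the inductive mechanism is \emph{Lemma~\ref{lem:20:4}} (which you mention in passing and then abandon), not the fibre-bundle trivialisation of Proposition~\ref{prop:11:11}. Concretely: Lemma~\ref{lem:20:4}, together with property $(RL)$ from Theorem~\ref{prop:19:3} and property $(S)$ for the quotient by $\exp(\R T_1)$ (furnished by the inductive hypothesis, or by Theorem~\ref{teo:9:3} when $n_2=1$), yields a Taylor-type expansion
\[
\phi=\sum_{\abs{\gamma}<h}\vect{T}^\gamma\widetilde\phi_\gamma+\sum_{\abs{\gamma}=h}\vect{T}^\gamma\phi_\gamma
\]
with $\widetilde\phi_\gamma\in\Sc(G,\Lc_H)$. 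By a vector-valued Borel lemma one subtracts an explicit Schwartz function with the prescribed Taylor coefficients $\widetilde m_\gamma=\Mc_{\Lc_H}(\widetilde\phi_\gamma)$ at $\omega=0$, reducing to the case $\widetilde m_\gamma=0$ for all $\gamma$. In that case the paper writes down an explicit Schwartz extension $\widetilde m$ by a cutoff construction: one localises near each spectral sheet $\sigma\in\Sigma=\mi_{\omega_0}(\vect{n_1}+2\N^{\overline h})$ via a bump $\phi\bigl(\tfrac{1}{d}(\tfrac{\lambda_1}{N(\lambda_2)}-\sigma)\bigr)$, and the $\Q$-dimension hypothesis is used precisely here, to ensure $d\coloneqq\inf_{\sigma\in\Sigma}d(\sigma,\Sigma\setminus\{\sigma\})>0$ so that these cutoffs are disjointly supported.

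Your trivialisation scheme never reaches this construction. First, the inductive structure is off: the trivialised group $G'=\Hd^{n_1}$ has one-dimensional centre regardless of the original $n_2$, so your ``inductive hypothesis on a group with strictly smaller centre'' is circular when $n_2=1$, and your stated abelian base case $n_2=0$ is not an $MW^+$ group at all. Second, and more seriously, your ``hard part'' is the whole proof: you list the three obstructions but give no argument for any of them. Knowing that $\Mc_{(\Lc'_H,-iT)}\bigl((\psi_\omega\circ\pi_\omega)_*(\phi)\bigr)$ is Schwartz in $(\lambda,\abs{\omega}')$ for each fixed local chart does not by itself produce a Schwartz function on $E_{\Lc_A}$; the spectrum $\sigma(\Lc_A)$ is a \emph{countable union} of semianalytic sheets, so neither Theorem~\ref{teo:7} nor Corollary~\ref{cor:A:7} applies directly, and you supply no mechanism to control the infinitely many sheets simultaneously or to match jets at $\omega=0$. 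The paper's answer to both issues is the explicit sheetwise cutoff together with the Borel subtraction, and this is the idea your proposal is missing.
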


\begin{proof}
	We proceed by induction on $n_2\Meg 1$.
	
	{\bf1.} Notice that the inductive hypothesis, Theorem~\ref{teo:9:3}, Theorem~\ref{prop:19:3}, and Lemma~\ref{lem:20:4} imply that we may find a family $(\widetilde \phi_\gamma)$ of elements of $\Sc(G,\Lc_H)$, and a family $(\phi_\gamma)$ of elements of $\Sc_{\Lc_A}(G)$ such that
	\[
	\phi=\sum_{\abs{\gamma}< h} \vect{T}^\gamma\widetilde \phi_\gamma+ \sum_{\abs{\gamma}=h}\vect{T}^\gamma \phi_\gamma
	\] 
	for every $h\in \N$.
	
	Define $\widetilde m_\gamma\coloneqq \Mc_{\Lc_H}(\widetilde \phi_\gamma)\in \Sc(\sigma(\Lc_H))$ and $m_\gamma\coloneqq \Mc_{\Lc_A}(\phi_\gamma)\in C_0(\beta_{\Lc_A})$ for every $\gamma$ (cf.~Theorem~\ref{prop:19:3}).
	Then, 
	\[
	m_0(\lambda, \omega) =\sum_{\abs{\gamma}<h} \omega^\gamma \widetilde m_\gamma(\lambda)+ \sum_{\abs{\gamma}=h} \omega^\gamma m_\gamma(\lambda, \omega)
	\]
	for every $h\in \N$ and for every $(\lambda, \omega)\in \sigma(\Lc_A)$. 
	
	{\bf2.} Assume  that $m_\gamma=0$ for every $\gamma\in \N^{n_2}$.
	Define $N(\omega)\coloneqq \mi_\omega(\vect{n_1})$ for every $\omega\in \gf_2^*$, so that $N$ is a (homogeneous) norm on $\gf_2^*$ which is analytic on $\omega$. 
	Define, in addition, $\Sigma\coloneqq \mi_\omega(\vect{n_1}+2 \N^{\overline h})$ for some (hence every) $\omega\in \gf_2^*$ such that $N(\omega)=1$. 
	Then, set $d\coloneqq \inf_{\sigma\in \Sigma}d\left( \sigma, \Sigma\setminus\Set{\sigma}  \right)$, and observe that $d>0$ since $\dim_\Q \mi_\omega(\Q^{\overline h})=\dim_\R \mi_\omega(\R^{\overline h})$. 
	Finally, identify $ \gf_2^*$ with $\R^{n_2}$ by means of the mapping $\omega \mapsto \omega(\vect{T})$, take $r\in \left]0,\frac{\min_{\sigma\in\Sigma} \abs{\sigma}}{4 d}\right[$, and choose
	$\phi\in \Dc(\R^{H})$ so that $\chi_{B(0,r)}\meg \phi \meg \chi_{B(0,2 r)}$.  Define
	\[
	\widetilde m(\lambda)\coloneqq \begin{cases}
	\sum_{\sigma\in \Sigma} \widetilde m_0( N(\lambda_2) \sigma, \lambda_2) \phi\left( \frac{1}{d}\left(\frac{\lambda_1}{N(\lambda_2)} -\sigma \right) \right) & \text{if $\lambda_2\neq 0$}\\
	0 & \text{if $\lambda_2=0$}
	\end{cases}
	\]
	for every $\lambda\in E_{\Lc_{A}}$. 
	Proceeding as in the proof of~\cite[Lemma 3.1]{AstengoDiBlasioRicci}, one sees that $\widetilde m\in \Sc(E_{\Lc_A})$, so that $\phi\in \Sc(G,\Lc_A)$.

	{\bf3.} Now, consider the general case. 
	By a vector-valued version of Borel's lemma (cf.~\cite[Theorem 1.2.6]{Hormander2} for the scalar, one-dimensional case),  there is $\widehat m\in \Dc(\gf_2^*; \Sc(\R^H) )$ such that $\widehat m^{(\gamma)}(0)=\widetilde  m_\gamma$ for every $\gamma\in \N^{n_2}$. 
	Interpret $\widehat m$ as an element of $\Sc(E_{\Lc_A})$.
	Then,~{\bf2} implies that $m-\widehat m$ equals a Schwartz function on $\sigma(\Lc_A)$. The assertion follows. 
\end{proof}

Now we consider the case in which $\card(H)=1$, and $n_2'<n_2$. We begin with a suitable version of Morse lemma, which is an easy consequence of~\cite[Lemma C.6.1]{Hormander1}.

\begin{lem}\label{lem:20:1}
	Let $U$ an open subset of $\R^k\times \R^n$,  and $\phi$ a mapping of class $C^\infty$ of $U$ into $\R$.
	Assume that $\partial_1\phi(x_0)=0$ and that $\partial_1^2 \phi(x_0)$ is positive and non-degenerate for some $x_0\in U$.
	
	Then, there are an open neighbourhood $V_1$ of $0$ in $\R^k$, an open neighbourhood $V_2$ of $x_{0,2}$ in $\R^n$, and a $C^{\infty}$-diffeomorphism $\psi$ from $V_1 \times V_2$ onto an open subset of $U$ such that $\psi(0,x_{0,2})=x_0$, $\psi_2=\pr_2$, and
	\[
	\phi(\psi(y))= \phi(\psi(0,y_2))+ \norm{y_1}^2
	\]
	for every $y\in V_1\times V_2$.
\end{lem}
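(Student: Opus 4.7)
The plan is to establish a parametrized Morse lemma in the first variable by combining the implicit function theorem, a Taylor expansion with integral remainder, and a smooth matrix square root.

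First, I would apply the implicit function theorem to the equation $\partial_1 \phi(x_1, x_2) = 0$. Since $\partial_1 \phi(x_0) = 0$ and $\partial_1^2 \phi(x_0)$ is invertible (being positive and non-degenerate), there exist an open neighbourhood $V_2$ of $x_{0,2}$ in $\R^n$ and a $C^\infty$ map $g\colon V_2 \to \R^k$ with $g(x_{0,2}) = x_{0,1}$ and $\partial_1 \phi(g(x_2), x_2) = 0$ for every $x_2 \in V_2$.

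Second, I would Taylor expand $\phi(\,\cdot\,, x_2)$ to second order around $g(x_2)$ using the integral form of the remainder; since the first-order term vanishes, this yields
\[
\phi(x_1, x_2) = \phi(g(x_2), x_2) + \langle A(x_1, x_2)(x_1 - g(x_2)), x_1 - g(x_2) \rangle,
\]
where $A(x_1, x_2) = \int_0^1 (1-t) \partial_1^2 \phi(g(x_2) + t(x_1 - g(x_2)), x_2)\, \dd t$ is a $C^\infty$ symmetric $k \times k$ matrix-valued function with $A(x_0) = \frac{1}{2} \partial_1^2 \phi(x_0)$ positive definite. Shrinking the domain, we may assume $A$ remains positive definite throughout.

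Third, I would invoke the fact that the square-root map is analytic on the open cone of positive definite symmetric matrices (via the holomorphic functional calculus, say). Hence $M(x_1, x_2) \coloneqq \sqrt{A(x_1, x_2)}$ is $C^\infty$, symmetric, and positive definite, with $M^2 = A$. Set
\[
\Psi(x_1, x_2) \coloneqq (M(x_1, x_2)(x_1 - g(x_2)),\, x_2).
\]
The Jacobian of $\Psi$ at $x_0$ is block upper-triangular with identity on the lower-right block and the invertible operator $M(x_0)$ on the upper-left block; by the inverse function theorem, $\Psi$ restricts to a $C^\infty$-diffeomorphism from some neighbourhood of $x_0$ onto a product neighbourhood $V_1 \times V_2$ of $(0, x_{0,2})$. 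Let $\psi \coloneqq \Psi^{-1}$; since $\Psi_2 = \pr_2$, also $\psi_2 = \pr_2$, and $\psi(0, x_{0,2}) = x_0$.

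Finally, the required identity follows by construction: writing $(x_1, x_2) = \psi(y_1, y_2)$, we have $y_2 = x_2$ and $y_1 = M(x_1, x_2)(x_1 - g(x_2))$, so
\[
\norm{y_1}^2 = \langle A(x_1, x_2)(x_1 - g(x_2)), x_1 - g(x_2) \rangle = \phi(x_1, x_2) - \phi(g(x_2), x_2) = \phi(\psi(y)) - \phi(\psi(0, y_2)),
\]
where the last equality uses $\psi(0, y_2) = (g(y_2), y_2)$. The only non-routine ingredient is the smooth dependence of the matrix square root on a smoothly varying positive definite symmetric matrix; everything else is a direct combination of the implicit and inverse function theorems, so no substantial obstacle is expected.
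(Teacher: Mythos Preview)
Your proof is correct and complete. The paper does not actually prove this lemma: it simply states that it ``is an easy consequence of~[Lemma C.6.1]{H\"ormander, \emph{The Analysis of Linear Partial Differential Operators, III}}'', i.e.\ the standard parametric Morse lemma. What you have written is precisely a self-contained proof of that lemma---implicit function theorem to locate the critical curve $g(x_2)$, integral-remainder Taylor expansion to produce the smooth symmetric matrix $A(x_1,x_2)$, smooth square root on the positive-definite cone, and the inverse function theorem for the change of variables---so you are not taking a different route but rather spelling out what the citation conceals. The only point worth a remark is that after applying the inverse function theorem you must shrink the image of $\Psi$ to a genuine product $V_1\times V_2$ (and correspondingly shrink the $V_2$ obtained from the implicit function theorem so that $g$ remains defined there); you do this implicitly, and it causes no difficulty.
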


\begin{cor}\label{cor:20:1}
	Keep the hypotheses and the notation of Lemma~\ref{lem:20:1}.
	Take a function $f\in C^\infty(\psi(V_1\times V_2)\times \R)$ and a function $g\colon V_2\times \R\to \C$ so that
	\[
	f(x, \phi(x))= g(x_2, \phi(x))
	\]
	for every $x\in \psi(V_1\times V_2)$. 
	Then, $g$ can be modified so as to be of class $C^\infty$ in a neighbourhood of $(x_{0,2}, \phi(x_0))$.
\end{cor}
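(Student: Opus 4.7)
The plan is to move to the Morse coordinates supplied by Lemma~\ref{lem:20:1} and then exploit rotational invariance. Set $\tilde f\coloneqq f\circ (\psi\times\id_\R)\in C^\infty(V_1\times V_2\times \R)$ and $h(y_2)\coloneqq \phi(\psi(0,y_2))$, which is $C^\infty$ on $V_2$. Since $\phi(\psi(y))=h(y_2)+\norm{y_1}^2$, the hypothesis $f(x,\phi(x))=g(x_2,\phi(x))$ rewrites, after setting $x=\psi(y_1,y_2)$ (so that $x_2=y_2$), as
\[
\tilde f(y_1,y_2,h(y_2)+\norm{y_1}^2)=g(y_2,h(y_2)+\norm{y_1}^2)
\]
for every $(y_1,y_2)\in V_1\times V_2$.

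I would next observe that the right-hand side depends on $y_1$ only through $\norm{y_1}^2$, so the $C^\infty$ function $F(y_1,y_2)\coloneqq \tilde f(y_1,y_2,h(y_2)+\norm{y_1}^2)$ is invariant under the standard $O(k)$-action on the $y_1$-variable. I would then invoke the (parametric) theorem of G.~Schwarz on smooth $O(k)$-invariants --- equivalently, the classical fact that a $C^\infty$ function on $\R^k$ invariant under rotations is of the form $y_1\mapsto G(\norm{y_1}^2)$ with $G\in C^\infty(\R)$, the extension from $[0,+\infty)$ to $\R$ being provided by Whitney's theorem --- with $y_2$ as a smooth parameter. This produces an open neighbourhood of $(x_{0,2},0)$ in $V_2\times \R$ and a function $G\in C^\infty$ on it such that $F(y_1,y_2)=G(y_2,\norm{y_1}^2)$ near $(0,x_{0,2})$.

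The modification is then immediate: define $\tilde g(y_2,t)\coloneqq G(y_2,t-h(y_2))$ on a neighbourhood of $(x_{0,2},\phi(x_0))=(x_{0,2},h(x_{0,2}))$. By composition of smooth maps, $\tilde g$ is of class $C^\infty$, and whenever $(y_2,t)$ has the form $(y_2,h(y_2)+\norm{y_1}^2)$ for some $y_1\in V_1$ --- i.e.\ on the region in which $g$ is constrained by the hypothesis --- we have $\tilde g(y_2,t)=G(y_2,\norm{y_1}^2)=F(y_1,y_2)=\tilde f(y_1,y_2,t)=g(y_2,t)$, so $\tilde g$ is the required $C^\infty$ modification of $g$.

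The one step likely to require attention is the parametric Schwarz-type factorisation of $F$; it can be regarded as standard, but if a self-contained justification is desired, it can be produced by expanding $F$ formally in Taylor series at $y_1=0$ (noting that $O(k)$-invariance kills all odd-degree monomials in $y_1$ and groups the even ones into powers of $\norm{y_1}^2$ whose coefficients are smooth in $y_2$) and then realising the resulting formal series as the Taylor series of a $C^\infty$ function $G(y_2,s)$ via a parametric Borel-type construction.
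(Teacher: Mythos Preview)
Your approach is essentially the paper's: pass to Morse coordinates, observe that $F(y_1,y_2)$ is radial in $y_1$, factor through $\norm{y_1}^2$, and shift back. The paper secures the parametric dependence on $y_2$ by noting that the Whitney map $h\mapsto h\circ\norm{\,\cdot\,}^2$ from $\Ec_\R(\R_+)$ onto radial $C^\infty$ functions admits a \emph{continuous linear} section (Whitney plus a continuous extension operator $\Ec_\R(\R_+)\to\Ec(\R)$, cf.\ Bierstone--Milman); applying this section to the smooth family $y_2\mapsto (\tau F)(\,\cdot\,,y_2)$ immediately gives a $G$ smooth jointly in $(y_2,s)$. This is a bit more direct than invoking Schwarz's theorem with parameters, though your citation of the latter is certainly legitimate.

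One caveat: your proposed self-contained fallback via Borel is not enough as written. Realising the formal series $\sum_j a_j(y_2)s^j$ as the Taylor series at $s=0$ of some $C^\infty$ function $G(y_2,s)$ only matches jets at $s=0$; it does not force $G(y_2,\norm{y_1}^2)=F(y_1,y_2)$ for $y_1$ away from $0$, which is what you need. The honest argument requires either the continuous-linear-section trick above or the actual proof of Whitney's even-function theorem carried through with a smooth parameter.
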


\begin{proof}
	Indeed, the assumption means that
	\[
	f\left(y, \phi(\psi(0,y_2))+\norm{y_1}^2\right)=g\left(y_2, \phi(\psi(0,y_2))+\norm{y_1}^2\right) 
	\]
	for every $y\in V_1\times V_2$.
	Define, for every $y_2\in V_2$, 
	\[
	\widetilde f_{y_2}\colon V_1\ni y_1\mapsto f((y_1,y_2), \phi\left(\psi(0,y_2))+\norm{y_1}^2\right) \qquad \text{and} \qquad
	\widetilde g_{y_2}\colon  \R \ni t\mapsto g(y_2, \phi(\psi(0,y_2))+t).
	\]
	Then, the mapping $V_2\ni y_2 \mapsto \widetilde f_{y_2}$ belongs to $\Ec(V_2; \Ec(V_1))$, and
	\[
	\widetilde f_{y_2}(y_1)= \widetilde g_{y_2}\left(\norm{y_1}^2\right)
	\]
	for every $y_1\in V_1$ and for every $y_2\in V_2$.
	
	Now,~\cite{Whitney2} easily implies that the mapping\footnote{We denote by $\Ec_\R(\R_+)$ the quotient of $\Ec(\R)$ by the set of $\phi\in \Ec(\R)$ which vanish on $\R_+$.}
	\[
	\Phi_1\colon\Ec_\R(\R_+) \ni h \mapsto h\circ \norm{\,\cdot\,}^2\in \Ec(\R^k)
	\]
	is an isomorphism onto the set of radial functions of class $C^\infty$. 
	Since there is a continuous linear extension operator $\Ec_\R(\R_+)\to \Ec(\R)$ (cf., for instance,~\cite[Corollary 0.3]{BierstoneMilman}), we  find a continuous linear mapping $\Phi_2\colon \Phi_1(\Ec_\R(\R_+))\to \Ec(\R)$ such that
	\[
	\Phi_2(h)\circ \norm{\,\cdot\,}^2=h
	\]
	for every even function $h\in \Ec(\R)$. 
	Then, take $\tau\in \Dc(V_1)$ so that $\tau$ equals $1$ on a neighbourhood $V_1'$ of $0$ in $V_1$, and define
	\[
	\widetilde G_{y_2}\colon V_1 \ni t \mapsto \Phi_2(\tau \widetilde f_{y_2} ) (t).
	\]
	Then, $\widetilde G_{y_2}\left(\norm{y_1}^2\right)= \widetilde g_{y_2}\left(\norm{y_1}^2\right)$ for every $y_1\in V_1'$ and for every $y_2\in V_2$.
	In addition, the mapping $y_2 \mapsto \widetilde G_{y_2}$ belongs to $\Ec(V_2; \Ec(\R))$, so that there is $\widetilde G\in \Ec(V_2\times \R)$ such that $\widetilde G(y_2,t)= \widetilde G_{y_2}(t)$ for every $y_2\in V_2$ and for every $t\in \R$.
	Then, 
	\[
	g\left(y_2, \phi(\psi(0,y_2))+\norm{y_1}^2\right) =\widetilde G\left(y_2, \norm{y_1}^2\right)
	\]
	for every $y_2\in V_2$ and for every $y_1\in V_1'$.
	Define
	\[
	G\colon V_2 \times \R\ni (y_2, t)\mapsto \widetilde G(y_2, t-\phi(\psi(0,y_2))),
	\]
	so that $G\in \Ec(V_2\times \R)$ and
	\[
	f(x, \phi(x))= G(x_2, \phi(x))
	\]
	for every $x\in \psi(V_1'\times V_2)$, whence the result. 
\end{proof}

\begin{teo}\label{prop:20:4}
	Assume that $\card(H)=1$ and that $W=\Set{0}$, and let $S$ be the analytic hypersurface $\Set{\omega\in \gf_2^*\colon \mi_\omega(\vect{n_{1,\omega}})=1 }$. Assume that, for every $\omega\in S$ such that $\langle T_1,\dots, T_{n_2'}\rangle^\circ\subseteq T_\omega(S)$, the Gaussian curvature of $S$ at $\omega$ is non-zero.
	Take $n_2'\in \Set{0,\dots, n_2-1}$ and define  $\Lc_{A'}\coloneqq (\Lc, (-i T_1,\dots, - i T_{n_2'}))$.
	Then, $\Lc_{A'}$ satisfies property $(S)$.
\end{teo}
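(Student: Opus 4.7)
My plan is to proceed by induction on $n_2'\in\{0,\dots,n_2-1\}$, patterning the argument on the proof of Theorem~\ref{prop:20:5}, but with Lemma~\ref{lem:20:1} (Morse lemma) and Corollary~\ref{cor:20:1} playing the role that discreteness of the spectrum of the full family $\Lc_A$ plays there. The Gaussian curvature hypothesis will enter precisely to guarantee the non-degeneracy condition required by Lemma~\ref{lem:20:1}.

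For the base case $n_2'=0$, the curvature assumption is vacuous (since $\langle T_1,\dots,T_{n_2'}\rangle^\circ=\gf_2^*$ cannot be contained in the hyperplane $T_\omega S$), and the statement reduces to property $(S)$ for a single homogeneous sub-Laplacian $\Lc$ on a Métivier group with $W=\Set{0}$, which follows from the techniques of~\cite{AstengoDiBlasioRicci,AstengoDiBlasioRicci2}.

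For the inductive step, take $\phi\in\Sc_{\Lc_{A'}}(G)$ and let $m_0\coloneqq \Mc_{\Lc_{A'}}(\phi)$, which has a continuous representative by Theorem~\ref{prop:8} (applicable as $W=\Set{0}$). I would apply Lemma~\ref{lem:20:4} with $T'_1=T_{n_2'}$ to obtain, for every $h\in\N$, a decomposition
\[
\phi=\sum_{\abs{\gamma}<h} T_{n_2'}^{\gamma}\,\widetilde{\phi}_\gamma + \sum_{\abs{\gamma}=h} T_{n_2'}^{\gamma}\,\phi_\gamma,
\]
with $\widetilde{\phi}_\gamma\in \Sc(G,(\Lc,-iT_1,\dots,-iT_{n_2'-1}))$ and $\phi_\gamma\in \Sc_{\Lc_{A'}}(G)$; this uses the inductive hypothesis, applied to the $d\pi_1$-image family on $G/\exp_G(\R T_{n_2'})$, whose central dimension and number of central operators both drop by one (so that $n_2'-1<n_2-1$ is preserved), and whose curvature condition I would verify descends from that on $G$. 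Setting $\widetilde m_\gamma\coloneqq \Mc_{(\Lc,-iT_1,\dots,-iT_{n_2'-1})}(\widetilde{\phi}_\gamma)$ (Schwartz by induction) and $m_\gamma\coloneqq \Mc_{\Lc_{A'}}(\phi_\gamma)$ (continuous), a vector-valued Borel lemma produces a smooth Schwartz-valued function $\widehat m$ on $\R^{n_2'}$ whose Taylor expansion at $\tau=0$ matches the $\widetilde m_\gamma$. The task then reduces to showing that $m_0-\widehat m$, which vanishes to infinite order on $\Set{\tau=0}\cap \sigma(\Lc_{A'})$, coincides on $\sigma(\Lc_{A'})$ with a Schwartz function on $\R\times\R^{n_2'}$.

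This last step is the main obstacle. The spectrum $\sigma(\Lc_{A'})$ equals $L(\sigma(\Lc_A))$ with $L=\id_\R\times\pr_{1,\dots,n_2'}$, and its boundary arises from critical points of $L\circ \pi_\gamma$ on $\Omega$, where $\pi_\gamma(\omega)\coloneqq(\widetilde\mi_\omega(\vect{1}_{n_1}+2\gamma),\omega)$; these critical points occur exactly where $\langle T_1,\dots,T_{n_2'}\rangle^\circ\subseteq T_\omega S$. At such $\omega$, the non-vanishing Gaussian curvature guarantees that the restriction of $\widetilde\mi_{(\,\cdot\,)}(\vect{1}_{n_1}+2\gamma)$ to the relevant fibre of $\pr_{1,\dots,n_2'}$ has a non-degenerate Hessian, so Lemma~\ref{lem:20:1} applies. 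Locally near each such boundary point, I would pull $m_0-\widehat m$ back along $L\circ \pi_\gamma$, bring the situation into Morse normal form via Lemma~\ref{lem:20:1}, and invoke Corollary~\ref{cor:20:1} to represent $m_0-\widehat m$ as a $C^\infty$ function of local coordinates on $E_{\Lc_{A'}}$. A partition of unity subordinate to a finite covering of the boundary, combined with Schwartz extension away from the boundary and careful control of seminorms, would yield $m\in\Sc(E_{\Lc_{A'}})$. The most delicate point is to ensure compatibility on overlapping Morse charts, and to preserve the infinite-order vanishing at $\tau=0$ under the Morse coordinate change.
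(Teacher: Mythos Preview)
Your inductive scheme, the use of Lemma~\ref{lem:20:3} for the Taylor decomposition, and the idea of invoking the Morse lemma via Corollary~\ref{cor:20:1} at boundary points are all on the right track, and the paper's proof shares this architecture. However, there is a genuine gap at the core of your argument: to apply Corollary~\ref{cor:20:1} you need a $C^\infty$ function $f$ as input, but at this stage you only know that $m_0$ (hence $m_0-\widehat m$) is \emph{continuous} on $\sigma(\Lc_{A'})$, via Theorem~\ref{prop:8}. Pulling a merely continuous function back along the smooth map $L\circ\pi_\gamma$ yields only a continuous function on $\Omega$, so you have nothing smooth to feed into the Morse machinery; the step ``invoke Corollary~\ref{cor:20:1} to represent $m_0-\widehat m$ as a $C^\infty$ function'' is therefore unjustified.

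The paper closes this gap with an explicit inversion formula. After the Borel reduction (applied with the full list $T'_1,\dots,T'_{n_2'}=T_1,\dots,T_{n_2'}$ and base family $(\Lc)$ alone, so that $\widetilde\phi_\gamma\in\Sc(G,\Lc)$, rather than peeling off one $T$ at a time), one sets
\[
\widetilde M(\omega)\coloneqq \int_G \phi(x,t)\, e^{-\frac{1}{4}\abs{x_\omega}^2+i\omega(t)}\,\dd(x,t),
\]
which lies in $\Sc(\gf_2^*)$ because $\phi\in\Sc(G)$ and the exponent is smooth in $\omega$ by Proposition~\ref{prop:11:3}; moreover $\widetilde M$ vanishes of order $\infty$ at $0$. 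Since $\widetilde M(\omega)=m_0\bigl(\mi_\omega(\vect{n_{1,\omega}}),L'(\omega)\bigr)$, this \emph{is} the pullback you wanted, but now manifestly Schwartz by an integral representation rather than by any property of $m_0$. One then applies Theorem~\ref{teo:7} once (instead of patching with a partition of unity): the formal-composite condition is trivial where the map $\omega\mapsto(\mi_\omega(\vect{n_{1,\omega}}),L'(\omega))$ is a submersion, follows from Corollary~\ref{cor:20:1} at the critical points (where Lemma~\ref{lem:6} gives a singleton fibre and the curvature hypothesis gives the non-degenerate Hessian), and follows from the infinite-order vanishing at $\omega=0$.
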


The condition on $S$ is satisfied, for example, if $\omega \mapsto \mi_\omega(\vect{n_{1,\omega}})$ is a hilbertian norm.
Observe, in addition, that the Gaussian curvature of $S$ vanishes on a negligible set in virtue of the strict convexity of the norm  $\omega \mapsto \mi_\omega(\vect{n_{1,\omega}})$. 
Therefore, for almost every $(T'_1,\dots, T'_{n_2'})\in \gf_2^{n_2'}$ the family $(\Lc, (- i T'_1,\dots, - i T'_{n_2'}))$ satisfies property $(S)$.

\begin{proof}
	{\bf1.} We proceed by induction on $n_2'$. Observe first that  the assertion follows from Theorem~\ref{teo:9:3} when $n_2'=0$. Then, assume that $n_2'>0$. By the inductive assumption, it is easily seen that $\dd \pi_1(\Lc_{A'})$ satisfies property $(S)$, where $\pi_1$ is the canonical projection of $G$ onto $\quot{G}{\exp_G(\R T_1)}$.
	
	Take $\phi\in \Sc_{\Lc_{A'}}(G)$.
	Then, 
	Theorem~\ref{prop:8} and Lemma~\ref{lem:20:3} imply that we may find a family $(\widetilde \phi_\gamma)_{\gamma\in \N^{n_2'}}$ of elements of $\Sc(G,\Lc)$, and a family $(\phi_\gamma)_{\gamma\in \N^{n_2'}}$ of elements of $\Sc_{\Lc_{A'}}(G)$ such that
	\[
	\phi=\sum_{\abs{\gamma}< h} \vect{T}^\gamma\widetilde \phi_\gamma+ \sum_{\abs{\gamma}=h}\vect{T}^\gamma \phi_\gamma
	\] 
	for every $h\in \N$.
	
	Define $\widetilde m_\gamma\coloneqq \Mc_{\Lc}(\widetilde \phi_\gamma)\in \Sc(\sigma(\Lc))$ and $m_\gamma\coloneqq \Mc_{\Lc_{A'}}(\phi_\gamma)\in C_0(\sigma(\Lc_{A'}))$ for every $\gamma$.
	Then, 
	\[
	m_0(\lambda, \omega') =\sum_{\abs{\gamma}<h} \omega'^\gamma \widetilde m_\gamma(\lambda)+ \sum_{\abs{\gamma}=h} \omega'^\gamma m_\gamma(\lambda, \omega')
	\]
	for every $h\in \N$ and for every $(\lambda, \omega')\in \sigma(\Lc_{A'})$. 
	
	{\bf2.} As in the proof of Theorem~\ref{prop:20:5}, we may reduce to the case in which $\widetilde m_\gamma=0$ for every $\gamma$. 
	Let $L\colon E_{\Lc_A}\to E_{\Lc_{A'}}$ be the unique linear mapping such that $L(\Lc_A)=\Lc_{A'}$, and denote by $L'$ the mapping $\gf_2^*\ni \omega \mapsto (\omega(T_1),\dots, \omega(T_{n_2'}))\in \R^{n_2'}$, so that
	\[
	L(\Set{r}\times r S(\vect{T}))=\Set{r}\times r L'(S)= \left( \Set{r}\times \R^{n_2'}\right) \cap \sigma(\Lc_{A'})
	\]
	for every $r>0$.
	Now, define
	\[
	\widetilde M(\omega)\coloneqq \int_G \phi(x,t) e^{-\frac{1}{4} \abs{x_{\omega}}^2+i \omega(t)  }\,\dd (x,t)
	\]
	for every $\omega\in \gf_2^*$. 
	Arguing as in the proof of Theorem~\ref{prop:20:5} and taking into account Proposition~\ref{prop:11:3}, we see that $\widetilde M\in \Sc(\gf_2^*)$ and that $\widetilde M$ vanishes of order $\infty$ at $0$. 
	Now, observe that
	\[
	m_0( \mi_\omega(\vect{n_{1,\omega}}) ,L'(\omega(\vect{T})))= \widetilde M(\omega) 
	\] 
	for every $\omega\in \gf_2^*$.
	In addition, $\Sigma\coloneqq\R_+(\Set{1}\times S)$ is a closed semianalytic subset of $E_{\Lc_A}$ since it is the closure of the graph of an analytic function (defined on $\gf_2^*\setminus \Set{0}$); in addition, $L$ is proper on $\Sigma$ and $L(\Sigma)=\sigma(\Lc_{A'})$ is a subanalytic closed convex cone, hence Nash subanalytic. 
	By Theorem~\ref{teo:7}, in order to prove that $m_0\in \Sc_{E_{\Lc_{A'}}}(\sigma(\Lc_{A'}))$  it suffices to show that $\widetilde M$ is a formal composite of $L'$. 
	Now, the assertion is clear at $0$ since $\widetilde M$ vanishes of order $\infty$ at $0$.
	Then, take $\omega\in S$. 
	If $\ker L\not \subseteq T_{\omega(\vect{T})} (S(\vect{T}))$, then $L'$ is a submersion at $\omega$, so that the assertion follows in this case.
	Otherwise, as in the proof of Lemma~\ref{lem:6} we see that $L'^{-1}(L'(\omega))=\Set{\omega}$, so that the assertion follows from Corollary~\ref{cor:20:1}.
	By homogeneity, the assertion follows for every $\omega\neq 0$.
	Therefore, $m_0\in \Sc_{E_{\Lc_{A'}}}(\sigma(\Lc_{A'}))$, whence the result.
\end{proof}

\section{Examples: $H$-Type Groups}\label{sec:15}\label{sec:7}

In this section we shall deal with the following situation: $G$ is an $H$-type group and there is a finite family $(\vf_\eta)_{\eta\in H}$ of subspaces of $\gf_1$ such that $\vf_\eta\oplus \gf_2$, with the induced structure, is an $H$-type Lie algebra for every $\eta\in H$ and such that $\vf_{\eta_1}$ and $\vf_{\eta_2}$ commute and are orthogonal for every $\eta_1,\eta_2\in H$ such that $\eta_1\neq \eta_2$.
We shall define $\vect{n_1}\coloneqq \left(\frac{1}{2}\dim \vf_\eta \right)_{\eta\in H}$.

We shall then consider, for every $\eta\in H$, the group of linear isometries $O(\vf_\eta )$ of $\vf_\eta $, and define a canonical action of $O\coloneqq \prod_{\eta\in H} O(\vf_\eta )$ on the \emph{vector space} subjacent to $\gf$ as follows: $(L_\eta )((v_\eta ),t)\coloneqq ((L_\eta \cdot v_\eta ),t)$ for every $(L_\eta )\in O$ and for every $((v_\eta ),t)\in \gf_1\oplus \gf_2$. 

A projector of $\Dc'(G)$ is then canonically defined as follows:
\[
\pi_*(T)\coloneqq \int_{O}  (L\,\cdot\,)_*(T)\,\dd \nu_{O}(L)
\]
for every $T\in \Dc'(G)$; here, $\nu_{O}$ denotes the \emph{normalized} Haar measure on $O$.

\begin{prop}\label{prop:15:1}
	The following hold:
	\begin{enumerate}
		\item $\pi$ induces a continuous projection on $\Dc'^r(G)$, $\Sc'(G)$, $\Ec'^r(G)$, $\Ec^r(G)$, $\Sc(G)$, $\Dc^r(G)$ and $L^p(G)$ for every $r\in \N\cup\Set{\infty}$ and for every $p\in [1,\infty]$;
		
		\item if $\phi_1,\phi_2\in \Dc(G)$, then
		\[
		\langle \pi_*(\phi_1), \phi_2\rangle=\langle \phi_1,\pi_*(\phi_2)\rangle \qquad \text{and} \qquad \langle \pi_*(\phi_1)\vert\phi_2\rangle=\langle \phi_1\vert\pi_*(\phi_2)\rangle;
		\] 
		
		\item if $\mi$ is a positive measure on $G$, then also $\pi_*(\mi)$ is a positive measure; in addition, $\pi_*(\nu_G)=\nu_G$;
		
		\item if $T\in \Dc'(G)$ is $O$-invariant, then also $\check T$ is $O$-invariant;
		
		\item if $T$ is supported at $e$, then $\pi_*(T)$ is supported at $e$; 
		
		\item if $\phi_1,\phi_2\in \Dc(G)$ are $O$-invariant, then also $\phi_1*\phi_2$ is $O$-invariant and $\phi_1*\phi_2=\phi_2*\phi_1$. 
	\end{enumerate}
\end{prop}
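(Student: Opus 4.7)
After identifying $G$ with $\gf$ via $\exp_G$, the action of $O$ becomes a linear action of a compact group on $\gf$ fixing $\gf_2$ pointwise, and $\nu_G$ corresponds (up to normalisation) to Lebesgue measure on $\gf$. Items (1)--(5) are then routine consequences of averaging a continuous family of pushforwards over the compact group $O$: (1) follows because $L\mapsto (L\cdot)_*$ is continuous into the strong-operator topology on each of the listed spaces, while $\pi_*^2=\pi_*$ comes from Fubini combined with the left-invariance of $\nu_O$; (2) follows from the fact that each $(L\cdot)_*$ preserves Lebesgue measure (since $L$ is a linear isometry of $\gf$), so that $\langle(L\cdot)_*\phi_1,\phi_2\rangle=\langle\phi_1,(L^{-1}\cdot)_*\phi_2\rangle$, and then averaging using that $L\mapsto L^{-1}$ preserves $\nu_O$ (the hermitian version is identical); (3) is immediate since pushforwards preserve positivity and each $L$ preserves $\nu_G$; (4) follows from $L\circ\iota=\iota\circ L$, which holds because $L$ is linear on $\gf_1$ and fixes $\gf_2$; and (5) because each $L\in O$ fixes $0\in\gf$, hence $e\in G$.

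The substantive content lies in (6). The $H$-type structure enters through the maps $J_{\eta,\omega}\colon\vf_\eta\to\vf_\eta$ defined by $\langle J_{\eta,\omega}w,v\rangle=\omega([w,v])$ and satisfying $J_{\eta,\omega}^2=-|\omega|^2\,\mathrm{id}$. For each non-zero $v=(v_\eta)\in\vf$ I would construct a reflection $L'_v=(L'_{v,\eta})\in O$ by letting $L'_{v,\eta}$ act as $-\mathrm{id}$ on $W_{v_\eta}:=\{J_{\eta,\omega}(v_\eta)\colon\omega\in\gf_2\}\subseteq\vf_\eta$ and as $+\mathrm{id}$ on its orthogonal complement. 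Skew-symmetry of $B_\omega$ forces $v_\eta\perp W_{v_\eta}$, hence $L'_v v=v$; and, decomposing $w=(w_\eta)$ into $W_{v_\eta}$- and $W_{v_\eta}^\perp$-components, the identities $[\vf_\eta,\vf_{\eta'}]=0$ for $\eta\neq\eta'$ together with $[w'',v_\eta]=0$ for $w''\in W_{v_\eta}^\perp$ (which follows from the definition of $J_{\eta,\omega}$) yield the key bracket identity $[L'_v w,v]=-[w,v]$ for every $w\in\vf$. Substituting $w\to L'_v w$ in the integral defining $\phi_1*\phi_2$ and invoking $O$-invariance of both factors (noting $v-L'_v w=L'_v(v-w)$) then gives
\[
(\phi_1*\phi_2)(v,t)=\int\phi_1(w,t')\,\phi_2(v-w,t-t'+\tfrac{1}{2}[w,v])\,dw\,dt';
\]
the change of variables $(v',t')\mapsto(v-v',t-t')$ applied to $(\phi_2*\phi_1)(v,t)$, followed by a shift in the time variable, produces the same right-hand side, yielding $\phi_1*\phi_2=\phi_2*\phi_1$.

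For the $O$-invariance of $\phi_1*\phi_2$, given $L\in O$ and $v\in\vf$ I would construct $M=(M_\eta)\in O$ with $Mv=v$ and $[Mw,v]=[Lw,Lv]$ for every $w\in\vf$. The crucial point is that both $\omega\mapsto J_{\eta,\omega}(v_\eta)$ and $\omega\mapsto \trasp L_\eta J_{\eta,\omega}(L_\eta v_\eta)$ are linear maps of $\gf_2$ onto subspaces of $v_\eta^\perp\subseteq\vf_\eta$, in each case of norm $|\omega|\,|v_\eta|$ by the $H$-type identity; composing one with the inverse of the other and extending via $\mathrm{id}$ on $\R v_\eta$ and any isometry on the remaining orthogonal complement produces $M_\eta\in O(\vf_\eta)$ fixing $v_\eta$, and by construction $\trasp M_\eta J_{\eta,\omega} v_\eta=\trasp L_\eta J_{\eta,\omega}L_\eta v_\eta$ for all $\omega$, which translates via the definition of $J_{\eta,\omega}$ to $B_\omega(M_\eta w_\eta,v_\eta)=B_\omega(L_\eta w_\eta,L_\eta v_\eta)$ and hence to $[Mw,v]=[Lw,Lv]$. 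Substituting $w\to M^{-1}w$ in the integral for $(\phi_1*\phi_2)(Lv,t)$, together with $O$-invariance of both $\phi_i$ and $Mv=v$, then collapses it onto $(\phi_1*\phi_2)(v,t)$. The main hurdle throughout (6) is establishing the bracket identities for $L'_v$ and $M$; both rest squarely on the $H$-type relation $J_\omega^2=-|\omega|^2\,\mathrm{id}$, which simultaneously forces $v\perp W_v$ (so that $L'_v$ fixes $v$) and makes the two $J$-parametrisations isometric (so that $M$ can be constructed). Once these bracket identities are in hand, all the substitutions in the convolution integrals are formal.
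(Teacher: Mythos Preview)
Your proposal is correct and follows precisely the approach the paper has in mind: the paper omits the proof and simply cites Damek--Ricci~\cite{DamekRicci}, and your argument for~(6) is exactly the Damek--Ricci construction (the reflection $L'_v$ across $W_v^\perp$ for commutativity, and the isometry $M$ intertwining the two $J$-parametrisations for $O$-invariance), carried out factor-by-factor over $\eta\in H$. The verifications you sketch --- that $v_\eta\perp W_{v_\eta}$, that $[w'',v_\eta]=0$ for $w''\in W_{v_\eta}^\perp$, and that both maps $\omega\mapsto J_{\eta,\omega}v_\eta$ and $\omega\mapsto\trasp L_\eta J_{\eta,\omega}(L_\eta v_\eta)$ are isometric embeddings of $\gf_2$ into $v_\eta^\perp$ --- are all correct consequences of the $H$-type identity $J_{\eta,\omega}^2=-\abs{\omega}^2\,\mathrm{id}$, and the ensuing substitutions in the convolution integrals go through as you describe.
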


The proof is based on~\cite{DamekRicci} and is omitted.

Now, let $\Lc_\eta $ be the differential operator corresponding to  the restriction of the scalar product to $\vf_\eta^* $; in other words, $\Lc_\eta $ is minus the sum of the squares of the elements of any orthonormal basis of $\vf_\eta $. 
Let $T_1,\dots, T_{n_2}$ be an orthonormal basis of $\gf_2$, and define $\Lc_A\coloneqq ( (\Lc_\eta)_{\eta\in H}, (-i T_1,\dots,- i T_{n_2} ))$.

Recall that a left-invariant differential operator $X$ is $\pi$-radial if and only if $\pi_*(X_e)=X_e$, that is, if and only if $X_e$ is $O$-invariant. Nevertheless, this does \emph{not} imply that $X$ is $O$-invariant.

\begin{prop}\label{prop:15:2}
	$\Lc_A$ is a Rockland family and generates (algebraically) the unital algebra of left-invariant differential operators which are $\pi$-radial.
\end{prop}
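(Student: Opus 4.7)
The plan is to verify the six conditions defining a Rockland family for $\Lc_A$, and then to identify the algebra of $\pi$-radial left-invariant operators by combining classical invariant theory with the Poincar\'e--Birkhoff--Witt filtration on $\Uf(\gf)$.

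For the Rockland assertion, left-invariance, homogeneity of degree~$2$, and the absence of constant terms are immediate from the definitions; each $\Lc_\eta$ is formally self-adjoint as a sum of negative squares of real vector fields, and each $-iT_j$ is formally self-adjoint because $T_j$ is a real vector field. Commutativity follows since the $T_j$ lie in the center of $\gf$ and hence commute with every element of $\Uf(\gf)$, while $[\Lc_\eta,\Lc_{\eta'}]=0$ for $\eta\neq\eta'$ is a consequence of $[\vf_\eta,\vf_{\eta'}]=0$ (each $\Lc_\eta$ is a polynomial in elements of $\vf_\eta$). For joint hypoellipticity, by the equivalence noted in the footnote following the definition of $\Lc_A$ in Section~\ref{sec:4}, it suffices to show that $\sum_{\eta\in H}\Lc_\eta$ is hypoelliptic; the decomposition $\gf_1=\bigoplus_{\eta\in H}\vf_\eta$ together with $[\vf_\eta,\vf_\eta]=\gf_2$ (built into the $H$-type hypothesis on each $\vf_\eta\oplus\gf_2$) ensures that $\gf_1$ Lie-generates $\gf$, so H\"ormander's theorem applies.

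For the second assertion, identify left-invariant differential operators on $G$ with the universal enveloping algebra $\Uf(\gf)$, and $\Uf(\gf)$ with distributions supported at $e$ via $X\mapsto X\delta_e$; composing with the PBW symmetrization $\lambda\colon S(\gf)\to\Uf(\gf)$ gives a linear isomorphism that is $O$-equivariant, since the $O$-action fixes $e$ and acts linearly on $T_eG\cong\gf$. Hence $X$ is $\pi$-radial if and only if $\lambda^{-1}(X)\in S(\gf)^O$. Since $O=\prod_{\eta\in H}O(\vf_\eta)$ acts factorwise through the standard representation on each $\vf_\eta$ and trivially on $\gf_2$, the first fundamental theorem of invariant theory for the orthogonal group yields
\[
S(\gf)^O \;=\; \R\bigl[Q_\eta:\eta\in H\bigr]\otimes S(\gf_2),
\]
where $Q_\eta\coloneqq\sum_i X_{\eta,i}^2\in S^2(\vf_\eta)$ for some orthonormal basis $(X_{\eta,i})$ of $\vf_\eta$. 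Thus $S(\gf)^O$ is generated as a unital algebra by $\{Q_\eta\}\cup\{T_j\}$.

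Let $\Ac$ denote the unital subalgebra of $\Uf(\gf)$ generated by $\{\Lc_\eta\}\cup\{T_j\}$. The inclusion $\Ac\subseteq\lambda(S(\gf)^O)$ is immediate, since each generator is $\pi$-radial and, by Proposition~\ref{prop:15:1}(vi), $\pi$-radial operators are closed under composition. For the reverse inclusion I would argue by induction on the PBW filtration $F_0\subseteq F_1\subseteq\cdots$ of $\Uf(\gf)$, using that the principal symbol isomorphism $\sigma_n\colon F_n/F_{n-1}\to S^n(\gf)$ is $O$-equivariant and satisfies $\sigma_n(\lambda(P))=P$ for every $P\in S^n(\gf)$. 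Given a $\pi$-radial $X\in F_n$, the top symbol $\sigma_n(X)\in S^n(\gf)^O$ is a weighted-homogeneous polynomial $P(\{Q_\eta\},\{T_j\})$ of degree $n$ (weighting $Q_\eta$ by $2$ and $T_j$ by $1$); the element $Y\coloneqq P(\{-\Lc_\eta\},\{T_j\})\in\Ac\cap F_n$ then satisfies $\sigma_n(Y)=P(\{Q_\eta\},\{T_j\})=\sigma_n(X)$, so $X-Y\in F_{n-1}$ is again $\pi$-radial, which closes the induction. The main subtlety is that $O$ need not act on $\gf$ by Lie algebra automorphisms, hence does not give algebra automorphisms of $\Uf(\gf)$; nevertheless, the action on distributions at $e$ transfers via $\lambda$ to the natural linear action on $S(\gf)$, so the vector-space isomorphism $\lambda$ and the PBW symbol maps are $O$-equivariant, which is exactly what the induction uses.
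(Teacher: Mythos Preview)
Your proof is correct and takes a genuinely different route from the paper's. For the algebra statement, the paper projects an $O$-invariant distribution $S$ at $e$ to the abelianization $G/[G,G]\cong\gf_1$, where the Fourier transform identifies $O$-invariant point distributions with polynomials in the $|\xi_\eta|^2$; this produces a polynomial $P_0$ with $S-P_0(\Lc_{H,e})$ in the ideal generated by $\gf_2$, and one inducts on the order via the central elements $T_k$. You instead transport the whole problem to $S(\gf)$ via the symmetrization $\lambda$, invoke the first fundamental theorem for $O(\vf_\eta)$ to pin down $S(\gf)^O$, and run a PBW-filtration induction.

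What each buys: the paper's argument is more elementary and avoids naming the symmetrization map or invariant theory explicitly (its Fourier step on the abelianization is, of course, invariant theory in disguise). Your approach is structurally cleaner and makes transparent the key subtlety you correctly flag: $O$ does not act by Lie-algebra automorphisms, so one must use the $O$-equivariance of $\lambda$ as a \emph{linear} isomorphism together with the associated-graded ring structure, rather than any multiplicativity of the $O$-action on $\Uf(\gf)$.

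One point worth tightening: your appeal to Proposition~\ref{prop:15:1}(vi) for the inclusion $\Ac\subseteq\lambda(S(\gf)^O)$ is stated for $\phi_1,\phi_2\in\Dc(G)$, not for distributions at $e$. The extension you need---that the convolution of two $O$-invariant distributions supported at $e$ is again $O$-invariant---does hold (e.g.\ by the same integral computation that underlies (vi), or by noting that the heat kernel $e^{-t\Lc_\eta}\delta_e$ is radial in $\vf_\eta$ so that $\Lc_\eta^k\delta_e=(-\partial_t)^k e^{-t\Lc_\eta}\delta_e|_{t\to0^+}$ is $O$-invariant), but it is worth saying so. The paper's induction depends on the same fact in the form ``$P_0(\Lc_{H})\delta_e$ is $O$-invariant,'' which it also leaves implicit.
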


\begin{proof}
	Since $\sum_{\eta\in H} \Lc_\eta$ is the operator associated with the scalar product of $\gf_1^*$, it is clear that $\Lc_A$ is a Rockland family.
	
	Now, take an $O$-invariant distribution $S$ on $G$ which is supported at $e$. 
	Let $p\colon G\to \quot{G}{[G,G]}$ be the canonical projection. 
	Then,  $p(S)$ is $O$-invariant and supported at $p(e)$.
	By means of the Fourier transform, we see that there is a unique polynomial $P_0\in \R[H]$ such that $p(S)=P_0(p(\Lc_{H,e}))$.
	Therefore, there are $S_1,\dots, S_{n_2}\in \Dc'(G)$ such that $\Supp{S_k}\subseteq \Set{e}$ for every $k=1,\dots,n_2$, and such that
	\[
	S=P_0(\Lc_{H,e})+\sum_{k=1}^{n_2}T_{k,e} S_k.
	\]
	Reasoning by induction, it follows that $S$ belongs to the unital algebra (algebraically) generated by $\Lc_{A,e}$. 
	Conversely, it is clear that $T_1,\dots, T_{n_2}$ are $\pi$-radial. 
	On the other hand, a direct computation shows that $\Lc_{\eta ,e}=-\sum_{v\in B}\partial_v^2$, where $B$ is any orthonormal basis of $\vf_\eta $. Hence, $\Lc_{\eta ,e}$ is $O$-invariant.	
\end{proof}

Now, we shall consider some image families of $\Lc_A$. 
More precisely, we shall fix $\mi\in (\R^H)^{H'}$ so that the induced mapping from $\R^H$ into $\R^{H'}$ is proper on $\R_+^H$. 
Then, we shall define $L\colon E_{\Lc_A} \ni (\lambda_1,\lambda_2)\mapsto (\mi(\lambda_1), \lambda_2)\in \R^{H'}\times \gf_2^*$ and consider the family $L(\Lc_A)$.
Then, $L(\Lc_A)$ is a Rockland family since $L$ is proper on $\sigma(\Lc_A)$ by construction.

\begin{prop}\label{prop:15:4}
	Set $d\coloneqq \dim_\Q \mi(\Q^H)$. 
	Then, there are a $\beta_{L(\Lc_A)}$-measurable function $m\colon E_{L(\Lc_A)}\to \C^{d}$ and a linear mapping $L'\colon \R^{d}\to \C^{H'}$ such that the following hold:
	\begin{itemize}
		\item there is $\mi'\in ((\Q_+^*)^H)^d$ such that $m(\Lc_A)=\mi'(\Lc_H)$;
		
		\item $(L'(m(\Lc_A)),(-i T_j)_{j=1}^{n_2})=L(\Lc_A)$;
		
		\item $m$ equals $\beta_{L(\Lc_A)}$-almost everywhere a continuous function if and only if $d=\dim_\R \mi(\R^H)$.
	\end{itemize} 
\end{prop}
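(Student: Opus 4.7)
The plan is to decompose $\mi = L'\circ \mi'$ with $\mi'\in((\Q_+^*)^H)^d$ and $L'\in M_{H'\times d}(\R)$ such that $L'|_{\Q^d}$ is injective, to define $m$ as the partial inverse of $L'$ along the rational lattice in $\R^d$, and finally to match the continuity dichotomy of $m$ with the dichotomy for $L'|_{\R^d}$ being injective, the latter being equivalent to $d=\dim_\R \mi(\R^H)$.

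To produce the factorisation I would fix any $\Q$-basis $v_1,\dots,v_d$ of $V\coloneqq \mi(\Q^H)\subseteq \R^{H'}$, write $\mi(e_\eta)=\sum_j c_{j,\eta}v_j$ with $c_{j,\eta}\in \Q$, and set $L_0'(e_j)\coloneqq v_j$, $(\mi_0')_{j,\eta}\coloneqq c_{j,\eta}$, so that $\mi=L_0'\circ\mi_0'$ and $\mi_0'$ has rank $d$. Since $\mi$ is proper on $\R_+^H$, $\ker \mi\cap\R_+^H=\{0\}$, and the inclusion $\ker \mi_0'\subseteq \ker \mi$ yields $\ker \mi_0'\cap \R_+^H=\{0\}$; by Gordan's theorem there is $y\in \Q^d$ with $w\coloneqq (\mi_0')^{T}y\in(\Q_+^*)^H$. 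A small perturbation of any $\Q$-basis of the row space of $\mi_0'$ by $w$ then yields a $\Q$-basis of strictly positive vectors; declaring these to be the rows of $\mi'$ and transferring the corresponding rational change of basis to $L_0'$ produces $\mi=L'\circ\mi'$ with $\mi'\in((\Q_+^*)^H)^d$; the new columns of $L'$ still form a $\Q$-basis of $V$, so that $L'|_{\Q^d}\colon \Q^d\to \R^{H'}$ is injective.

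On the support of $\beta_{\Lc_A}$, i.e.\ for $\omega\in \gf_2^*\setminus\{0\}$ and $\gamma\in \N^H$, the spectral point $(\lambda_1,\lambda_2)=(|\omega|(\vect{n}_1+2\gamma),\omega(\vect T))\in \sigma(\Lc_A)$ is sent by $L$ to $(|\omega|\,L'(\mi'(\vect{n}_1+2\gamma)),\omega(\vect T))$ with $\mi'(\vect{n}_1+2\gamma)\in \Q_+^d$. I would then define
\[
m(\alpha,\beta)\coloneqq |\omega|\,(L'|_{\Q^d})^{-1}\!\bigl(\alpha/|\omega|\bigr)
\]
on this image (recovering $|\omega|$ from $\beta$ through the $H$-type norm on $\gf_2^*$) and zero elsewhere; well-definedness follows from the injectivity of $L'|_{\Q^d}$, and $\beta_{L(\Lc_A)}$-measurability is immediate since $m$ takes countably many values on Borel pieces. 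By construction $m\circ L=\mi'$ on $\sigma(\Lc_A)$ off a null set, so the composite-functions formula yields $m(L(\Lc_A))=\mi'(\Lc_H)$; applying $L'$ componentwise recovers $L'(m(L(\Lc_A)))=\mi(\Lc_H)$, and the second component of $L(\Lc_A)$ is trivially preserved.

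For the continuity dichotomy, if $d=\dim_\R \mi(\R^H)$ the $v_j$'s are $\R$-linearly independent, so $L'$ is a linear isomorphism $\R^d\to \mi(\R^H)$; its inverse extends continuously to all of $\R^{H'}$ (e.g.\ by composing an orthogonal projection onto $\mi(\R^H)$ with $(L'|_{\R^d})^{-1}$), yielding the continuous representative $(\alpha,\beta)\mapsto (L')^{-1}(\alpha)$ of $m$. Conversely, if $d>\dim_\R \mi(\R^H)$ then $\ker \mi \supsetneq \ker \mi'$, and I would pick $w\in \ker \mi\setminus \ker \mi'$ together with a large $M>0$ so that $\xi_2\coloneqq M\vect{1}_H$ and $\xi_1\coloneqq \xi_2+w$ both lie in $\R_+^H$; by construction $\mi(\xi_1)=\mi(\xi_2)$ while $\mi'(\xi_1)\neq \mi'(\xi_2)$. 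For $\omega_n\in \gf_2^*\setminus\{0\}$ with $|\omega_n|\to 0$ and $\gamma_n^{(j)}\coloneqq \lfloor \xi_j/(2|\omega_n|)\rfloor\in \N^H$, the two sequences of spectral points $(|\omega_n|\mi(\vect{n}_1+2\gamma_n^{(j)}),\omega_n(\vect T))\in \sigma(L(\Lc_A))$ both converge to the common limit $(\mi(\xi_1),0)$, while the corresponding values of $m$ converge to the distinct vectors $\mi'(\xi_1)$ and $\mi'(\xi_2)$. The main obstacle is to conclude rigorously that no $\beta_{L(\Lc_A)}$-a.e.\ continuous representative can exist: I would argue that, since $L$ is proper on $\sigma(\Lc_A)$ the support of $\beta_{L(\Lc_A)}$ equals the closed set $\sigma(L(\Lc_A))$ and contains the limit point, and since any continuous representative is determined on a neighbourhood of it by the values of $m$ on a conull subset meeting each approximating sheet densely, continuity would force $\mi'(\xi_1)=\mi'(\xi_2)$, contradicting our choice of $\xi_1,\xi_2$.
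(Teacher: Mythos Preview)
Your approach is essentially the same as the paper's, with two genuine technical differences worth noting.

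First, the factorisation. The paper picks $\Q$-linear functionals $p_1,\dots,p_d$ on $\mi(\Q^H)$ and sets $\mi'_h\coloneqq p_h\circ\mi\in\Q^H$, so that $\mi'=P\circ\mi$ and the well-definedness of $m$ is immediate from $\ker\mi\subseteq\ker\mi'$ on $\Q^H$. You instead fix a $\Q$-basis of $\mi(\Q^H)$, obtain $\mi=L'\circ\mi'$ with $L'|_{\Q^d}$ injective, and use that injectivity to define $m$. These are dual constructions and equally valid. Your route has the advantage that you actually secure $\mi'\in((\Q_+^*)^H)^d$ via Gordan's theorem; the paper's proof, as written, only produces $\mi'\in(\Q^H)^d$ and does not address the positivity claimed in the statement.

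Second, the failure of continuity when $d>\dim_\R\mi(\R^H)$. The paper argues abstractly: a continuous $m$ would make $m\times\id$ and $L'\times\id$ mutually inverse homeomorphisms between the two spectra, forcing $\mi'(\R_+^H)$ and $\mi(\R_+^H)$ to have the same dimension. Your explicit two-sequence construction is correct and more concrete, and the ``main obstacle'' you flag is not a real obstacle: your $m$ is already continuous on the open part $\{\omega\neq 0\}$ of $\sigma(L(\Lc_A))$ (once $\omega$ is fixed, $\mi(\vect{n_1}+2\gamma)$ lies in a discrete set, so nearby spectral points have the same $\mi'$-value), and any continuous representative agreeing with $m$ almost everywhere must coincide with $m$ on this dense open set; your two sequences then lie entirely in that set and give the contradiction directly. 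The paper's dimension argument is shorter, but yours is perfectly rigorous once you observe this continuity of $m$ away from $\omega=0$.
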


\begin{proof}
	Indeed, we may find $d$ linearly independent $\Q$-linear functionals $p_1,\dots, p_d$ on $\mi(\Q^{H})$. 
	Let $\mi'_1,\dots, \mi'_d$ be the elements of $\Q^{H}$ associated with $p_1\circ \mi, \dots, p_d\circ \mi$. 
	Then, $\mi'_1,\dots, \mi'_d$ are linearly independent over $\Q$, hence over $\C$ by tensorization.
	Now, define $\Lc''_h\coloneqq \sum_{\eta\in H} \mi'_{h,\eta} \Lc_\eta$, so that the family $(\Lc''_1,\dots, \Lc''_d)$ is linearly independent over $\C$. 
	Next, take $h\in \Set{1,\dots, d}$, and observe that, if $\lambda\in \R^{n_2}\setminus \Set{0}$ and $\gamma_1,\gamma_2\in \N^H$ are such that
	\[
	(\abs{\lambda}\mi(\vect{n_1}+2\gamma_1),\lambda)=(\abs{\lambda}\mi(\vect{n_1}+2\gamma_2),\lambda),
	\] 
	then $\mi(\gamma_1-\gamma_2)=0$, so that
	\[
	(\abs{\lambda} \mi'(\vect{n_1}+2\gamma_1),\lambda)=(\abs{\lambda} \mi'(\vect{n_1}+2\gamma_2),\lambda).
	\]
	Hence, there is a $\beta_{\Lc_A}$-measurable function $m\colon E_{L(\Lc_A)}\to \C^d$ such that
	\[
	m_h(L(\lambda'))=\mi'_h (\lambda')
	\]
	for every $\lambda'\in \sigma(\Lc_{A})\cap (\R^H\times (\R^{n_2}\setminus \Set{0}))$; hence, $\Lc''_h\delta_e=\Kc_{\Lc_A}(m_h)$ for every $h=1,\dots, d$.
	Next, observe that, for every $\eta '\in H'$ there is $(L'_{\eta ',1},\dots, L'_{\eta ',d})\in \Q^d$ such that 
	\[
	\sum_{h=1}^d L'_{\eta ',h} (p_h\circ \mi)=\mi_{\eta '}
	\]
	on $\Q^H$. Therefore, $\sum_{h=1}^k L'_{\eta ',h} \mi'_h=\mi_{\eta '}$, whence $(L'(m(\Lc_A)),(-i T_j)_{j=1}^{n_2})=L(\Lc_A)$.
	
	If $d=\dim_\R \mi(\R^H)$, then $m\times \id_{\R^{n_2}}$ is a homeomorphism of $\sigma(L(\Lc_A))$ onto $\sigma(\Lc''_1,\dots, \Lc''_d, (-i T_h)_{h=1}^{n_2})$. 
	Conversely, assume that $m$ can be taken so as to be continuous. 
	Then,  $m\times \id_{\R^{n_2}}$ and $L'\times \id_{\R^{n_2}}$ are inverse of one another between $\sigma(L(\Lc_A))$ and $\sigma(\Lc''_1,\dots, \Lc''_d, (-i T_h)_{h=1}^{n_2})$. 
	In particular,
	$L'$ induces a homeomorphism of $\mi'(\R_+^H)$ onto $\mi(\R_+^H)$, so that these two cones must have the same dimension. Hence, $d=\dim_\R(\mi(\R^H))$.
\end{proof}

\begin{teo}\label{teo:15:1}
	The following conditions are equivalent:
	\begin{enumerate}
		\item[(i)] $\chi_{L(\Lc_A)}$ has a continuous representative;
		
		\item[(ii)] $L(\Lc_A)$ satisfies property $(RL)$;
		
		\item[(iii)] every element of $\Sc_{L(\Lc_A)}(G)$ has a continuous multiplier;
		
		\item[(iv)] $L(\Lc_A)$ satisfies property $(S)$;
		
		\item[(v)] $L(\Lc_A)$ is functionally complete;
		
		\item[(vi)] $\dim_\Q \mi(\Q^H)=\dim_\R \mi(\R^H)$.
	\end{enumerate}
	If, in addition, $L(\Lc_A)$ is not functionally complete, then there is some $L'$, corresponding to some $\mi'\in (\R^H)^{H''}$, such that $L'(\Lc_A)$ is functionally complete and functionally equivalent to $L(\Lc_A)$.
\end{teo}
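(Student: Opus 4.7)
The strategy is to show that (vi) is equivalent to each of the other conditions. For $(\text{vi})\Rightarrow(\text{i}),(\text{ii}),(\text{iv})$ I would apply Theorems~\ref{prop:19:4}, \ref{prop:19:3}, \ref{prop:20:5} to the derived family $L(\Lc_A)$ after verifying their hypotheses; the converse implications will be obtained uniformly using Proposition~\ref{prop:15:4} together with a multiplication trick by a strictly positive Schwartz function.

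For the forward direction, I would first unwind the spectral data of $L(\Lc_A)$. Since each $\vf_\eta\oplus\gf_2$ is $H$-type with the induced inner product on $\gf_2$, one has $\abs{J_{Q_\eta,\omega}}=c_\eta(\omega)\,P_\eta$, where $P_\eta$ is the orthogonal projection onto $\vf_\eta$ and $c_\eta$ is a scalar homogeneous of degree one in $\omega$; hence $\abs{J_{Q''_{h'},\omega}}=\sum_\eta\mi_{h',\eta}c_\eta(\omega)\,P_\eta$ for $L(\Lc_A)$. Consequently the projectors supplied by Proposition~\ref{prop:10:9} coincide with the constant $P_\eta$, we have $\Omega=\gf_2^*\setminus\{0\}$, and by homogeneity the mapping $\mi^{(L)}$ is constant on each level set of $\omega\mapsto\mi^{(L)}_{\eta_0'}(\vect{n_1}^{(L)})$. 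Since factoring out the common scalar preserves $\Q$- and $\R$-linear spans, $\dim_\Q\mi^{(L)}_\omega(\Q^{\bar h})=\dim_\Q\mi(\Q^H)$ and $\dim_\R\mi^{(L)}_\omega(\R^{\bar h})=\dim_\R\mi(\R^H)$. Hence under (vi) all hypotheses of Theorems~\ref{prop:19:3}, \ref{prop:19:4}, and~\ref{prop:20:5} (the latter with the additional $P_h$-constancy) are met, yielding (ii), (i) and (iv) respectively; (iii) then follows from (iv) since Schwartz functions are continuous, and (v) follows from (iv) by Proposition~\ref{prop:12:1}.

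For the reverse implications, let $m\colon E_{L(\Lc_A)}\to\C^d$, $L'$, and $\mi'\in((\Q_+^*)^H)^d$ be as in Proposition~\ref{prop:15:4}, so that each $\Lc''_h:=\sum_\eta\mi'_{h,\eta}\Lc_\eta$ satisfies $\Kc_{L(\Lc_A)}(m_h)=\Lc''_h\delta_e$. Pick $\tau\in\Sc(E_{L(\Lc_A)})$ strictly positive; then $\Kc_{L(\Lc_A)}(\tau)\in\Sc(G)$ and
\[
\Kc_{L(\Lc_A)}(m_h\tau)=\Lc''_h\Kc_{L(\Lc_A)}(\tau)\in\Sc(G)\subseteq L^1(G).
\]
Under (iv) I would take $m_h\tau$ Schwartz; under (iii) it is continuous directly; under (ii) it equals a.e.\ an element of $C_0$; under (i) the identity $(m_h\tau)(\lambda)=\int_G\Kc_{L(\Lc_A)}(m_h\tau)(g)\,\overline{\chi_{L(\Lc_A)}(\lambda,g)}\,\dd g$ together with dominated convergence gives continuity in $\lambda$. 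In each case, division by $\tau>0$ shows that $m_h$ has a continuous representative, and Proposition~\ref{prop:15:4} then forces (vi). For the contrapositive of (v)$\Rightarrow$(vi): if (vi) fails, the scalar component $m_h$ from Proposition~\ref{prop:15:4} fails to be a.e.\ continuous, hence a fortiori is not a.e.\ equal to a polynomial, yet $m_h(L(\Lc_A))=\Lc''_h$ is a differential operator, contradicting functional completeness.

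For the concluding addendum, assume $L(\Lc_A)$ is not functionally complete. Then (vi) fails and Proposition~\ref{prop:15:4} supplies $\mi'\in((\Q_+^*)^H)^d$ with $\Q$-linearly independent components, hence also $\R$-linearly independent, so that $\dim_\Q\mi'(\Q^H)=d=\dim_\R\mi'(\R^H)$. Setting $H'':=\{1,\dots,d\}$ and defining $\widetilde L\colon E_{\Lc_A}\to\R^{H''}\times\gf_2^*$ by $\widetilde L(\lambda_1,\lambda_2)=(\mi'(\lambda_1),\lambda_2)$, positivity of the entries ensures the properness of $\mi'$ on $\R_+^H$, so $\widetilde L(\Lc_A)$ is a Rockland family. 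The equivalence (v)$\Leftrightarrow$(vi) applied to $\widetilde L(\Lc_A)$ yields functional completeness, while the Borel maps $(m,\pr_2)\colon E_{L(\Lc_A)}\to E_{\widetilde L(\Lc_A)}$ and $(L',\pr_2)\colon E_{\widetilde L(\Lc_A)}\to E_{L(\Lc_A)}$ coming from Proposition~\ref{prop:15:4} realize the functional equivalence. The main obstacle will be the reduction of the rank and homogeneity conditions of Theorems~\ref{prop:19:3}, \ref{prop:19:4}, and~\ref{prop:20:5} to (vi); once this bookkeeping is accomplished, the rest is driven uniformly by Proposition~\ref{prop:15:4}.
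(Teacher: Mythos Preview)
Your proposal follows essentially the same route as the paper: use condition (vi) as the hub, invoke Theorems~\ref{prop:19:4} and~\ref{prop:20:5} for the forward implications, and use Proposition~\ref{prop:15:4} together with the multiplication-by-$\tau$ trick for the reverse ones. The paper organizes the implications as two cycles, $(\mathrm{i})\Rightarrow(\mathrm{ii})\Rightarrow(\mathrm{iii})\Rightarrow(\mathrm{vi})\Rightarrow(\mathrm{i})$ and $(\mathrm{vi})\Rightarrow(\mathrm{iv})\Rightarrow(\mathrm{v})\Rightarrow(\mathrm{vi})$, and leaves the hypothesis-checking for Theorems~\ref{prop:19:4} and~\ref{prop:20:5} entirely implicit; you instead prove every arrow in and out of (vi) and supply the bookkeeping explicitly, and you also write out the argument for the final addendum, which the paper omits. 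So the content is the same, only more detailed on your side.

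One point to tighten in your verification step: when you write $\abs{J_{Q''_{h'},\omega}}=\sum_\eta\mi_{h',\eta}c_\eta(\omega)\,P_\eta$ and then appeal to the Section~\ref{sec:4} framework, you are tacitly assuming that each $Q''_{h'}=\sum_\eta \mi_{h',\eta}Q_\eta$ is positive, i.e.\ that $\mi_{h',\eta}\ge 0$; otherwise $L(\Lc_A)$ does not fit the standing hypotheses of that section (positive $Q$'s), and the eigenvalue you wrote should carry $\abs{\mi_{h',\eta}}$. The cleanest way around this, still within your scheme, is to first invoke Proposition~\ref{prop:15:4} under (vi): it yields $\mi'\in((\Q_+^*)^H)^d$ and, since $d=\dim_\R\mi(\R^H)$, a \emph{linear} bijection $(L',\id)$ between the spectra of $\widetilde L(\Lc_A)$ and $L(\Lc_A)$. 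Apply Theorems~\ref{prop:19:4} and~\ref{prop:20:5} to $\widetilde L(\Lc_A)$ (whose quadratic forms are genuinely positive), and transfer (i) and (iv) back along this linear isomorphism. The paper's one-line citations presumably rely on this reduction as well.
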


\begin{proof}
	{\bf (i) $\implies$ (ii).} Obvious.
	
	{\bf (ii) $\implies$ (iii).} Obvious.
	
	{\bf (iii) $\implies$ (vi).} Assume, on the contrary, that $\dim_\Q \mi(\Q^{H})>\dim_\R \mi(\R^{H})$, and keep the notation of Proposition~\ref{prop:15:4}. 
	Then, $m_h$ cannot be taken so as to be continuous for some $h\in \Set{1,\dots, d}$. 
	Take $\phi \in \Sc(E_{L(\Lc_A)})$ so that $\phi(\lambda)\neq 0$ for every $\lambda\in E_{\Lc_A}$.
	Then, 
	\[
	\Kc_{L(\Lc_A)}(m_h \phi)=(\mi'_h(\Lc_H))\Kc_{L(\Lc_A)}(\phi)\in \Sc(G),
	\]
	but $m_h \phi$ is not equal $\beta_{L(\Lc_A)}$-almost everywhere to any continuous functions, whence the result.	
	
	{\bf (vi) $\implies$ (iv).} This follows from Theorem~\ref{prop:20:5}.
	
	{\bf (iv) $\implies$ (v).} This follows from Proposition~\ref{prop:12:1}.
	
	{\bf (v) $\implies$ (vi).} This follows from Proposition~\ref{prop:15:4}.
	
	{\bf (vi) $\implies$ (i).} This follows from~Theorem~\ref{prop:19:4}.
\end{proof}

\section{Examples: Products of Heisenberg Groups}\label{sec:8}

In this section, $(G_\alpha)_{\alpha\in A}$ will be a family of Heisenberg groups each of which is endowed with a homogeneous sub-Laplacian $\Lc_\alpha$. 
Define $\Lc\coloneqq \sum_{\alpha\in A}\Lc_\alpha$, and denote by $\Tc$ a finite family of elements of $\gf_2$, which is the centre of the Lie algebra of $G\coloneqq \prod_{\alpha\in A} G_\alpha$.

Before we proceed to the main results of these section, let us introduce some more notation. 
For every $\alpha\in A$, we shall denote by $T_\alpha$ a basis of the centre of the Lie algebra of $G_\alpha$, so that we may identify canonically $\gf_2$ with $\bigoplus_{\alpha\in A} \R T_{\alpha}$. 
Then, there is a basis $(X_{\alpha,1},\dots, X_{\alpha,2 n_{1,\alpha}}, T_\alpha)$ of the Lie algebra of $G_\alpha$ such that $[X_{\alpha,k},X_{\alpha,n_{1,\alpha}+k}]=T_\alpha$ for every $k=1,\dots, n_{1,\alpha}$, while the other commutators vanish, and such that there is $\mi_\alpha\in (\R_+^*)^{n_{1,\alpha}}$ such that
\[
\Lc_\alpha= -\sum_{k=1}^{n_{1,\alpha}} \mi_{\alpha,k} (X_{\alpha,k}^2+X_{\alpha,n_{1,\alpha}+k}^2).
\]	
We shall denote by $\gf_{1,\alpha}$ the vector space generated by $X_{\alpha,1},\dots, X_{\alpha,2 n_{1,\alpha}}$, and we shall set $\vect{n_1}\coloneqq (n_{1,\alpha})_{\alpha\in A}$.

\begin{prop}
	Assume that $\card(A)\Meg 2$. If $\Tc$ generates $\gf_2$, then the families $(\Lc, - i \Tc)$ and $(\Lc_A, - i \Tc)$ are functionally equivalent. 
	In addition, $(\Lc,-i \Tc)$ does not satisfy properties $(RL)$ and $(S)$.
\end{prop}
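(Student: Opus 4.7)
The approach is to reduce to Proposition~\ref{prop:19.4} by relating the generating family $\Tc$ to the basis $(T_\alpha)_{\alpha\in A}$ of $\gf_2$ (where $T_\alpha$ spans the one-dimensional centre $\gf_{2,\alpha}$), for which that proposition applies directly. Since $\Tc$ generates $\gf_2$ and $(T_\alpha)_\alpha$ is a basis, there is an injective linear map $A\colon\R^{\card A}\to\R^{\card\Tc}$ with $\Tc=A(T_\alpha)_\alpha$ (regarded as vectors in $\gf_2$) and a linear left inverse $B$ with $(T_\alpha)_\alpha=B\Tc$; in particular the composition $\id_\R\times A$ is an injective linear map between the spectral spaces.

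For the functional equivalence, the first direction is immediate: $\Lc=\sum_\alpha \Lc_\alpha$ is a polynomial (indeed linear) function of $\Lc_A=(\Lc_\alpha)_\alpha$, so $(\Lc,-i\Tc)$ is a polynomial function of $(\Lc_A,-i\Tc)$. For the reverse direction I would invoke the proof of Proposition~\ref{prop:19.4}, which explicitly constructs a Borel function $m$ on $E_{(\Lc,-i(T_\alpha)_\alpha)}$ such that $m$ applied to $(\Lc,-i(T_\alpha)_\alpha)$ returns $(\Lc_A,-i(T_\alpha)_\alpha)$. Composing $m$ with $\id_\R\times B$ on the source and with $\id_\R\times A$ on the target (in the appropriate places) yields a Borel function witnessing the functional equivalence of $(\Lc,-i\Tc)$ with $(\Lc_A,-i\Tc)$.

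For the failure of $(RL)$ and $(S)$ I would argue by contradiction using the linear relationship. Functionally equivalent Rockland families share the same kernels (every kernel is obtained from the other by composing the multiplier with a Borel map), so $L^1_{(\Lc,-i\Tc)}(G)=L^1_{(\Lc,-i(T_\alpha)_\alpha)}(G)$ and similarly for $\Sc_{(\cdot)}(G)$. If $(\Lc,-i\Tc)$ satisfied $(RL)$, then for any $\phi\in L^1_{(\Lc,-i(T_\alpha)_\alpha)}(G)$ there would exist $\widetilde m\in C_0(\R\times\R^{\card\Tc})$ with $\Kc_{(\Lc,-i\Tc)}(\widetilde m)=\phi$, and the composition $\widetilde m\circ(\id_\R\times A)$ would then be a $C_0$-multiplier for $\phi$ with respect to $(\Lc,-i(T_\alpha)_\alpha)$ (since $\id_\R\times A$ is linear injective, hence proper, $C_0$ is preserved under composition). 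This would make $(\Lc,-i(T_\alpha)_\alpha)$ satisfy $(RL)$, contradicting Proposition~\ref{prop:19.4}. Replacing $C_0$ by $\Sc$ throughout yields the analogous argument for property $(S)$, since injective linear maps also preserve the Schwartz class under composition.

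The substantive content is entirely carried by Proposition~\ref{prop:19.4} (whose proof produces both the Borel function exhibiting functional equivalence and the Schwartz kernel without a continuous multiplier); the only obstacle I anticipate is the routine bookkeeping of linear identifications between the spectra and the verification that the relevant compositions remain well-defined $\beta$-almost everywhere.
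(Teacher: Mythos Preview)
Your proposal is correct and follows the same route as the paper, whose own proof consists of the single sentence ``See [Proposition~\ref{prop:19.4}] and its proof.'' You have simply unpacked that reference: the Borel map exhibiting functional equivalence is precisely the map $m$ constructed there, transported along the linear identifications between $\Tc$ and the basis $(T_\alpha)_\alpha$, and the failure of $(RL)$ and $(S)$ is pulled back through the injective linear map $\id_\R\times A$ from the case treated in that proposition.
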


\begin{proof}
	See Theorem~\ref{prop:19:4} and its proof.
\end{proof}

\begin{lem}\label{lem:20:2}
	Let $\mi$ be a linear mapping of $\R^n$ onto $\R^m$ such that $\ker \mi \cap \R_+^n=\Set{0}$.
	Define $\Sigma_0\coloneqq \mi(\R_+^{n})\times \Set{0}$, and 
	\[
	\Sigma\coloneqq \Set{(\lambda \mi (\vect{1}_{n}+2 \gamma), \lambda )\colon \lambda>0, \gamma\in \N^{n}}\cup\Sigma_0.
	\]
	If $\phi\in C^\infty(\R^{m}\times \R)$ vanishes on $\Sigma$, then $\phi$ vanishes of order $\infty$ on $\Sigma_0$.
\end{lem}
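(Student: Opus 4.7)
The plan is to approximate every point $(\xi_0, 0) \in \Sigma_0$ by points of $\Sigma$ with $\lambda > 0$, and then to extract vanishing of all derivatives of $\phi$ at $\Sigma_0$ by iterating Hadamard's lemma. I would organize the argument in three steps.

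First, I would verify that $\mi(\R_+^n)$ is a closed convex cone in $\R^m$ with non-empty interior: closedness follows from the fact that $\ker \mi \cap \R_+^n = \{0\}$ forces $\mi$ to be proper on $\R_+^n$, and non-emptiness of the interior follows because $\mi$, being a linear surjection, is an open map, so that $\mi((\R_+^*)^n)$ is a non-empty open subset of $\mi(\R_+^n)$. Consequently, $\Sigma_0 = \mi(\R_+^n) \times \{0\}$ is the closure of $\mi(\R_+^n)^\circ \times \{0\}$, so it will suffice to prove vanishing of order $\infty$ at points of the relative interior; the statement at boundary points then follows by continuity of the derivatives of $\phi$. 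Moreover, every $(\xi_0, 0)$ with $\xi_0 \in \mi(\R_+^n)$ is a limit of points of $\Sigma$ with $\lambda > 0$: pick $x_0 \in \R_+^n$ with $\mi(x_0) = \xi_0$ and a sequence $\lambda_k \to 0^+$, and set $\gamma_k := \lfloor x_0 /(2 \lambda_k) \rfloor \in \N^n$ componentwise; then $2 \lambda_k \gamma_k \to x_0$, so that the points $(\lambda_k \mi(\vect{1}_n + 2 \gamma_k), \lambda_k) \in \Sigma$ converge to $(\xi_0, 0)$.

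Second, I would fix $\xi_0 \in \mi(\R_+^n)^\circ$, choose an open neighborhood $U \subseteq \mi(\R_+^n)^\circ$ of $\xi_0$, and observe that $\phi$ vanishes on $U \times \{0\}$. Hadamard's lemma then produces $\psi_1 \in C^\infty(U \times \R)$ with $\phi(y, \lambda) = \lambda \psi_1(y, \lambda)$ on $U \times \R$. For any $\xi \in U$ and any approximating sequence $(y_k, \lambda_k) \to (\xi, 0)$ constructed as above, the identity $0 = \phi(y_k, \lambda_k) = \lambda_k \psi_1(y_k, \lambda_k)$ with $\lambda_k > 0$ forces $\psi_1(y_k, \lambda_k) = 0$, and continuity yields $\psi_1(\xi, 0) = 0$. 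Hence $\psi_1$ vanishes on $U \times \{0\}$, so Hadamard's lemma can be applied again to write $\psi_1 = \lambda \psi_2$, giving $\phi = \lambda^2 \psi_2$ on $U \times \R$. Iterating, $\phi = \lambda^k \psi_k$ on $U \times \R$ for every $k \geq 1$, which forces $\partial_\lambda^j \phi \equiv 0$ on $U \times \{0\}$ for every $j$; differentiating further in $y$ then yields $\partial_y^\alpha \partial_\lambda^j \phi(\xi_0, 0) = 0$ for every multi-index $\alpha$ and every $j$.

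The main obstacle is really the geometric fact of the first step: once one can approach any $(\xi_0, 0) \in \Sigma_0$ along $\Sigma$ from the region $\lambda > 0$, the iterated Hadamard argument of the second step is completely routine. The crucial point that allows the iteration to close is that the Hadamard factorization can be performed on a fixed neighborhood $U$ (rather than shrinking at each step), which in turn is possible because $\xi_0$ lies in the open set $\mi(\R_+^n)^\circ$, where $\phi$ vanishes on a full open slab of $\{\lambda = 0\}$.
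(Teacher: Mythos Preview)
Your argument is correct, but the paper's proof takes a shorter and more explicit route that avoids Hadamard's lemma entirely. The key observation in the paper is that for each point of the form $x=(\lambda\mi(\vect{1}_n+2\gamma),0)$ one can approach $x$ \emph{vertically}, i.e.\ with the first coordinate held fixed: for every $k\in\N$,
\[
\left(x_1,\tfrac{\lambda}{2k+1}\right)=\left(\tfrac{\lambda}{2k+1}\,\mi\!\big(\vect{1}_n+2((2k+1)\gamma+k\vect{1}_n)\big),\tfrac{\lambda}{2k+1}\right)\in\Sigma,
\]
so the one-variable function $t\mapsto\phi(x_1,t)$ has a sequence of zeros accumulating at $0$, and an iterated Rolle argument gives $\partial_2^h\phi(x)=0$ for all $h$. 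Density of such $x$ in $\Sigma_0$ and the fact that $\Sigma_0$ is the closure of its relative interior then finish the proof.

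Your approach trades this explicit vertical approximation for a general (non-vertical) one, which is easier to produce but forces you to compensate by iterating Hadamard's lemma on a fixed neighbourhood $U$; you correctly note that each $\psi_j$ inherits the vanishing on $\Sigma\cap(U\times\R_+^*)$ from the factorisation $\phi=\lambda^j\psi_j$, closing the loop. Both arguments ultimately hinge on the same geometric facts you establish in your first step (properness of $\mi$ on $\R_+^n$ and non-empty interior of $\mi(\R_+^n)$), so the difference is purely in how one extracts the $\lambda$-order of vanishing: the paper's method is more direct and self-contained, while yours is slightly more conceptual and would adapt more readily if the vertical approximation were unavailable.
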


\begin{proof}
	Take $x=(\lambda \mi(\vect{1}_{n}+2 \gamma),0)$ for some $\lambda>0$ and some $\gamma\in \N^{n}$. 
	Then, for every $k\in\N$,
	\[
	\left(x_1, \frac{\lambda}{2 k+1}\right)= \left( \frac{\lambda}{2 k+1}\mi(  \vect{1}_{n}+ 2( (2k+1)\gamma+k \vect{1}_{n} )) ,\frac{\lambda}{2 k+1}    \right)\in \Sigma.
	\]
	Therefore, it is easily seen that $\partial^h_2 \phi (x)=0$ for every $h\in \N$.  
	Since the set 
	\[
	\Set{(\lambda \mi (\vect{1}_{n}+2 \gamma),0)\colon \lambda>0, \gamma\in \N^n }
	\]
	is dense in $\Sigma_0$, it follows that $\partial^h_2 \phi$ vanishes on $\Sigma_0$ for every $h\in \N$.
	Then, observe that, since we assumed that $ \mi(\R^{n})=\R^{m}$,  the closed convex cone $\Sigma_0$ generates $\R^{m}\times \Set{0}$, so that $\Sigma_0$ is the closure of its interior in $\R^m\times \Set{0}$. The assertion follows easily.
\end{proof}

\begin{teo}\label{prop:13}
	Assume that $\card(A)\Meg 2$. If $\Tc$ does not generate $\gf_2$, then the family $(\Lc, - i \Tc)$ satisfies properties $(RL)$ and $(S)$.
\end{teo}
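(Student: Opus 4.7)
Since each factor $(\Lc_\alpha, -i T_\alpha)$ on the Heisenberg group $G_\alpha$ satisfies properties $(RL)$ and $(S)$ (by Theorem~\ref{teo:15:1}, applied with $\card(H)=1$), Theorem~\ref{teo:4} applied inductively to the product decomposition shows that the full family
\[
\Lc_A^{\mathrm{full}}\coloneqq ((\Lc_\alpha)_{\alpha\in A},(-i T_\alpha)_{\alpha\in A})
\]
on $G$ satisfies $(RL)$ and $(S)$. The plan is to transfer these properties to the target through the factorization $(\Lc,-i\Tc)=N(\Lc_A^{\mathrm{full}})$, where $N\colon E_{\Lc_A^{\mathrm{full}}}\to E_{(\Lc,-i\Tc)}$ sends $((\lambda_\alpha),\omega)$ to $(\sum_\alpha\lambda_\alpha,\omega(\Tc))$. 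After discarding redundancies we may assume $\Tc$ is linearly independent; since $\Tc$ does not generate $\gf_2$, extend it to a basis $(T_1,\dots,T_{n_2})$ of $\gf_2$ with $\Tc=(T_1,\dots,T_k)$ and $k<n_2$, set $\Lc_A\coloneqq (\Lc,-iT_1,\dots,-iT_{n_2})$, $L\coloneqq \id_\R\times\pr_{1,\dots,k}$, and let $M\colon E_{\Lc_A^{\mathrm{full}}}\to E_{\Lc_A}$ be the surjection $((\lambda_\alpha),\omega)\mapsto(\sum_\alpha\lambda_\alpha,\omega)$, so that $N=L\circ M$ and $\Lc_A=M(\Lc_A^{\mathrm{full}})$.

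To prove $(RL)$ I would apply Theorem~\ref{prop:19:2} to the pair $(\Lc_A,L)$. Condition~(1) is trivial. For condition~(3), any $f\in L^1_{\Lc_A}(G)\subseteq L^1_{\Lc_A^{\mathrm{full}}}(G)$ (since $\Lc_A$ is a function of $\Lc_A^{\mathrm{full}}$) admits a continuous representative of $\Mc_{\Lc_A^{\mathrm{full}}}(f)$ on $\sigma(\Lc_A^{\mathrm{full}})$; since $\Mc_{\Lc_A^{\mathrm{full}}}(f)=\Mc_{\Lc_A}(f)\circ M$ almost everywhere and $M$ restricts to a homeomorphism from each stratum $C_\gamma^{\mathrm{full}}\subseteq\sigma(\Lc_A^{\mathrm{full}})$ onto the corresponding $C_{\gamma'}\subseteq\sigma(\Lc_A)$ (both parametrized by $\omega$), one extracts a continuous representative of $\Mc_{\Lc_A}(f)$ on each $C_{\gamma'}$. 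For condition~(2), the bottom cone
\[
C_0=\Set{(\textstyle\sum_\alpha c_\alpha\abs{\omega_\alpha},\omega)\colon\omega\in\gf_2^*},\quad c_\alpha\coloneqq\sum_\ell\mi_{\alpha,\ell},
\]
is the boundary of the polyhedral convex cone $\Set{(r,\omega)\colon r\Meg\sum_\alpha c_\alpha\abs{\omega_\alpha}}$ in $\R\times\gf_2^*$. Since $\omega\mapsto\sum_\alpha c_\alpha\abs{\omega_\alpha}$ is a norm, $L$ is proper on $C_0$, and Proposition~\ref{cor:A:6} yields $L$-connectedness of $\chi_{C_0}\cdot\beta_{\Lc_A}$.

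For $(S)$, take $\phi\in\Sc_{(\Lc,-i\Tc)}(G)\subseteq\Sc_{\Lc_A^{\mathrm{full}}}(G)$; property $(S)$ for $\Lc_A^{\mathrm{full}}$ provides $\tilde m\in\Sc(E_{\Lc_A^{\mathrm{full}}})$ with $\phi=\Kc_{\Lc_A^{\mathrm{full}}}(\tilde m)$, while the $(RL)$ step just proved supplies a continuous $m'\in C_0(\sigma(\Lc,-i\Tc))$ with $\phi=\Kc_{(\Lc,-i\Tc)}(m')$. Thus $\tilde m=m'\circ N$ almost everywhere, hence everywhere on $\sigma(\Lc_A^{\mathrm{full}})$ by continuity of both sides. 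Since $\sigma(\Lc_A^{\mathrm{full}})$ is a countable union of dilation-invariant semianalytic (hence Nash subanalytic) sets on which $N$ is proper, and its image $\sigma(\Lc,-i\Tc)$ is a closed convex cone (hence Nash subanalytic), Theorem~\ref{teo:7} yields the required Schwartz lift $m'\in\Sc_{E_{(\Lc,-i\Tc)}}(\sigma(\Lc,-i\Tc))$ once one shows that $\tilde m|_{\sigma(\Lc_A^{\mathrm{full}})}$ is a \emph{formal composite} of $N$.

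This last step is the main obstacle: at each $x\in\sigma(\Lc_A^{\mathrm{full}})$ one must exhibit a germ $\phi_{N(x)}\in \Ec(E_{(\Lc,-i\Tc)})$ whose Taylor series at $x$ matches that of $\tilde m$ modulo the ideal of $\sigma(\Lc_A^{\mathrm{full}})$. I would proceed stratum by stratum using Proposition~\ref{prop:11:11} applied to $\Lc_A^{\mathrm{full}}$: on the smooth strata, where $N$ has locally constant rank, the pointwise identity $\tilde m=m'\circ N$ together with an arbitrary smooth extension of $m'$ in transverse directions furnishes $\phi_{N(x)}$; at points where some $\omega_\alpha$ vanishes (the degenerate locus $W\neq\Set{0}$), a variant of Lemma~\ref{lem:20:2} shows that $\tilde m$ must vanish to infinite order, so the formal composite condition is automatic there.
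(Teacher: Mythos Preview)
Your treatment of property $(RL)$ is essentially the paper's argument: reduce to Theorem~\ref{prop:19:2}, verify condition~{\bf2} via the polyhedral boundary and Proposition~\ref{cor:A:6}, and verify condition~{\bf3} by pulling back continuity from a larger family that satisfies $(RL)$ by Theorems~\ref{teo:4} and~\ref{teo:15:1}. The paper uses the even finer family $\Lc'_{A'}=((-X_{\alpha,k}^2-X_{\alpha,n_{1,\alpha}+k}^2)_{k},-iT_\alpha)_{\alpha}$ rather than your $\Lc_A^{\mathrm{full}}$, but this is inessential for $(RL)$.

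For property $(S)$, however, there is a genuine gap. You propose to invoke Theorem~\ref{teo:7} with $C=\sigma(\Lc_A^{\mathrm{full}})$, but that theorem requires $C$ to be a closed \emph{subanalytic} set. The spectrum $\sigma(\Lc_A^{\mathrm{full}})$ is a countable union of semianalytic cones (one for each multi-index $\gamma$), and near any point with some $\omega_\alpha=0$ infinitely many of these cones accumulate; subanalytic sets admit locally finite analytic stratifications, so $\sigma(\Lc_A^{\mathrm{full}})$ is \emph{not} subanalytic and Theorem~\ref{teo:7} does not apply to it. Your proposed stratum-by-stratum verification of the formal-composite condition does not repair this, because the conclusion of Theorem~\ref{teo:7} (existence of a global Schwartz lift) is simply unavailable on a non-subanalytic $C$.

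The paper avoids this obstruction by never applying Theorem~\ref{teo:7} to the full spectrum. Instead it passes to the finer family $\Lc'_{A'}$ and decomposes $\sigma(\Lc'_{A'})$ into the \emph{finite} collection of closed convex semi-algebraic cones $S_{\eps,0}$, $\eps\in\{-,+\}^A$. On each $S_{\eps,0}$ on which $L'$ is injective, Corollary~\ref{cor:A:7} produces a Schwartz function $m'_\eps$ with $m'_\eps\circ L'=m_0$ on $S_{\eps,0}$. The substantial work, occupying most of step~{\bf2} of the paper's proof, is the patching of the $m'_\eps$: one must show that $m'_{\eps_1}-m'_{\eps_2}$ vanishes to infinite order on $L'(S_{\eps_1,0})\cap L'(S_{\eps_2,0})$, and this requires a three-case analysis (overlap with interior, common boundary face, and the remaining case handled by a convexity argument on the polyhedral boundary $S_0'$) together with Lemma~\ref{lem:20:2}. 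Only after this patching does Theorem~\ref{teo:7} yield a single $m'\in\Sc(E_{(\Lc,-i\Tc)})$. This finite geometric patching is precisely what your outline is missing.
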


\begin{proof}
	{\bf1.} Let us prove that $(\Lc,- i \Tc)$ satisfies property $(RL)$. 
	Consider the Rockland family $(\Lc, - i T_A)$ and take $\alpha\in A$; take $\omega\in \R^{A}$. 
	Define 
	\[
	C_\gamma\coloneqq 
	\left\{\left(\sum_{\alpha\in A}\abs{\omega_\alpha} \mi_\alpha (\vect{1}_{n_{1,\alpha}}+2\gamma_\alpha), \omega  \right) \colon \omega \in \R^A \right\}
	\]
	for every $\gamma\in \N^{\vect{n_1}}$, so that $C_0$	is the boundary of a convex polyhedron. 
	If $L\colon E_{(\Lc, - i T_A)}\to E_{(\Lc,-i \Tc)}$ is the unique continuous linear mapping such that $L(\Lc, - i T_A)=(\Lc, - i \Tc)$, then $\chi_{C_0}\cdot \beta_{(\Lc, - i T_A)}$ is $L$-connected by Proposition~\ref{prop:A:8}.
	Now, define $\Lc'_{A'}\coloneqq   (( - X_{\alpha,k}^2-X_{\alpha,n_{1,\alpha}+k}^2  )_{k=1,\dots,n_{1,\alpha}},- i T_\alpha)_{\alpha\in A}$, so that $\Lc'_{A'}$ satisfies properties $(RL)$ and $(S)$ by Theorems~\ref{teo:4} and~\ref{teo:15:1}.
	Take $f\in L^1_{(\Lc,- i \Tc)}(G)$, and let $\widetilde m$ be its continuous multiplier relative to $\Lc'_{A'}$ (cf.~Theorem~\ref{teo:15:1}).
	Then, 
	\[
	m_\gamma\colon C_\gamma\ni \left(\sum_{\alpha\in A}\abs{\omega_\alpha} \mi_\alpha (\vect{1}_{n_{1,\alpha}}+2\gamma_\alpha), \omega  \right) \mapsto \widetilde m( (\abs{\omega_\alpha} (\vect{1}_{n_{1,\alpha}}+2 \gamma_\alpha),\omega_\alpha)_{\alpha\in A})
	\]
	is a continuous function on $C_\gamma$ which equals $\Mc_{(\Lc,- i T_A)}(f)$ $\chi_{C_\gamma}\cdot \beta_{(\Lc, - i T_A)}$-almost everywhere. 
	Therefore, the assertion follows from Theorem~\ref{prop:19:2}.

	{\bf2.} Assume that $\Tc$ generates a hyperplane of $\gf_2$, and let us prove that $(\Lc,- i \Tc)$ satisfies property $(S)$. 
	Take $m\in C_0(E_{(\Lc,-i \Tc)})$ such that $\Kc_{(\Lc, - i \Tc)}(m)\in \Sc(G)$, and 
	consider the (unique) linear mapping
	\[
	L'\colon E_{\Lc'_{A'}}\to E_{(\Lc,- i \Tc)}
	\]   
	such that $L'(\Lc'_{A'})=(\Lc,- i \Tc)$. 
	Then, there is $m_0\in \Sc(E_{\Lc'_{A'}})$ such that $m\circ L'=m_0$ on $\sigma(\Lc'_{A'})$. 
	Next, define, for every $\eps\in \Set{-,+}^A$ and for every $\gamma\in \N^{\vect{n_1}}$,
	\[
	S_{\eps,\gamma}\coloneqq \left\{ \left(\abs{\omega_\alpha}   (\vect{1}_{n_{1,\alpha}}+2 \gamma_\alpha), \omega_\alpha  \right)_{\alpha\in A} \colon \omega \in \prod_{\alpha\in A}\R_{\eps_\alpha}\right\},
	\]
	so that $S_{\eps,\gamma}$ is a closed convex semi-algebraic set of dimension $\card(A)$.
	Assume that $L'$ is not one-to-one on $S_{\eps,\gamma}$. 
	Since $L'$ is proper on $\sigma(\Lc'_{A'})$, for every  $\lambda\in L'(S_{\eps,\gamma})$ the fibre $L'^{-1}(\lambda)$ intersects $S_{\eps,\gamma}$ on a closed segment whose end-points lie in the relative boundary of $S_{\eps,\gamma}$.
	Therefore, $L'(S_{\eps,\gamma})$ gives no contribution to $\bigcup_{\eps'\in \Set{-,+}^A} L'(S_{\eps',\gamma})$; in particular, we may find a subset $E_0$ of $\Set{-,+}^A$ such that $\bigcup_{\eps\in E_0}L'(S_{\eps,0})=L'(\sigma(\Lc'_{A'}))$ and such that $L'$ is one-to-one on $S_{\eps,0}$ for every $\eps\in E_0$.
	
	Now, Corollary~\ref{cor:A:7} implies that for every $\eps\in E_0$ there is $m'_\eps\in \Sc(E_{(\Lc,- i \Tc)})$ such that $m'_\eps\circ L'=m_0$ on $S_{\eps,0}$. 
	Nevertheless, we must prove that these functions  $m'_\eps$ can be patched together to form a Schwartz multiplier of $\Kc_{(\Lc,- i \Tc)}(m)$.
	Then, take  $\lambda\in \sigma(\Lc,- i \Tc)$. 	We shall distinguish some cases.
	
	Assume that there are $\eps_1,\eps_2\in E_0$ such that $L'(S_{\eps_1,0})\cap L'(S_{\eps_2,0})$ has non-empty interior and such that $\lambda\in L'(S_{\eps_1,0})\cap L'(S_{\eps_2,0})$. 
	Then, $m'_{\eps_1}=m'_{\eps_2}$ on $L'(S_{\eps_1,0})\cap L'(S_{\eps_2,0})$, so that $m'_{\eps_1}-m'_{\eps_2}$ vanishes of order infinity on the closure of the interior of $L'(S_{\eps_1,0})\cap L'(S_{\eps_2,0})$, which is $L'(S_{\eps_1,0})\cap L'(S_{\eps_2,0})$ by convexity. 
	In particular, $m'_{\eps_1}-m'_{\eps_2}$ vanishes of order infinity at $\lambda$.
	
	Next, assume that there are $\eps_1,\eps_2\in E_0$ and $\lambda'\in S_{\eps_1,0}\cap S_{\eps_2,0}$ such that $L'(\lambda')=\lambda$. 
	Then, $\lambda'\in S_{\eps_k,\gamma}$ for every $\gamma\in \N^{\vect{n_1}} $ such that $\gamma_\alpha=0$ if $\eps_{1,\alpha}=\eps_{2,\alpha}$, and for $k=1,2$; let $\Gamma_{\eps_1,\eps_2}$ be the set of such $\gamma$. 
	Now, clearly $m'_{\eps_k}\circ L'=m_0$ on $S_{\eps_k,\gamma}$ for every $\gamma\in \Gamma_{\eps_1,\eps_2}$. 
	Taking into account Lemma~\ref{lem:20:2}, we see that the restriction of $(m'_{\eps_1}-m'_{\eps_2})\circ L'$ to $\prod_{\alpha \in A} V_\alpha$ vanishes of order $\infty$ at $\lambda'$, where $V_\alpha$ is $\R(\vect{1}_{n_{1,\alpha}}, \eps_{1,\alpha})$ if $\eps_{1,\alpha}=\eps_{2,\alpha}$ while $V_\alpha= \R^{n_{1,\alpha}+1}$ otherwise. 
	Since either $\eps_1=\eps_2$ or $L'\colon \prod_{\alpha \in A} V_\alpha\to E_{(\Lc,-i \Tc)}$ is onto, it follows that $m'_{\eps_1}-m'_{\eps_2}$ vanishes of order $\infty$ at $\lambda$.
	
	Then, assume that there are $\eps_1,\eps_2\in E_0$ such that $\lambda\in L'(S_{\eps_1,0})\cap L'(S_{\eps_2,0})$, but that  $L'(S_{\eps_1,0})\cap L'(S_{\eps_2,0})$ has empty interior and $\lambda \not \in L'(S_{\eps_1,0}\cap S_{\eps_2,0})$. 
	Let us prove that there is $\eps_3\in E_0$ such that $\lambda\in L'(S_{\eps_1,0}\cap S_{\eps_3,0})$ and such that $L'( S_{\eps_2,0})\cap L'(S_{\eps_3,0})$ has non-empty interior. 
	Indeed, observe that there is a unique liner mapping $L''$ such that $L''(\Lc'_{A'})=(\Lc, - i T_A)$. 
	In addition, if $S_0= \bigcup_{\eps\in \N^A} S_{\eps,0}$, then $L''$ induces a homeomorphism of $S_0$ onto $S_0'=L''(S_0)$. 
	Furthermore, $S_0'$ is the boundary of the convex envelope $C_0'$ of $\sigma(\Lc, - i T_A)$, which is a convex polyhedron, and $\ker L$ has dimension $1$. 
	Next, observe that $L(S_0')=L(C_0')=\sigma(\Lc, - i \Tc)$ and that $L$ is proper on $C_0'$; put an orientation on $\ker L$, fix a linear section $\ell$ of $L$, and define
	\[
	g_+\colon \sigma(\Lc,- i \Tc)\ni \lambda \mapsto \max \Set{t\in \ker L\colon  \ell(\lambda)+t\in S_0'  }
	\] 
	and
	\[
	g_-\colon \sigma(\Lc,- i \Tc)\ni \lambda \mapsto \min \Set{t\in \ker L\colon  \ell(\lambda)+t\in S_0'  }.
	\]
	Then, $g_-$ and $g_+$ are convex and concave, respectively, hence continuous on the interior of $\sigma(\Lc,-i\Tc)$. Observe that the union of the graphs of $g_-$ and $g_+$ is $\bigcup_{\eps\in E_0} L''(S_{\eps,0})$.
	Now, let $E_{0,\pm}$ be the set of $\eps\in E_0$ such that $L''(S_{\eps,0})$ is contained in the graph of $g_\pm$. 
	Observe that $E_0$ is the disjoint union of $E_{0,-}$ and $E_{0,+}$, since $g_-(\lambda)\neq g_+(\lambda)$ for every $\lambda$ in the interior of $\sigma(\Lc_A)$ (cf.~the proof of Proposition~\ref{cor:A:6}).
	Therefore, $\sigma(\Lc_A)=\bigcup_{\eps\in E_{0,\pm}} L'(S_{\eps,0})$; since $L'(S_{\eps,0})$ is closed for every $\eps\in E_0$ and since $E_0$ is finite, this proves that the union of the $L'(S_{\eps,0})$ such that $\eps\in E_{0,\pm}$ and $\lambda\in L'(S_{\eps,0})$ is a neighbourhood of $\lambda$ in $\sigma(\Lc_A)$.
	Next, since $\lambda \not \in L'(S_{\eps_1,0}\cap S_{\eps_2,0})$, we may assume that $\eps_1\in E_{0,+}$ and $\eps_2\in E_{0,-}$. 
	Then, there is $\eps_3\in E_{0,+}$ such that $\lambda\in L'(S_{\eps_3,0})$ and $L'(S_{\eps_2,0})\cap L'(S_{\eps_3,0})$ has non-empty interior, so that $\lambda\in L'(S_{\eps_1,0}\cap S_{\eps_2,0})$.  
	Therefore, the preceding arguments show that  $m'_{\eps_2}-m'_{\eps_1}$ vanishes of order $\infty$ at $\lambda$.
	
	Hence, by means of Theorem~\ref{teo:7} we see that there is $m'\in \Sc(E_{(\Lc,- i \Tc)})$ such that $m'\circ L=m_0$ on $\sigma(\Lc'_{A'})$, so that $m'=m$ on $\sigma(\Lc, - i \Tc)$, whence the result in this case.
	
	{\bf3.} Now, consider the general case, and take $m\in C_0(E_{(\Lc,-i \Tc)})$ such that $\Kc_{(\Lc, - i \Tc)}(m)\in \Sc(G)$. 
	Take a finite subset $\Tc'$ of $\gf_2$ which contains $\Tc$ and generates a hyperplane of $\gf_2$, so that~{\bf2} implies that $(\Lc, - i \Tc')$ satisfies property $(S)$. 
	Observe that $\sigma(\Lc,- i \Tc')$ is a convex semi-algebraic set. Therefore, the assertion follows easily from Corollary~\ref{cor:A:7}.
\end{proof}

\begin{lem}\label{lem:2:1}
	Let $G'$ and $G''$ be two non-trivial homogeneous groups, $\Lc'$ and $\Lc''$ two positive Rockland operators on $G'$ and $G''$, respectively. Then, the operator $\Lc'+\Lc''$ on $G'\times G''$ satisfies property $(S)$.
\end{lem}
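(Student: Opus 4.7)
The strategy is to descend to the abelianization of $G' \times G''$ and apply Theorem~\ref{teo:9:3}. Let $\pi_0 \colon G' \times G'' \to G_0 := G'_0 \times G''_0$ be the canonical projection, where $G'_0$ and $G''_0$ are the abelianizations of $G'$ and $G''$. Since every non-trivial character of $G'$ (resp.\ $G''$) factors through $G'_0$ (resp.\ $G''_0$), the Rockland and positivity conditions on $\Lc'$ and $\Lc''$ force the descended operators $\dd\pi_0(\Lc')$ and $\dd\pi_0(\Lc'')$ to be positive Rockland operators on the abelian groups $G'_0$ and $G''_0$; by Proposition~\ref{prop:6} they correspond to non-zero proper positive homogeneous polynomials $P'$ and $P''$ on the respective duals. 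After a harmless rescaling of the dilations on $G'\times G''$ so that $\Lc'$ and $\Lc''$ acquire a common homogeneous degree, $\dd\pi_0(\Lc' + \Lc'')$ is a positive Rockland operator on $G_0$ whose symbol is $P(\xi', \xi'') = P'(\xi') + P''(\xi'')$.

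The key step, and the main obstacle, is the purely algebraic assertion that $P$ is not a non-trivial perfect $k$-th power in $\R[\xi', \xi'']$, i.e.\ the greatest $k\in\N^*$ with $P^{1/k}$ polynomial is $k=1$. Suppose to the contrary that $P = Q^k$ with $k \geq 2$, and decompose $Q = \sum_{j=0}^{d_Q} Q_j$ into its $\xi''$-homogeneous components with $Q_{d_Q} \neq 0$. Comparing $\xi''$-top parts of $P=Q^k$ gives $Q_{d_Q}^k$ equal to the leading part of $P''$, which lies in $\R[\xi'']$; since $R \in \R[\xi',\xi'']$ with $R^k \in \R[\xi'']$ forces $R \in \R[\xi'']$ (apply $\partial_{\xi'_i}$ to $R^k$ and use that $\R[\xi'']$ is an integral domain), we obtain $Q_{d_Q} \in \R[\xi'']$. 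Inductively on the $\xi''$-codegree $m$: since the $\xi''$-degree-$(kd_Q-m)$ part of $P$ lies in $\R[\xi'']$ whenever $kd_Q-m>0$, and equals $kQ_{d_Q}^{k-1} Q_{d_Q-m}$ plus terms involving only $Q_j$ with $j>d_Q-m$ already in $\R[\xi'']$, the same integrality argument yields $Q_{d_Q - m} \in \R[\xi'']$. Because $k \geq 2$, this induction exhausts all non-negative indices $j$, so $Q \in \R[\xi'']$ and hence $P = Q^k \in \R[\xi'']$, contradicting $P' \neq 0$.

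Granting the claim, Theorem~\ref{teo:9:3} yields property $(S)$ for $\dd\pi_0(\Lc' + \Lc'')$ on $G_0$. Now take $m$ with $\Kc_{\Lc' + \Lc''}(m) \in \Sc(G' \times G'')$. The pushforward $(\pi_0)_*\Kc_{\Lc'+\Lc''}(m)$ belongs to $\Sc(G_0)$, and by the group Plancherel formula (reasoning as in the proof of Theorem~\ref{prop:19:3}, via \cite[Proposition 5.4]{MartiniRicciTolomeo}) equals $\Kc_{\dd\pi_0(\Lc' + \Lc'')}(m)$. Applying $(S)$ for $\dd\pi_0(\Lc' + \Lc'')$ gives $\tilde m \in \Sc(\R)$ with $m = \tilde m$ $\beta_{\dd\pi_0(\Lc'+\Lc'')}$-almost everywhere; since both $\beta_{\Lc'+\Lc''}$ and $\beta_{\dd\pi_0(\Lc'+\Lc'')}$ are equivalent to Lebesgue measure on $\R_+^*$ (both operators being positive Rockland with spectrum $[0,\infty)$), they share the same null sets, and thus $m = \tilde m$ $\beta_{\Lc'+\Lc''}$-almost everywhere, establishing property $(S)$ for $\Lc'+\Lc''$.
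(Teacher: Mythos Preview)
Your proof is correct and follows the same overall strategy as the paper: descend to the abelianization, invoke Theorem~\ref{teo:9:3} for the pushed-forward operator, and then transfer property $(S)$ back using that $(\pi_0)_*\Kc_{\Lc'+\Lc''}(m)=\Kc_{\dd\pi_0(\Lc'+\Lc'')}(m)$ together with the fact that both Plancherel measures live on $\R_+$ with the same null sets. The paper's proof is slightly terser in two places. First, it simply asserts that Theorem~\ref{teo:9:3} gives $(S)$ for $\dd\pi(\Lc'+\Lc'')$ without checking the hypothesis $k=1$; your explicit algebraic argument that $P'(\xi')+P''(\xi'')$ cannot be a nontrivial perfect power is a genuine addition that fills this gap, and your induction on the $\xi''$-codegree is clean and correct. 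Second, for the transfer step the paper cites \cite[Proposition~3.2.4]{Martini} (via the right quasi-regular representation on $L^2$ of the abelianization) rather than \cite{MartiniRicciTolomeo}; both routes work, but the former is the more direct reference here since one only needs compatibility with a single quotient map, not the full almost-everywhere statement for the Plancherel decomposition.
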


\begin{proof}
	Let $\pi$ be the canonical projection of $G'\times G''$ onto its abelianization $G'''$, that is, onto its quotient by the normal subgroup $[G'\times G'', G'\times G'']$. Then, Theorem~\ref{teo:9:3} implies that $\dd \pi(\Lc'+\Lc'')$ satisfies property $(S)$. 
	Now, take $\phi \in \Sc_{\Lc'+\Lc''}(G'\times G'')$. Then, Theorem~\ref{teo:9:3} and~\cite[Theorem 3.2.4]{Martini}, applied to the right quasi-regular representation of $G'\times G''$ in $L^2(G''')$,  imply that $\pi_*(\phi)\in \Sc_{\dd \pi(\Lc'+\Lc'')}(G''')$, so that there is $m\in \Sc(\R)$ such that $\Kc_{\dd \pi(\Lc'+\Lc'')}(m)=\pi_*(\phi)$. 
	Since $\sigma(\Lc'+\Lc'')=\R_+=\sigma(\dd \pi(\Lc'+\Lc''))$, we see that $\phi=\Kc_{\Lc'+\Lc''}(m)$, whence the result.
\end{proof}

\begin{teo}\label{prop:16:2}
	Let $G'$ be a homogeneous group endowed with a positive Rockland operator $\Lc'$ which is homogeneous of degree $2$. 
	Then, the following hold:
	\begin{enumerate}
		\item $(\Lc+\Lc',-i \Tc)$ satisfies property $(RL)$;
		
		\item if $\Lc'$ satisfies property $(S)$, then also $(\Lc+\Lc',-i \Tc)$ satisfies property $(S)$.
	\end{enumerate}
\end{teo}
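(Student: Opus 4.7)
The plan is to first reduce to the case where $G'$ is abelian via the abelianization projection, and then to introduce an auxiliary ``resolved'' family over $G\times G'$ and descend via the connectedness/formal-composite machinery of Sections~\ref{sec:5}--\ref{sec:6}.

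First I would push down to the abelianization of the $G'$-factor: let $\pi\colon G\times G'\to G\times G'_{\mathrm{ab}}$, where $G'_{\mathrm{ab}}:=G'/[G',G']$. Since $\dd\pi(\Lc')$ is still a positive Rockland operator of degree~$2$ (necessarily a positive definite quadratic form on $G'_{\mathrm{ab}}\simeq\R^{n_1'}$), we have $\sigma(\Lc')=[0,\infty)=\sigma(\dd\pi(\Lc'))$ and hence $\sigma(\Lc+\Lc',-i\Tc)=\sigma(\Lc+\dd\pi(\Lc'),-i\Tc)$. Arguing as in Lemma~\ref{lem:2:1} via~\cite[Proposition~3.2.4]{Martini} applied to the right quasi-regular representation of $G\times G'$ on $L^2(G\times G'_{\mathrm{ab}})$, the pushforward $\pi_*$ sends $L^1_{(\Lc+\Lc', -i\Tc)}(G\times G')$ (resp.\ $\Sc_{(\Lc+\Lc', -i\Tc)}$) into the corresponding space on $G\times G'_{\mathrm{ab}}$ with the \emph{same} multiplier, and conversely a multiplier found downstairs serves upstairs thanks to the equality of spectra. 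It thus suffices to treat the case $G'=\R^{n'}$ with $\Lc'=-\sum a_{ij}\partial_i\partial_j$ positive definite (in which case $\Lc'$ satisfies property~$(S)$ automatically by Theorem~\ref{teo:9:3}, since $k=1$).

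Next I would introduce the refined Rockland family $\widetilde\Lc:=((\Lc_\alpha,-iT_\alpha)_{\alpha\in A},\Lc')$ on $G\times G'$. Each factor $(\Lc_\alpha,-iT_\alpha)$ on the Heisenberg group $G_\alpha$ satisfies properties $(RL)$ and $(S)$ by Theorem~\ref{teo:15:1} (with $L$ the identity, so condition~(vi) is trivial), and $\Lc'$ does by Theorem~\ref{teo:9:3}; hence by iterated application of Theorems~\ref{teo:4} and~\ref{teo:5}, $\widetilde\Lc$ satisfies both properties. After a harmless linear change we may assume $\Tc=\{T_\alpha:\alpha\in A_0\}$ for some $A_0\subseteq A$, and define the linear surjection
\[
L\colon E_{\widetilde\Lc}\ni\bigl((\lambda_\alpha,\omega_\alpha)_{\alpha\in A},\lambda'\bigr)\longmapsto\Bigl(\sum_{\alpha\in A}\lambda_\alpha+\lambda',\;(\omega_\alpha)_{\alpha\in A_0}\Bigr)\in E_{(\Lc+\Lc',-i\Tc)},
\]
so that $L(\widetilde\Lc)=(\Lc+\Lc',-i\Tc)$. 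If $\phi$ is a kernel of $(\Lc+\Lc',-i\Tc)$ with multiplier $m$, then $m\circ L$ is a multiplier of the same $\phi$ relative to $\widetilde\Lc$, so $m\circ L$ has a representative in $C_0$ (for part~(1)) or in $\Sc$ (for part~(2)); the task is to extract the same regularity for $m$ itself.

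For part~(1) I would apply Proposition~\ref{prop:A:7}: the measure $\beta_{\widetilde\Lc}$ splits as a countable sum, indexed by $\gamma\in\prod_\alpha\N^{n_{1,\alpha}}$, of pieces carrying Lebesgue measure in the $\omega_\alpha$'s and in the continuous parameter $\lambda'\in[0,\infty)$, and the $L$-connectedness of $\beta_{\widetilde\Lc}$ follows because any mismatch in the discrete sheet labels $\gamma$ between two points with the same $L$-image can be absorbed by a continuous perturbation of $\lambda'$, producing neighbourhoods whose $L$-images overlap in a set of positive $\beta_{\widetilde\Lc}$-measure. For part~(2), granted $m\in C_0$ from part~(1), I would follow the strategy of the proof of Theorem~\ref{prop:13}~(2): decompose $\sigma(\widetilde\Lc)$ into the closed convex semi-algebraic pieces $S_{\eps,\gamma}$ indexed by sign patterns $\eps\in\{-,+\}^A$ and levels $\gamma$; on each $S_{\eps,0}$ Corollary~\ref{cor:A:7} produces a local Schwartz lift $m'_\eps$, and the lifts agree to infinite order on the overlaps via a Lemma~\ref{lem:20:2}-type flatness argument, so that Theorem~\ref{teo:7} patches them into a global $m_3\in\Sc(E_{(\Lc+\Lc',-i\Tc)})$ with $m_3=m$ on $\sigma(\Lc+\Lc',-i\Tc)$. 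The main technical hurdle is the verification of $L$-connectedness and, for part~(2), the flatness of the differences $m'_{\eps_1}-m'_{\eps_2}$ on the overlaps: in contrast to Proposition~\ref{prop:19.4}, where $(\Lc,-i\Tc)$ fails these properties precisely because its spectrum is a disjoint union of discrete sheets, here the continuous $\lambda'$-direction supplied by $\Lc'$ smooths out the discreteness, which is exactly what makes both the connectedness and the $C^\infty$-compatibility of the patches work.
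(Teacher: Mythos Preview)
Your outline is a legitimate route, but it diverges from the paper's in both parts and contains one incorrect reduction.

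The paper does \emph{not} abelianize $G'$. For part~(1) it bypasses connectedness entirely: setting $S_0\coloneqq\{((\lvert\omega_\alpha\rvert\,\vect{1}_{n_{1,\alpha}},\omega_\alpha))_{\alpha\in A}:\omega\in\R^A\}$ (the $\gamma=0$ sheet), it observes that $L$ restricted to $S_0\times\R_+$ is a proper \emph{bijection} onto $\sigma(\Lc+\Lc',-i\Tc)$, hence a homeomorphism, and that $L_*(\chi_{S_0\times\R_+}\beta_{\Lc'_{A'}})$ is equivalent to $\beta_{(\Lc+\Lc',-i\Tc)}$ by Federer's coarea theorem. Since $\Lc'_{A'}$ satisfies $(RL)$, the continuous multiplier restricted to $S_0\times\R_+$ descends through the homeomorphism. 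Your $L$-connectedness argument also works (the continuous $\lambda'$-direction does make any two points in a common $L$-fibre directly connected), but you still owe the disintegration hypothesis of Proposition~\ref{prop:A:7}, and the paper's one-line homeomorphism is cleaner. For part~(2) the paper argues by induction on $\card(A)$ rather than by your $S_{\eps,\gamma}$-patching: it shows the Schwartz lift $m_1\in\Sc(E_{\Lc'_{A'}})$ of $m\circ L$ is a formal composite of $L$ (Theorem~\ref{teo:7}), handling points with all $\omega_\alpha\neq0$ directly and points with some $\omega_\alpha=0$ via Lemma~\ref{lem:20:3} and Lemma~\ref{lem:2:1}, absorbing the degenerate Heisenberg factor into an enlarged $G'$. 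Your patching strategy is a viable alternative, and your abelianization of $G'$ is a genuine simplification --- since a positive Rockland operator of degree~$2$ on an abelian group can never be a perfect $k$-th power for $k\Meg 2$, Theorem~\ref{teo:9:3} gives property~$(S)$ for $\dd\pi(\Lc')$ automatically, so your approach in fact proves part~(2) \emph{without} the hypothesis on $\Lc'$.

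One genuine error: your ``harmless linear change'' to $\Tc=(T_\alpha)_{\alpha\in A_0}$ is not harmless. The $T_\alpha$ are the distinguished central generators of the Heisenberg factors $G_\alpha$ and cannot be relabelled; a generic $\Tc\subset\gf_2$ is not spanned by a subset of them. The paper's reduction goes in the opposite direction: it first \emph{enlarges} $\Tc$ to a basis of $\gf_2$ (i.e.\ to the full $(T_\alpha)_{\alpha\in A}$), proves the result there, and then descends to the original $\Tc$ via Propositions~\ref{cor:A:6}--\ref{prop:A:7} for $(RL)$ and Corollary~\ref{cor:A:7} for $(S)$, using that the projection is linear and proper on the convex spectrum. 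You should do the same.
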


Notice that we do \emph{not} require that $G'$ is graded, so that the requirement that $\Lc'$ has homogeneous degree $2$ can be  met up to rescaling the dilations of $G'$.
In addition, if $\Lc'$ is not positive, then $(\Lc+\Lc',-i \Tc)$ is \emph{not} a Rockland family, since the mapping $\sigma(\Lc,-i \Tc, \Lc')\ni (\lambda_1,\lambda_2,\lambda_3)\mapsto (\lambda_1+\lambda_3, \lambda_2)$ is not proper.

\begin{proof}
	{\bf1.}  Let us prove that $\Lc_A$ satisfies property $(RL)$. Observe that, if the assertion holds when $\Tc$ generates $\gf_2$, then the assertion follows by means of Propositions~\ref{prop:A:8} and~\ref{prop:A:7}.
	Therefore, we may assume that $\Tc$ is a basis of $\gf_2$.
	
	Define $\Lc'_{A'}\coloneqq   (((-X_1^2-X_{1+n_{1,\alpha}}^2,\dots, -X_{n_{1,\alpha}}^2-X_{2 n_{1,\alpha}}^2) ,- i T_{\alpha})_{\alpha\in A}, \Lc')$,
	and observe that $\Lc'_{A'}$ satisfies property $(RL)$ by Theorems~\ref{teo:4} and~\ref{teo:15:1}. 
	Define
	\[
	S_{0}\coloneqq \left\{ \left(\abs{\omega_\alpha}  \vect{1}_{n_{1,\alpha}}, \omega_\alpha  \right)_{\alpha\in A} \colon \omega \in \R^A\right\},
	\]
	so that $S_{0}$ is a closed semi-algebraic set of dimension $\card(A)$.
	Then, apply Proposition~\ref{prop:A:7} with $\beta=\chi_{S_0\times \R_+}\beta_{\Lc'_{A'}}$, observing that $L\colon S_0\times \R_+\to \sigma(\Lc+\Lc',-i \Tc)$ is a proper bijective mapping, hence a homeomorphism.
	Since $L_*(\beta_{(\Lc+\Lc',-i \Tc)})$ is equivalent to $L_*(\beta)$ thanks to~\cite[Theorem 3.2.22]{Federer},	the assertion follows.

	{\bf2.} Now, assume that $\Lc'$ satisfies property $(S)$, and let us prove that $(\Lc+\Lc',-i \Tc)$ satisfies property $(S)$. 
	Observe that, if we prove that the assertion holds when $\Tc$ generates $\gf_2$, then the general case will follow by means of Corollary~\ref{cor:A:7}.
	Therefore, we shall assume that $\Tc=(T_\alpha)_{\alpha\in A}$.
	
	Observe first that $\Lc'_{A'}$ satisfies property $(S)$ by Theorem~\ref{teo:5}.
	Then, take $m\in C_0(\sigma(\Lc_A))$ such that $\Kc_{\Lc_A}(m)\in \Sc(G\times G')$. It follows that there is $m_1\in \Sc(E_{\Lc'_{A'}})$ such that 
	\[
	m\circ L=m_1
	\]
	on $\sigma(\Lc'_{A'})$. 
	Since $S_0\times \R_+$ is a closed semi-algebraic set, by Theorem~\ref{teo:7} it will suffice to show that the class of $m_1$ in $\Sc(S_0\times \R_+)$ is a formal composite of $L$. 
	Now, this is clear at the points of the form $\left(\sum_{\alpha\in A}\abs{\omega_\alpha}\mi_{\alpha}(\vect{n_{1,\alpha}})+ r, \omega\right)$, where $\omega\in (\R^*)^{A}$ and $r\Meg 0$. 
	Arguing by induction on $\card(A)$ and taking Lemma~\ref{lem:2:1} into account, the assertion follows by means of Lemma~\ref{lem:20:3}.
\end{proof}

As a complement to Theorem~\ref{prop:16:2}, we present the following pathological case.

\begin{prop}\label{prop:4}
	Let $(X,Y,T)$ be a standard basis of $\Hd^1$, and let $\Lc'$ be a positive Rockland operator on a homogeneous group $G$. 
	Assume that $(\Lc')$ satisfies property $(S)$ and that $\Lc'^h$ is homogeneous of degree $2$ for some $h\Meg 2$.
	Then, the Rockland family $(-X^2-Y^2+\Lc'^h, -i T)$ is functionally complete and satisfies property $(RL)$, but does \emph{not} satisfy property $(S)$.
\end{prop}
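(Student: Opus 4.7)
The plan is to handle the three assertions in sequence, exploiting throughout the enriched Rockland family $(-X^2-Y^2,-iT,\Lc')$ on $\Hd^1\times G$ and the proper linear map
\[
L\colon E_{(-X^2-Y^2,-iT,\Lc')}\to E_{(-X^2-Y^2+\Lc'^h,-iT)},\qquad L(s,\tau,\lambda)\coloneqq (s+\lambda^h,\tau),
\]
which satisfies $L(-X^2-Y^2,-iT,\Lc')=(-X^2-Y^2+\Lc'^h,-iT)$, so that $\Kc_{(-X^2-Y^2+\Lc'^h,-iT)}(m)=\Kc_{(-X^2-Y^2,-iT,\Lc')}(m\circ L)$ for every bounded multiplier $m$. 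Property $(RL)$ will then follow at once from Theorem~\ref{prop:16:2}(1) applied with $\Tc=(T)$ and with $\Lc'^h$ (a positive Rockland operator of homogeneous degree $2$) in place of the theorem's ``$\Lc'$''.

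For functional completeness I will first upgrade the hypothesis on $\Lc'$ to the enriched family: Theorem~\ref{teo:15:1} gives $(S)$ for $(-X^2-Y^2,-iT)$ on $\Hd^1$; Theorem~\ref{teo:5} then yields $(S)$ for $(-X^2-Y^2,-iT,\Lc')$ on the product; and Proposition~\ref{prop:12:1} promotes this to functional completeness. Given $D=m(-X^2-Y^2+\Lc'^h,-iT)$ a left-invariant differential operator, the identity $(m\circ L)(-X^2-Y^2,-iT,\Lc')=D$ then produces a polynomial $Q(s,\tau,\lambda)$ equal to $m\circ L$ on the spectrum (the needed continuity of $m$ comes from $(RL)$, along the lines of Proposition~\ref{prop:12:1}'s proof). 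The key algebraic step is to introduce the vector field $V\coloneqq \partial_\lambda - h\lambda^{h-1}\partial_s$ tangent to the fibres of $L$: since $Q=m\circ L$ on the spectrum, $VQ$ vanishes on each Schrödinger half-plane $\{s=|\tau|(1+2k):\tau\in\R,\lambda\Meg 0\}$, so as a polynomial $VQ$ must vanish on the affine variety $V_k=\{s^2=(1+2k)^2\tau^2\}$ for every $k\in\N$; because the irreducible factors $s\pm(1+2k)\tau$ are pairwise distinct, an infinite product divides $VQ$ in $\C[s,\tau,\lambda]$, forcing $VQ\equiv 0$. The resulting first-order PDE integrates to $Q(s,\tau,\lambda)=R(s+\lambda^h,\tau)$ for a polynomial $R$, whence $m=R$ on $\sigma(-X^2-Y^2+\Lc'^h,-iT)$.

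To show the failure of $(S)$, I will exhibit an explicit counterexample. Choose $G_0\in C_c^\infty(\R^2)$ with support in $\{(r,\tau):1\meg r\meg 2|\tau|\}$ and $G_0(\tau_0,\tau_0)\neq 0$ for some $\tau_0\Meg 1$, and define
\[
m(r,\tau)\coloneqq (r-|\tau|)^{1/h}\,G_0(r,\tau)
\]
on $\sigma(-X^2-Y^2+\Lc'^h,-iT)=\{r\Meg|\tau|\}$. The support condition forces $G_0(s+\lambda^h,\tau)$ to vanish on every Schrödinger piece of index $k\Meg 1$ (because there $s+\lambda^h\Meg 3|\tau|>2|\tau|$) and on the abelian piece (where $\tau=0$ conflicts with $1\meg r\meg 2|\tau|$); on the $k=0$ piece $m\circ L$ reduces to $\lambda\, G_0(|\tau|+\lambda^h,\tau)$, smooth and compactly supported. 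Then $\widetilde M(s,\tau,\lambda)\coloneqq \lambda\, G_0(s+\lambda^h,\tau)\chi(s)$, for a suitable cutoff $\chi\in C_c^\infty(\R)$, belongs to $C_c^\infty(\R^3)\subset\Sc(\R^3)$ and agrees with $m\circ L$ on the spectrum of the enriched family, so property $(S)$ there yields $\Kc_{(-X^2-Y^2+\Lc'^h,-iT)}(m)=\Kc_{(-X^2-Y^2,-iT,\Lc')}(\widetilde M)\in\Sc(\Hd^1\times G)$. Yet $\partial_r m(r,\tau_0)\to+\infty$ as $r\to\tau_0^+$ (since $h\Meg 2$ and $G_0(\tau_0,\tau_0)\neq 0$), so $m$ admits no $C^1$, let alone Schwartz, extension near $(\tau_0,\tau_0)$; since $\beta_{(-X^2-Y^2+\Lc'^h,-iT)}$ is absolutely continuous with respect to Lebesgue measure on $\{r>|\tau|,\tau\neq 0\}$, any Schwartz representative of $m$ would coincide with $m$ there and hence, by continuity, on all of the spectrum, a contradiction.

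The main obstacle I anticipate is the algebraic step in the functional-completeness argument: although the spectrum of the enriched family is only a countable union of $2$-dimensional semialgebraic pieces in $\R^3$, the vanishing of $VQ$ has to be propagated first to the Zariski closures of those pieces and then, via the infinite family of pairwise-distinct linear factors $s\pm(1+2k)\tau$, to the whole of $\R^3$. Everything else is more routine, though the support of $G_0$ in the counterexample must be tuned carefully so that the cross-talk between different Schrödinger branches disappears.
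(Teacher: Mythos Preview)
Your treatment of property $(RL)$ and of the failure of property $(S)$ is correct and essentially coincides with the paper's (the paper's counterexample is phrased slightly differently --- it takes $\phi\in\Dc(\R^3)$ equal to $\pr_3$ near $(1,1,0)$ and supported in $\{\lambda_1'<3|\lambda_2'|-\lambda_3'^h\}$ --- but the mechanism is identical).

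The functional-completeness argument, however, has a genuine gap. You claim that since $Q=m\circ L$ on the spectrum, the polynomial $VQ$ vanishes on each Schr\"odinger half-plane $S_k=\{s=|\tau|(1+2k),\ \lambda\Meg 0\}$. This does not follow: the vector field $V=\partial_\lambda-h\lambda^{h-1}\partial_s$ is tangent to the fibres of $L$, but it is \emph{not} tangent to $S_k$ (for $\lambda>0$ it has a nonzero $\partial_s$-component, while $s$ is determined by $\tau$ on $S_k$). The equality $Q=m\circ L$ on $S_k$ lets you differentiate $Q-m\circ L$ along directions tangent to $S_k$, but says nothing about the transverse derivative $\partial_s Q$ that enters $VQ$. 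Concretely, a single fibre of $L$ meets the spectrum only in finitely many isolated points (one per sheet $S_k$ with $C-(1+2k)|\tau|\Meg 0$, where $C=s+\lambda^h$), so the constancy of $m\circ L$ along fibres yields only finitely many equalities among values of $Q$, not the differential identity $VQ=0$.

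The paper closes this gap by introducing a \emph{second} polynomial. Using the functional completeness of $(\Lc,-iT)$ on $\Hd^1$ (which holds since that family satisfies $(S)$; cf.~Theorem~\ref{teo:15:1} and Proposition~\ref{prop:12:1}), one obtains a polynomial $P$ on $\R^2$ with $m=P$ on the Heisenberg fan $\sigma(\Lc,-iT)$. Now $P\circ L$ and $Q$ are both genuine polynomials on $\R^3$, and they agree on the set
\[
\bigl\{(k_1|r|,\,r,\,(k_2|r|)^{1/h}):r\in\R,\ k_1\in 2\N+1,\ k_2\in 2\N\bigr\},
\]
which is Zariski-dense (fix $r>0$ and let $k_1,k_2$ run). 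Hence $P\circ L=Q$ identically, whence $m=P$ on $\sigma(\Lc+\Lc'^h,-iT)$. Your vector-field idea, as written, does not supply a substitute for this step.
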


\begin{proof}
	{\bf1.} Define $\Lc\coloneqq -X^2-Y^2$. 
	Then, Theorem~\ref{prop:16:2} implies that $(\Lc+\Lc'^h, -i T)$ is a Rockland family which satisfies the property $(RL)$. 
	Next, take some $\phi \in \Dc(E_{(\Lc,-i T, \Lc')})$  supported in $\Set{(\lambda'_1,\lambda'_2,\lambda'_3)\colon \lambda'_1<3\abs{\lambda'_2}-\lambda'^h_3} $ and equal to $\pr_3$ on a neighbourhood of $(1,1,0)$. Then, 
	\[
	m\colon (\lambda_1,\lambda_2)\mapsto \phi\left(\abs{\lambda_2}, \lambda_2, \sqrt[h]{\lambda_1-\abs{\lambda_2} }\right)
	\]
	is not equal  to any elements of $\Sc(E_{\Lc_A})$ on $\sigma(\Lc_A)$. On the other hand, $\Kc_{\Lc_A}(m)=\Kc_{(\Lc,-i T, \Lc')}(\phi)\in \Sc(\Hd^1\times \R)$.
	Hence, $\Lc_A$ does not satisfy property $(S)$.
	
	{\bf2.} Now, let us prove that $\Lc_A$ in functionally complete. 
	Take $m\in C(E_{\Lc_A})$ such that $\Kc_{\Lc_A}(m)$ is supported in $\Set{e}$. 
	Notice that we may assume that $m$ is continuous since $\Lc_A $ satisfies property $(RL)$. 
	Projecting onto the quotient by $\Set{0}\times\R$, we see that there is a unique polynomial $P$ on $E_{\Lc_A}$ which coincides with $m$ on $\sigma(\Lc,-i T)$. 
	On the other hand, the family $(\Lc,-i T, \Lc')$ is functionally complete since it satisfies property $(S)$ (cf.~Theorem~\ref{teo:5} and Proposition~\ref{prop:12:1}). 
	Hence, there is a unique polynomial $Q$ on $E_{(\Lc,-i T, \Lc')}$ such that
	\[
	m(\lambda_1+\lambda_3^h, \lambda_2)=Q(\lambda_1,\lambda_2,\lambda_3)
	\]
	for every $(\lambda_1,\lambda_2,\lambda_3)\in\sigma(\Lc,-i T, \Lc')$. Hence, 
	\[
	P(\lambda_1+\lambda_3^h, \lambda_2)=Q(\lambda_1,\lambda_2,\lambda_3)
	\]
	for every $(\lambda_1,\lambda_2,\lambda_3)\in \Set{\left( k_1\abs{r}, r, \sqrt[h]{k_2\abs{r} }  \right)\colon r\in \R, k_1\in 2\N+1, k_2\in 2\N  } $.	
	Now, the closure of this latter set in the Zariski topology is $E_{(\Lc,-i T, \Lc')}$, so that $m=P$ on $\sigma(\Lc_A)$. The assertion follows.
\end{proof}

\begin{small}
	\section*{Acknowledgements}
	I would like to thank professor F.\ Ricci for patience and guidance, as well as for many inspiring discussions and for the numerous suggestions concerning the redaction of this manuscript.	
	I would also like to thank Dr A.\ Martini and L.\ Tolomeo for some discussions concerning their work.
\end{small}

\end{document}